	\newcommand{\NN}{\mathbb{N}}
	\newcommand{\RR}{\mathbb{R}}
	\newcommand{\CC}{\mathbb{C}}
	\newcommand{\EE}{\mathbb{E}}
	\newcommand{\PP}{\mathbb{P}}
	\newcommand{\B}[1]{\mathbf{#1}}
	\newcommand{\cov}[2]{\operatorname{Cov}({#1}, {#2})}
	\newcommand{\conj}[1]{\overline{#1}}
	\theoremstyle{definition}
	\newtheorem{theorem}{Theorem}[section]
	\newtheorem{defn}[theorem]{Definition}
	\newtheorem{lemma}[theorem]{Lemma}
	\newtheorem{corro}[theorem]{Corollary}
	\newtheorem{thm}{Theorem}[section]
	\theoremstyle{remark}
	\newtheorem*{remark}{Remark}
	\newtheorem{assumption}[thm]{Assumption}
\begin{document}

	\title[Asymptotic analysis of SST]{Asymptotic analysis of synchrosqueezing transform -- toward statistical inference with nonlinear-type time-frequency analysis}
	
	\author{Matt Sourisseau}

	\address{Department of Mathematics, University of Toronto, Toronto, ON,  Canada}

		\author{Hau-tieng Wu}

	\address{Department of Mathematics and Department of Statistical Science, Duke University, Durham, NC, USA} 

		\author{Zhou Zhou}

	\address{Department of Statistics, University of Toronto, Toronto, ON,  Canada}

		\maketitle

		\begin{abstract}
			We provide a statistical analysis of a tool in nonlinear-type time-frequency analysis, the synchrosqueezing transform (SST), for both the null and non-null cases. The intricate nonlinear interaction of different quantities in SST is quantified by carefully analyzing relevant multivariate complex Gaussian random variables. Specifically, we provide the quotient distribution of dependent and improper complex Gaussian random variables. Then, a central limit theorem result for SST is established. {As an example}, we provide a block bootstrap scheme based on the established SST theory to test if a given time series contains oscillatory components. 
		\end{abstract}

\section{Introduction}
	\label{sec:Introduction}

	Time series contain dynamical information of a system under observation, and their ubiquity is well-known~\cite{Hallin:1978}. A key task in understanding and forecasting such a system is to quantify the dynamics of an associated time series according to a chosen model, a task made challenging by the fact that often the system is nonstationary.
	Although there is no universal consensus on how to model and analyze time series extracted from nonstationary systems, two common schools of thought are those of time series analysis~\cite{Hamilton:1994,Brockwell_Davis:2002,Fan_Yao:2005} and time-frequency (TF) analysis \cite{Daubechies:1992,Flandrin:1999}. Roughly stated, the main difference between these two paradigms is the assumptions they make on the underlying random process modeling a time series.

	In classical time series analysis, this random process is typically assumed to have zero first-order statistics. The focus is then on analyzing the second-order statistics, mainly for the purpose of forecasting. Seasonality of a time series (that is, an oscillatory pattern of known periodicity in its mean) is modeled separately or included in the covariance structure~\cite{Brockwell_Davis:2002}.
	When a time series is modeled as a sum of a parametric periodic mean function and a stationary noise sequence, there is a small body of statistics literature on methods and algorithms to estimate its periodicity when unknown.  
	The common ground with TF analysis originates in investigating ``local spectral behavior'' \cite{Priestley1967}, a generalization of the idea of using the spectrum to capture local behavior. See Section \ref{Section More Simulation} for more literature review. 
	This direction has a long history, beginning with the consideration of analytic model \cite{Gabor:1946,Picinbono:1997} and more recently progressing to the {\em adaptive harmonic model} (AHM) modeling time-varying frequency and amplitude \cite{Daubechies_Lu_Wu:2011,Chen_Cheng_Wu:2014}, or {\em adaptive non-harmonic model} (ANHM) further modeling  nonsinusoidal, time-varying oscillatory patterns~\cite{Wu:2013,lin2016waveshape}{. Unlike the classical time series analysis, in this direction the oscillation is modeled in the first-order statistics.} 

	In the TF approach, available algorithms are roughly classified into linear-type, bilinear-type and nonlinear-type.
	The synchrosqueezing transform (SST) and its variations, a family of nonlinear-type TF tool, were developed based on AHM/ANHM in the past decade. % to sharpen the TF representation (TFR) for the estimation of {\em time-varying} frequency, amplitude, oscillatory patterns, possible trends, and noise. 
	SST can be viewed as a special case of the reassignment technique pioneered in~\cite{Kodera-76} and further explored in~\cite{Auger_Flandrin:1995}. SST encodes the spirit of empirical mode decomposition~\cite{Huang_Shen_Long_Wu_Shih_Zheng_Yen_Tung_Liu:1998}.
	Broadly, SST nonlinearly modifies the TF representation (TFR), and hence the spectrogram, derived from the short-time Fourier transform (STFT) by utilizing the {\em phase} information in STFT so that the TFR is sharper. 
	 Recall that spectrogram comes from dividing a time series into short segments by a chosen window (taper), and evaluate the tapered periodogram at each moment. As a result, the phase information included in STFT is lost. {The key feature that distinguishes SST from traditional spectral analysis is how the phase information is used to sharpen the TFR.}

	  {From the application perspective, SST has been  applied to handle diverse signal processing challenges since its development. Typical applications include (1) estimate the time-varying frequency and amplitude, (2) obtain the non-sinusoidal oscillatory pattern, (3) decompose the constitutional oscillatory components and their phase functions from a noisy observation, (4) determine if there is an oscillatory component and when it exists, and many others. See \cite{WU20208} for a recent review article for its successful scientific applications in medicine.  
	 Due to its flexibility, several variations of SST} have been proposed. For example, taking the S-transform~\cite{Huang_Zhang_Zhao_Sun:2015} or wave packets~\cite{Yang:2014} into account, considering higher order phase information~\cite{Oberlin_Meignen_Perrier:2015}, combining the cepstrum tool \cite{lin2016waveshape}, and applying multi-taper techniques~\cite{Xiao_Flandrin:2007,Daubechies_Wang_Wu:2016}. 
	 
	 {From the theoretical perspective, recently the theoretical analysis of SST under the AHM or ANHM when noise does not exist has been well-established. See \cite{Daubechies_Lu_Wu:2011,Chen_Cheng_Wu:2014} for example. 
	However, when noise (or any stochastic process) is present, the exploration has been limited to asymptotic expansion~\cite{Brevdo_Fuckar_Thakur_Wu:2013,Chen_Cheng_Wu:2014,YANG2018526} or only part of the algorithm~\cite{chassande1998statistics}. %Specifically, even in the null case (that is, when there is no oscillatory signal) when the noise is Gaussian and white, the distribution of SST is unknown.
	 To our knowledge, its statistical property, particularly the asymptotic distribution, even in the null case (that is, no oscillatory signal), is still missing.} %This fact motivated this theoretical work. % have contributed to further stabilizing the noise impact. 
	%
	%An important application of SST and its variations is found in various biomedical oscillatory time series. 

	See Figure~\ref{fig:PPG} for an example of SST when applied to a noisy photoplethysmography (PPG) signal. 
	The PPG signal is non-invasive, cost-effective, and widely used in healthcare environment \cite{alian2014photoplethysmography}. However, {noise is inevitable in the clinic environment, which might downgrade the reliability of the signal. In this example, the second half of the signal is of low quality in the sense that the cardiac oscillation cannot be visualized. %, and we need an automatic and systematic approach to determine the signal quality for clinical application.}
	In the associated TFR determined by SST, there is a curious ``texture'' structure (indicated by blue arrows) in the second half, which comes from the low-quality signal, while there is a dominant curve (indicated by red arrows) in the first half, which encodes the time-varying heart rate information. If we could understand the asymptotic distribution of SST under various situations, we could further utilize information encoded in the signal}. %This kind of texture structure could mask the information of interest, so there is reason to be cautious about it causing misinterpretation in scientific exploration. 

	{Motivated by the wide application of SST and the missing asymptotic analysis of SST (e.g., for a systematic study of various applications of SST) from the statistical perspective, the} main focus of this work is providing a statistical analysis of SST toward the statistical inference purpose. Specifically, we write down the distribution associated with SST of a stationary colored Gaussian random process, in both null and non-null cases. {We apply the developed result to design} a local bootstrapping algorithm for statistical inference for testing the existence of an oscillatory component, and its theoretical justification is also provided. {This algorithm is applied to determine the signal quality of the PPG signal shown in Figure \ref{fig:PPG}.}

	The first technical challenge encountered along the way is dealing with improper multivariable complex random variables and their ratio distributions. While proper (or, ``circular'') complex random variables have been widely discussed in signal processing literature~\cite{Baxley-10}, their improper counterparts and corresponding ratios have been mostly ignored, except for~\cite{Picinbono:1997,Schreier-10}. In our case, impropriety arises naturally from the phase information encoded in the STFT, and handling its effect on the quotient structure forms the first part of the paper, and is of its own interest for other applications.
	
	The second technical challenge is handling the nonlinear reassignment of STFT coefficients according to the reassignment rule. This nonlinear reassignment is challenged by handling the nonlinear change of variable by the $\CC\RR$-calculus computation~\cite{Kreutz-Delgado-09} and approximating the integration of confluent hypergeometric function that naturally pops out when we evaluate moments of SST, particularly in the non-null case. The analysis is complicated in the low frequency region due to the degeneracy of the covariance structure.
	This step has an interesting interpretation and helps us connect time series, TF analysis and other topics; the big picture is that the reassignment rule has a natural interpretation within the kernel regression framework of time series analysis, and can be understood in the framework of diffusion geometry in the manifold learning setup. 
	
	The third technical challenge is handling the dependence structure when we show the central limit theorem (CLT) of SST. The main technique here is exploiting the $M$-dependence, and the associated critical quantity is the ``effective sampling rate'' --  once we find a proper $M$-dependent surrogate of the original random process associated with SST, if the ``effective sampling rate'' is correctly specified, we show that in {\em both} null and non-null cases, SST of a stationary colored Gaussian random process follows a complex normal distribution. With the above results, we could establish a theoretical justification of a local bootstrapping algorithm for the statistical inference.

	\begin{figure}[h!]
		\centering
		\includegraphics[trim=0 0 0 0, clip,width=.9\textwidth]{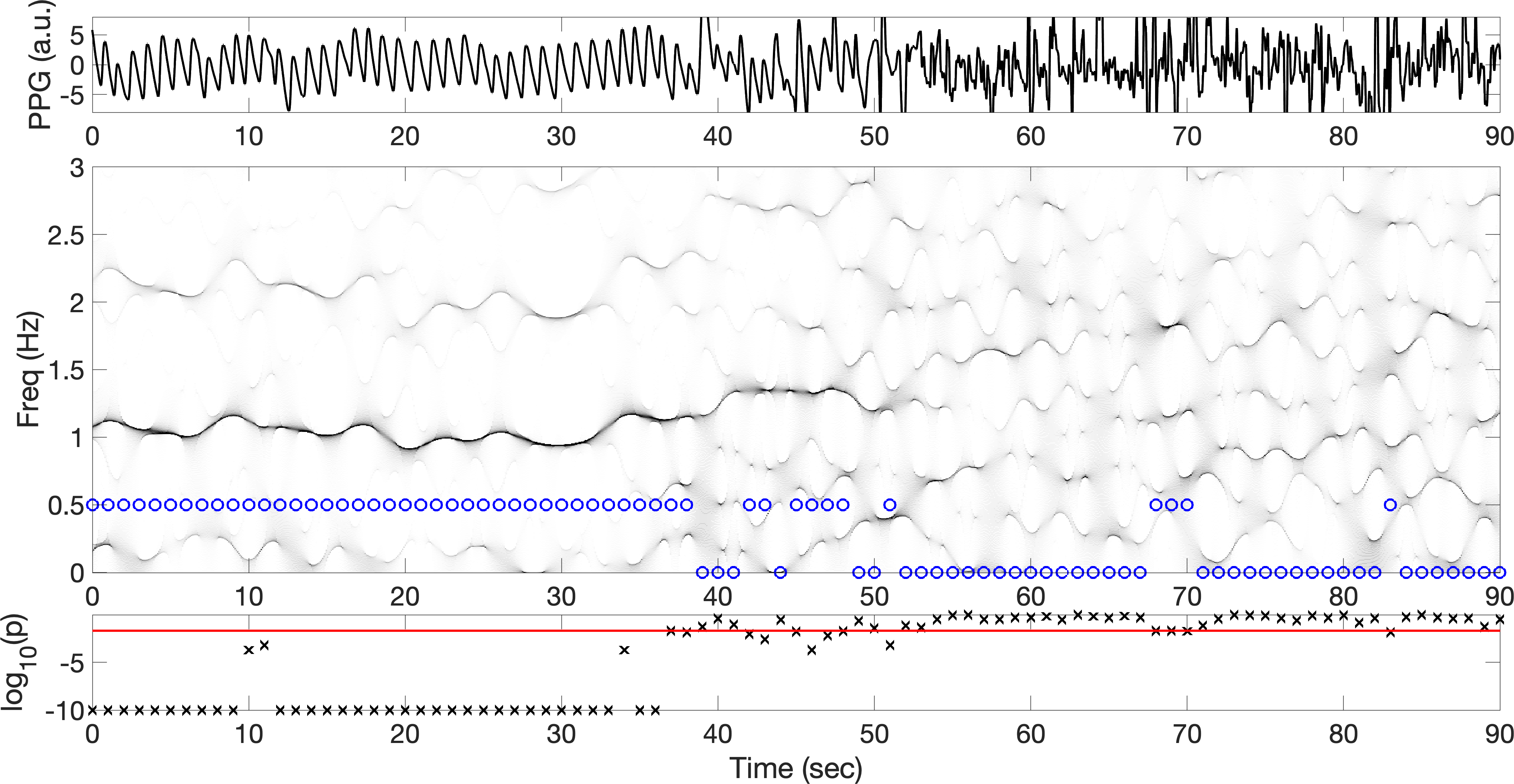}
		\caption{An application of the synchrosqueezing transform (SST) to the photoplethysmogram (PPG) signal. 
		Top: the PPG signal, where the second half of the signal is of low quality. 
		Middle: the time-frequency representation determined by the STFT-based SST of the PPG signal. The dominate curve around 1Hz indicates the instantaneous frequency (or instantaneous heart rate). The blue diamonds comes from the proposed bootstrapping algorithm {with the false discovery rate set to $0.05$ (Section \ref{Section:Dection of oscillatory signal}), where those at 0.5  (resp. 0) indicate the null hypothesis is (resp. cannot be) rejected, and hence the high (resp. low) quality.}
		{Bottom: the log 10 of p-values are plotted as black crosses with the threshold for the false discovery rate plotted as the red line.} 
		}\label{fig:PPG}
	\end{figure}

	The paper is organized in the following way. In Section~\ref{sec:SST summary}, we summarize the STFT-based SST. In Section~\ref{sec:multivariate complex Gaussian RVs}, we handle quotients of two complex Gaussian random variables. These results are of interest aside from their use in the sequel.
		The mathematical setup for the SST analysis is given in Section \ref{sec: SST setup}. Section~\ref{sec: SST statistics} includes the asymptotic analysis of SST. %where the SST integrand is approximated in a suitable sense by a truncated version of itself, which lends itself more handily to the proof of a CLT-style theorem for SST; namely, Theorem~\ref{thm:SSQ-CLT}.
		A local bootstrapping algorithm for an application of SST to the oscillatory component detection problem and its theoretical validation are shown in Section \ref{Section:Dection of oscillatory signal}.
		 A numerical simulation is shown in Section \ref{sec:Numerics}.
		We conclude the paper with a discussion in Section \ref{sec:Discussion and conclusion}.
		More literature review, numerical example, and all proofs are relegated to the supplementary material. 
		In this paper, we use the following asymptotic notations. For two set of $\{a_u\}_{u\in U},\,\{b_u\}_{u\in U}\subset \mathbb{R}_+$ indexed by a set $U$, $a_u\asymp b_u$ means that there exist constants $0<c_1\leq c_2<\infty$ so that $c_1b_u\leq a_u\leq c_2 b_u$ for all $u\in U$, $a_u=O(b_u)$ means that there exists a constant $c$ so that $a_u\leq cb_u$ for all $u\in U$. For $a,b\in \mathbb{R}$, $a\vee b$ means $\max\{a,b\}$.

	%%% Table of Symbols

\section{A summary of the SST algorithm}
	\label{sec:SST summary}
	
	Take a Schwartz function $h$.
	For a tempered distribution $f$, the STFT of $f$ associated with the window function $h$ is defined by the equation
	$V_f^{(h)}(t,\eta)
		\vcentcolon=
		f(h_{t,\eta})$,
	where $t\in\RR$ is the time and $\eta >0$ is the frequency and 
	$
		h_{t,\eta}(s)
		\vcentcolon=
		h(s-t)e^{- 2\pi i \eta (s-t)}
	$. 
	We mention that this is a modification of the ordinary STFT by the phase modulation $e^{2\pi i\eta t}$, and we choose to work with it to simplify the upcoming heavy notation. 
	When $f$ is represented by a function, we may abuse notation in the usual way and write
	\begin{equation}\label{eq: STFT on functions}
		V_f^{(h)}(t,\eta)
		=
		\int_{-\infty}^{\infty} 
			f(s)h(s-t)e^{- 2\pi i \eta (s- t)}
		\,ds,
	\end{equation}
	Commonly, the window function $h$ is chosen to be a Gaussian function with mean $0$ and bandwidth $\sigma>0$. 
	Given the above, the {\em STFT-based synchrosqueezing transform} (SST) of 
	$f$ with the modified window function $h$ with {\em resolution} $\alpha > 0$ is defined to be
	\begin{equation}\label{eq: SST}
		S_f^{(h,\alpha)}(t,\xi) 
		\vcentcolon= 
		\int 
			V_f^{(h)}(t,\eta)
			\, g_\alpha\big(\xi-\Omega^{(h)}_f(t,\eta)\big)
		\, d\eta,
	\end{equation}
	where the \emph{reassignment rule} $\Omega^{(h)}_f(t,\eta)$ is defined by
	\begin{equation}\label{eq: STFT-based reassignment rule}
		\Omega^{(h)}_f(t,\eta)
		\vcentcolon= 
		\left\{
		\begin{array}{ll}
			\frac{1}{2\pi i} 
			\frac{\partial_t V_f^{(h)}(t,\eta)}{ V_f^{(h)}(t,\eta)} 
			& \text{if } V_f^{(h)}(t,\eta)\neq 0\\ 
			-\infty 
			& \text{otherwise}
		\end{array}\right.
	\end{equation}
	and $g_\alpha:\mathbb{C}\to \mathbb{R}$ is an approximate $\delta$-distribution when restricted on $\mathbb{R}$ when $\alpha$ is sufficiently small. For concreteness, we will take ${g}_\alpha(z) = \frac{1}{\sqrt{\pi\alpha}} e^{-|z|^2/\alpha}$, which has the $L^1$ norm $1$. 
	Notice that the nonlinearity of SST over signals arises from the dependence of equation~\eqref{eq: SST} on the reassignment rule, which provides information about the instantaneous frequency of the signal (as made precise in~\cite{Daubechies_Lu_Wu:2011,Wu-11}). $\alpha$ is interpreted as the {\em resolution} of SST in the frequency axis. Numerically, SST is implemented by a direct discretization of \eqref{eq: STFT on functions}, \eqref{eq: SST}, and \eqref{eq: STFT-based reassignment rule}. We will come back to this part when we discuss the proposed bootstrapping algorithm in Section \ref{Section:Dection of oscillatory signal}.
 {We should mention a commonly confusing point regarding SST. At the first glance, since a differentiation is taken when we define the reassignment rule, SST is unstable when noise exists. This is not the case since $\partial_t V_f^{(h)}(t,\eta)=-V_f^{(h')}(t,\eta)+2\pi i \eta V_f^{(h)}(t,\eta)$; that is, the differentiation operator is equivalent to evaluating another STFT with the differentiation of $h$ as the window and summing $2\pi i \eta V_f^{(h)}(t,\eta)$. Based on this fact, the stability of SST has been established in \cite{Brevdo_Fuckar_Thakur_Wu:2013,Chen_Cheng_Wu:2014}.}

	To better appreciate which kind of information is utilized in SST, rewrite $V_f^{(h)}(t,\eta)=|V_f^{(h)}(t,\eta)|e^{i2\pi P_f^{(h)}(t,\eta)}$, where $P_f^{(h)}(t,\eta)$ is the ``phase'' of the complex value $V_f^{(h)}(t,\eta)$. By properly choosing the branch for $\log$, we have the relationship $\Omega_f^{(h)}(t,\eta)=\frac{1}{2\pi i}\partial_t \log(V_f^{(h)}(t,\eta))=\frac{1}{2\pi i}\partial_t \log |V_f^{(h)}(t,\eta)| + \partial_t P_f^{(h)}(t,\eta)$. Suppose the magnitude $|V_f^{(h)}(t,\eta)|$ changes slowly, we see that the reassignment rule encodes the phase information.

	The goal of this paper is to initiate the study of the distribution of $S_{f+\Phi}^{(h,\alpha)}(t,\xi)$ for $t\in\RR$ and $\xi>0$, where $f$ is a deterministic tempered distribution and $\Phi$ is a generalized random process (GRP); see Section \ref{sec:app: GRP summary} for a summary of GRP. In this case, to understand the statistics of $S_{f+\Phi}^{(h,\alpha)}(t,\xi)$, we are led to consider the distribution of the ratio of the random variables
	\begin{align}
		V_{f+\Phi}^{(h)}(t,\eta) 
			&=
			f(h_{t,\eta})
			+ \Phi(h_{t,\eta})
			\label{eq:general V_f+Phi}\\
		\partial_t V_{f+\Phi}^{(h)}(t,\eta)
			&= 
			\partial_t f(h_{t,\eta})
			+
			\Phi(\partial_t h_{t,\eta})
			\label{eq:general d_t V_f+Phi}.
	\end{align}
	We work under the assumptions that the noise $\Phi$ is mean-zero. Under this assumption, we have $\mu = [\mu_1\,\,\,\mu_2]^\top$ so that 
	\begin{equation}\label{eq:mu vector}
		\mu 
		\vcentcolon= 
		\EE\big[
			V_{f+\Phi}^{(h)}(t,\eta)\ \ 
			\partial_t V_{f+\Phi}^{(h)}(t,\eta)
			\big]^\top=\begin{bmatrix}
			f(h_{t,\eta})& 
			\partial_t f(h_{t,\eta})
			\end{bmatrix}^\top\,.
	\end{equation}
	By a direct expansion and the fact that $\partial_t[h_{t,\eta}(s)]=-(h')_{t,\eta}(s)+i2\pi\eta h_{t,\eta}(s)$, by definition, the reassignment rule takes the form
	\begin{equation}\label{eq:general reassignment rule}
		\Omega_{f+\Phi}^{(h)}(t,\eta) 
		=
		\frac{1}{2\pi i} 
		\left(
			\frac{\mu_2 + 2\pi i \eta \,\Phi(h_{t,\eta}) - \Phi((h')_{t,\eta})}
			{\mu_1 + \Phi(h_{t,\eta})}
		\right)\,.
	\end{equation}
	Note that $(h')_{t,\eta}(s) = h'(s-t)e^{-2\pi i \eta(s-t)}$. If the noise is such that $\Phi(h_{t,\eta})$ is zero only on a set of measure zero, we may add and subtract $2\pi i \eta \mu_1$ in the numerator of~\eqref{eq:general reassignment rule} to obtain the almost-sure equality
	\begin{align}\label{eq:general reassignment rule a.s. expression}
		\Omega_{f+\Phi}^{(h)}(t,\eta) 
		&\buildrel \text{a.s} \over =
		\,\eta - \frac{1}{2\pi i} \left(
		\frac{2\pi i \eta \mu_1 -\mu_2 + \Phi((h')_{t,\eta})}
		{\mu_1 + \Phi(h_{t,\eta})}
		\right).
	\end{align}
	
	We will treat the \emph{null case} when $f(t) = 0$, and the \emph{non-null case} when $f(t)$ is not identically zero.	%
	The analysis depends on understanding the random variables at hand, specifically $\frac{2\pi i \eta \mu_1 -\mu_2 + \Phi((h')_{t,\eta})}
		{\mu_1 + \Phi(h_{t,\eta})}$, which we address now.

\section{Complex Gaussians and their quotients}\label{sec:multivariate complex Gaussian RVs}

	Suppose the complex random vector $\B{Z}\in \CC^n$ can be written in the form $\B{Z}= \B{X}+i\B{Y}$ for some $n$-dim real-valued random vectors $\B{X}$ and $\B{Y}$. The density of $\B{Z}$ is then defined to be the density of $[\B{X}^\top\,\,\,\B{Y}^\top]^\top \in \RR^{2n}$; that is, $f_{\B{Z}}(x+iy) \coloneqq f_{\B{X},\B{Y}}(x,y)$. 

	\begin{defn}[Complex Gaussian distribution \cite{Schreier-10}]
	\label{def: complex gaussian}
		Let $\mu \in \CC^n$. Suppose $\Gamma, C \in \CC^{n\times n}$ are Hermitian positive-definite and complex symmetric, respectively, and the Hermitian matrix $\conj{\Gamma} - C^* \Gamma^{-1} C$ is positive definite. We write $\B{Z} \sim \CC N_n(\mu, \Gamma, C)$ and say $\B{Z} \in \CC^n$ follows a \emph{complex Gaussian} distribution with mean $\mu$, covariance $\Gamma$, and pseudocovariance $C$ if
		\begin{equation}
		\label{eq: complex gaussian density}
			f_{\B{Z}}(z) =
			\pi^{-n} (\det{\underline{\Sigma}})^{-1/2}
			e^{-\frac{1}{2}(\underline{z}-\underline{\mu})^* \underline{\Sigma}^{-1} (\underline{z}-\underline{\mu})},
		\end{equation}
		where $z\in \CC^n$, $\underline{z}\vcentcolon=\begin{bmatrix}z\\\conj{z}\end{bmatrix}$, and 
		$\underline{\Sigma} 
			\vcentcolon= \begin{bmatrix}
			\Gamma & C\\
			\overline{C} & \overline{\Gamma}
			\end{bmatrix}
		$ is the {\em augmented covariance matrix}.
		$\B{Z}$ is said to be \emph{proper} if $C=\B{0}$, and \emph{improper} otherwise. 
	\end{defn}

	Note that while \emph{real} Gaussian vectors are completely characterized by their mean and covariance, complex Gaussian vectors are characterized by their mean and {\em augmented covariance}, as is clearly seen by the structure of the matrix $\underline{\Sigma}$. If a complex Gaussian vector has uncorrelated components (that is, diagonal $\Gamma$), it does not necessarily follow that these components are independent, as $C$ may be nonzero.  
	When $\B{Z}$ is proper, commutativity of matrix inversion with conjugation and positive-definiteness of $\Gamma$ gives us
	\begin{equation}
	f_{\B{Z}}(z) = 
	\pi^{-n} (\det{\Gamma})^{-1}
	e^{-(z-\mu)^* \Gamma^{-1} (z-\mu)}.
	\end{equation}
	If $\Gamma$ is also diagonal, the real and imaginary parts of $\B{Z}$ are seen by direct calculation to be independent $N(\Re{(\mu)},\Gamma/2)$ and $N(\Im{(\mu)},\Gamma/2)$ random variables, respectively. 

	Note that positive-definiteness of ${\underline{\Sigma}}$ guarantees the invertibility of $\Gamma$, and hence by~\cite[Proposition 2.8.3]{Bernstein-09} we have 
	$\det{\underline{\Sigma}} = \det{\Gamma} \det{P}$,
	where the so-called {\em Schur complement},
	$P \vcentcolon= \conj{\Gamma}- \conj{C} \Gamma^{-1} C$ is also invertible.
	By block matrix inversion~\cite[(2.8.16), (2.8.18) and (2.8.20)]{Bernstein-09}, we have  
	$\underline{\Sigma}^{-1} 
		= 
		\begin{bmatrix}
			\conj{P^{-1}} & -\conj{P^{-1}}\conj{R} \\
			-R^\top \conj{P^{-1}} & P^{-1}
		\end{bmatrix}$,
	 where
	$R \coloneqq \conj{C} \Gamma^{-1}$. Observe that since $\underline{\Sigma}$ is positive definite, $P^{-1}$ is too.
	Moreover, 
	 by a direct calculation, $\conj{P^{-1}}\conj{R}$ is symmetric. %Indeed, by viewing $\underline{\Sigma}\times\underline{\Sigma}^{-1}$ as a multiplication of $2\times 2$ block matrices, we have $-\Gamma\conj{P^{-1}}\conj{R}+CP^{-1}=0$ and $\conj{C}\conj{P^{-1}}-\conj{\Gamma}R^\top \conj{P^{-1}}=0$, which leads to $\conj{P^{-1}}\conj{R}=\Gamma^{-1}CP^{-1}$ and $R^\top \conj{P^{-1}}=\conj{\Gamma}^{-1}\conj{C}\conj{P^{-1}}$. Since $\underline{\Sigma}^{-1}$ is Hermitian, we have $R^\top \conj{P^{-1}}=(\conj{P^{-1}}\conj{R})^*$, and hence we immediately have the claim by $(\conj{P^{-1}}\conj{R})^\top= \conj{R}^\top P^{-1}=\Gamma^{-1}CP^{-1}=\conj{P^{-1}}\conj{R}$. 

	\subsection{Complex Gaussian quotient density and moments}
	\label{ssec:Complex Gaussian quotient density}

		If the complex random vector $(Z_1,Z_2) \sim \CC N_2(0,\Gamma,0)$, then the density of $Z_2/Z_1$ has a simple closed-form determined in \cite{Baxley-10}. 
		To extend this result to the most general case, we recall from \cite[p.~285, (10.2.8)]{Lebedev-72}, \cite[p.~290, (10.5.2)]{Lebedev-72} and \cite[(4)]{Pham-Gia-06} that the Hermite function $H_\nu$ of order $\nu<0$ is an analytic function of $z\in \CC$ and satisfies the identities
		\begin{align}
			\label{eq:Hermite function of negative order integral representation}
			H_{\nu}(z) 
				&= \frac{1}{\Gamma(-\nu)} \int_0^\infty t^{-(\nu+1)} e^{-t^2 -2tz} \,dt\,\\
			\label{eq: H_nu  sum}
			H_{\nu}(-z) + H_{\nu}(z) 
				&= \frac{2^{\nu+1}\sqrt{\pi}}{\Gamma((1-\nu)/2)} \Hypergeometric{1}{1}{\frac{-\nu}{2}}{\frac{1}{2}}{z^2}\\
			\label{eq: H_nu difference}
			H_{\nu}(-z) - H_{\nu}(z) 
				&= \frac{2^{\nu+2}\sqrt{\pi}z}{\Gamma(-\nu/2)} 
				\Hypergeometric{1}{1}{\frac{-\nu+1}{2}}{\frac{3}{2}}{z^2}\,,
		\end{align}
		where $\Hypergeometric{1}{1}{}{}{}$ is the confluent hypergeometric function \cite[p.~239, (9.1.4)]{Lebedev-72}.

		With these in mind, we have the following new result:

		\begin{thm}[Complex Gaussian quotient density]
			\label{thm: Quotient density}
			If $\begin{bmatrix}Z_1&Z_2\end{bmatrix}^\top \sim \CC N_2(\mu,\Gamma,C)$, then for any $q \in \CC$ the density of the random variable $Q = Z_2/Z_1$ is given by
			\begin{align}
				\label{eq: Gaussian quotient density}
				f_Q(q) 
				&= 
					\frac{e^{-\frac{1}{2}\underline{\mu}^*\underline{\Sigma}^{-1} \underline{\mu} }}
					{\pi^2 \sqrt{\det{\Gamma}\det{P}}}
					\int_0^{\pi}
					\Hypergeometric{1}{1}{2}{\frac{1}{2}}{\frac{
					B_\mu(\theta,q)^2
					}{
					A(\theta,q)
					}}
					\frac{1}{
					A(\theta, q)^2
					}
					 \,d\theta,
			\end{align}
			where 
			\begin{align}
				\label{eq:A}
				A(\theta, q) 
					&= \B{q}^* \overline{P^{-1}} \B{q} - \Re{\big(e^{2i\theta} \B{q}^\top R^\top \overline{P^{-1}} \B{q} \big)}\\
				B_\mu(\theta,q)
					&= \Re{\big(e^{i\theta} (\mu^* - \mu^\top R^\top) \conj{P^{-1}} \B{q} \big)}
					\label{eq: zeta_2}
			\end{align}
			and $\B{q}:=[1\,\,\,q]^\top\in\mathbb{C}^2$.
			In the special case when $C=0$ and $\mu = 0$, we write $Q^\circ$ instead of $Q$, and we have
			$f_{Q^\circ}(q) 
				= 
				\frac{1}{\pi \det{\Gamma}}
				\frac{1}{(\B{q}^* \Gamma^{-1} \B{q})^2}$.
					\end{thm}

		The proof of Theorem \ref{thm: Quotient density} is given in Appendix~\ref{Proof Theorem 2.1}. The symbols $A$ and $B_\mu$ originating from the existence of the impropriety are introduced to keep the formula \eqref{eq: Gaussian quotient density} and the following analysis succinct. Notice that $B_\mu$ vanishes when the mean $\mu$ is zero. Moreover, $f_{Q^\circ}$ does not blow up at $0$ because $\|\B{q}\|^2  = 1+|q|^2$ for all $q$ and decays like $\|\B{q}\|^{-4}$ as $|q|\to \infty$.

		\begin{thm} 
			\label{thm: quotient mean bound proposition}
			Let $[Z_1\,\,\,Z_2]^\top \sim \CC N_2(\mu,\Gamma,C)$ and $Q = Z_2/Z_1$. Then 
			\begin{enumerate}[(i)]
				\item $\EE |Q|^\beta$ and $\EE Q^\beta$ are finite when $0\leq \beta<2$, and infinite when $\beta=2$.

				\item When $C=0$ and $\mu=0$, we have
				$\EE Q^\circ = \Gamma_{21}/\Gamma_{11}$. 
				\item When $C=0$, $\Gamma$ is diagonal, and $\mu=(\mu_1,0)\in \mathbb{C}^2$, we have $\EE Q = 0$. 
			\end{enumerate}
			
		\end{thm}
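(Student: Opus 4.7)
The plan is to read off each claim from the density formula \eqref{eq: Gaussian quotient density} combined with a little linear algebra, and to short-circuit the moment computations in (ii) and (iii) by the regression decomposition of a bivariate complex Gaussian. I expect part (i) to be the crux, since it requires a uniform two-sided bound on the density at infinity; parts (ii) and (iii) will then fall out as soft consequences.

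For part (i), the main step is the two-sided asymptotic $f_Q(q) \asymp (1+|q|^2)^{-2}$ as $|q|\to\infty$. Set $\B{v}_\theta \vcentcolon= e^{i\theta}\B{q}$. A direct block calculation using the block form \eqref{eq: inverse augmented covariance}, together with the identity $\texttt R^\top \overline{\texttt P^{-1}} = \texttt P^{-1}\texttt R$ (which follows after one conjugation from the symmetry of $\overline{\texttt P^{-1}}\overline{\texttt R}$ noted in the text) identifies
\[
A(\theta, q) \;=\; \tfrac{1}{2}\,\underline{\B{v}_\theta}^{\,*}\,\underline{\Sigma}^{-1}\,\underline{\B{v}_\theta}.
\]
Since $\underline{\Sigma}^{-1}$ is Hermitian positive definite with extremal eigenvalues $0<\lambda_{\min}\le \lambda_{\max}$ and $\|\underline{\B{v}_\theta}\|^2 = 2\|\B{q}\|^2 = 2(1+|q|^2)$, one obtains $\lambda_{\min}(1+|q|^2)\le A(\theta,q)\le \lambda_{\max}(1+|q|^2)$ uniformly in $\theta$. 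A Cauchy--Schwarz estimate bounds $|B_\mu(\theta,q)|^2 \le C_\mu(1+|q|^2)$, so $B_\mu^2/A$ is uniformly bounded, and since the power series of $\Hypergeometric{1}{1}{2}{1/2}{\cdot}$ has nonnegative coefficients the hypergeometric factor satisfies $1 \le \Hypergeometric{1}{1}{2}{1/2}{B_\mu^2/A} \le c$ uniformly in $(\theta, q)$. Substituting into \eqref{eq: Gaussian quotient density} yields $f_Q(q) \asymp (1+|q|^2)^{-2}$. In polar coordinates on $\CC$ the moment question reduces to the convergence of $\int_1^\infty r^{\beta-3}\,dr$, which is finite precisely when $\beta<2$. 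Since $|Q^\beta| = |Q|^\beta$, the same integrability and divergence statements transfer to $\EE Q^\beta$.

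For parts (ii) and (iii), I use the regression decomposition. In (ii), set $\alpha \vcentcolon= \Gamma_{21}/\Gamma_{11}$ and $W \vcentcolon= Z_2 - \alpha Z_1$. With $\mu = 0$ and $C = 0$, the pair $(W, Z_1)$ is mean-zero and jointly proper complex Gaussian with $\mathrm{Cov}(W, Z_1) = \Gamma_{21} - \alpha\Gamma_{11} = 0$, so $W \perp Z_1$; hence $Q^\circ = \alpha + W/Z_1$. A quick polar-coordinate check gives $\EE |1/Z_1| < \infty$ (the singularity $1/|z|$ is $L^1_{\mathrm{loc}}$ on $\CC$ and the Gaussian tail handles infinity), and together with $\EE|W|<\infty$ and independence, Fubini gives $\EE(W/Z_1) = \EE W \cdot \EE(1/Z_1) = 0$, so $\EE Q^\circ = \Gamma_{21}/\Gamma_{11}$. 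For (iii), diagonality of $\Gamma$ together with $C = 0$ forces $Z_1$ and $Z_2$ to be independent proper complex Gaussians, with $\EE Z_2 = 0$ by the assumption on $\mu$; the mean shift $\mu_1$ does not affect the local integrability of $1/|z|$, so the same Fubini step gives $\EE Q = \EE Z_2 \cdot \EE(1/Z_1) = 0$. The central obstacle throughout is the quadratic-form identification of $A$ in part (i); everything else is bookkeeping once that identity is in hand.
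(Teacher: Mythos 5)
Your argument is correct, and on parts (ii) and (iii) it takes a genuinely different route from the paper. For (i), the paper argues in two stages: when $C=0$ it bounds the explicit polar integral of $f_{Q^\circ}$ directly (reverse triangle inequality plus the $p$-test for divergence at $\beta=2$), and when $C\neq 0$ it obtains divergence through a dedicated squeezing lemma comparing $f_Q$ to $f_{Q^\circ}$ from below, and finiteness for $\beta<2$ from essentially the same ingredients you use (Cauchy--Schwarz for $B_\mu^2/A$, extremal eigenvalues of $\underline{\Sigma}^{-1}$, boundedness of the confluent hypergeometric factor on a compact argument set). Your unified two-sided estimate $f_Q(q)\asymp(1+|q|^2)^{-2}$, which rests on the quadratic-form identity $\underline{\B{e}}^*\underline{\Sigma}^{-1}\underline{\B{e}}=2A(\theta,q)$ that the paper itself records in its proof of the density theorem, treats all cases (any $C$, any $\mu$) at once and replaces the squeezing lemma; this is a tidier packaging of the same analytic content. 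The real divergence is in (ii) and (iii): the paper computes $\EE Q^\circ$ by an explicit polar-coordinate calculation culminating in a residue-theorem evaluation of a contour integral, and proves (iii) by a symmetry argument on the density ($A(q)=A(-q)$ when $\Gamma$ is diagonal, $B_\mu$ independent of $q$), whereas you use the regression decomposition $Z_2=\alpha Z_1+W$ with $\alpha=\Gamma_{21}/\Gamma_{11}$, note that propriety plus vanishing covariance and pseudo-cross-covariance (both guaranteed by $C=0$) makes $W$ independent of $Z_1$, and conclude by Fubini since $\EE|1/Z_1|<\infty$ (local integrability of $1/|z|$ in $\RR^2$, nondegeneracy of $Z_1$) and $\EE W=0$; the same independence-plus-Fubini step disposes of (iii) immediately, with the nonzero mean $\mu_1$ harmless. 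What the paper's computation buys is the explicit closed form of the angular and radial integrals, in the same style as the rest of its moment estimates; what yours buys is brevity, no contour integration, and a version of (ii)--(iii) that transparently generalizes to any jointly proper pair with integrable numerator and nondegenerate denominator.
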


		The proof of Theorem \ref{thm: quotient mean bound proposition} is given in~\ref{Proof Proposition 2.3}. 
		Note that when $\Gamma$ is not diagonal, $Z_1$ and $Z_2$ are dependent. In the SST application we soon consider, $\Gamma$ will automatically be diagonal, and for most applications, a whitening process can achieve this condition.

				\begin{remark}
		We mention a point of potential confusion from the literature. It is well-known that the quotient of two \emph{real} Gaussian random variables (independent or not) has a Cauchy tail, and its density is given explicitly in \cite[Theorem 2]{Pham-Gia-06}. One might expect a parallel statement that ``a quotient of complex Gaussians should have a complex Cauchy distribution''. However, this is not the case: a distribution by the name of complex Cauchy has already appeared in the literature~\cite[p.~46, (2.80) with $n=1$]{Schreier-10}, its density being given by
		$
		f(z) = \frac{1}{\pi \sqrt{\det{\underline{S}}} }
		\left( 1+(\underline{z}-\underline{\mu})^*\underline{S}^{-1}(\underline{z}-\underline{\mu})\right)^{-3/2}
		$
		for some location parameter $\mu$ and scatter (or dispersion) matrix $\underline{S}$. This does not coincide with the distribution of the quotient of two complex Gaussians; for example, the former does not have a mean~\cite{Schreier-10}, whereas the latter does. %, as is shown in Theorem \ref{thm: quotient mean bound proposition}.
		\end{remark}
		
\section{Mathematical model for SST analysis}\label{sec: SST setup}
	
	The following mathematical model is considered to analyze SST. 
	We follow the ideas in~\cite{Gelfand-64,Koralov-07} and introduce a complex version of Gaussian noise. Then, we study each step of the algorithm.
	\begin{defn}[Stationary complex Gaussian noise]\label{Definition:stationary complex Gaussian}
		A stationary GRP $\Phi$ is \emph{complex Gaussian} if for any finite collection $\psi_1,\ldots,\psi_n\in \mathcal S$ we have 
		$(\Phi(\psi_1),\ldots,\Phi(\psi_n)) \sim \CC N_n(0,\Gamma,C)$,
		where
		$\Gamma_{i,j} = \mathbb{E}[
			(\Phi(\psi_i)-\mathbb{E}[\Phi(\psi_i)])
			\overline{(\Phi(\psi_j)-\mathbb{E}[\Phi(\psi_j)])}
			]$
		and
		$C_{i,j} =\mathbb{E}[
			(\Phi(\psi_i)-\mathbb{E}[\Phi(\psi_i)])
			(\Phi(\psi_j)-\mathbb{E}[\Phi(\psi_j)])
			]$
		for all $1\le i,j\le n$. Denote the spectral measure associated with $\Phi$ by $d \vartheta(\xi)$, where $\int (1+|\xi|)^{-2l}d\vartheta(\xi)<\infty$ for some $l>0$; see Section~\ref{sec:app: GRP summary} for more information.
	\end{defn}

	%Note that the matrices above need not be diagonal. A typical example of complex Gaussian white noise is $DW$, where $W$ is the standard complex Brownian motion, $D$ is the differentiation operator on GRPs, and $DW$ is the weak derivative of $W$.
	
	\begin{assumption}\label{assump:noise part}
		 For the stationary GRP $\Phi$, we assume that the spectral measure associated with $\Phi$ is absolutely continuous related to the Lebesgue measure and there exists a smooth function $p(\xi)$, the spectral density of $\Phi$, so that $d\vartheta(\xi)=p(\xi)d\xi$ by the Radon-Nikodym theorem. Further, we assume that $p(\xi)\asymp (1+|\xi|)^\varrho$ for $\xi\in \mathbb{R}$, where $\varrho<2l-1$. When $\varrho=0$, the GRP is white; otherwise it is colored.
	\end{assumption}
	
		A concrete example of the colored GRP is the continuous autoregressive and moving average (CARMA) random process \cite[(2.16)]{Brockwell2001}. The CARMA(1,0) process is defined as a stationary solution of the first-order stochastic differential equation $(D+a)X(t)=bDW(t)$, where $a>0$, $b\neq 0$, $t\geq 0$, $D$ is the differentiation with respect to $t$ in the proper sense, and $\{W(t)\}$ is the standard Brownian motion. We have $\mathbb{E}X(t)=0$ and the power spectrum function is $p(\xi)=\frac{1}{2\pi}\frac{b^2}{|i\xi+a|^2}$; that is, $\varrho=-2$. %with the covariance function $\frac{b^2}{2a}e^{-a|\xi|}$.

	To study the non-null case, we focus on the oscillatory signal we have interest. In practice, the frequency and amplitude of an oscillatory signal both vary with time \cite{Daubechies_Lu_Wu:2011, Chen_Cheng_Wu:2014}. The AHM is commonly applied to model such oscillatory signals, where the frequency and amplitude are assumed to change slowly relative to its time-varying frequency. Under the slowly varying assumption, a function satisfying the AHM can be well-approximated {\em locally} by a single harmonic component \cite{Daubechies_Lu_Wu:2011,Chen_Cheng_Wu:2014}. In light of this, we assume from now on the following:
	\begin{assumption}\label{assump:nonnull signal}
		 Consider $Y=f+\Phi$, where $\Phi$ is a stationary GRP and
	$f(t) = A e^{2\pi i (\xi_0t+\phi_0)}$ is the oscillatory signal  for some fixed frequency $\xi_0>0$, phase shift $\phi_0\in[0,1)$, and amplitude $A\ge 0$. We refer to the situation when $A=0$ as the {\em null case}, otherwise the {\em non-null case}.
	\end{assumption}

	Since working with a more general kernel will not provide more insight to understanding SST but the notation will become highly intense, we make the following assumption in the following analysis.
	
	\begin{assumption}\label{assump:gaussian window}
		The window is $h(x)=(2\pi)^{-1/2} e^{-x^2/2}$. %  and $d\vartheta(\xi)=d\xi$. %Note that $\hat{h}(\xi) = e^{-2\pi^2\xi^2}$.
	\end{assumption}

\section{Statistical analysis of SST}\label{sec: SST statistics}

	From now on, unless otherwise described, we always assume Assumptions \ref{assump:noise part}, \ref{assump:nonnull signal} and \ref{assump:gaussian window} hold.

\subsection{The statistical behavior of  STFT}
	{We start from establishing the statistical behavior of STFT.
	\begin{thm}\label{thm: V_Phi and V_Phi' second-order stats}
		Suppose Assumptions~\ref{assump:noise part}, \ref{assump:nonnull signal} and \ref{assump:gaussian window} hold. For any $t\in \mathbb{R}$ and $\eta,\eta'>0$, then $\begin{bmatrix}\Phi(h_{t,\eta})\ \ \Phi(h_{t,\eta'})\end{bmatrix}^\top\sim \CC N_2(0,\Gamma_{\eta,\eta'},C_{\eta,\eta'})$, where
		\begin{align*}
		 \Gamma_{\eta,\eta'}&\,=
		 \begin{bmatrix} 
		 \gamma_0(\eta,\eta)  & e^{-\pi^2(\eta'-\eta)^2}\gamma_0(\eta,\eta') \\ e^{-\pi^2(\eta'-\eta)^2}\gamma_0(\eta',\eta)  & \gamma_0(\eta',\eta')
		  \end{bmatrix},\ \
		 C_{\eta,\eta'}=e^{-\pi^2(\eta'+\eta)^2}\begin{bmatrix} 
		 \nu_0(\eta,\eta)  
		 & \nu_0(\eta,\eta') \\  
		 \nu_0(\eta',\eta)
		 & \nu_0(\eta',\eta')\end{bmatrix}\,,
		 \end{align*}
			 $\gamma_0(\eta,\eta') \vcentcolon= \int  e^{-4\pi^2(\xi+\frac{\eta+\eta'}{2})^2}\,d\vartheta(\xi)$ and
		$\nu_0(\eta,\eta') \vcentcolon=\int e^{-4\pi^2(\xi+\frac{\eta-\eta'}{2})^2}\,d\vartheta(\xi)$. In other words, we have
		\begin{align*}
			\cov{V_{f+\Phi}^{(h)}(t,\eta)}{V_{f+\Phi}^{(h)}(t,\eta')} 
			= &\,
			e^{-\pi^2(\eta'-\eta)^2}\gamma_0(\eta,\eta')\\
			\cov{V_{f+\Phi}^{(h)}(t,\eta)}{\overline{V_{f+\Phi}^{(h)}(t,\eta')}} 
			= &\,
			e^{-\pi^2(\eta'+\eta)^2}\nu_0(\eta,\eta')\,.
		\end{align*}
	\end{thm}
	In other words, at a fixed time $t$, the STFT coefficient at each frequency $\eta$ is a complex normal distribution, and the dependence of two coefficients of two frequencies, $\eta$ and $\eta'$, decay exponentially fast when $|\eta-\eta'|$ increases. We refer readers to \cite{yang2020spectral} for more discussion of statistical inference via STFT. Next, we prepare results to study SST.}
	By equations~\eqref{eq:general V_f+Phi} and~\eqref{eq:general d_t V_f+Phi}, 
	we investigate the noise structure
	$\B{W}_{t,\eta} 
		\vcentcolon= 
		\begin{bmatrix}\Phi(h_{t,\eta})\ \ \Phi((h')_{t,\eta})\end{bmatrix}^\top$
	The second-order statistics of $\B{W}_{t,\eta}$ are computed in the following lemma.

	\begin{lemma}\label{lem: V_Phi and d_t V_Phi second-order stats}
		For any $t\in \mathbb{R}$ and $\eta>0$, $\B{W}_{t,\eta}\sim \CC N_2(0,\Gamma_\eta,C_\eta)$, where
		\begin{align}
			\Gamma_\eta 
			&= 
			\begin{bmatrix}
				\gamma_0(\eta) & -2\pi i\gamma_1(\eta)\\
				2\pi i\gamma_1(\eta) & 4\pi^2\gamma_2(\eta)
			\end{bmatrix}
			\label{even/odd covariance of W}
			\,\,\mbox{and}\,\,
			C_\eta
			:=
			e^{-4\pi^2\eta^2}\begin{bmatrix}
				\gamma_0(0)  & 2\pi i \eta\gamma_0(0)\\
				 2\pi i \eta\gamma_0(0) & 4\pi^2[\gamma_2(0)-\eta^2\gamma_0(0)]
			\end{bmatrix},\nonumber
		\end{align} 
			and $\gamma_k(s) 
			:= 
			\int_{-\infty}^\infty 
				e^{-4\pi^2(\xi+s)^2}(\xi+s)^kd\vartheta(\xi)$ for $s\in\mathbb{R}$ and $k\ge 0$. 
	\end{lemma}
	See~Section \ref{section:Technical lemmas for the SST analysis} for a proof. 
	Clearly, $C_\eta \to \Gamma_\eta$ when $\eta\to 0$. Therefore, the eigenvalues of the augmented covariance matrix becomes more degenerate when $\eta\to 0$. On the other hand, $C_\eta\to 0$ when $\eta\to \infty$. 
	Note that when the noise is white, that is, $d\vartheta(\xi)=d\xi$, 
		the formula are simplified as 
			$\Gamma_\eta = 
				\frac{1}{2\sqrt{\pi}}
				\begin{bmatrix}
					1 & 0 \\
					0 & 1/2
				\end{bmatrix}$ and
			$C_\eta = 
				\frac{e^{-4\pi^2\eta^2}}{2\sqrt{\pi}}
				\begin{bmatrix}
					1 & 2\pi i \eta \\
					2\pi i \eta & 1/2 -4\pi^2\eta^2
				\end{bmatrix}$. Due to the relationship between different moments, in general $\Gamma_\eta$ is not degenerate with a lower bound for eigenvalues, like the white noise case (see Lemma \ref{Lemma: c of mu_xi0eta when eta small}).

	\subsection{The statistical behavior of the reassignment}

	Note that \eqref{eq:general V_f+Phi}, \eqref{eq:general d_t V_f+Phi} and~\eqref{eq:mu vector} 
	are reduced to
	\begin{align}
		V_{f+\Phi}^{(h)}(t,\eta) 
		&= 
		f(t) \hat{h}(\eta-\xi_0) 
		+ \Phi(h_{t,\eta}),\nonumber\\
		\partial_t V_{f+\Phi}^{(h)}(t,\eta)  
		&= 
		f'(t) \hat{h}(\eta-\xi_0) 
		+ 2\pi i \eta \,\Phi(h_{t,\eta}) 
		- \Phi((h')_{t,\eta}),\\
		\mu 
		&= 
		\big[ f(t)\hat{h}(\eta-\xi_0)\ \ f'(t)\hat{h}(\eta-\xi_0)\big]^\top=\big[ \mu_1\ \ 2\pi i\xi_0\mu_1\big]^\top\,,\nonumber
	\end{align}
	where $\hat{h}$ is the Fourier transform of $h$, $\mu_1=f(t)\hat{h}(\eta-\xi_0)$, and
	%We then also have $\mu_2 = 2\pi i \xi_0 \mu_1$, 
	\eqref{eq:general reassignment rule a.s. expression} becomes{\small
	\begin{equation}\label{eq:nonnull reassignment}
		\Omega_{f+\Phi}^{(h)}(t,\eta) 
		\buildrel \text{a.s} \over =
		\,\eta - \frac{1}{2\pi i} \left(
		\frac{2\pi i (\eta-\xi_0) \mu_1 + \Phi((h')_{t,\eta})}
		{\mu_1 + \Phi(h_{t,\eta})}
		\right)=:\eta-\frac{1}{2\pi i}Q_{f+\Phi}^{(h)}(t,\eta)\,.
	\end{equation}}
	Since $\Omega_{f+\Phi}^{(h)}(t,\eta)$ and $Q_{f+\Phi}^{(h)}(t,\eta)$ are linearly related, we only need to study one of them. As a quotient of two complex Gaussian random variables, the behavior of $Q_{f+\Phi}^{(h)}(t,\eta)$ could be immediately understood from Section \ref{ssec:Complex Gaussian quotient density} when the mean is $\mu=\begin{bmatrix}\mu_1 & 2\pi i(\eta-\xi_0)\mu_1 \end{bmatrix}^\top$. For example, the variance of $Q_{f+\Phi}^{(h)}(t,\eta)$ does not exist.
	In the special case when $\eta=\xi_0$, by \eqref{eq:nonnull reassignment}, we have
	\begin{equation}
		\Omega_{f+\Phi}^{(h)}(t,\xi_0) 
		\buildrel \text{a.s.} \over= 
		\xi_0 - \frac{1}{2\pi i}
		\left( \frac{\Phi((h')_{t,\xi_0})}
		{f(t) + \Phi(h_{t,\xi_0})}\right)
	\end{equation}
	since $\hat{h}(0)=1$. When $\xi_0$ is sufficiently large so that the pseudocovariance is small by Lemma \ref{lem: V_Phi and d_t V_Phi second-order stats} and the noise is white so that $\Gamma_\eta$ is diagonal, by Theorem \ref{thm: quotient mean bound proposition}(iii), we know that $\frac{\Phi((h')_{t,\xi_0})}
		{f(t) + \Phi(h_{t,\xi_0})}$ has a mean bounded by $Ce^{-4\pi^2\xi_0^2}\xi_0^{\max\{-2\varrho,\varrho\}}$ for some $C>0$. This says that when $\eta=\xi_0$ and the noise is white, the reassignment rule gives accurate frequency information. See Section \ref{OS section Proof of Proposition prop:Q convergence to proper case} for an argument. %When the noise is colored, however, this argument cannot be directly applied. 

	%This asymptotical behavior is summarized in the following proposition.	 
%	The proof is given in Section \ref{OS section Proof of Proposition prop:Q convergence to proper case}.
%
%	\begin{prop}
%		\label{prop:Q convergence to proper case}
%		Suppose Assumptions~\ref{assump:noise part}, \ref{assump:nonnull signal} and \ref{assump:gaussian window} hold and $A>0$. When $\eta\to \infty$, we have 
%		\begin{equation}
%			f_{Q_{f+\Phi}^{(h)}(t,\eta)}(q) 
%			= 
%			f_{Q_{\Phi}^{(h)}(t,\eta)}(q) + O\left(
%				\frac{\eta^{-\varrho} e^{-8\pi^2(\eta-\xi_0)^2}}{(1+4\pi^2|q|^2)^2}
%				\right)\,,
%		\end{equation}
%		and hence $Q_{f+\Phi}^{(h)}(t,\eta) \to  Q_{\Phi}^{(h)}(t,\eta)$ in distribution and
%			$\EE Q_{f+\Phi}^{(h)}(t,\eta) 
%				= 
%				 O(\eta^{-\varrho} e^{-8\pi^2 (\eta-\xi_0)^2})$.
%%	Moreover, when $\eta=\xi_0$ and $\xi_0$ is sufficiently large, we have
%%	\begin{equation}
%%		|\mathbb{E}\Omega_{f+\Phi}^{(h)}(t,\xi_0) 
%%		- 
%%		\xi_0| =O(e^{-8\pi^2\xi_0^2})\,.
%%	\end{equation}
%	\end{prop}

%\begin{remark}
%	In the non-null case, when $\eta$ is far from $\xi_0$, $\mu_1$ is close to zero because of the factor $\hat{h}(\eta-\xi_0)$, and we would expect $Q_{f+\Phi}^{(h)}(t,\eta)$ to be well approximated by the null case $Q_{\Phi}^{(h)}(t,\eta)$.
%	\end{remark}
	
	Since $h$ is Gaussian, the covariance between $\Phi(h_{t,\eta})$ and $\Phi(h_{t,\eta'})$ decays exponentially when $|\eta-\eta'|$ increases. Intuitively, the covariance between $Q_{f+\Phi}^{(h)}(t,\eta)$ and $Q_{f+\Phi}^{(h)}(t,\eta')$ should also be small when $|\eta-\eta'|$ is large, but the ratio structure might obfuscate the speed of decay. The following theorem shows that this intuition is true. See Section~\ref{Section: proof of M dependent related arguments} for the proof.
	\begin{thm}[Ratio covariance] \label{thm: Q covariance} Suppose Assumptions~\ref{assump:noise part}, \ref{assump:nonnull signal} and \ref{assump:gaussian window} hold.
		For distinct 
		$\eta, \eta'>0$, as $|\eta-\eta'| \to \infty$ we have
		\begin{equation}
			\cov{Q_{f+\Phi}^{(h)}(t,\eta)}{Q_{f+\Phi}^{(h)}(t,\eta')} 
			= 
			O((\eta+\eta')^2e^{-(\eta -\eta')^2})\,.
		\end{equation}
	\end{thm}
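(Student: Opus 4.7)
The plan is to decouple the frequencies $\eta$ and $\eta'$ by exploiting the exponentially small cross-(pseudo)covariance of the underlying noise vectors at well-separated $\eta,\eta'$, and to perform a perturbative expansion of the joint Gaussian density around the product of marginals. Set $\mathbf W_\eta := (\Phi(h_{t,\eta}),\,\Phi((h')_{t,\eta}))^\top$ and $\mathbf W_{\eta'}$ analogously, so that $Q_{f+\Phi}^{(h)}(t,\eta)=(b_\eta+(\mathbf W_\eta)_2)/(a_\eta+(\mathbf W_\eta)_1)$ with $a_\eta=\mu_1$ and $b_\eta=2\pi i(\eta-\xi_0)\mu_1$, and $(\mathbf W_\eta,\mathbf W_{\eta'})$ is jointly complex Gaussian with augmented covariance of block form $\underline\Sigma = \begin{pmatrix}\underline\Sigma_\eta & \underline\Delta \\ \underline\Delta^* & \underline\Sigma_{\eta'}\end{pmatrix}$. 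The entries of the cross-block $\underline\Delta$ are Fourier integrals of $h^2$, $hh'$, and $(h')^2$ evaluated at shifts $s=\eta-\eta'$ (for covariance entries) or $s=\eta+\eta'$ (for pseudocovariance entries), identical in form to those appearing in Lemma~\ref{lem: V_Phi and d_t V_Phi second-order stats} but with a nonzero shift; a direct Gaussian computation bounds each such entry by $C(1+s^2)e^{-\pi^2 s^2}$, and since $|\eta-\eta'|\le\eta+\eta'$ when $\eta,\eta'>0$, we obtain $\|\underline\Delta\| = O((\eta+\eta')^2 e^{-\pi^2(\eta-\eta')^2})$.

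Next, the block-matrix inversion formula~\eqref{eq: inverse augmented covariance}, combined with a Neumann expansion of the Schur complement in $\underline\Delta$, gives $\underline\Sigma^{-1} = \operatorname{diag}(\underline\Sigma_\eta^{-1},\underline\Sigma_{\eta'}^{-1}) + O(\|\underline\Delta\|)$ and $\det\underline\Sigma = \det\underline\Sigma_\eta\det\underline\Sigma_{\eta'}(1+O(\|\underline\Delta\|^2))$, so the joint density of $(\mathbf W_\eta,\mathbf W_{\eta'})$ factors as $f_\eta(\mathbf w_\eta)f_{\eta'}(\mathbf w_{\eta'})\,e^{L(\underline{\mathbf w}_\eta,\underline{\mathbf w}_{\eta'})}(1+O(\|\underline\Delta\|^2))$, where $L$ is a Hermitian sesquilinear form in $(\underline{\mathbf w}_\eta,\underline{\mathbf w}_{\eta'})$ whose coefficients are of order $\|\underline\Delta\|$. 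Expanding $e^L = 1 + L + \tfrac12 L^2 + \cdots$ and integrating $Q(t,\eta)\overline{Q(t,\eta')}$ against the factored density, the leading ``$1$''-term contributes exactly $\mathbb EQ(t,\eta)\cdot\overline{\mathbb EQ(t,\eta')}$, which is cancelled by the mean-product subtracted to form the covariance; the ``$L$''-term, by Fubini, is a finite sum (indexed by the entries of $\underline\Delta$) of products of the form $\mathbb E[Q(t,\eta)\,\ell(\underline{\mathbf w}_\eta)]\cdot\mathbb E[\overline{Q(t,\eta')}\,\ell'(\underline{\mathbf w}_{\eta'})]$ with $\ell,\ell'$ linear.

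The main obstacle is justifying this expansion, because $Q$ has only absolute moments of order $<2$ by Theorem~\ref{thm: quotient mean bound proposition}(i), so naive second-moment bounds are unavailable. The saving grace is that multiplying $Q(t,\eta)=(b_\eta+Y_\eta)/(a_\eta+X_\eta)$ by any polynomial of bounded degree in the components of $\underline{\mathbf w}_\eta$ leaves the integrand with at most a first-order singularity $|a_\eta+X_\eta|^{-1}$, which is integrable against the two-dimensional real Lebesgue measure underlying the distribution of $X_\eta$; hence each term in the linear-in-$\underline\Delta$ contribution is finite. The $O(\|\underline\Delta\|^2)$ residual is controlled by splitting the integration into the region $\{|L|\le 1\}$ (expand $e^L$ to one further order; its integrand still has at most a first-order singularity) and its complement (where the Gaussian weight dominates $e^{|L|}$ on an exponentially small event). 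Combining these estimates yields $|\cov{Q(t,\eta)}{Q(t,\eta')}| = O(\|\underline\Delta\|) = O((\eta+\eta')^2 e^{-\pi^2(\eta-\eta')^2})$, which implies the stated rate after absorbing the $\pi^2$ factor into the constant.
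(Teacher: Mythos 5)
Your argument is correct in substance, but it reaches the bound by a genuinely different route than the paper. The paper first changes variables through the quotient map (so the Jacobian $|y_1|^2|y_3|^2$ removes the singularity of the ratio) and then invokes a general ``master theorem'' (Theorem~\ref{thm: covariance after a COV}): it compares the covariance under $\underline{\Sigma}^{(h,h)}_{\eta,\eta'}$ with the covariance under the decoupled matrix $\underline{\Sigma}^{\circ}_{\eta,\eta'}$, controls $\mathcal{E}=\underline{\Sigma}^{-1}-\underline{\Sigma}^{\circ-1}$ by eigenvalue perturbation theory, splits according to the sign of the eigenvalues of $\mathcal{E}$, uses $|e^{-x}-1|\le x$, and needs the decay of the means $\EE Q_{f+\Phi}^{(h)}(t,\eta)$ supplied by Proposition~\ref{prop:Q convergence to proper case} and Theorem~\ref{thm: quotient mean bound proposition}. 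You instead stay in the original coordinates, tilt the joint density against the product of marginals by $e^{L}$ with $L$ of order $\|\underline\Delta\|$, and exploit two structural facts the paper does not use: the mean product cancels \emph{exactly} because the marginals of the coupled and decoupled Gaussians coincide (so no smallness of the means is needed), and the first-order terms only require expectations of $Q$ times linear functions of the noise, whose integrands have a single first-order pole $|a_\eta+X_\eta|^{-1}$ that is locally integrable in two real dimensions, so the infinite second moment of $Q$ never enters. Your remainder control (expanding $e^{L}$ one order further on $\{|L|\le 1\}$ and using that the Gaussian weight dominates on the superexponentially rare complement, since $\|\mathcal{E}\|$ is eventually much smaller than the least eigenvalue of $\underline{\Sigma}^{\circ-1}$) is sound and plays the role of the paper's integrability checks $I_{i,j,u}<\infty$. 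Two points you gloss over are at the same level of looseness as the paper itself: you should note once that $\EE[Q_{f+\Phi}^{(h)}(t,\eta)\overline{Q_{f+\Phi}^{(h)}(t,\eta')}]$ is absolutely convergent under the joint law (the joint density is dominated by a Gaussian with slightly smaller exponent and the two poles sit in independent pairs of real coordinates), and that the implied constants are not uniform as $\min(\eta,\eta')\to 0$ because of the degeneracy of $\underline{\Sigma}_\eta$ there, a caveat the paper's own proof shares. Your exponent $e^{-\pi^2(\eta-\eta')^2}$ versus the paper's exact $e^{-3\pi^2(\eta-\eta')^2}$ for the cross-block is immaterial, since both are absorbed into the stated $e^{-(\eta-\eta')^2}$ rate.
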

	%For a proof, see Section~\ref{proof: Q covariance}. We remark that because the density of the reassignment rule is related to that of $Q_{f+\Phi}^{(h)}$ via a simple affine transformation, these results can be immediately extended to $\Omega_{f+\Phi}^{(h)}$. 
	%

	\subsection{Preparation for the SST distribution}

		Following \eqref{eq: SST}, define the complex random vector 
		$
			\B{Z}_{\alpha,\xi,\eta} 
			\vcentcolon= 
			\begin{bmatrix}Y_{f+\Phi}^{(h,\alpha,\xi)}(t,\eta)& \Omega_{f+\Phi}^{(h)}(t,\eta)\end{bmatrix}^\top$, where
		\begin{equation} \label{Yt,eta) definition}
			Y_{f+\Phi}^{(h,\alpha,\xi)}(t,\eta) 
			\vcentcolon=
			V_{f+\Phi}^{(h)}(t,\eta)\frac{1}
				{\sqrt{\pi\alpha}} 
			\exp{\Big(
				-\frac{1}{\alpha} 
				\left|
					\xi-\Omega_{f+\Phi}^{(h)}(t,\eta)
				\right|^2
			\Big)}\,,
		\end{equation}
		$\eta>0$, $\xi>0$, and $\alpha>0$. Since $\Omega_{f+\Phi}^{(h)}(t,\eta)$ is not defined when $V_{f+\Phi}^{(h)}(t,\eta)=0$, $\B{Z}_{\alpha,\xi,\eta}$ is defined on $\CC^2\setminus\{(0,z_2):z_2\in\CC\}$. %where $\{(0,z_2):z_2\in\CC\}$ is a measure zero set in $\mathbb{C}^2$ with related to the Lebesgue measure. 
		Naturally, the distribution of $Y_{f+\Phi}^{(h,\alpha,\xi)}(t,\eta)$ is the marginal distribution of $\B{Z}_{\alpha,\xi,\eta}$. However, to precisely write down the distribution of $\B{Z}_{\alpha,\xi,\eta}$ and hence $Y_{f+\Phi}^{(h,\alpha,\xi)}(t,\eta)$ is not simple. We need to carry out a careful change of variable argument with the $\CC\RR$-calculus computation~\cite{Kreutz-Delgado-09} for this purpose, which is of its own independent interest and is summarized in Section \ref{Section:Collection of Useful Lemmas}.
						
		The seemingly complicated random variable $Y_{f+\Phi}^{(h,\alpha,\xi)}(t,\eta)$ turns out to have nice behavior -- 
		while the reassignment rule {$\Omega_{f+\Phi}^{(h)}(t,\eta)$} has a fat tail for every $\eta>0$ by Proposition~\ref{thm: quotient mean bound proposition} and \eqref{eq:nonnull reassignment}, %$\Omega_{f+\Phi}^{(h)}(t,\eta)=\eta-\frac{1}{2\pi i}Q_{f+\Phi}^{(h)}(t,\eta)$, 
		after being composed with a Gaussian function this tail is ``tamed''. To simplify the heavy notation, when there is no danger of confusion, we suppress $t$, $f+\Phi$ and $\xi$ and emphasize the ``bandwidth'' $\alpha$ and $\eta$ by denoting 
		\begin{equation}
		{Y}_{\alpha,\xi,\eta}:=Y_{f+\Phi}^{(h,\alpha,\xi)}(t,\eta),\,\, \Omega_\eta:=\Omega_{f+\Phi}^{(h)}(t,\eta)\,\,\mbox{ and }V_\eta:=V_{f+\Phi}^{(h)}(t,\eta).
		\end{equation}
		We state below that ${Y}_{\alpha,\xi,\eta}$ has finite moments of all orders, one for null and one for non-null case, relegating the proof to Section~\ref{Proof of Proposition joint density Y and omega}. 

		\begin{thm}[Absolute moments and moments of ${Y}_{\alpha,\xi,\eta}$, null case]\label{Proposition joint density Y and omega}
		Suppose Assumptions~\ref{assump:noise part}, \ref{assump:nonnull signal} and \ref{assump:gaussian window} hold, $A=0$ and 
			$k >0$.  For any $\eta,\xi,\alpha>0$, the $k$-th (absolute) moment of ${Y}_{\alpha,\xi,\eta}$ is finite. Moreover,
			\begin{enumerate}
			
			\item (absolute moments)
			For $\eta\geq1$ and $\xi>0$, when $\alpha$ is sufficiently small, we have
			$\EE{|{Y}_{\alpha,\xi,\eta}|^k}
				\asymp \alpha^{-k/2+1}$,
				where 
				 the implied constant depends on $\varrho$, $k$ and $\frac{\eta^{k\varrho/2}}{(1+4\pi^2|\eta-\xi|^2)^{(k+4)/2}}$; 
				 for $\eta<1$ and $\xi>\sqrt{\eta}$, when $\alpha$ is sufficiently small, we have
			\begin{align}
				c_1\alpha^{-k/2+1} \leq \EE{|{Y}_{\alpha,\xi,\eta}|^k}
				\leq  c_2 \Big(\frac{1}{\sqrt{\alpha\eta}}e^{-\frac{k\xi^2}{2\alpha}}\vee 1\Big)\alpha^{-k/2+1}\,,
				\end{align}
				where 
				 $c_1>0$ depends on $\varrho$, $k$ and $\frac{\eta^{3k+8}}{(1+4\pi^2|\eta-\xi|^2)^{(k+4)/2}}$, and $c_2>0$ depends on $\varrho$ and $k$.
				 
				 \item  (moments)
			When $k$ is odd, for any $\eta,\xi>0$, 
			$
			\mathbb{E}{Y}_{\alpha,\xi,\eta}^k=0$.
			When $k$ is even, for $\eta\geq1$, when $\alpha$ is sufficiently small, we have $
				|\EE{Y}_{\alpha,\xi,\eta}^k|
				=O( \alpha^{-k/2+1})$, 
				where 
				 the implied constant depends on $\varrho$, $k$ and $\frac{\eta^{k\varrho/2}e^{-4\pi^2\eta^2}}{(1+4\pi^2|\eta-\xi|^2)^{(k+4)/2}}$; 
				 for $\eta<1$, we have a simple bound
			$|\EE {Y}_{\alpha,\xi,\eta}^k|\leq  \EE |{Y}_{\alpha,\xi,\eta}|^k$.
				\end{enumerate}
				
				\end{thm}

In the theorem, we split the (absolute) moment evaluate into two cases, one is for $\eta\geq 1$ and one is for $\eta<1$, particularly when $\eta$ is close to $0$. In fact, when $\eta$ is close to $0$, $\underline\Sigma^{(h)}_\eta$ is close to being degenerate, and controlling the (absolute) moments depends on controlling the degeneracy, which needs a different approach compared with that when $\eta\geq 1$. Moreover, the spectral property of color noise is reflected in the (absolute) moment bound, and it interacts with $\xi$, the frequency we want to detect by SST. A similar fact holds for the non-null case, where the signal plays another role in the analysis. Note that the results for the non-null case is not optimal, while it is sufficient for our purpose.

		\begin{thm}[Absolute moments and moments of ${Y}_{\alpha,\xi,\eta}$, non-null case]\label{Proposition joint density Y and omega non-null}
			Suppose Assumptions~\ref{assump:noise part}, \ref{assump:nonnull signal} and \ref{assump:gaussian window} hold, $A>0$ and $k \in \NN$. 
			For any $\eta,\xi,\alpha>0$, the $k$-th (absolute) moment  of ${Y}_{\alpha,\xi,\eta}$ is finite. Moreover,
			\begin{enumerate}
			\item (absolute moment) 
			for $\eta\geq 1$, $|\eta-\xi_0|\geq 1/2$ and $\xi>0$, when $\alpha$ is sufficiently small, we have
			\begin{align}
				\EE{|Y_{\alpha,\xi,\eta}|^k}
				\asymp \alpha^{-k/2+1} \,,
				\end{align}
				where 
				 the implied constant depends on $\varrho$, $k$ and $\frac{\eta^{k\varrho/2}}{(1+4\pi^2|\eta-\xi|^2)^{(k+4)/2}}$; 
				 when $|\eta-\xi_0|< 1/2$ and $\xi>0$, when $\alpha$ is sufficiently small, we have
			\begin{align}
				c_1 \alpha^{-k/2+1}  \leq \EE{|Y_{\alpha,\xi,\eta}|^k}
				\leq  c_2\alpha^{-k/2+1} \,,
				\end{align}
				where $c_1$ depends on $\varrho$, $k$ and $e^{-A^2(1+4\pi^2\xi_0)^2\xi_0^{-\rho}}$ and
				 $c_2$ depends on $\varrho$, $k$ and $A^{k+3}\xi_0^{(2-\rho)(k+3)/2}$;
				  for $\eta<1$ (particularly when $\eta$ is close to $0$), %and $\xi>\sqrt{\eta}$,
				 when $\alpha$ is sufficiently small so that $\alpha<\eta$, we have
			\begin{align}
				c_1&\,e^{-CA^2\hat{\hbar}(\xi_0)^2(\eta^{-6}\vee\xi_0^2\eta^{-2})}\alpha^{-k/2+1} \leq  \EE{|Y_{\alpha,\xi,\eta}|^k}\leq  c_2  \max\Big\{\frac{1}{\sqrt{\alpha\eta}}e^{-\frac{k\xi^2}{2\alpha}},\, 1,\,\frac{1}{A\hat{\hbar}(\xi_0)\xi_0}\Big\} \alpha^{-k/2+1} \nonumber\,
				\end{align}
				where 
				 $c_1$ depends on $\varrho$, $k$ and $\frac{\eta^{3k+8}}{(1+4\pi^2|\eta-\xi|^2)^{(k+4)/2}}$, $C>0$ depends on $\varrho$, and $c_2$ depends on $\varrho$ and $k$.				  
				 
				 \item (moment) When $\eta\geq 1$ and $|\xi_0-\eta|\geq 1/2$, when $\alpha$ is sufficiently small, we have
			$
			|\mathbb{E}{Y}_{\alpha,\xi,\eta}^k|=O(e^{-4\pi^2(\xi_0-\eta)^2})$,
			where the implied constant depends on $\EE|{Y}_{\alpha,\xi,\eta}|^k$.
			When $\eta< 1$ or when $\eta\geq 1$ and $|\xi_0-\eta|< 1/2$, when $\alpha$ is sufficiently small, we have the trivial bound
			$|\EE{Y}_{\alpha,\xi,\eta}^k|
				\leq \EE|{Y}_{\alpha,\xi,\eta}|^k$.
				\end{enumerate}
				\end{thm}

				Note that ${Y}_{\alpha,\xi,\eta}$ is the product of two dependent random variables, $V_\eta$ and $g_\alpha( |\xi-{\Omega_\eta} |\big)$. By Lemma \ref{lem: V_Phi and d_t V_Phi second-order stats}, we know that the covariance of $V_\eta$ and $V_{\eta'}$ decays exponentially when $|\eta-\eta'|\to \infty$, and by Theorem~\ref{thm: Q covariance}, the same decay is true for the covariance of ${\Omega_\eta}$ and ${\Omega_{\eta'}}$. It is thus natural to expect that the covariance of ${Y}_{\alpha,\xi,\eta}$ and ${Y}_{\alpha,\xi,\eta'}$ also decays exponentially when $|\eta-\eta'|\to \infty$. Below, we show that despite the involved nonlinear transform, the same decay rate is also true for the covariance of ${Y}_{\alpha,\xi,\eta}$ and ${Y}_{\alpha,\xi,\eta'}$. 
		
		\begin{thm}
		\label{Theorem covariance of Yeta and Yetap}
			{Suppose Assumptions~\ref{assump:noise part}, \ref{assump:nonnull signal} and \ref{assump:gaussian window} hold. Fix $\xi>0$. For any $\alpha>0$}, $\text{Cov}\big({Y}_{\alpha,\xi,\eta},\,{Y}_{\alpha,\xi,\eta'}\big)$ and $\text{Cov}\big({Y}_{\alpha,\xi,\eta},\,\overline{{Y}_{\alpha,\xi,\eta'}}\big)$ are continuous over $\eta>0$ and $\eta'>0$. For $\eta,\eta'>0$ satisfying $|\eta-\eta'|\geq 1$, {when $\alpha$ is sufficiently small, 
			\begin{align}
				|\text{Cov}\big({Y}_{\alpha,\xi,\eta},\,{Y}_{\alpha,\xi,\eta'}\big)|\vee |\text{Cov}\big({Y}_{\alpha,\xi,\eta},\,\overline{{Y}_{\alpha,\xi,\eta'}}\big)|
				=
				O((\eta-\eta')^2e^{-\pi^2(\eta-\eta')^2}) \nonumber\,,
			\end{align}
			where the implied constants depend on $\varphi$ and $\varrho$}
		\end{thm}

	\subsection{Distribution of \texorpdfstring{$S_{f+\Phi}^{(h,\alpha)}(t,\xi)$}{SPhi}}

		With the above preparation, we may state the main result. 
		Without loss of generality, we focus on $t=0$. 
		To study the distribution of $S_{f+\Phi}^{(h,\alpha)}(0,\xi)$ for a given $\alpha>0$ and $\xi>0$, by viewing ${Y}_{\alpha,\xi,\eta}$ as a random process indexed by $\eta$, a natural approach is to discretize ${Y}_{\alpha,\xi,\eta}$, approximate $S_{f+\Phi}^{(h,\alpha)}(0,\xi)$ by a Riemann sum, and apply the CLT. 
		%For $\alpha>0$ and $\xi>0$,  
		Below we consider the following discretization in $\eta$. %to evaluate the distribution of $S_{f+\Phi}^{(h,\alpha)}(0,\xi)$. 
		For each $l=1,\ldots,n$, denote $\eta_l\vcentcolon=l\Delta \eta$, where $\Delta \eta=n^{-1/2-\beta}$, and $\beta\geq0$ is to be determined in the proof. Also, denote $H=n\Delta \eta=n^{1/2-\beta}$. %Note that we can consider $H$ to be related to the sampling rate, and $\Delta \eta$ related to the length of the signal; that is, $\tau=1/(2H)=n^{-1/2+\beta}/2$ is the interval of two consecutive samples and $T=1/\Delta \eta=n^{1/2+\beta}$ is the length of signal. 
		To further simplify the notation, when there is no danger of confusion, denote 
		\begin{align}
		&{V_l\vcentcolon=V_{f+\Phi}(t,\eta_l)},\,\, {\Omega_l\vcentcolon=\Omega_{\eta_l}},\,\, Y_{\alpha,\xi,l}\vcentcolon=Y_{\alpha,\xi,\eta_l},\,\,\mathsf Y_{\alpha,\xi,l}\vcentcolon=\mathsf Y_{\alpha,\xi,\eta_l}.
		\end{align}  
		For $\xi>0$, we approximate $S_{f+\Phi}^{(h,\alpha)}(0,\xi)$ by the Riemann sum:
		\begin{equation}\label{sst discretization}
		S_{\alpha,\xi,n}\vcentcolon= \Delta \eta\sum_{l=1}^n  Y_{\alpha,\xi,l}\,.
		\end{equation}
		The asymptotic distribution of $S_{\alpha,\xi,n}$ when $n\to \infty$ represents the distribution of $S_{f+\Phi}^{(h,\alpha)}(0,\xi)$, which is related to integrating over a wider spectral range with a finer frequency resolution. 
		Note that the dependence structure of ${Y}_{\alpha,\xi,\eta}$ generates difficulty when we evaluate \eqref{sst discretization}, despite its exponential decay indicated in Theorem \ref{Theorem covariance of Yeta and Yetap}. 
		To handle this difficulty, the proof heavily depends on the $M$-dependent argument. Intuitively, the behavior of the $M$-dependent random process of ${Y}_{\alpha,\xi,\eta}$, denoted as $\mathsf{Y}_{\alpha,\xi,\eta}$, will be essentially the same as that of ${Y}_{\alpha,\xi,\eta}$ for large $M$, and we expect
		$
			\text{Var}({Y}_{\alpha,\xi,\eta})
			\approx
			\text{Var}(\mathsf {Y}_{\alpha,\xi,\eta})
			\approx 
			\text{Cov}({Y}_{\alpha,\xi,\eta},\mathsf{Y}_{\alpha,\xi,\eta})
		$
		 for any $(t,\eta)$. The following lemma quantifies this intuition. The main challenge toward this seemingly simple conclusion is the nonlinearity inherited from SST, which boils down to 
		 studying the relationship between the covariances of ${Y}_{\alpha,\xi,\eta}$ and $\mathsf {Y}_{\alpha,\xi,\eta}$. To the best of our knowledge, this kind of problem is less considered in the Gaussian approximation literature and there is no standard approach toward it. We provide a separate section elaborating this technique in Section \ref{Section: proof of M dependent related arguments}, which is the basis of the proof of Theorem \ref{thm:SSQ-CLT} shown in Section \ref{Proof SST related Section4}.

		\begin{thm} \label{thm:SSQ-CLT}
			{Suppose Assumptions~\ref{assump:noise part}, \ref{assump:nonnull signal} and \ref{assump:gaussian window} hold and assume $\varrho<5$.} Fix $\xi>0$, take a small $\delta>0$ and set $\beta=\frac{\delta}{4(2+\delta)}$ and $\Delta \eta=n^{-1/2-\beta}$.
			Also assume $\alpha=\alpha(n)$ so that $\alpha\to 0$ and $n\alpha\to 1$ when $n\to \infty$. We have $S_{\alpha,\xi,n}-\mathbb{E}S_{\alpha,\xi,n}\to \CC N_1(0,\nu,\wp)$ weakly when $n\to \infty$, where $\nu>0$ and $\wp\in \mathbb{C}$ are of order $(1+\xi)^\varrho$, where the implied constant depends on $\varrho$ when $A=0$ and on $A$, $\xi_0$ and $\varrho$ when $A>0$.
		\end{thm}

		Note that  
		the condition $n\alpha\to 1$ could be understood as the ``sufficient sampling'' condition.

	\section{An application -- oscillatory component detection via SST}\label{Section:Dection of oscillatory signal}

A critical question in practice is how to determine if a given time series contains an oscillatory component. This challenging problem has attracted lots of attention \cite{priestley1981spectral}, but so far there is no universally accepted solution, particularly when handling modern biomedical signals. 
Below, we propose a detection algorithm based on SST to handle this challenge. 

	\subsection{Discretization of SST}

Before introducing the algorithm, we detail the discretization of SST. First, we follow the setup in \cite{Chen_Cheng_Wu:2014} to discretize $\Phi$. 
\begin{assumption}\label{Assumption boosting}
Take a symmetric Schwartz function $\psi$ so that $\hat{\psi}(\xi)=1$ when $|\xi|\leq 1/4$ and $\hat{\psi}(\xi)=0$ when $|\xi|>1/2$. Set $X_j:=\Phi({\bar N}^{1/2}\psi(\bar N\cdot-j))$, where ${\bar N}^{1/2}\psi(\bar N\cdot-j)$ is of unit $L^1$ norm centered at $j/\bar N$, $\bar N>0$ is the sampling frequency, $j=-\lfloor\frac{N}{2}\rfloor,-\lfloor\frac{N}{2}\rfloor+1,\ldots,\lfloor\frac{N}{2}\rfloor-1,\lfloor\frac{N}{2}\rfloor$, and $N$ is the number of sampling points. Below, we assume $\bar N= N^{1/2}$. 
\end{assumption} 

Note that $\psi$ is the {\em measurement function}, which models the properties of the measurement equipment, and $N/\bar N>0$ is the recording length. Clearly, $\mathsf{X}:=[X_{-\lfloor\frac{N}{2}\rfloor},\ldots,X_{\lfloor\frac{N}{2}\rfloor}]$ is a discretization of the stationary GRP $\Phi$ so that $\mathsf{X}$ is a stationary Gaussian time series with mean $0$. For $l\in\{-\lfloor\frac{N}{2}\rfloor,-\lfloor\frac{N}{2}\rfloor+1,\ldots,\lfloor\frac{N}{2}\rfloor-1,\lfloor\frac{N}{2}\rfloor\}$, $\mathsf{X}(l)$ is the discretization at time $l/\bar N$ and $\mathsf{X}(l)$ has a Gaussian distribution with the standard deviation $\sigma_N:=(\int|\hat{\psi}(\xi/\sqrt{N})|^2p(\xi)d\xi)^{1/2}$, which increases when $N$ increases. Specifically, when $\varrho\geq 0$, the high frequency noise is not negligible. Thus, when $N$ gets larger, the ``measurement period'' is shorter, high frequency noise dominates, and the measured value is more uncertain. 

To discretize the analysis of $\Phi$ by SST, we need the following discretization. Without loss of generality, since $\Phi$ is stationary, in the following analysis we fix to $t=0$.  First, the STFT of $\Phi$ in \eqref{eq: STFT on functions} is discretized by
$
\mathbf V_{\eta,N}
		:=
		\frac{1}{\bar N}\sum_{j=-\lfloor N/2\rfloor}^{ \lfloor N/2\rfloor} X_j h\Big(\frac{j}{\bar N}\Big)e^{- 2\pi i \frac{\eta j}{\bar N}}$,
	where $\eta>0$. Note that $\mathbf V_{\eta,N}$ is a periodic function of $\eta$ with period $\bar N$, and $\mathbf V_{\eta,N}$ is the numerical implementation of $V_\Phi^{(h)}(0,\eta)=\Phi(h_{0,\eta})$. For the reassignment rule, the $\Phi((h')_{0,\eta})$ in \eqref{eq:general reassignment rule a.s. expression} can be implemented in the same way with $h$ replaced by $h'$. By a direct expansion, we have 
	\begin{equation}\label{Discretization Vf}
	\mathbf V_{\eta,N}=\Phi\left(\frac{1}{\bar N}\sum_{j=-\lfloor N/2\rfloor}^{ \lfloor N/2\rfloor} {\bar N}\psi(\bar N\cdot-j) h\Big(\frac{j}{\bar N}\Big)e^{- 2\pi i \frac{\eta j}{\bar N}}\right).
	\end{equation}
	Notice that $\frac{1}{\bar N}\sum_{j=-\lfloor N/2\rfloor}^{ \lfloor N/2\rfloor} {\bar N}\psi(\bar Ny-j) h\Big(\frac{j}{\bar N}\Big)e^{- 2\pi i \frac{\eta j}{\bar N}}$ could be viewed as an approximation of $h(y)e^{-i2\pi \eta y}$. Thus, if we define 
	$
	\mathbf h_{\eta,N}(\cdot):=\frac{1}{\bar N}\sum_{j=-\lfloor N/2\rfloor}^{ \lfloor N/2\rfloor} {\bar N}\psi(\bar N\cdot-j) h\Big(\frac{j}{\bar N}\Big)e^{- 2\pi i \frac{\eta j}{\bar N}}$, 
	which is a Schwartz function, $\mathbf V_{\eta,N}=\Phi(\mathbf h_{\eta,N})$. Similarly, we can define 
	$
	\mathbf h'_{\eta,N}(\cdot):=\frac{1}{\bar N}\sum_{j=-\lfloor N/2\rfloor}^{ \lfloor N/2\rfloor} {\bar N}\psi (\bar N\cdot-j) h'\Big(\frac{j}{\bar N}\Big)e^{- 2\pi i \frac{\eta j}{\bar N}}$, 
	and have $\mathbf V'_{\eta,N}:=\Phi(\mathbf h'_{\eta,N})$, which is the numerical implementation of $\Phi((h')_{0,\eta})$. 
	The reassignment rule can be implemented following \eqref{eq:general reassignment rule a.s. expression} by a direct division. For $\xi>0$, the integrand of SST is discretized in the same way as \eqref{sst discretization}, which is denoted by $\{\mathbf Y_{\alpha,\xi,l,N}\}_{l=1}^{n}$. Denote 
	$
	\mathbf S_{\alpha, \xi,n,N}\vcentcolon= \Delta \eta\sum_{l=1}^{n}  \mathbf Y_{\alpha,\xi,l,N}$.
The following corollary states the intuition that when the sampling rate $\bar N$ is high, the discretization of SST approaches the continuous version of SST. The proof is postponed to Section \ref{Section proof for section 5}.

\begin{corro}\label{Discretization corollary}
%Fix a tolerable error $\epsilon>0$. 
Assume Assumptions \ref{assump:noise part}, \ref{assump:nonnull signal}, \ref{assump:gaussian window} and \ref{Assumption boosting} hold. 
Adapt notations used in Theorem \ref{thm:SSQ-CLT}. 
For $\xi>0$ and a sufficiently large $n$, we have ${\mathbf S}_{\alpha,\xi,n,N}\to S_{\alpha,\xi,n}$ 
in probability when $N\to \infty$. 
\end{corro}

\subsection{Fisher-SST statistic}

Assume Assumptions~\ref{assump:noise part}, \ref{assump:nonnull signal} and \ref{assump:gaussian window} hold.
To test if an oscillatory component exists at a given time point, we propose to use the {\em maximal magnitude} of SST coefficients at the associated properly chosen frequency grid points as a test statistic under the null hypothesis $H_0:f(t)=0$ against the alternative $H_1:f(t)\neq 0$. 
Consider the following procedure.
Take a recorded time series $\boldsymbol{x}=[x_{-\lfloor\frac{ N}{2}\rfloor},\ldots,x_{\lfloor\frac{ N}{2}\rfloor}]\in\mathbb{R}^N$ following the discretization scheme in Assumption \ref{Assumption boosting}; that is, the sampling period is $1/ \sqrt{N}$. Note that $\boldsymbol{x}=\mathsf X$ under $H_0$. 
To simplify the notation, we fix to time $0$. For a given $N$, we could sample the frequency axis up to $\sqrt{N}/2$ Hz according to the Nyquist-Shannon theorem. Take $n$ to be the discretization of the SST integration in \eqref{sst discretization} and suppose $N$ is sufficiently larger than $n$. 
Then choose a uniform grid 
$
G:= (\xi_1,\xi_2,\ldots,\xi_r)\subset (0, \sqrt{N}/2)
$ 
on the frequency axis, where $r\in \mathbb{N}$ is chosen to be $\lfloor\sqrt{N}/\log(N)\rfloor$ so that $\xi_j=j\sqrt{N}/\lfloor\sqrt{N}/\log(N)\rfloor$ for $j=1,\ldots, r$. %\] 
The {\em Fisher-SST statistic} is defined as 
\[
\mathfrak{m}_{n,N}:=\max_{\xi_j\in G} \big|\mathbf S_{\alpha,\xi_j,n,N}\big|\,.
\]
For a preassigned $a\in [0,1]$, let $T_a$ be the $(100\times a)$-th percentile of $\mathfrak{m}_{n,N}$ under the null.
We reject the null hypothesis if the Fisher-SST statistic of $\boldsymbol{x}$ exceeds $T_a$; that is, we detect a sufficiently strong oscillatory component compared with the noise around time $0$. 

{To determine when an oscillatory component exists over a period, we repeat the above steps and evaluate Fisher-SST statistics over a set of chosen timestamps. 
Note that SST on the TF domain has a dependent structure. When two consecutive time stamps are sufficiently separated, the SST coefficients would be approximately independent. 
Indeed, for the chosen kernel in Assumption \ref{assump:gaussian window}, $h(c)$ is numerically zero for a sufficiently large constant $c>0$, say, $c=10$. Therefore, numerically the SST coefficients are independent if two timestamps are separated by $2c$. The proof comes from the Plancheral theorem and the fact that the noise is Gaussian, and we omit details. Thus, we propose to choose a uniform grid $(t_1,t_2,\ldots,t_{r'})\subset [- \sqrt{N}/2,\sqrt{N}/2]$ on the time axis, where $r'\in \mathbb{N}$ is chosen so that $t_i-t_{i-1}=\log (N)$.
Then, for each timestamp $t_i$, evaluate the Fisher-SST statistic and obtain the p-value $p_i$. 
Since we will run this test for $r'=\lfloor\sqrt{N}/\log N\rfloor$ times and these tests are numerically independent, 
we recommend to consider the false discovery rate control \cite{benjamini1995controlling} to handle the multiple testing issue in the following way. Rank obtained p-values $p_1,\ldots,p_{r'}$ as $p_{(1)}\leq p_{(2)}\leq \ldots\leq p_{(r')}$. Take $q>0$ to be the false discovery rate and set $k^*$ to be the largest integer so that $p_{(k^*)}\leq k^*q/r'$. At time $t_i$, the null hypothesis is rejected if $p_i\leq p_{(k^*)}$. See \cite{benjamini1995controlling} for details. The result shows when an oscillatory component exists over a period.}

\subsection{Bootstrapping}\label{section local bootstrapping}
{Although we know that the discretization effect on $\mathbf S_{\alpha,\xi_j,n,N}$ disappears asymptotically by Corollary \ref{Discretization corollary}, the distribution of $\lim_{N\to \infty}\mathfrak{m}_{n,N}$ is not known.} 
We thus propose a {{\em bootstrapping}} algorithm to approximate the Fisher-SST statistic. 
Decompose $\boldsymbol{x}$ into the possibly existing oscillatory component $\boldsymbol{y}$ by the reconstruction formula provided in \cite{Daubechies_Lu_Wu:2011,Chen_Cheng_Wu:2014} and the noise part $\boldsymbol{n}$. 
% 
%Take a {\em local neighborhood} $B\in (0,1]$. To simplify the discussion, assume $\frac{NB}{2}\in \mathbb{N}$. 
%
%For $i\in\{-\lfloor\frac{N}{2}\rfloor+\frac{NB}{2}+1,\ldots, \lfloor\frac{N}{2}\rfloor-\frac{NB}{2}\}$, 
Based on the stationary assumption in Assumption \ref{assump:noise part}, we could estimate the covariance structure of the noise by applying the banding covariance approximation approach \cite[Section 3]{xiao2012covariance}. 
Denote the estimated covariance as $\hat{\Sigma}$, and generate $m\in \mathbb{N}$, {say, 10,000,} pseudo-observed Gaussian noises $\boldsymbol{n}^{*(l)}\in\mathbb{R}^{N}$, where $l=1,\ldots,m$, with mean $0$ and the covariance structure $\hat\Sigma$.
For each $l$ and a timestamp, apply SST to $\boldsymbol{n}^{*(l)}$ and all grid points in $G$, denoted as $\big\{{\mathbf S}^{*(l)}_{\alpha,\xi_j,n,N}|\,\xi_j\in G\big\}$. 
Define 
\[
{\mathfrak{m}}^{*(l)}_{n,N}:=\max_{\xi_j\in G} \big|{\mathbf S}^{*(l)}_{\alpha,\xi_j,n,N}\big|\,.
\] 
The distribution of $\mathfrak{m}_{n,N}$ can be approximated by the empirical distribution of $\big\{{\mathfrak{m}}^{*(l)}_{n,N}\big\}_{l=1}^m$.
	The justification of the proposed bootstrapping algorithm is given in the following theorem, whose proof is postponed to Section \ref{Section proof for section 5}.

\begin{thm}\label{Bootstrapping corollary}
Assume Assumptions \ref{assump:noise part}, \ref{assump:nonnull signal}, \ref{assump:gaussian window} and \ref{Assumption boosting} hold and $G\subset (0, \sqrt{N}/2)$ is a uniform grid with $|G|=\lfloor\sqrt{N}/\log(N)\rfloor$. There exists a probability space associated with the bootstrapping, called $(\Omega, \mathsf F, \mathbb P)$, where we could construct a sequence of i.i.d. random variables $\big\{{\mathfrak{m}}^{*(l)}_{n,N}\big\}_{l=1}^m$ that follows the same distribution as that of $\mathfrak m^*_{n,N}$ and $\mathfrak m^{\#}_{n,N}$ which has the same distribution as that of $\mathfrak{m}_{n,N}$.
Then we have $\mathfrak{m}^*_{n,N}-\mathfrak{m}^{\#}_{n,N}\to 0$ in probability when $N\to \infty$. 
\end{thm}

\section{Numerical Results}
	\label{sec:Numerics}
	The Matlab code of SST is available in \url{http://hautiengwu.wordpress.com/}. {More numerical results of the developed theorems could be found in Section \ref{Appendix more numerical evidence}.}
	We demonstrate the proposed oscillatory signal detection algorithm and show the rejection rate with a { realistic simulated signal. 
	We consider the smoothed Brownian path realizations to model an oscillation with a slowly varying amplitude and frequency \cite{Daubechies_Wang_Wu:2016}. Suppose $W$ is the standard Brownian motion defined on $[0,\infty)$. A smoothed Brownian motion with the bandwidth $B>0$ is defined as $W_B := W\star K_B$, where $K_B$ is the Gaussian function with the bandwidth $B>0$ and $\star$ denotes the convolution operator. Given $T>0$ and parameters $\zeta_1,\dots,\zeta_6>0$, we then define a family of random processes on $[0,T]$ by
$\Psi_{[\zeta_1,\dots,\zeta_6]}(x):= \zeta_1 + \zeta_2 x+ \zeta_3\frac{W_{\zeta_4}(x)}{\|W_{\zeta_4}\|_{L^{\infty}[0,T]}} + \zeta_5\int_0^x \frac{W_{\zeta_6}(s)}{\|W_{\zeta_6}\|_{L^{\infty}[0,T]}}d{s}$.
	Consider $f(t)=Aa(t)\cos(2\pi\phi(t))+\Phi$ over $[0,T]$, where $A\geq 0$ controls the strength of the oscillation, $a(t)$ is a realization of $\Psi_{[1,0,0.5,5,0,0]}(t)$, $\phi(t)$ is a realization of $\Psi_{[0,8,0,0,2,4]}(t)+t^{1.1}/15$, and $\Phi$ follows an autoregressive and moving average (ARMA) process with a proper normalization so that the standard deviation is $1$ at each timestamp, where the ARMA process is determined by the auto-regression polynomial $a(z) = 0.5z + 1$ and the moving averaging polynomial $b(z) = -0.5z + 1$, with the innovation process taken as independent and identically distributed Gaussian random variables. Set $n=2,048$ and $\beta=0.05$, and realize $f$ with the sampling rate $64$Hz and sample $4,096$ points from $f$ over $[0,64]$ s. Take grids of the frequency and time axes as the above and and construct Fisher-SST at 32s. Note that the amplitude and frequency of the oscillation $a(t)\cos(2\pi\phi(t))$ both change slowly, and hence locally the signal oscillates like a harmonic function. 
	%
	%We divide the signal into segments of length $XXX$ sampling points so that each segment is of the same length as the chosen Gaussian window. Over each segment, we take one time grid, which is chosen to be the middle point of the segment. On the frequency axis, we take a grid every $2.5$ Hz. 
	%
	For a comparison, we also consider STFT and the chirplet path pursuit algorithm \cite{candes2008detecting}. Based on Theorem \ref{thm: V_Phi and V_Phi' second-order stats}, we consider STFT and define a similar statistic, called the Fisher-STFT statistic, by $\max_{\xi_j\in G} \big|\mathbf V_{\xi_j,N}\big|$. The bootstrapping to estimate the null distribution is carried out in the same way shown in Section \ref{section local bootstrapping}, except the step of decomposing the noise out of the noisy signal. Specifically, we follow the common practice to subtract the oscillation associated with the maximal peak determined by the periodogram from the noisy signal, and view the remaining component as the noise. 
	For Fisher-SST and Fisher-STFT, we repeat the bootstrapping for $10,000$ times to determine the threshold, where we take $0.05$ as our significance level.
	For the chirplet path pursuit, we use the ChirpLab v1.1 package provided by the authors of \cite{candes2008detecting}, where we use the best path statistic with the cubic polynomial to fit the amplitude, run Monte Carlo simulation for $10,000$ times, and take $0.05$ as the significance level.
	We realize $f$ for $1,000$ times with $A=0.12(k-1)$, where $k=1,\ldots,10$, and plot the simulated rejection rate in Figure \ref{Fig:RejctionRate}. We see that the Fisher-SST has a higher rejection rate compared with the Fisher-STFT and chirplet path pursuit, and the chirplet path pursuit performs better than the Fisher-STFT when the signal is strong. It is expected that chirplet path pursuit performs better than Fisher-STFT since a multiscale scheme and the chirp information is captured in the chirplet path pursuit algorithm. We shall mention that chirplet coefficients used in the chirplet path pursuit, which comes from the inner product of the signal and $\exp(i2\pi(at^2/2+bt))$ for a range of $a$ and $b$, decay at the rate $(a-a_0)^{-1/2}$ when $a_0$ is the true chirp, $b$ is fixed and $|a|\to\infty$. This slow decay could be understood as an uncertainty principle \cite{chen2021disentangling}. So, while chirplet could help capture an oscillatory component with a chirp, its performance might be impacted.}

\begin{figure}[bht!]
  \centering %\vspace{-5pt}
  \includegraphics[width=0.5\textwidth]{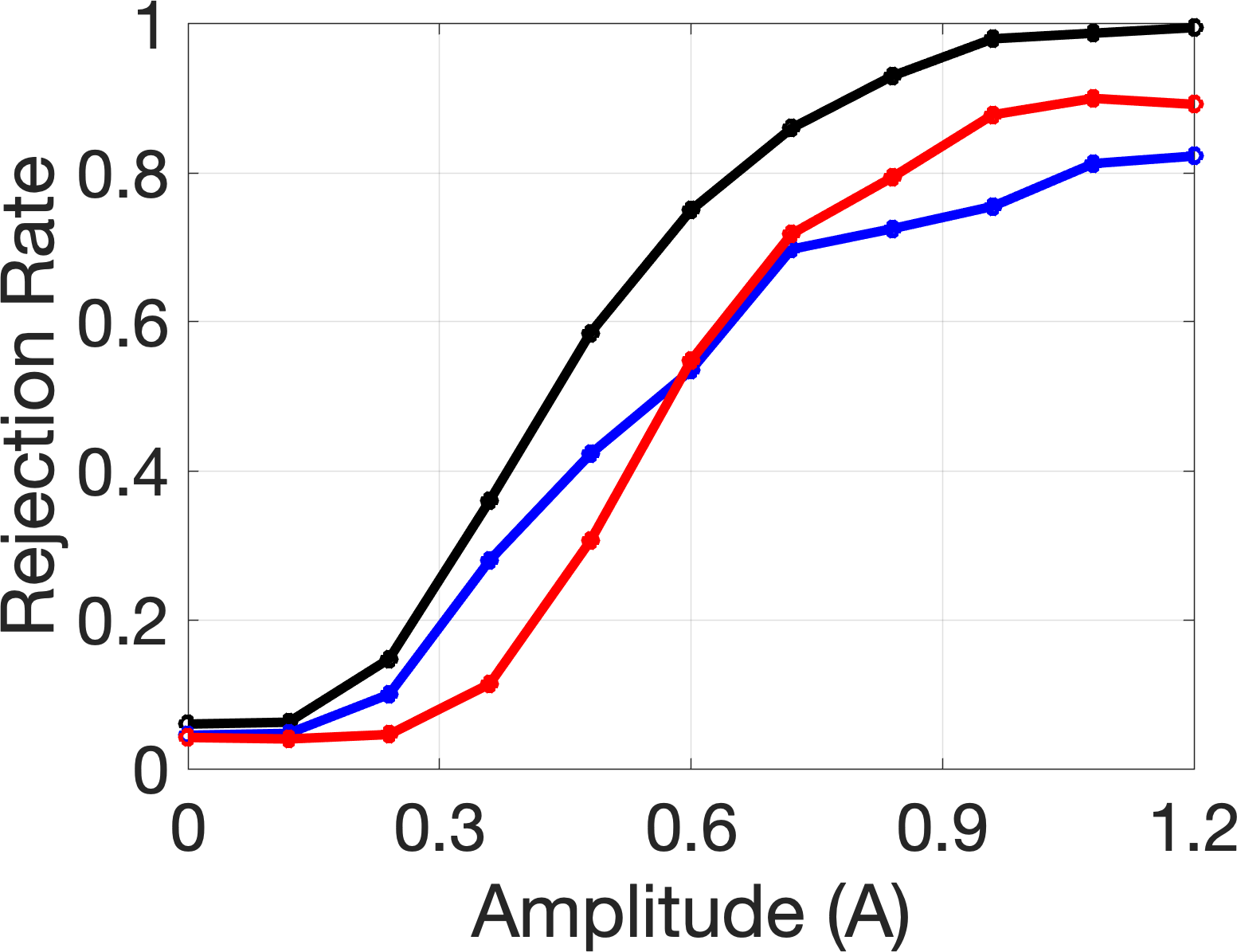}
\caption{The rejection rate of the proposed oscillatory signal detection scheme over a series of simulated signals, with the signal amplitude $A$ ranging from $0$ to $1.2$. {The black, blue and red curves are based on the Fisher-SST, the Fisher-STFT, and the chirplet path pursuit respectively.}}
\label{Fig:RejctionRate}
\end{figure}

Finally, we come back to the PPG signal shown in Figure \ref{fig:PPG}. 
{A critical biomedical signal processing step, particularly for long-term monitoring in clinics, is determining when the signal quality is trustworthy so that the obtained information is usable for decision making. This step is usually referred to signal quality assessment. Signal quality assessment is in general challenging, and the strategy depends on the clinical problem. When the heart rate and its variability are the concern, we care if a PPG signal oscillates properly and reflects how the heart beats, so that the TFR of a high-quality PPG encodes the time-varying heart rate as a curve with distinguishable intensity. 
See Figure \ref{fig:PPG} for an example, where the PPG signal in the first 50 second is labeled as high quality and the remaining signal is labeled as low quality. It is visually obvious to see an oscillation from the 0th to 40th second, which are cardiac cycles. However, after the 50th second, the signal looks chaotic and it is not clear if it provides any useful cardiac information. The signal between the 40th and 50th second is also oscillatory, but the pattern is slightly distorted compared with that before the 40th second. 
This visual inspection suggests that the signal quality over each segment of predetermined length could be quantified by the confidence of oscillation detection. Note that
the frequency and amplitude might change slowly, so it is reasonable to assume that locally the signal oscillates with fixed amplitude and frequency, and the proposed bootstrapping algorithm and the Fisher-SST statistic could be applied.}
In this signal, the sampling rate is 100Hz and the signal length is $100$ sec. The window satisfies Assumption \ref{assump:gaussian window}, $\Delta \eta=1/1201$, and the significance level is $0.05$. We evaluate the Fisher-SST statistic every 1 second by setting the grid size to be $0.5$ in $G$. {By setting the desired false discovery rate to be $0.05$}, the rejection of the null hypothesis, marked as blue diamonds, coincides with the visually identifiable cardiac oscillations. {Thus, the first 50-seconds segment are of high quality with some type I errors, which coincides with the expert's annotation.} 
The potential of designing a signal quality assessment algorithm based on the proposed algorithm will be further explored in our future work.

\section{Discussion and Conclusion}
	\label{sec:Discussion and conclusion}
	
	We provide a theoretical support for the nonlinear-type TF analysis algorithm, SST, {that forms a foundation for future statistical inference studies}. In particular, we extend the existing quotient distribution of proper complex normal random variables to the improper case, and quantify the asymptotic distribution of SST at a given frequency entry. While there are a multitude of available nonlinear-TF analysis algorithms, to the best of our knowledge, this is the first work providing an extensive quantification of the asymptotic distribution. This result sets the stage for further analysis of SST and other nonlinear-type TF analysis algorithms. 
	In particular, we provide several analytic tools to handle the main challenges when studying a nonlinear-type TF analysis algorithm. Specifically, in order to handle the nonlinearity involved in SST, a careful change of variables, an approximation scheme for the confluent hypergeometric function, and a construction of the associated $M$-dependent random process are given. 
	Observe that one major challenge in nonlinear frequency domain analysis lies in the lack of systematic dependence measures, such as strong mixing conditions \cite{rosenblatt1956central} and physical dependence measures \cite{wu2005nonlinear}. In this article, we adopted a highly nontrivial $M$-dependent approximation scheme in the frequency domain and successfully combined it with the nonlinear kernel regression technique in time series analysis to derive the asymptotic distribution of the STFT-based SST, and construct a local bootstrap inference procedure with theoretical supports.

	\subsection{Relationship with kernel regression}

		The approximation  \eqref{sst discretization} is related to the kernel regression perspective (See for instance Chapter 6 of \cite{Fan_Yao:2005}) of time series analysis.
		Suppose we {\em were} able to model $\{({\Omega_l,\,V_l})\}_{l=1}^n$ as a dataset sampled from a random vector $[X \,\,\, Y]^\top\in \CC^2$ so that $Y$ and $X$ were related by 
		$Y=F(X)+\mathfrak{N}$,
		where $\mathfrak{N}$ is random noise satisfying $\mathbb{E}[\mathfrak{N}|X=\xi]=0$, and the response ${V_l}$ and predictor ${\Omega_l}$ are related by the ``regression function'' $F$. If we further imagine ${f_{\Omega}(\xi)}$ to model the ``density of ${\Omega_l}$ at $\xi$'', then the kernel regression $S_{\xi,n}/{f_{\Omega}(\xi)}$ estimates that regression function at a fixed $\xi>0$; that is,
		it gives the conditional expectation of $Y$ given $X=\xi$ so that 
		$F(\xi)=\mathbb{E}[Y|X=\xi]$.
		In our case, this model is not correct, but still we have $S_{\xi,n}/{f_{\Omega}(\xi)}\to\mathbb{E}[Y|X=\xi]$ as $n\to \infty$.
		Adapting this kernel regression perspective, intuitively if we view ${V_l}$ as a ``noisy'' version of some regression function over ${\Omega_l}$, with the ``clean'' regression function providing the ``best'' TF representation, then the kernel regression helps recover this representation. When the signal is only noise, we expect $F$ to be zero and $S_{\xi,n}$ to converge to $0$. However, while this intuition helps us better understand how SST works, the structure of the regression function is not easy to directly identify in the non-null case.

	\subsection{Continuous wavelet transform based SST}

		The same analysis can be mimicked in the continuous wavelet transform (CWT) setup, but a significant simplification occurs regarding the pseudocovariance. In particular, let $\psi \in \mathcal{S}$ and
		for $a, b >0$, define $\Psi_{a,b}(t) = a^{-1/2}\psi((t-b)/a)$. Then the CWT of a tempered distribution $f$ takes the form
		$C_f^{(\psi)}(a,b)
			=
			f(\Psi_{a,b})$.
		Analogously to~\eqref{eq:general V_f+Phi} and~\eqref{eq:general d_t V_f+Phi}, we have
		$C_{f+\Phi}^{(\psi)}(a,b) 
				=
				f(\Psi_{a,b})
				+ \Phi( \Psi_{a,b} )\,, 			
				\partial_t V_{f+\Phi}^{(h)}(t,\eta)
				= 
				\partial_t f(\Psi_{a,b})
				+ \Phi( \Psi_{a,b}')$.
		To simplify the discussion, suppose $\Phi$ is white, and 
		the covariance becomes 
		$	\Gamma 
			= 
			\frac{1}{a^2}
			\begin{bmatrix}
				a^2 \int |\hat{\psi}(\xi)|^2 
				\, d\xi
				& -2\pi i a \int \xi |\hat{\psi}(a\xi)|^2
				\,d\xi\\
				2\pi i a \int \xi |\hat{\psi}(a\xi)|^2
				\,d\xi 
				& 4\pi^2 \int \xi^2 |\hat{\psi}(\xi)|^2 \, d\xi
			\end{bmatrix}
		$. For the pseudocovariance, 
		if we further assume that $\hat{\psi}$ is analytic with $\hat{\psi}$ real and $\text{supp}(\hat{\psi}) \subset (0,\infty)$, then the pseudocovariance matrix is manifestly zero. 
		Hence, $\Gamma$ and $C$ trivially commute and are thus simultaneously diagonalizable, so there is a basis of $\CC^2$ where the components of $[C_\Phi^{(\psi)}\,\,\,\partial_t C_\Phi^{(\psi)}]^\top$ are uncorrelated \emph{and} have a zero pseudocovariance. The reassignment rule is thus made as a quotient of \emph{independent} random variables, and the relevant nonlinear transform depending on the complex gaussian quotients simplify significantly when $C=0$.
		Note that the main technical challenge in analyzing STFT-based SST is handling pseudocovariance and this challenge is not encountered in the CWT-based SST. As a result, the proof is similar to that of the STFT-based SST shown in this paper, and we omit the details.

	\subsection{Future work}
	
	We remark that while the bounds in the proof are sufficient for our purpose, they might not be optimal, particularly when $\eta\to 0$. We need a different approach to handle the degeneracy of the covariance structure for a finer analysis.
	In addition to SST, there are many other nonlinear-type TF analysis algorithms, for example, reassignment \cite{Auger_Flandrin:1995}, concentration of frequency and time \cite{Daubechies_Wang_Wu:2016}, synchrosqueezed wave packet transform \cite{Yang:2014}, synchrosqueezing S-transform \cite{Huang_Zhang_Zhao_Sun:2015}, second-order SST \cite{Oberlin_Meignen_Perrier:2015}, and bilinear TF analysis tools like Cohen and Affine classes \cite{Flandrin:1999}. The current work sheds light on constructing a systematic approach to the study of statistical properties of those algorithms. %One major challenge is better handling the naturally appearing confluent hypergeometric function, particularly in the non-null case. 
	{As mentioned in Introduction, SST and these TF analysis tools have been widely applied in the signal processing society. Inspired by these applications, there are many important statistical inference problems remain open. For example, how to generalize the proposed oscillatory component detection algorithm to the case when multiple oscillatory components exist? How to establish the inference procedure for estimating instantaneous frequency, amplitude modulation and phase function and decomposing the noisy signal into its constitutional components? How to handle the nonstationary and/or non-Gaussian noise (e.g. the piecewise locally stationary \cite{zhou2013}) or study the statistical structure of a random process? How to detect the number of oscillatory components present inside a noisy signal or at which times such components exist? To answer these questions, we need to fully understand the distribution of SST on the TF domain (e.g. for various $\xi$ and $t$) so that an inference can be carried out on the TFR level. Note that} the oscillatory signal detection algorithm proposed in Section \ref{Section:Dection of oscillatory signal} is a special inference example. {More discussion can be found in Section \ref{Section More Simulation} of the supplement}.
	%
	%Since in practice noise is usually not Gaussian and not white, yet another direction is to understand how nonlinear-type TF analysis behaves on signals with non-Gaussian and non-white (or even nonstationary) random processes as the background noise. 

\section{Acknowledgements}

	The authors acknowledge Professors Almut Burchard and Mary Pugh for the fruitful discussion. {They thank the authors of \cite{candes2008detecting} for providing the ChirpLab 1.1 code.} They also thank the associate editor and the anonymous reviewers for their valuable and constructive feedbacks and comments.

\bibliographystyle{abbrv}
\bibliography{bib-10-09.bib}

	\setcounter{page}{1}
	\setcounter{equation}{0}
	\setcounter{figure}{0}
	\renewcommand{\thepage}{SI.\arabic{page}}
	\renewcommand{\thesection}{SI.\arabic{section}}
	\renewcommand{\theequation}{SI.\arabic{equation}}
	\renewcommand{\thelemma}{SI.\arabic{lemma}}
	\renewcommand{\theprop}{SI.\arabic{proposition}}
	\renewcommand{\thetable}{SI.\arabic{table}}
	\renewcommand{\thefigure}{SI.\arabic{figure}}

	%\clearpage
	%	{\large\center Online Supplementary Information for \\
	%	\Large \center \textbf{On statistics of synchrosqueezing transform and its relationship with kernel regression}\\ 
	%	\large\center by Matt Sourisseau, Hau-Tieng Wu and Zhou Zhou\\}
	%	

\appendix

\section{More literature review and further discussion}\label{Section More Simulation}

In a pioneering work on statistical inference of unknown periodicity, Fisher~\cite{Fisher1929} proposed a maximum periodogram test, which was later investigated and extended~\cite{Hannan1961,LewisFieller1979,Priestley1981, Chiu1989,LinLiu2009}. Parametric and non-paramtric approaches have found applications in modelling the light curves of variable stars with chirp behavior in their frequencies~\cite{Genton_Hall:2007} and possible oscillatory patterns~\cite{Oh_Nychka_Brown_Charbonneau:2004,Bickel_Kleijn_Rice:2008}, and a more general combination of all of the above has also been considered~\cite{DeLivera_Alysha_Hyndman_Snyder:2011}. However, it seems that in time series literature, little attention has been paid to the possibility of complex seasonality with time-varying amplitude and frequency.
	To capture nonstationarity, a random process can be modelled as locally stationary~\cite{Silverman1957,Dahlhaus:1997}, piecewise locally stationary~\cite{zhou2013heteroscedasticity,Zhou2014}, or satisfying a time-varying autoregressive (AR) model~\cite{Hallin:1978}, for example. It is interesting that one of major driving forces in the recent surge in nonstationary time series analysis lies in modelling such series via various evolutionary spectral decompositions of the underlying covariance structure, which shares a similar flavour to that of TF analysis. That is, one seeks to model general classes of nonstationary time series by various time-varying Fourier or wavelet representations of the covariance. See for instance~\cite{Priestley1965} and~\cite{Dahlhaus:1997} for an evolutionary Fourier decomposition or Cramer representation approach, and~\cite{Nason2000} for a method based on time-varying wavelet spectra. Among others, \cite{Adak1998} and~\cite{Ombao2001} provide contributions in estimating algorithms.

	%For more examples in the biomedical signal processing, see \cite{Lin_Wu_Tsao_Yien_Hseu:2014} for the anesthesiology depth evaluation, \cite{Wu_Hseu_Bien_Kou_Daubechies:2013} for the ventilator weaning prediction, \cite{wu2018analyzing} for the otoacoustic emission signal analysis, \cite{lin2016sleep} for sleep apnea event prediction, \cite{lo2019hypoventilation} for bronchoscopic sedation monitoring, \cite{Chang2019} for noxious stimulation quantification, \cite{wu2016optimizing} for non-contact photoplethysmogram signal analysis, to name but a few.  

		\subsection{Relationship with nonlocal mean}

		Another interpretation of the approximation \eqref{sst discretization} for SST at time $0$ and frequency $\xi$
		is the {\em nonlocal mean} \cite{buades2005non}. To understand this relationship, we view ${\Omega_l}$
		as a {\em frequency feature} designed from the STFT coefficient at frequency $\eta_l$. The main purpose of this feature is in designing a {\em metric} comparing frequency information $V_l$ and $\xi$; that is, 
		$d(\xi,V_l):=\big|\xi-{\Omega_l} \big|$. 
		Clearly, the closer $\xi$ and $V_l$ are in the sense of $d(\xi,V_l)$, the more weight will be given to the ``mean'' process in \eqref{sst discretization}.
		The mean is obviously kernel weighted with the bandwidth $\alpha$, but since those $V_l$'s close to $\xi$ might not be contiguous, the mean is nonlocal. In other words, SST is functioning like a nonlocal mean on a metric space with the metric depending on the phase information of the signal. %This viewpoint might be combined with the currently developed diffusion geometry framework to further improve the algorithm. %We will report on results in this direction in future work.

	\subsection{Multitaper SST}

		Consider two windows as an example,
		$h_0(x) 
			=
				\pi^{-1/4} e^{-x^2/2}\,, 
			h_1(x) 
			=
				\sqrt{2}\pi^{-1/4} x e^{-x^2/2}$,
				the first two Hermite functions, orthonormal with respect to the standard $L^2$ inner product. For the complex Gaussian vector $\big[V_{f+\Phi}^{(h_0)}(t,\eta)\ V_{f+\Phi}^{(h_1)}(t,\eta)\big]^\top$, when $\Phi$ is white we have 
		$
			C
			=
			\Gamma 
			= 
			\begin{bmatrix}
				1 & \sqrt{2}/2\\
				\sqrt{2}/2 & 3/8
			\end{bmatrix}
		$ 
		since $\hat{h}_0$ and $\hat{h}_1$ are both real-valued functions and the expressions appearing in the pseudocovariance calculation reduce to precisely the same calculations of the covariance calculation. Hence $C$ and $\Gamma$ can be clearly simultaneously diagonalized. This ensures that the complex Gaussian vector $\big[V_{f+\Phi}^{(h_0)}(t,\eta)\ V_{f+\Phi}^{(h_1)}(t,\eta)\big]^\top$ has independent components. A similar argument show the independence of $\big[\partial_tV_{f+\Phi}^{(h_0)}\ \partial_tV_{f+\Phi}^{(h_1)}\big]^\top$, and hence the independence of $S_{f+\Phi}^{(h_0,\alpha)}(t,\xi)$ and $S_{f+\Phi}^{(h_1,\alpha)}(t,\xi)$. This  provides a theoretical justification of applying the multitaper approach proposed in \cite{Xiao_Flandrin:2007}. % to SST. 

%\clearpage
	\section{Some useful lemmas} \label{Section:Collection of Useful Lemmas}

		If $g:\mathbb{C}^n\to \mathbb{C}^n$ is a bijective real-analytic mapping, we may find the complex Jacobian of $g^{-1}$ by a direct $\CC\RR$-calculus computation~\cite{Kreutz-Delgado-09}, which we expound below.
				
		\begin{lemma}[Jacobian for ratio distribution]
			\label{lem: Jacobian for ratio distribution}
			Let $g: (\CC\backslash \{0\})\times \CC \to (\CC\backslash \{0\})\times \CC$ be defined by $g_Q(z_1,z_2) = (z_1, z_2/z_1)$. Then the complex Jacobian determinant of $g_Q^{-1}(z,q)$ is $|z|^2$.
		\end{lemma}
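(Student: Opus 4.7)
The plan is to reduce the computation to a standard fact about Jacobians of holomorphic maps. First I would invert $g_Q$ explicitly: solving $w_1 = z_1$ and $w_2 = z_2/z_1$ for $(z_1,z_2)$ gives $g_Q^{-1}(z,q) = (z,zq)$, which is a polynomial (and in particular holomorphic) map from $(\CC\setminus\{0\})\times\CC$ to itself.

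Next I would invoke the $\CC\RR$-calculus framework referenced in the statement (see~\cite{Kreutz-Delgado-09}). For a real-analytic bijection $g:\CC^n\to\CC^n$, viewing $g$ as a map $\RR^{2n}\to\RR^{2n}$ via the identification $z=x+iy$, the real Jacobian determinant equals the determinant of the augmented (Wirtinger) Jacobian
\begin{equation*}
	\underline{J}_g
	=
	\begin{bmatrix}
		\partial g/\partial z & \partial g/\partial \conj{z}\\
		\partial \conj{g}/\partial z & \partial \conj{g}/\partial \conj{z}
	\end{bmatrix}.
\end{equation*}
When $g$ is holomorphic, the off-diagonal Wirtinger blocks vanish and this determinant collapses to $|\det(\partial g/\partial z)|^2$.

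Applying this to $g_Q^{-1}(z,q)=(z,zq)$, the holomorphic Jacobian is
\begin{equation*}
	\frac{\partial g_Q^{-1}}{\partial (z,q)}
	=
	\begin{bmatrix}
		1 & 0\\
		q & z
	\end{bmatrix},
\end{equation*}
whose determinant is $z$. Therefore the complex Jacobian determinant is $|z|^2$, as claimed. There is no real obstacle here beyond keeping track of the $\CC\RR$-calculus convention; the only point worth double-checking is that the ``complex Jacobian determinant'' in the statement refers to the real Jacobian computed via the augmented Wirtinger matrix (the object relevant to the change-of-variables formula for densities of complex random vectors), which is why the modulus-squared $|z|^2$ appears rather than the bare holomorphic determinant $z$.
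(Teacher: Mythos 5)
Your proof is correct, but it takes a different route from the paper. The paper writes down the full $4\times 4$ augmented Wirtinger Jacobian of $g_Q^{-1}(z,q)=(z,zq)$ (rows for $g^{-1}_{Q,1},g^{-1}_{Q,2}$ and their conjugates, columns for $z,q,\bar z,\bar q$), observes it is block upper-triangular with blocks $\begin{bmatrix}1&q\\0&z\end{bmatrix}$ and its conjugate, and reads off the determinant $z\bar z=|z|^2$ directly. You instead exploit the holomorphy of $(z,q)\mapsto(z,zq)$: the off-diagonal Wirtinger blocks vanish, so the augmented determinant collapses to $|\det(\partial g_Q^{-1}/\partial(z,q))|^2=|z|^2$. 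Both computations are valid and your identification of the ``complex Jacobian determinant'' with the augmented/real Jacobian relevant to the density change of variables is the right reading. One caveat worth flagging: the remark the paper places right after this lemma asserts that $g_Q$ and its inverse are \emph{not} holomorphic and that the relation $\det J_\RR=|\det J_\CC|^2$ therefore cannot be invoked — your argument hinges on exactly that relation. In fact $(z,q)\mapsto(z,zq)$ is polynomial and $g_Q$ is holomorphic on $z_1\neq 0$, so your shortcut is sound and the paper's caveat is misplaced for this lemma; it genuinely applies only to the later map $g_Y$ of Lemma~\ref{Lemma:ChangeVariableJacobianC2}, where the modulus $|\xi-\eta+z_2/(2\pi i z_1)|^2$ introduces conjugates. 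The paper's brute-force $4\times4$ computation has the modest advantage of transferring verbatim to that non-holomorphic case, whereas your argument would have to revert to the full augmented matrix there.
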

		
		\begin{proof}
			Writing $g_Q^{-1}(z,q) =  (g^{-1}_{Q,1}(z,q), g^{-1}_{Q,2}(z,q)) = (z,zq)$, by~\cite{Kreutz-Delgado-09} we compute the complex Jacobian matrix at $(z,q)$ to be
			\begin{equation}\label{Expansion complex Jacobian matrix Proof1}
				\begin{bmatrix}
					\partial g^{-1}_{Q,1}/\partial z & \partial g^{-1}_{Q,1}/\partial q & \partial g^{-1}_{Q,1}/\partial \bar{z} & \partial g^{-1}_{Q,1}/\partial \bar{q} \\
					\partial g^{-1}_{Q,2}/\partial z & \partial g^{-1}_{Q,2}/\partial q & \partial g^{-1}_{Q,2}/\partial \bar{z} & \partial g^{-1}_{Q,2}/\partial \bar{q} \\
					\partial \overline{g^{-1}_{Q,1}} /\partial z & \partial \overline{g^{-1}_{Q,1}} /\partial q & \partial \overline{g^{-1}_{Q,1}}/\partial \bar{z} & \partial \overline{g^{-1}_{Q,1}}/ \partial \bar{q} \\
					\partial \overline{g^{-1}_{Q,2}}/ \partial z & \partial \overline{g^{-1}_{Q,2}}/ \partial q & \partial \overline{g^{-1}_{Q,2}}/\partial \bar{z} & \partial \overline{g^{-1}_{Q,2}}/ \partial \bar{q}
				\end{bmatrix} 
				=
				\begin{bmatrix}
					1 & q & 0 & 0 \\
					0 & z & 0 & 0\\
					0 & 0 & 1 & \bar{q}\\
					0 & 0 & 0 & \bar{z}
				\end{bmatrix}.
			\end{equation}
			Taking determinants, we obtain the desired result.
		\end{proof}
		\begin{remark}
			In this case it happens that the usual relationship $\det{J_\RR} = |\det{J_\CC}|^2$ between real and complex Jacobians, $J_\RR$ and $J_\CC$ (see~\cite{Cross-08}), still holds. However, since $g_Q$ and its inverse are not holomorphic, we cannot apply this usual relationship.
		\end{remark}
			
		\begin{lemma}[Jacobian for synchrosqueezing integrand]
			\label{Lemma:ChangeVariableJacobianC2}
			Fix {$\eta>0$,} $\alpha>0$ and $\xi>0$. Consider a complex change of variables $g_Y:\CC^2\setminus\{(0,z_2):z_2\in\CC\}\to \CC^2\setminus\{(0,z_2):z_2\in\CC\}$ defined by
			\begin{equation}
				g_Y(z_1,z_2) 
				= \left( 
				\frac{z_1}{\sqrt{\pi\alpha}} 
				e^{-\frac{1}{\alpha} 
				\left|\xi{-\eta+}\frac{z_2}{2\pi i z_1}\right|^2},
				 \frac{z_2}{2\pi i z_1}
				 \right)\,.
			\end{equation}
			By defining 
			\begin{equation}
			E_{\alpha,\xi}(\omega) \vcentcolon=\sqrt{\pi\alpha} e^{ |\xi -\omega| ^2/\alpha}\,, \label{Definition:Ealphaxi}
			\end{equation}
			the Jacobian of $g^{-1}_Y(y,\omega)$ is 
			\begin{equation}
				\det{J(y,\omega)} 
				= 
				4\pi^2 |y|^2E^4_{\alpha,\xi}(\omega)\,.
			\end{equation}
		\end{lemma}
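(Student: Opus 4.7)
The plan is to mimic the strategy used in the previous lemma (Jacobian for the ratio distribution), now adapted to the more elaborate map $g_Y$. First I would invert $g_Y$ explicitly: writing $(y,\omega)=g_Y(z_1,z_2)$, the second component gives $z_2 = 2\pi i z_1 \omega$, and substituting this into the first component lets me solve $z_1 = \pi\alpha\, y\, e^{|\xi-\eta+\omega|^2/\alpha}$, which in terms of $E_{\alpha,\xi}$ reads $z_1 = y\, E_{\alpha,\xi}(\eta-\omega)$, and hence $z_2 = 2\pi i\omega\, y\, E_{\alpha,\xi}(\eta-\omega)$.

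Next, using Wirtinger calculus as in~\cite{Kreutz-Delgado-09}, I would assemble the $4\times 4$ augmented Jacobian of $g_Y^{-1}$ whose rows are the partial derivatives of $z_1$, $z_2$, $\bar z_1$, $\bar z_2$ with respect to $y,\omega,\bar y,\bar\omega$. Two structural facts simplify the matrix drastically: neither $z_1$ nor $z_2$ depends on $\bar y$, so the $\bar y$-column is zero in the top two rows (and symmetrically the $y$-column is zero in the bottom two rows); and the Gaussian factor $E_{\alpha,\xi}(\eta-\omega)=\pi\alpha\, e^{(\xi-\eta+\omega)\overline{(\xi-\eta+\omega)}/\alpha}$ produces nontrivial $\omega$- and $\bar\omega$-derivatives proportional to $z_1$ itself, with factors $-(\bar\xi-\bar\eta+\bar\omega)/\alpha$ and $-(\xi-\eta+\omega)/\alpha$ respectively.

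The key computational trick is to exploit the multiplicative relation $z_2 = 2\pi i\omega\, z_1$. Performing the row operation $R_2 \to R_2 - 2\pi i\omega\, R_1$ annihilates every entry of row 2 except for the $\omega$-column, which collapses to $2\pi i z_1$; the analogous operation $R_4 \to R_4 + 2\pi i\bar\omega\, R_3$ does the same in the bottom half, leaving $-2\pi i\bar z_1$ in the $\bar\omega$-column. What remains is a sparse matrix whose determinant unfolds by successive single-column expansions as
\begin{equation*}
z_{1,y}\cdot 2\pi i z_1 \cdot \overline{z_{1,y}} \cdot (-2\pi i \bar z_1) = 4\pi^2 |z_1|^2 |z_{1,y}|^2.
\end{equation*}
Substituting $z_{1,y}=E_{\alpha,\xi}(\eta-\omega)$ and $|z_1|^2=|y|^2 E_{\alpha,\xi}(\eta-\omega)^2$ yields $4\pi^2 |y|^2 E_{\alpha,\xi}^4$, matching the claim (with the second output of $g_Y$ interpreted as the reassigned frequency, so that its argument is $\omega$).

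The main obstacle I anticipate is purely notational: the Wirtinger derivatives of the Gaussian factor are complex-valued and strew off-diagonal terms with cluttered expressions in $\xi-\eta+\omega$ and $\bar\xi-\bar\eta+\bar\omega$ across the matrix, which at first glance makes the $4\times 4$ determinant look unmanageable. The insight that these terms all appear as scalar multiples of $z_{1,\omega}$ and $z_{1,\bar\omega}$, and hence get cancelled cleanly by the row operations above because they carry exactly the coefficients $2\pi i\omega$ and $-2\pi i\bar\omega$ inherited from $z_2 = 2\pi i\omega z_1$, is what makes the whole calculation collapse. No estimates, limits, or nontrivial identities are needed beyond this algebraic observation.
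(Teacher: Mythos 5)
Your proposal is correct and takes essentially the same route as the paper: invert $g_Y$ explicitly and evaluate the $4\times 4$ augmented $\CC\RR$-calculus Jacobian, the only difference being that the paper factors $J=E_{\alpha,\xi}\cdot M_{\alpha,\xi}$ and computes $\det M_{\alpha,\xi}=4\pi^2|y|^2$ directly, whereas you collapse the matrix with row operations exploiting $z_2=2\pi i\,\omega z_1$ — a cosmetic difference. Two harmless blemishes: in your quotient parametrization the Wirtinger derivatives of the exponential factor carry $+(\xi-\eta+\bar\omega)/\alpha$ and $+(\xi-\eta+\omega)/\alpha$ (no minus sign), though the cancellation in your row operations is unaffected, and your remark about reading the second output as the reassigned frequency $\eta-q$ is exactly the parametrization the paper's own inverse \eqref{Definition:g1invg2inv} uses.
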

		
		\begin{proof}
			By a direct verification, $g_Y$ is a bijective mapping from $\CC^2\setminus\{(0,z_2):z_2\in\CC\}$ to itself, with inverse $g_Y^{-1}(y,\omega)=(g^{-1}_{Y,1}(y,\omega),g^{-1}_{Y,2}(y,\omega))$ defined by
			\begin{equation}
				\begin{aligned}
					{z_1}&=g^{-1}_{Y,1}(y,\omega) 
					= 
					\pi\sqrt\alpha
					e^{| \xi -\omega|^2/\alpha}y\label{Definition:g1invg2inv}\\
					{z_2}&=g^{-1}_{Y,2}(y,\omega) 
					= 
					2\pi^2 i \sqrt\alpha
					e^{| \xi -\omega|^2/\alpha}{(\eta-\omega)} y \,.
				\end{aligned}
			\end{equation}
			The complex Jacobian has the same form as \eqref{Expansion complex Jacobian matrix Proof1}, but with $g_{Q,1}$ and $g_{Q,2}$ replaced by $g_{Y,1}$ and $g_{Y,2}$ respectively. After expansion, we obtain 
			\begin{align}
				J(y,\omega)
				=
				E_{\alpha,\xi}(\omega)M_{\alpha,\xi}(y,\omega)\,,
			\end{align}
			where $M_{\alpha,\xi}(y,\omega)$ is 
			\begin{equation}\label{Definition:Malphaxi}
				\begin{bmatrix}
				1 & c_1 y(\bar{\omega}-\xi) 
				& 0 & c_1 y(\omega-\xi)\\
				 c_2{(\eta-\omega)} &   c_2y(-1+c_1{(\eta-\omega)}(\bar{\omega}-\xi)) & 0 & c_1 c_2 y {(\eta-\omega)} (\omega-\xi)\\
				0 &c_1 \bar{y}(\bar{\omega}-\xi) & 1 & c_1 \bar{y}(\omega-\xi)\\
				0 &  -c_1 c_2 \bar{y}{(\eta-\bar{\omega})}(\bar{\omega}-\xi) &   -c_2{(\eta-\bar{\omega})} &
				 -c_2\bar{y}(-1 +c_1  {(\eta-\bar{\omega})}(\omega-\xi))  
				\end{bmatrix}\,,
			\end{equation} 
			$c_1\vcentcolon=1/\alpha$ and $c_2=2\pi i$.
			We directly compute
			$\det{M_{\alpha,\xi}(y,\omega)} 
				= 4\pi^2 |y|^2$,
			which does not depend on $\alpha$, $\xi$ and $\omega$. We then have the conclusion that
			$\det{J(y,\omega)} 
				= 
				4\pi^2 |y|^2E^4_{\alpha,\xi}(\omega)$.
			
		\end{proof}
		\begin{remark}
		Note that $g_Y$ is not a holomorphic function of its second argument, but a real-analytic diffeomorphism from the underlying set $\RR^4\setminus\{(0,0,x_3,x_4): x_3,x_4 \in \RR\}$ to itself under the usual identification of $z_1$ and $z_2$ with $x_1+ix_2$ and $x_3+ix_4$, respectively. 
		\end{remark}
		
		When we evaluate the covariance structure in Theorem \ref{Theorem covariance of Yeta and Yetap} and other perturbation arguments, like the $M$-dependent approximation in Lemma \ref{Main Lemma 2 for Kernel SST}, we need to find the joint density of random variables $Y_{f+\Phi}^{(h,\alpha,\xi)}(t,\eta)$ and $Y_{f+\Phi}^{(h,\alpha,\xi)}(t,\eta')$ defined in \eqref{Yt,eta) definition}. To achieve this goal, we need the following lemma whose proof follows the same line as Lemma \ref{Lemma:ChangeVariableJacobianC2} but with a more tedious calculation. For the sake of completeness, we provide details below.

		\begin{lemma}\label{Lemma:ChangeVariableJacobianC4}
			Fix $\eta_1,\eta_2>0$, $\alpha>0$ and $\xi_1,\xi_2>0$. Consider a complex change of variables $g_{Y_1Y_2}:\CC^4\setminus\{z_1z_3=0\}\to \CC^4\setminus\{z_1z_3=0\}$ defined by
			\begin{equation*}
				g_{Y_1Y_2}(z_1,z_2,z_3,z_4) 
				= 
				\left( 
				\frac{z_1}{\sqrt{\pi\alpha}}  
				e^{-\frac{1}{\alpha} 
					\left|
						\xi_1-{\eta_1+}\frac{z_2}{2\pi i z_1}
					\right|^2
					},
				\frac{z_2}{2\pi i z_1},
				\frac{z_3}{\sqrt{\pi\alpha}}  
				e^{-\frac{1}{\alpha} 
					\left|
						\xi_2-{\eta_2+}\frac{z_4}{2\pi i z_3}
					\right|^2
					},
				\frac{z_4}{2\pi i z_3}
				\right)\,.
			\end{equation*}
			The complex Jacobian determinant of $g_{Y_1Y_2}$ is 
			\begin{equation}
				\det{J(y_1,\omega_1,y_2,\omega_2)} 
				= 
				16\pi^4 |y_1|^2 |y_2|^2
				E^4_{\alpha,\xi_1}(\omega_1)
				E^4_{\alpha,\xi_2}(\omega_2)\,,
			\end{equation}
			where $E_{\alpha,\xi_i}$ is defined in \eqref{Definition:Ealphaxi}.
		\end{lemma}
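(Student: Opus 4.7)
The plan is to reduce to Lemma~\ref{Lemma:ChangeVariableJacobianC2} by exploiting the product structure of $g_{Y_1Y_2}$ rather than launching into a direct $8\times 8$ determinant calculation. Observe that the map splits cleanly as $g_{Y_1Y_2}(z_1,z_2,z_3,z_4) = (g_Y^{(1)}(z_1,z_2),\,g_Y^{(2)}(z_3,z_4))$, where $g_Y^{(j)}$ is precisely the map treated in Lemma~\ref{Lemma:ChangeVariableJacobianC2} with $\eta$ replaced by $\eta_j$. In particular, the first two output components depend only on $(z_1,z_2,\bar z_1,\bar z_2)$ and the last two only on $(z_3,z_4,\bar z_3,\bar z_4)$.

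First, I would set up the $\CC\RR$-calculus augmented Jacobian on $\CC^4$, following the template of equation~\eqref{Expansion complex Jacobian matrix Proof1}: an $8\times 8$ matrix whose rows are indexed by the output components $(g_1,g_2,g_3,g_4,\bar g_1,\bar g_2,\bar g_3,\bar g_4)$ and whose columns are indexed by $(z_1,z_2,z_3,z_4,\bar z_1,\bar z_2,\bar z_3,\bar z_4)$. Then apply a simultaneous row and column permutation so that the outputs are grouped as $((g_1,g_2,\bar g_1,\bar g_2),(g_3,g_4,\bar g_3,\bar g_4))$ and the variables as $((z_1,z_2,\bar z_1,\bar z_2),(z_3,z_4,\bar z_3,\bar z_4))$. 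Because every cross-derivative between the two groups vanishes, the reordered matrix is block diagonal with two $4\times 4$ blocks.

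Each $4\times 4$ block is exactly the Jacobian matrix of $g_Y$ that appeared in the proof of Lemma~\ref{Lemma:ChangeVariableJacobianC2}, and so by that lemma each has determinant $4\pi^2|y_j|^2 E_{\alpha,\xi}^4(\omega_j)$. The identical row and column permutations contribute a factor $(\operatorname{sgn})^2 = 1$, hence the determinant of the original augmented Jacobian is the product of the two block determinants, giving $16\pi^4|y_1|^2|y_2|^2 E_{\alpha,\xi}^4(\omega_1)E_{\alpha,\xi}^4(\omega_2)$, as claimed.

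I do not anticipate any real obstacle: this is a routine block-diagonal Jacobian computation, with the only bookkeeping item being the permutation sign, which is trivially $+1$. The hypothesis $\eta\neq\eta'$ plays no role in this particular determinant identity; it presumably enters only when the lemma is later invoked to write down joint densities, where nondegeneracy of the underlying covariance structure (cf.\ Lemma~\ref{Appendex:C and P of W for Cov}) must be ensured.
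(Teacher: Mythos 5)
Your proposal is correct and follows essentially the same route as the paper: the paper likewise exploits the product structure, observing that the (augmented) Jacobian of $g_{Y_1Y_2}^{-1}$ is block diagonal with blocks $E_{\alpha,\xi}(\omega_j)M_{\alpha,\xi}(y_j,\omega_j)$ from Lemma~\ref{Lemma:ChangeVariableJacobianC2}, and multiplies the two determinants. The only bookkeeping caveat is that the determinant $4\pi^2|y_j|^2E^4_{\alpha,\xi}(\omega_j)$ you quote is that of the \emph{inverse} map's Jacobian in the $(y_j,\omega_j)$ coordinates (as in Lemma~\ref{Lemma:ChangeVariableJacobianC2} and the stated formula), so your blocks should be read as those of $g_{Y_1Y_2}^{-1}$ rather than of the forward map; with that understood, the argument is complete.
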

		
		\begin{proof}
			We see that $g_{Y_1Y_2}$ is bijective from $\CC^4\setminus\{z_1z_3=0\}$ to itself with inverse given by 
			\begin{equation}
				g_{Y_1Y_2}^{-1}(y_1,\omega_1,y_2,\omega_2)
				=
				(
				g^{-1}_{Y_1,1}(y_1,\omega_1),
				g^{-1}_{Y_1,2}(y_1,\omega_1),
				g^{-1}_{Y_2,1}(y_2,\omega_2),
				g^{-1}_{Y_2,2}(y_2,\omega_2)
				)\,, \nonumber
			\end{equation}
			where $g_{Y_j,1}$ and $g_{Y_j,2}$, $j=1,2$, are defined similarly to \eqref{Definition:g1invg2inv}.
			The complex Jacobian of $g_{Y_1Y_2}^{-1}$ is an $8\times 8$ block-diagonal matrix whose main diagonal blocks are $4\times 4$ matrices of the form $E_{\alpha,\xi_j}(\omega_j) M_{\alpha,\xi_j}(y_j,\omega_j)$ for $j=1,2$, where $M_{\alpha,\xi}$ is defined in~\eqref{Definition:Malphaxi}; that is the complex Jacobian of $g_{Y_1Y_2}^{-1}$ becomes  
			\begin{align}
				J(y_1,\omega_1,y_2,\omega_2) 
				=
				\begin{bmatrix}
				E_{\alpha,\xi_1}(\omega_1)M_{\alpha,\xi_1}(y_1,\omega_1) & 0\\ 0 & E_{\alpha,\xi_2}(\omega_2)M_{\alpha,\xi_2}(y_2,\omega_2) 
				\end{bmatrix},\nonumber
			\end{align}
			and so we have
			$
				\det{J(y_1,\omega_1,y_2,\omega_2)} 
				= 
				16\pi^4 |y_1|^2|y_2|^2
				E^4_{\alpha,\xi_1}(\omega_1)
				E^4_{\alpha,\xi_2}(\omega_2)
			$.
		\end{proof}

		Below, we quantify the key quantity we encounter when we analyze SST, the confluent hypergeometric function.
		\begin{lemma}\label{Lemma: 1F1 bound}
		For $k\in \mathbb{N}$, there exist $m\in(0,1)$ so that
		\begin{equation}
			m\max\{1,\,e^{x^2}x^{k+3}\}\leq \Hypergeometric{1}{1}{\frac{k}{2}+2}{\frac{1}{2}}{x^2}\leq m^{-1}\max\{1,\,e^{x^2}x^{k+3}\}
			\end{equation}
			when $x\in \mathbb{R}^+$.
		\end{lemma}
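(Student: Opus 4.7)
The strategy is to pinch the function $\Hypergeometric{1}{1}{k/2+2}{1/2}{x^2}$ between its trivial lower bound at the origin and its classical large--$x$ asymptotic, and to glue the two regimes with a compactness argument. First, since the power series expansion
\[
\Hypergeometric{1}{1}{k/2+2}{1/2}{x^2}
=\sum_{n=0}^{\infty}\frac{(k/2+2)_n}{(1/2)_n\,n!}\,x^{2n}
\]
has strictly positive coefficients (both $k/2+2>0$ and $1/2>0$), the constant term alone gives $\Hypergeometric{1}{1}{k/2+2}{1/2}{x^2}\ge 1$ for every $x\ge 0$. This already covers the regime in which $\max\{1,e^{x^2}x^{k+3}\}=1$.

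Next, I would invoke the standard asymptotic expansion of the confluent hypergeometric function \cite[p.~278]{Lebedev-72}, namely $M(a,b,z)\sim \frac{\Gamma(b)}{\Gamma(a)}e^{z}z^{a-b}$ as $z\to +\infty$, which is valid since $a=k/2+2$ is not a non-positive integer. With $b=1/2$, $a-b=k/2+3/2$, and $z=x^2$, this yields
\[
\Hypergeometric{1}{1}{k/2+2}{1/2}{x^2}
\;\sim\;\frac{\sqrt{\pi}}{\Gamma(k/2+2)}\,e^{x^2}x^{k+3}\qquad(x\to\infty).
\]
The matching of exponents $x^{k+3}$ with the statement is what makes the claimed bound natural: the right-hand side of the target inequality is the correct rate.

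Define the continuous ratio $F(x):=\Hypergeometric{1}{1}{k/2+2}{1/2}{x^2}/\max\{1,e^{x^2}x^{k+3}\}$ on $[0,\infty)$. Then $F$ is strictly positive and continuous, $F(0)=1$, and by the asymptotic above $\lim_{x\to\infty}F(x)=\sqrt{\pi}/\Gamma(k/2+2)\in(0,\infty)$. Pick $R>0$ so large that $F(x)$ is within, say, half of its limit for $x\ge R$; then $F$ is bounded above and below by positive constants on $[R,\infty)$, and by continuity and positivity the same holds on the compact interval $[0,R]$. Setting
\[
m:=\min\Bigl\{\inf_{x\ge 0}F(x),\ \bigl(\sup_{x\ge 0}F(x)\bigr)^{-1}\Bigr\}\in(0,1]
\]
and shrinking $m$ slightly if necessary to keep it strictly less than $1$ yields the two-sided bound in the statement.

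I do not foresee any real obstacle: this is a routine special-function estimate. The only care needed is to cite the asymptotic expansion with the correct leading constant, to verify that the limit $\sqrt{\pi}/\Gamma(k/2+2)$ is both strictly positive and finite (which it is for every $k\in\NN$), and to note that the continuous function $x\mapsto \max\{1,e^{x^2}x^{k+3}\}$ never vanishes, so the ratio $F$ is well defined throughout $[0,\infty)$.
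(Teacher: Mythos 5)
Your proof is correct and follows essentially the same route as the paper: small-$x$ behavior, the large-$x$ asymptotic $M(a,b,z)\sim\frac{\Gamma(b)}{\Gamma(a)}e^{z}z^{a-b}$, positivity, and a continuity/compactness argument to bound the ratio on all of $[0,\infty)$. The only cosmetic difference is that you retain the leading constant $\sqrt{\pi}/\Gamma(k/2+2)$ (and derive positivity from the positive series coefficients) where the paper simply cites NIST and absorbs the constant into $m$, which is harmless since $m$ may depend on $k$.
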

		
		\begin{proof}
		By \cite[p.323 (13.2.13)]{NIST}, we have 
			\begin{equation}
			\Hypergeometric{1}{1}{\frac{k}{2}+2}{\frac{1}{2}}{x^2}=1+O(x^2)
			\end{equation} 
			when $x\to 0$ for all $k\in\mathbb{N}$. We also have the following asymptotical approximation of the confluent hypergeometric function \cite[p.328 (13.7(i))]{NIST}:
			\begin{equation}
			\Hypergeometric{1}{1}{\frac{k}{2}+2}{\frac{1}{2}}{x^2}=e^{x^2}x^{k+3}(1+O(x^{-2}))
			\end{equation}
			when $x\to\infty$ for any $k\in \NN$. 
			Moreover, by \cite[p.331 (13.9.2)]{NIST}, 
			\[
			\Hypergeometric{1}{1}{\frac{k}{2}+2}{\frac{1}{2}}{x^2}>0
			\] 
			for all $x\geq 0$. Hence, by the control of $\Hypergeometric{1}{1}{\frac{k}{2}+2}{\frac{1}{2}}{x^2}$ when $x\to 0$ and $x\to \infty$ and the smoothness of $\Hypergeometric{1}{1}{\frac{k}{2}+2}{\frac{1}{2}}{x^2}$, 
			we see that 
			\[
			m\leq\frac{\Hypergeometric{1}{1}{\frac{k}{2}+2}{\frac{1}{2}}{x^2}}{\max\{1,\,e^{x^2}x^{k+3}\}}\leq m^{-1}
			\] 
			for all $x>0$ for some universal constant $0<m<1$. This concludes the proof. 
			\end{proof}

\section{Proofs for Section~3  about complex Gaussian quotient}

	\subsection{Proof of Theorem~\ref{thm: Quotient density}} \label{Proof Theorem 2.1}

		\begin{proof}[Proof of Theorem \ref{thm: Quotient density}]
			Let $
			\underline{\Sigma} 
			= 
			\begin{bmatrix}
				\Gamma & C\\
				\conj{C} & \conj{\Gamma}
			\end{bmatrix} 
			\in \CC^{4\times 4}
			$, which is invertible by the assumption of nondegeneracy.
			The joint density of $(Z_1,Q)$, where $Q = Z_2/Z_1$, is evaluated from \eqref{eq: complex gaussian density} and Lemma \ref{lem: Jacobian for ratio distribution} by changing variables via $g_Q$:
			\begin{equation} \label{Z_1, Q joint density}
				f_{Z_1,Q}(z,q) 
				= 
				|z|^2 f_\B{Z}(g_Q^{-1}(z,q))
				= 
				\frac{|z|^2 e^{-\frac{1}{2} \underline{\mu}^* \underline{\Sigma}^{-1} \underline{\mu}} }{\pi^2 \sqrt{\det{\underline{\Sigma}}}}
				e^{-\frac{1}{2} \underline{\B{g}}^* \underline{\Sigma}^{-1} \underline{\B{g}} + 
				\Re{(\underline{\mu}^* \underline{\Sigma}^{-1} \underline{\B{g}}) }},
			\end{equation}
			$\B{g} \vcentcolon= z \B{q}\in \CC^2$. The density of $Q$ is then obtained by evaluating the marginal distribution of $(Z_1,Q)$. Integrating over $z$ in the polar form $z=re^{i
			\theta}$, where $r>0$ and $\theta\in [0,2\pi)$,
			gives us
			\begin{equation}
				f_Q(q) 
				= 
				\frac{ e^{-\frac{1}{2} \underline{\mu}^* \underline{\Sigma}^{-1} \underline{\mu}} }{\pi^2 \sqrt{\det{\underline{\Sigma}}}}
				\int_0^{2\pi}\!\!\! \int_0^\infty r^3
				e^{-\frac{1}{2} r^2 \underline{\B{e}}^* \underline{\Sigma}^{-1} \underline{\B{e}} + 
				r \Re{(\underline{\mu}^* \underline{\Sigma}^{-1} \underline{\B{e}})}}\,dr\,d\theta\,,
			\end{equation}
			where 
			\[
			\B{e} \vcentcolon= e^{i\theta} \B{q}.
			\] 
			 Using the fact that $\conj{C} = C^*$ by complex symmetry, it follows that $\det{\underline{\Sigma}} = \det{\Gamma} \det{ P}$. Also recall by block matrix inversion:
			\begin{equation}
				\underline{\Sigma}^{-1} 
				= 
				\begin{bmatrix}
					\conj{ P^{-1}} & - P^{-1}\conj{ R} \\
					- R^\top \conj{ P^{-1}} &  P^{-1}
				\end{bmatrix}.
			\end{equation}
			Due to the appearance of conjugation in $\underline{\B{e}}$, the phase $e^{i\theta}$ plays an essential role, which leads to the following quantities:
			\begin{align}
				\label{Expansion Sigma zetai Lemma1}
				\underline{\B{e}}^* \underline{\Sigma}^{-1} \underline{\B{e}} 
				&=2 A(\theta,q)\in\RR\\
				\underline{\mu}^* \underline{\Sigma}^{-1} \underline{\B{e}}
				&= 2 B_\mu(\theta, q)\in\RR\,.\nonumber
			\end{align}
			Recall that due to the nondegeneracy of $\underline{\Sigma}$, $\det{\underline{\Sigma}} = \det{\Gamma} \det{ P}$ implies that $ P$ and $\Gamma$ are both invertible. 
			Also, note that $ P$ is Hermitian, so $\B{q}^* \overline{ P^{-1}} \B{q}$ is real. As a result, $\B{q}^* \overline{ P^{-1}} \B{q}$ and $\B{q}^\top { P^{-1}}\conj{\B{q}}$ in the expansion of $\underline{\B{e}}^* \underline{\Sigma}^{-1} \underline{\B{e}}$ are equivalent, and this fact leads to the first equality in \eqref{Expansion Sigma zetai Lemma1}.

			By hypothesis, $ \underline{\Sigma}^{-1} $ is a positive-definite Hermitian, so $A(\theta,q)>0$ and the substitution $r = t/\sqrt{A(\theta,q)}$ is permissible. We then recognize the Hermite function $H_{-4}$ with the help of equation~\eqref{eq:Hermite function of negative order integral representation}:
			\begin{align*}
				f_{Q}(q) 
				&= 
				\frac{ e^{-\frac{1}{2}\underline{\mu}^* \underline{\Sigma}^{-1} \underline{\mu}}}{\pi^2 \sqrt{\det{\Gamma} \det{ P}}}
				\int_0^{2\pi} 
				\frac{1}{A(\theta,q)^2}
				\int_0^\infty
				 t^3
				\exp{\left(
				   -t^2 + 2 t \frac{B_\mu(\theta,q)}
				   {\sqrt{A(\theta,q)}}
				   \right)
				}
				\,dt\,d\theta\\
				&=\frac{ e^{-\frac{1}{2}\underline{\mu}^* \underline{\Sigma}^{-1} \underline{\mu}} }{\pi^2 \sqrt{\det{\Gamma} \det{ P}}}
				\int_0^{2\pi}
				\frac{6}{A(\theta,q)^2}
				H_{-4}\left( \frac{-B_\mu(\theta,q)}
				   {\sqrt{A(\theta,q)}}
				   \right)
				\,d\theta.
			\end{align*}
			Finally, observe that $A(\theta+\pi, q) = A(\theta, q)$ and $B_\mu(\theta+\pi,q) = -B_\mu(\theta,q)$, which lets us break up the domain of integration and apply~\eqref{eq: H_nu difference} to obtain equation~\eqref{eq: Gaussian quotient density}. 

			When $\mu = 0$, the hypergeometric function in the integrand of~\eqref{eq: Gaussian quotient density} reduces to $\Hypergeometric{1}{1}{2}{1/2}{0} = 1/12$. When $C=0$, the function $A(\theta,q)$ becomes $\B{q}^* \Gamma^{-1} \B{q}$. As a result,  when $\mu=0$ and $C=0$, we see that $f_{Q^\circ}(z) = (\pi \det{\Gamma})^{-1} (\B{q}^* \Gamma^{-1} \B{q} )^{-2}$. 
		\end{proof}

	\subsection{Proof of Theorem \ref{thm: quotient mean bound proposition}}\label{Proof Proposition 2.3}

		We need the following two lemmas to finish the proof. Recall that we have $f_{Q^\circ}(q) = (\pi \det{\Gamma} )^{-1} (\B{q}^* \Gamma^{-1} \B{q})^{-2}$.
		
		\begin{lemma}\label{Proof quotient Lemma 1 symmetry}
			Follow the notation in Theorem \ref{thm: Quotient density}. In general $f_{Q^\circ}$ satisfies $f_{Q^\circ}(q) = f_{Q^\circ}(\conj{q})$. Moreover, when $\Gamma$ is diagonal, $f_{Q^\circ}(q) = f_{Q^\circ}(-q)$.
		\end{lemma}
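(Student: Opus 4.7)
The plan is to read both symmetries straight off the closed-form density $f_{Q^\circ}(q) = (\pi\det\Gamma)^{-1}(\mathbf{q}^{*}\Gamma^{-1}\mathbf{q})^{-2}$ supplied by Theorem~\ref{thm: Quotient density}. The prefactor $(\pi\det\Gamma)^{-1}$ does not depend on $q$, so any asserted symmetry of $f_{Q^\circ}$ reduces to the corresponding symmetry of the real, positive Hermitian form $\mathcal{Q}(q) := \mathbf{q}^{*}\Gamma^{-1}\mathbf{q}$ sitting in the denominator. My first step is therefore to expand $\Gamma^{-1}$ as a $2\times 2$ Hermitian matrix with real diagonal entries $A, C$ and off-diagonal entry $B\in\mathbb{C}$, so that $\mathcal{Q}(q) = A + 2\,\Re(Bq) + C|q|^{2}$; this form makes the reality $\overline{\mathcal{Q}(q)} = \mathcal{Q}(q)$ transparent.

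For the general conjugation symmetry $f_{Q^\circ}(q) = f_{Q^\circ}(\bar q)$, the key observation is that $\mathbf{q}(\bar q) = \overline{\mathbf{q}(q)}$, so $\mathcal{Q}(\bar q) = \bar{\mathbf{q}}^{*}\Gamma^{-1}\bar{\mathbf{q}}$. I would then rewrite this scalar as a transpose of itself, move the transpose inside the product using $\overline{\Gamma^{-1}} = (\Gamma^{-1})^{\top}$ (which holds because $\Gamma^{-1}$ is Hermitian), and combine with the reality of $\mathcal{Q}(q)$ to collapse $\mathcal{Q}(\bar q)$ back to $\mathcal{Q}(q)$. Squaring and multiplying by $(\pi\det\Gamma)^{-1}$ then yields the identity for $f_{Q^\circ}$.

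The diagonal case is strictly easier. When $\Gamma = \mathrm{diag}(\gamma_{1},\gamma_{2})$ with $\gamma_{1},\gamma_{2}>0$ one has $\Gamma^{-1} = \mathrm{diag}(\gamma_{1}^{-1},\gamma_{2}^{-1})$, and hence $\mathcal{Q}(q) = \gamma_{1}^{-1} + \gamma_{2}^{-1}|q|^{2}$, a function of $|q|^{2}$ alone. Invariance under $q\mapsto -q$ is then immediate, and $f_{Q^\circ}(q) = f_{Q^\circ}(-q)$ follows on substitution (and in fact one sees the stronger rotational invariance $f_{Q^\circ}(q) = f_{Q^\circ}(e^{i\theta}q)$ for all $\theta\in\mathbb{R}$).

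I anticipate no genuine obstacle: both parts collapse to elementary algebraic identities about the $2\times 2$ Hermitian form $\mathcal{Q}$ the moment Theorem~\ref{thm: Quotient density} is in hand. The only piece of bookkeeping that warrants care is the manipulation of $B$ and $\bar B$ when verifying the conjugation symmetry, where the Hermitian relation $\overline{\Gamma^{-1}} = (\Gamma^{-1})^{\top}$ is essential for the transpose-and-conjugate interchange to close up cleanly.
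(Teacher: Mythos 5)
Your diagonal case is fine and is exactly the paper's argument: with $\Gamma^{-1}=\mathrm{diag}(\gamma_1^{-1},\gamma_2^{-1})$ the form collapses to $\gamma_1^{-1}+\gamma_2^{-1}|q|^2$, a function of $|q|^2$ alone, so $f_{Q^\circ}(q)=f_{Q^\circ}(-q)$ (indeed full rotational invariance, as you note).

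The conjugation symmetry is where your argument does not close. Transposing the scalar $\mathcal{Q}(\bar q)=\overline{\mathbf{q}}^{*}\Gamma^{-1}\overline{\mathbf{q}}=\mathbf{q}^{\top}\Gamma^{-1}\overline{\mathbf{q}}$ and using $(\Gamma^{-1})^{\top}=\overline{\Gamma^{-1}}$ yields $\mathcal{Q}(\bar q)=\mathbf{q}^{*}\,\overline{\Gamma^{-1}}\,\mathbf{q}$, not $\mathbf{q}^{*}\Gamma^{-1}\mathbf{q}$; the reality of $\mathcal{Q}$ cannot bridge $\Gamma^{-1}$ and $\overline{\Gamma^{-1}}$. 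In your $A,B,C$ notation, $\mathcal{Q}(\bar q)=A+2\Re(B\bar q)+C|q|^{2}$ and $\Re(B\bar q)=\Re(\overline{B}q)$, which differs from $\Re(Bq)$ by $2\Im(B)\Im(q)$, so the identity $\mathcal{Q}(\bar q)=\mathcal{Q}(q)$ for all $q$ is equivalent to $\Im(B)=0$, i.e.\ to $\Gamma^{-1}$ (equivalently $\Gamma$) having real entries. Concretely, the Hermitian positive-definite choice $g_{11}=g_{22}=2$, $g_{12}=i$ gives $\mathcal{Q}(q)=2-2\Im(q)+2|q|^{2}$, which is not invariant under $q\mapsto\bar q$. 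Note that the paper's own justification of this first claim is only the phrase ``by a direct expansion,'' and the expansion it then writes down, $g_{11}+2\Re(g_{12}q)+g_{22}|q|^{2}$, exhibits exactly this obstruction; the symmetry is only used downstream where $\Gamma$ is diagonal (hence real), in which case it is immediate. So either restrict the first symmetry to $\Gamma$ with real (e.g.\ diagonal) entries, or supply the hypothesis $\Im(g_{12})=0$ explicitly; as a claim about an arbitrary Hermitian positive-definite $\Gamma$, your transpose-and-conjugate step would fail.
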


		\begin{proof}
			By a direct expansion, we have the symmetry $f_{Q^\circ}(q) = f_{Q^\circ}(\conj{q})$. Denote $g_{ij}$ to be the $(i,j)$-th entry of $\Gamma^{-1}$ for $i,j = 1,2$. For $\B{q}=(1,q)\in \CC^2$, where $q\in \CC$, we have 
			\begin{align*}
				\B{q}^* \Gamma^{-1} \B{q} 
				= g_{11} 
				+ (g_{12}q+g_{21} \conj{q})
				+ g_{22} |q|^2
				= g_{11} 
				+ 2 \Re{(g_{12}q)}
				+ g_{22} |q|^2,
			\end{align*}
			and if $\Gamma^{-1}$ is diagonal, then $\Re{(g_{12}q)}=0$. Thus, when $\Gamma^{-1}$ is diagonal, we obtain another symmetry $f_{Q^\circ}(q) = f_{Q^\circ}(-q)$. 
		\end{proof}

		\begin{lemma}\label{lem: circular density squeezing lemma}
			Suppose $Q = Z_2/Z_1$, where $\B{Z} = (Z_1,Z_2) \sim \CC N_2(\mu,\Gamma,C)$. Let $b := \|\Gamma^{-1/2} \mu \|$. Denote $\mathsf M \vcentcolon=  P^{-1/2} C^* \, \Gamma^{-1/2}$ and
				$\mathsf N \vcentcolon= \conj{\Gamma^{-1/2} C  P^{-1}} \Gamma^{1/2}$. We have the following lower bound control of $f_Q$:
			\begin{equation}
			\label{quotient density squeezing inequality}
				\frac{12 e^{-(1+\|\mathsf M\|^2+\|\mathsf N\|) b^2} H_{-4}(\sqrt{1+\|\mathsf M\|^2+\|\mathsf N\|} b)}{(1+\|\mathsf M\|^2+\|\mathsf N\|)^2 \sqrt{\det{\Gamma^{-1} P}}}  f_{Q^\circ}(q)
				\le f_Q(q)\,.
			\end{equation}
		\end{lemma}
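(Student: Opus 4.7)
The plan is to start from the Hermite-function form of $f_Q$ that appears mid-proof of Theorem~\ref{thm: Quotient density}, namely
\begin{equation*}
f_Q(q)
=
\frac{6\,e^{-\frac{1}{2}\underline\mu^*\underline\Sigma^{-1}\underline\mu}}
{\pi^2\sqrt{\det\Gamma\,\det\texttt P}}
\int_0^{2\pi}\frac{H_{-4}\!\bigl(-B_\mu(\theta,q)/\sqrt{A(\theta,q)}\bigr)}{A(\theta,q)^2}\,d\theta,
\end{equation*}
and to lower-bound each of the three $q$-dependent factors by quantities expressible in $\B q^*\Gamma^{-1}\B q$, $b$, $\|\texttt M\|$, and $\|\texttt N\|$. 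Comparing the resulting product with the explicit formula $f_{Q^\circ}(q)=(\pi\det\Gamma)^{-1}(\B q^*\Gamma^{-1}\B q)^{-2}$ will produce~\eqref{quotient density squeezing inequality}.

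The algebraic backbone is the Sherman--Morrison--Woodbury expansion
\begin{equation*}
\conj{\texttt P^{-1}}
=
\Gamma^{-1}+\Gamma^{-1}\,C\,\texttt P^{-1}\,\conj{C}\,\Gamma^{-1},
\end{equation*}
obtained by rewriting $\conj{\texttt P}=\Gamma-C\conj\Gamma^{-1}\conj C$ and applying Woodbury. Setting $w=\Gamma^{-1/2}\B q$ and using Hermiticity of $\Gamma^{1/2}$ (so $(\Gamma^{1/2})^\top=\conj{\Gamma^{1/2}}$) together with the complex symmetry $C^*=\conj C$, a direct verification gives the two identities
\begin{equation*}
\B q^*\conj{\texttt P^{-1}}\B q=\|w\|^2+\|\texttt M w\|^2,
\qquad
\B q^\top \texttt R^\top \conj{\texttt P^{-1}}\B q=w^\top \texttt N w,
\end{equation*}
where the first uses $\texttt M=\texttt P^{-1/2}C^*\Gamma^{-1/2}=\texttt P^{-1/2}\conj C\Gamma^{-1/2}$ and the second matches the expression $\conj{\Gamma^{-1/2}C\texttt P^{-1}}\,\Gamma^{1/2}$ defining $\texttt N$. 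Since $\|w\|^2=\B q^*\Gamma^{-1}\B q$, and since $|w^\top\texttt N w|\le\|\texttt N\|\|w\|^2$, these estimates assemble to
\begin{equation*}
A(\theta,q)
\le
\bigl(1+\|\texttt M\|^2+\|\texttt N\|\bigr)\,\B q^*\Gamma^{-1}\B q
\end{equation*}
uniformly in $\theta$. Substituting $\mu$ for $\B q$ (so $y=\Gamma^{-1/2}\mu$, $\|y\|=b$) in the same identities, together with the expansion $\underline\mu^*\underline\Sigma^{-1}\underline\mu=2\mu^*\conj{\texttt P^{-1}}\mu-2\Re(\mu^\top\texttt R^\top\conj{\texttt P^{-1}}\mu)$ used to derive~\eqref{Expansion Sigma zetai Lemma1}, yields $\tfrac{1}{2}\underline\mu^*\underline\Sigma^{-1}\underline\mu\le (1+\|\texttt M\|^2+\|\texttt N\|)b^2$.

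To control the Hermite argument, I would apply Cauchy--Schwarz in the genuine inner product $\langle x,y\rangle=y^*\underline\Sigma^{-1}x$ (valid because $\underline\Sigma^{-1}$ is Hermitian positive definite); this gives $(2B_\mu)^2\le (2A)(\underline\mu^*\underline\Sigma^{-1}\underline\mu)$, whence $|B_\mu(\theta,q)|/\sqrt{A(\theta,q)}\le \sqrt{1+\|\texttt M\|^2+\|\texttt N\|}\,b$. The integral representation~\eqref{eq:Hermite function of negative order integral representation} shows that $H_{-4}$ is strictly positive and strictly decreasing on $\RR$, so
\begin{equation*}
H_{-4}\!\bigl(-B_\mu/\sqrt A\bigr)
\ge
H_{-4}\!\bigl(|B_\mu|/\sqrt A\bigr)
\ge
H_{-4}\!\bigl(\sqrt{1+\|\texttt M\|^2+\|\texttt N\|}\,b\bigr).
\end{equation*}
Substituting the three bounds into the Hermite form, performing the (now trivial) $\theta$-integration, and rewriting $\det\Gamma/\sqrt{\det\Gamma\det\texttt P}=1/\sqrt{\det(\Gamma^{-1}\texttt P)}$ produces~\eqref{quotient density squeezing inequality} after identifying $(\B q^*\Gamma^{-1}\B q)^{-2}$ with $\pi\det\Gamma\,f_{Q^\circ}(q)$.

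The only genuinely delicate step I anticipate is verifying $\B q^\top\texttt R^\top\conj{\texttt P^{-1}}\B q=w^\top\texttt N w$: the bookkeeping requires $\texttt R^\top=\conj{\Gamma^{-1}}\conj C$ (from $\Gamma^*=\Gamma$ and $C^\top=C$), followed by folding $\Gamma^{1/2\top}=\conj{\Gamma^{1/2}}$ through the quadratic form to exactly match $\conj{\Gamma^{-1/2}C\texttt P^{-1}}\Gamma^{1/2}$ in the definition of $\texttt N$. Keeping transposes and conjugate transposes straight is where the calculation is most error-prone; everything else is a substitution of standard inequalities.
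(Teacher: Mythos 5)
Your proposal is correct: every estimate goes in the right direction, the identities $\Gamma^{1/2}\conj{\texttt P^{-1}}\Gamma^{1/2}=I+\texttt M^*\texttt M$ and $\B{q}^\top\texttt R^\top\conj{\texttt P^{-1}}\B{q}=w^\top\texttt N w$ (with $w=\Gamma^{-1/2}\B{q}$) hold exactly as you claim, and the constants assemble to precisely \eqref{quotient density squeezing inequality}. The route, however, is organized differently from the paper's. The paper works one level earlier: it lower-bounds the joint density $f_{\B{Z}}(z)$ itself by $c\,e^{-(1+\|\texttt M\|^2+\|\texttt N\|)\|u\|^2}$ with $u=\Gamma^{-1/2}(z-\mu)$ (using the same two matrix identities you use), decouples the mean by the triangle inequality $\|u\|\le\|\Gamma^{-1/2}z\|+b$ inside the exponent, and then redoes the polar change of variables and the radial integral, which regenerates the factor $H_{-4}(\sqrt{1+\|\texttt M\|^2+\|\texttt N\|}\,b)$. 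You instead start from the already radially integrated Hermite form of $f_Q$ from the proof of Theorem~\ref{thm: Quotient density} and bound the three $\theta$-dependent ingredients separately: the uniform upper bound $A(\theta,q)\le(1+\|\texttt M\|^2+\|\texttt N\|)\,\B{q}^*\Gamma^{-1}\B{q}$, the bound $\tfrac12\underline{\mu}^*\underline{\Sigma}^{-1}\underline{\mu}\le(1+\|\texttt M\|^2+\|\texttt N\|)b^2$, and the Hermite argument via Cauchy--Schwarz in the $\underline{\Sigma}^{-1}$-inner product combined with positivity and monotone decrease of $H_{-4}$ on the real line (clear from \eqref{eq:Hermite function of negative order integral representation}). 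What your version buys is economy: no re-derivation of the change of variables or the radial integral, and the mean is handled by a one-line Cauchy--Schwarz (the same inequality the paper reuses in Proposition~\ref{prop:Q convergence to proper case}) rather than by manipulating $(ar+b)^2$ in the exponent; what the paper's version buys is a self-contained pointwise comparison of densities that makes transparent why an $H_{-4}$ factor with argument $\sqrt{1+\|\texttt M\|^2+\|\texttt N\|}\,b$ must appear. Both arguments rest on the same algebraic core and, reassuringly, produce the identical constant.
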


		\begin{proof}
			The bound comes from a straightforward expansion. By a direct expansion of~\eqref{eq: complex gaussian density}, we have
			\begin{align*}
					f_{\B{Z}}(z) 
						&\,=
						\frac{1}{\pi^{2} \sqrt{\det{\underline{\Sigma}}}}
						e^{-[(\underline{z}-\underline{\mu})^* \conj{ P}^{-1} (\underline{z}-\underline{\mu})- \Re(\underline{z}-\underline{\mu})^\top { R}^{\top}\conj{ P^{-1}} (\underline{z}-\underline{\mu}) ]}\,,
				\end{align*}	
			where $z\in\CC^2$. Let $u = \Gamma^{-1/2} (z - \mu)$. Note that by the definition of ${P}$, we have $\Gamma^{1/2}\conj{ P^{-1}}\Gamma^{1/2}=I+\mathsf M^*\mathsf M$. Thus, we have 
			\begin{align*}
				-(z-\mu)^*\conj{ P^{-1}}(z-\mu) + \Re{[(z-\mu)^\top  R^\top \conj{ P^{-1}} (z-\mu)]}
				= -\|u\|^2 - \|\mathsf M u\|^2 + \Re{(u^\top \mathsf N u)}.
			\end{align*}
			Combining Cauchy-Schwarz with the inequality $-|w| \le \Re{(w)} \le |w|$ and submultiplicativity of matrix norms, we have
			\begin{equation*}
				-(\|\mathsf M\|^2+\|\mathsf N\|)\|u\|^2
				\le -\|\mathsf M u\|^2 + \Re{(u^\top \mathsf N u)} .
			\end{equation*}
			Adding $-\|u\|^2$ throughout, exponentiating, and adjusting constants, this gives us
			\begin{equation*}
				c e^{-(1+\|\mathsf M\|^2+\|\mathsf N\|) \|u\|^2}
				\le f_{\B{Z}}(z)\,,			\end{equation*}
			where $c = (\pi^{2} \sqrt{ \det{\Gamma P}} )^{-1}$. On the other hand, by the triangle inequality we also have
			\begin{equation*}
				\|u\|^2 
				\le (\|\Gamma^{-1/2} z\| + \|\Gamma^{-1/2}\mu \|)^2\,,
			\end{equation*}
			so we may apply this to the previous string of inequalities to obtain
			\begin{equation*}
				c e^{-(1+\|\mathsf M\|^2+\|\mathsf N\|) (\|\Gamma^{-1/2} z\| + \|\Gamma^{-1/2}\mu \|)^2}
				\le f_{\B{Z}}(z)\,.
			\end{equation*}
			We may now change variables as in the proof of Theorem~\ref{thm: Quotient density} and integrate to get
			\begin{equation*}
				c \int_0^{2\pi} \int_0^\infty r^3 e^{-(1+\|\mathsf M\|^2+\|\mathsf N\|) (ar + b)^2} \,dr\,d\theta
				\le f_Q(q)\,,
				\end{equation*}
			where for brevity we have set $a = \|\Gamma^{-1/2} \B{e}\|$ (with $\B{e}$ as in the proof of Theorem~\ref{thm: Quotient density}) and $b = \|\Gamma^{-1/2} \mu \|$. By expanding out the exponent and performing a substitution for $r$, this gives us
			\begin{equation*}
				6c \int_0^{2\pi} \frac{e^{-(1+\|\mathsf M\|^2+\|\mathsf N\|) b^2}}{(1+\|\mathsf M\|^2+\|\mathsf N\|)^2 a^4} H_{-4}(\sqrt{1+\|\mathsf M\|^2+\|\mathsf N\|} b) \,d\theta
				\le f_Q(q)\,.
				\end{equation*}
			Finally, since $a^2 = \B{e}^* \Gamma^{-1} \B{e} = \B{q}^* \Gamma^{-1} \B{q}$ does not depend on $\theta$, this reduces to the desired inequality.
		\end{proof}

			We may now prove the desired theorem.

			\begin{proof}[Proof of Theorem~\ref{thm: quotient mean bound proposition}]
				 We will finish the proof by considering various situations.\newline\newline
			\underline{\textbf{When $C=0$\,:}}
				We start with expanding the density 
				\[
				f_{Q^\circ}(q) 
				= \frac{1}{\pi \det{\Gamma}(\B{q}^* \Gamma^{-1} \B{q})^{-2}} 
				= \frac{1}{\pi \det{\Gamma} (g_{11} + 2 \Re{(g_{12} q)} + g_{22}|q|^2)^2},
				\]
				where $g_{ij}$ denotes the $(i,j)$-th entry of $\Gamma^{-1}$ for $i,j = 1,2$. Since $\Gamma$ is assumed to be Hermitian and positive definite, we have $\det{\Gamma^{-1}} = g_{11} g_{22} - |g_{12}|^2 > 0$. It follows that neither $g_{11}$ nor $g_{22}$ is zero and that $g_{11}g_{22} > |g_{12}|^2$.  
				Moreover, since $\Gamma^{-1}$ is a Hermitian, positive-definite matrix, we are assured that $g_{11}, g_{22} > 0$. 
				Writing $g_{12} = |g_{12}|e^{i\tau}$ for some $\tau \in [0,2\pi)$, and converting the above density to polar form by setting $q = re^{i\theta}$, we notice that 
				\[
				2\Re{(g_{12} q)} = 2\Re{(|g_{12}|r e^{i(\tau+\theta)})}
				= 2|g_{12}|r \cos{(\tau+\theta)}.
				\]
				Since $\B{q}^* \Gamma^{-1} \B{q}$ is positive, we know $g_{11} + 2|g_{12}|r \cos{\sigma} + g_{22}r^2>0$.

				With the above preparation, we now show the first claim. 
				For $\beta\geq 0$, in the polar coordinate we have 
				\begin{align*}
					\EE[ |Q^\circ|^{\beta} ]
					= \frac{1}{\pi \det{\Gamma} }
					\int_0^{2\pi}\!\!\!  \int_0^\infty \frac{r^{1+\beta}}{(g_{11} + 2|g_{12}|r \cos{(\theta + \tau)} + g_{22}r^2)^2} \,dr\, d\theta.
				\end{align*}
				By the reverse triangle inequality, we have 
				\begin{align}
					\EE[ |Q^\circ|^\beta ]
					&\le 
					\frac{1}{\pi \det{\Gamma} }
					\int_0^{2\pi}\!\!\!  \int_0^\infty 
					\frac{r^{1+\beta}}
					{(g_{11} - 2|g_{12}|r + g_{22}r^2)
					^2} \,dr\, d\theta\nonumber\\
					&=
					\frac{2}{\det{\Gamma}} 
					\int_0^\infty 
					\frac{r^{1+\beta}}{(g_{11}-2|g_{12}|r + g_{22} r^2)^2} \,dr\,.
				\end{align}
				Note that since $|g_{12}|<g_{11}^{1/2}g_{22}^{1/2}$ due to the positive definiteness of $\Gamma$, we have $g_{11}-2|g_{12}|r + g_{22} r^2>(rg_{22}^{1/2}-g_{11}^{1/2})^2\geq 0$. Since $g_{22}>0$, 
				we have $\EE |Q^\circ|^\beta < \infty$ when $\beta<2$. Since $|\EE (Q^\circ)^\beta|\leq \EE[ |Q^\circ|^\beta]$, we conclude that $|\EE (Q^\circ)^\beta|<\infty$ when $0\leq \beta<2$.
				When $\beta=2$, a similar argument gives us
				\begin{equation*}
					\EE [(Q^\circ)^2] 
					= \frac{1}{\pi \det{\Gamma} }
					\int_0^{2\pi} \!\!\! \int_0^\infty \frac{r^3e^{i2\theta}}{(g_{11}+ 2|g_{12}|r \cos{(\theta + \tau)} + g_{22}r^2)^2} \,dr d\theta,
				\end{equation*}
				but the $r$ integral diverges by the $p$-test.

				For the second claim, applying Fubini's theorem, we change the order of integration and make the substitution $\sigma = \theta + \tau$ to obtain
				\begin{align*}
					\EE Q^\circ &=\frac{1}{\pi \det{\Gamma} }
					\int_0^{2\pi} \!\!\! \int_0^\infty \frac{r^{2}e^{i\theta}}{(g_{11} + 2|g_{12}|r \cos{(\theta + \tau)} + g_{22}r^2)^2} \,dr\, d\theta\\
					&= \frac{1}{\pi \det{\Gamma} }
					\int_0^\infty \!\!\! \int_{\tau}^{2\pi+\tau} \frac{r^2 e^{i(\sigma-\tau)}}{(g_{11} + 2|g_{12}|r \cos{\sigma} + g_{22}r^2)^2} \,d\sigma dr\,.
				\end{align*}
				Now, the integrand is a periodic function of $\sigma$ that is being integrated over its full period, so the result is the same if we integrate over any other interval of length $2\pi$. We choose
				$(0,2\pi)$ for this purpose to obtain
				\begin{equation}\label{E Q circ intermediate}
					\EE Q^\circ  
					= 
					\frac{e^{-i \tau}}{\pi \det{\Gamma} }
					\int_0^\infty r^2 \int_{0}^{2\pi} \frac{e^{i\sigma}}{(g_{11} + 2|g_{12}|r \cos{\sigma} + g_{22}r^2)^2} \,d\sigma dr.
				\end{equation}
				We now digress for a moment to point out that the contour integral
				\begin{equation} \label{contour int 1}
					-4 i \int_\gamma \frac{z^2}{(b z^2 + 2a z + b)^2} \, dz,
				\end{equation}
				where $\gamma$ is the unit circle oriented counterclockwise, may be reparametrized by letting $z = e^{i\sigma}$. This then implies that $2 \cos{\sigma} = z+z^{-1}$ by Euler's identity, and if we also let $a = g_{11} + g_{22}r^2$ and $b = 2|g_{12}|r$, our reparametrized contour integral is precisely the innermost integral in~\eqref{E Q circ intermediate}. So it suffices to determine the value of~\eqref{contour int 1}, which we do now.

				If $b=0$, expression~\eqref{contour int 1} is seen to be the integral of a function with a removeable singularity at the origin over a smooth closed contour; by Cauchy's theorem, this integral is then zero.

				Otherwise, $b>0$. In this case the function $p(z) = z^2 + 2(a/b) z + 1$ satisfies $p(0) \neq 0$ and $p(z) = z^2 p(1/z)$, which shows that if $z_0$ is a root of $p(z)$, then so is $1/z_0$. Moreover, $p(\bar{z}) = \conj{p(z)}$, because $\Gamma^{-1}$ is Hermitian so its diagonal entries are real. Hence for some real $x$ in the unit disk, the residue theorem gives us
				\begin{equation}
					\int_\gamma \frac{z^2}{(b z^2 + 2a z + b)^2} \, dz
					=
					\frac{1}{b^2}\int_\gamma \frac{z^2}{(z-x)^2(z-1/x)^2} \, dz
					=
					\frac{-4\pi i x^3}{b^2 (x^2-1)^3}\,.\nonumber
				\end{equation}
				Now, the roots of $z^2 + 2(a/b) z + 1$ occur at 
				\begin{equation*}
					z_\pm 
					=
					-\frac{a}{b} \pm \sqrt{\frac{a^2}{b^2} - 1},
				\end{equation*}
				and the arithmetic-geometric mean inequality implies that
				\begin{equation*}
					\frac{a}{b} 
					= 
					\frac{g_{11}+g_{22}r^2}{2|g_{12}|r} 
					\ge \frac{2\sqrt{g_{11}g_{22}r^2}}{2|g_{12}|r} 
					= 
					\frac{\sqrt{g_{11}g_{22}}}{|g_{12}|} > 1\,,
				\end{equation*}
				where the final inequality follows from the fact that $\det{\Gamma^{-1}}>0$. So $z_+$ is the root in the unit disk, and plugging this in for $x$ above we have
				\begin{equation}
					\int_\gamma \frac{z^2}{(b z^2 + 2a z + b)^2} \, dz
					=
					\frac{-\pi b i }{2 (a^2-b^2)^{3/2}}.
				\end{equation}
				Consequently, equation~\eqref{E Q circ intermediate} reduces to
				\begin{align}
					\EE Q^\circ 
					&= \frac{-4 e^{-i \tau}|g_{12}|}{ \det{\Gamma} }
					\int_0^\infty \frac{r^3}{\left((g_{11}+g_{22}r^2)^2-4|g_{12}|r^2\right)^{3/2}} dr\nonumber\\
					&= \frac{-4 g_{21} }{ g_{22} \det{\Gamma} }
					\frac{1}{(4g_{11}g_{22}-4|g_{12}|^2)}
					= \frac{-g_{21} }{ g_{22} \det{\Gamma} \det{\Gamma^{-1}} }= \frac{-g_{21} }{ g_{22} }\,, \nonumber
				\end{align}
				where the $r$ integral can be done with the substitution of $u = r^2$ and the obvious trigonometric substitutions arising later.
								
				For the third claim, note that since $C=0$, we have ${R}=0$ and ${P}=\bar{\Gamma}$. Thus, 
				\begin{equation*}
				A(\theta,q)=A(q)=\textbf{q}^*\Gamma^{-1}\textbf{q} =g_{11}+2\Re[g_{12}q]+g_{22}|q|^2=g_{11}+g_{22}|q|^2
				\end{equation*}
				since $\Gamma$ is diagonal. Clearly, we have the symmetry that $A(q)=A(-q)$ under this condition. On the other hand,
				$B(\theta,q)=\Re[e^{i\theta}\mu^*\Gamma^{-1}\textbf{q}]$. By a direct expansion, since $\mu=(\mu_1,0)$, we have 
				\begin{equation}
				\mu^*\Gamma^{-1}\textbf{q}=\bar{\mu}_1(g_{11}+g_{12}q)=g_{11}\bar{\mu}\,,
				\end{equation}
				where the last equality comes from the assumption that $\Gamma$ is diagonal. Thus, $B(\theta,q)=B(\theta)=\Re[e^{i\theta}g_{11}\bar{\mu}_1]$. As a result, $f_Q(q)$ is reduced to
				\begin{align}\label{Equation of f_Q when C=0}
				f_Q(q) 
				&= 
					\frac{e^{-\frac{1}{2}\underline{\mu}^*\underline{\Sigma}^{-1} \underline{\mu} }}
					{\pi^2 \sqrt{\det{\underline{\Sigma}}}}
					\int_0^{\pi}
					\Hypergeometric{1}{1}{2}{\frac{1}{2}}{\frac{
					B_\mu(\theta)^2
					}{
					A(q)
					}}
					 \,d\theta\frac{1}{
					A(q)^2
					}\,,
			\end{align}
			and hence
			\begin{align*}
			\mathbb{E}Q=\frac{e^{-\frac{1}{2}\underline{\mu}^*\underline{\Sigma}^{-1} \underline{\mu} }}
					{\pi^2 \sqrt{\det{\underline{\Sigma}}}}
					\int_0^{\pi} \Big[\int_{\mathbb{C}}
					\Hypergeometric{1}{1}{2}{\frac{1}{2}}{\frac{
					B_\mu(\theta)^2
					}{
					A(q)
					}}
					\frac{q}{
					A(q)^2
					}
					 \,dq \Big] d\theta=0
			\end{align*}
			by the symmetry of $A(q)$. We thus finish the claim.
			\newline\newline
			\underline{\textbf{When $C\neq 0$\,:}} First, since $\EE |Q^\circ|^2$ diverges, by the lower bound of $f_Q$ by $f_{Q^\circ}$ shown in Lemma~\ref{lem: circular density squeezing lemma}, we know that $\EE |Q|^2$ and $\EE (Q^2)$ also blow up. When $\beta<2$, note by Lemma \ref{Lemma: 1F1 bound}, we have
			\begin{equation}
			\Hypergeometric{1}{1}{2}{\frac{1}{2}}{\frac{
					B_\mu(\theta,q)^2
					}{
					A(\theta,q)
					}}
					\leq m^{-1}\left\{\Big(\frac{B_\mu(\theta,q)^2}{A(\theta,q)}\Big)^{3/2}e^{\frac{B_\mu(\theta,q)^2}{A(\theta,q)}},1\right\}\,.
			\end{equation}
			Moreover, by the Cauchy-Schwartz inequality, we have $\frac{B_\mu(\theta,q)^2}{A(\theta,q)}\leq \underline{\mu}^*\underline{\Sigma}^{-1} \underline{\mu}$. Hence, we have
			\begin{equation}
			e^{\frac{B_\mu(\theta,q)^2}{A(\theta,q)}-\frac{1}{2}\underline{\mu}^*\underline{\Sigma}^{-1} \underline{\mu} }\leq 1.
			\end{equation}
			Denote the eigenvalues of $\underline{\Sigma}$ to be $\lambda_1\geq \lambda_2\geq \lambda_3\geq \lambda_4>0$ due to the positive definite assumption. We thus have $\underline{\mu}^*\underline{\Sigma}^{-1} \underline{\mu}\geq \lambda_1^{-1}\|\underline{\mu}\|^2$, and hence $e^{-\frac{1}{2}\underline{\mu}^*\underline{\Sigma}^{-1} \underline{\mu} }\leq e^{-\frac{1}{2}\lambda_1^{-1}\|\underline{\mu}\|^2}$. As a result, we have
			\begin{align}
			\EE |Q|^\beta\leq \frac{e^{-\frac{1}{2}\lambda_1^{-1}\|\underline{\mu}\|^2}}{\pi^2\lambda_4^2}\int_{\mathbb{C}}\int_0^\pi\frac{|q|^\beta}{A(\theta,q)^2}d\theta dq
			\end{align}
			By \eqref{Expansion Sigma zetai Lemma1}
			we have $A(\theta,q)=\frac{1}{2}\underline{\B{e}}^* \underline{\Sigma}^{-1} \underline{\B{e}}\geq \frac{1}{2}\|\underline{\B{e}}\|^2\lambda_1^{-1}$, which leads to 
			\begin{equation}
			\EE |Q|^\beta\leq e^{-\frac{1}{2}\lambda_1^{-1}\|\underline{\mu}\|^2}\frac{2\lambda_1^2}{\pi\lambda_4^2}\int_{\mathbb{C}}\frac{|q|^\beta}{(1+|q|^2)^2} dq<\infty\,,
			\end{equation}
	where we use the fact that $\|\underline{\B{e}}\|^2=2	\|\B{q}\|^2=2(1+|q|^2)$ and $\beta<2$. We thus finish the claim.
			
			\end{proof}

%\clearpage
\section{Generalized Random Process} \label{sec:app: GRP summary}

	An ordinary random process is a family $(X_t)_{t\in T}$ of random variables defined on a common probability space $(\Omega,\mathcal{F},\PP)$ and taking values in a common measurable space; see for example~\cite{Koralov-07}.
	A generalized random process (GRP) is the extension of this idea to the distribution setting. In particular, let $\mathcal{M}(\Omega,\mathcal{X})$ denote the collection of random variables defined on 
	$(\Omega,\mathcal{F},\PP)$ and taking values in the measurable space $\mathcal{X}$. Then a linear function	
	$\Phi : \mathcal{S} \to \mathcal{M}(\Omega,\mathcal{X})$ that is
	continuous in the sense of finite-dimensional distributions is called a generalized random process (GRP); see~\cite{Gelfand-64}.
	Such a process is said to be wide-sense stationary (WSS) if there exists a function $m:\mathcal{S} \to \CC$ and positive-definite function 
	$B_\Phi:\mathcal{S}\times \mathcal{S} \to \CC$ satisfying 
	\begin{align*}
		m(\phi) 
		&= 
		\EE \Phi(\phi)\\
		B_\Phi(\phi,\psi)
		&=
		\mathbb{E}[
			(\Phi(\phi)-m(\phi))
			\overline{(\Phi(\psi)-m(\psi))}
			]
	\end{align*}

	If $\Phi$ is a wide-sense stationary generalized random process (GRP) with mean $m(\phi) = \EE \Phi(\phi)$, the covariance functional $B_\Phi$ of $\Phi$ is defined by
	\begin{equation}\label{covariance functional definition}
		B_\Phi(\phi,\psi)
		\vcentcolon= 
		\mathbb{E}[
			(\Phi(\phi)-m(\phi))
			\overline{(\Phi(\psi)-m(\psi))}
			] 
		=
		\cov{\Phi(\phi)}{\Phi(\psi)}
	\end{equation}
	for any test functions $\phi,\psi\in\mathcal{S}$. 
	For use later, we introduce the pseudocovariance as
	\begin{equation}\label{pseudocovariance functional definition}
		P_\Phi(\phi,\psi)
		\vcentcolon= 
		\mathbb{E}[(\Phi(\phi)-m(\phi)){(\Phi(\psi)-m(\psi))}] 
		= 
		B_\Phi(\phi,\overline{\psi})\,.
	\end{equation}
	Here, we use the linearity of $\Phi$ and the fact that $\Phi(\psi)-m(\psi)=\overline{\Phi(\overline{\psi})-m(\overline{\psi})}$.
	Notice that because of the complex conjugation, $B_\Phi$ is a sesquilinear form on test functions, and this form is Hermitian. 

	A GRP is called stationary if for any test functions $\phi_1, \ldots,\phi_n$, and any $h\in \CC$, the random vectors 
	$(\Phi(\phi_1 \circ \tau_h),\ldots,\Phi(\phi_n \circ \tau_h))$
	and
	$(\Phi(\phi_1),\ldots,\Phi(\phi_n))$ have the same distribution, where $\phi_k\circ\tau_h(t) = \phi_k(t+h)$ is translation by $h$.
	It turns out \cite{Gelfand-64} that for any stationary GRP $\Phi$, there exists a functional $B_0$ with the property that
	\begin{equation}
		B_\Phi(\phi,\psi)
		=
		B_0(\phi * \psi^\star),
	\end{equation}
	where $*$ denotes convolution and $\psi^\star(t) \vcentcolon= \overline{\psi(-t)}$. In fact, $B_0$ is the Fourier transform of a unique positive tempered measure $\mu$, which lets us write
	\begin{equation} \label{covariance functional integral form}
		B_\Phi(\phi,\psi)
		= 
		\int_{-\infty}^\infty
			\widehat{\phi}(\xi)
			\overline{\widehat{\psi}(\xi)}
		\,d \mu(\xi)
	\end{equation}
	for any $\phi,\psi \in \mathcal{S}$. We call $d \mu$ the \emph{power spectrum} of the GRP $\Phi$. If $d\mu$ is absolutely continuous with related to the Lebesgue measure so that $d\mu(\xi)=f(\xi)d\xi$ for some non-negative function $f$, when there is no danger of confusion, we also call $f$ the power spectrum of $\Phi$.
	For ease of notation in what follows, we define the associated \emph{pseudocovariance functional} $P_\Phi$ by
	\begin{equation}
		\label{pseudocovariance functional definition and integral form}
		P_\Phi(\phi,\psi) 
		= 
		B_\Phi(\phi,\bar{\psi}) 
		= 
		\int_{-\infty}^\infty 
			\widehat{\phi}(\xi)\widehat{\psi}(-\xi)
		\,d \mu(\xi).
	\end{equation}
	Note that the above framework could be reduced to the case where $\Phi$ is an ordinary random process with covariance function 
	$C(s) = \mathbb{E}[\Phi(t)\overline{\Phi(t+s)}]$, but we proceed below using the more general framework.

	An ordinary random process may be thought of as the measurements of some quantity at a sequence of instants. On the other hand, a GRP $\Phi$ describes a random process that cannot be measured precisely at each instant, so that $\Phi(\psi)$ is a random variable describing the measurement of some quantity when it is measured by an instrument that is characterized by the {\it measurement function} $\psi$, commonly taken to be a Schwartz function. This provides a more general framework that takes into account the inability to measure physical quantities instantaneously.

%\clearpage
\section{Summary of Notation in the SST analysis}\label{Section Notation table}

	We systematically use the following notations, where $\Phi$ is a stationary random process satisfying Assumption \ref{Definition:stationary complex Gaussian}, the signal satisfies Assumption \ref{assump:nonnull signal}, and $h$ is a Schwartz function. 
	In the proof, the condition of $h$ will be made clear from one to another, and we will make it clear when it is expressed in the superscript.
	Take $\eta>0$. Define
	\begin{equation}
			\mu^{(h)}_{\xi_0,\eta} \vcentcolon=A\hat{h}(\xi_0-\eta)e^{i2\pi \xi_0 t}\begin{bmatrix}1&i2\pi\xi_0\end{bmatrix}^\top\,. \label{Definition of muhxi0eta}
			\end{equation}
			Denote $\underline{\Sigma}^{(h)}_\eta$ to be the augmented covariance matrix of the complex random vector 
			\[
			\B{W}^{(h)}_\eta := \begin{bmatrix}\Phi(h_{t,\eta})&\Phi((h')_{t,\eta})\end{bmatrix}^\top
			\] 
			and $\lambda^{(h)}_{\eta,1}\geq \lambda^{(h)}_{\eta,2}\geq \lambda^{(h)}_{\eta,3}\geq\lambda^{(h)}_{\eta,4}$ to be its eigenvalues. For $\theta\in [0,2\pi)$ and $\omega\in \mathbb{C}$, define
	\begin{align}
			c^{(h)}_\eta &\vcentcolon= 4(\det{\underline{\Sigma}^{(h)}_\eta})^{-1/2}\nonumber\\
			A^{(h)}_{\eta}(\theta,\omega)&\vcentcolon=\frac{1}{2}\big(\underline{e^{i\theta}\boldsymbol{\omega}}\big)^*\underline{\Sigma}_\eta^{(h)-1}\underline{e^{i\theta}\boldsymbol{\omega}}\label{Expansion:ABC}\\
			B^{(h)}_{\xi_0,\eta}(\theta,\omega)&\vcentcolon=\frac{1}{2}\Re\Big[\left(\underline{\mu^{(h)}_{\xi_0,\eta}}\right)^* \underline{\Sigma}_\eta^{(h)-1}\underline{e^{i\theta}\boldsymbol{\omega}}\Big]\nonumber\\
			C^{(h)}_{\xi_0,\eta}&\vcentcolon=\frac{1}{2}\left(\underline{\mu^{(h)}_{\xi_0,\eta}}\right)^* \underline{\Sigma}_\eta^{(h)-1} \underline{\mu^{(h)}_{\xi_0,\eta}}\,,\nonumber
			\end{align}
			where \begin{equation}
			\boldsymbol{\omega}\vcentcolon=\begin{bmatrix}1& 2\pi i {(\eta-\omega)}\end{bmatrix}^\top\,.
			\end{equation} 
			Since $\underline{\Sigma}^{(h)}_\eta$ is positive definite and $\|\boldsymbol{\omega}\|^2=1+4\pi^2|\eta-\omega|^2\geq 1$, for all $\omega\in \mathbb{C}$ we have 
		\begin{align}
			1/\lambda_{\eta,1}^{(h)}\leq \|\boldsymbol{\omega}\|^2/\lambda_{\eta,1}^{(h)}&\leq A_{\eta}(\theta,\omega)\leq \|\boldsymbol{\omega}\|^2/\lambda_{\eta,4}^{(h)}\,. \label{Upper bound of Aeta2}
		\end{align}
		Similarly, we know that $C^{(h)}_{\xi_0,\eta}>0$
		in the non-null case.
		 This bound gives us a rough idea of $A^{(h)}_\eta(\theta,\omega)$ and is a reasonably good bound when $\eta$ is not close to zero. When $\eta$ is close to zero, as will be shown in Lemma \ref{Lemma: c of mu_xi0eta when eta small}, due to the degeneracy of $\underline{\Sigma}^{(h)}_\eta$, $\lambda^{(h)}_{\eta,4}$ is closer to zero and the upper bound is bad. A more precise bound described in Lemma \ref{Lemma: c of mu_xi0eta when eta small} is needed later when we control the moments. 
		 Note that the definition of $A^{(h)}_\eta$ and $B^{(h)}_{\xi_0,\eta}$ here mirrors that of \eqref{eq:A} in Theorem \ref{thm: Quotient density}, since by a direct expansion we have
			\begin{align}
			A^{(h)}_{\eta}(\theta,\omega)&=\boldsymbol{\omega}^* \conj{{P}^{-1}} \boldsymbol{\omega} - \Re \Big(e^{2i\theta}\boldsymbol{\omega}^\top {R}^\top\conj{ P^{-1}} \boldsymbol{\omega}\Big)\,,\label{Expansion Atheta omega in a different form}\\
			B^{(h)}_{\xi_0,\eta}(\theta,\omega)&=
			\Re\Big[ e^{i\theta}\Big(\mu_{\xi_0,\eta}^{(h)*}-\mu_{\xi_0,\eta}^{(h)\top} R^\top\Big)\conj{ P^{-1}}\boldsymbol{\omega} \Big]\,.\nonumber
			\end{align}
			Here, recall that the expansion of $B^{(h)}_{\xi_0,\eta}(\theta,\omega)$ depends on the fact that $\conj{ P^{-1}}\conj{ R}$ is symmetric.

			When we compare two windows or two frequencies, we need the following notations.
	Fix $t\in\mathbb{R}$, for $\eta,\eta'>0$ and two Schwartz functions $h$ and $\hbar$ as windows, denote
	\begin{align}
				\mu^{(h, \hbar)}_{\xi_0,\eta,\eta'}=e^{i2\pi \xi_0 t}\begin{bmatrix}\hat{h}(\xi_0-\eta)&i2\pi\xi_0\hat{h}(\xi_0-\eta)&\hat{\hbar}(\xi_0-\eta')&i2\pi\xi_0\hat{\hbar}(\xi_0-\eta')\end{bmatrix}^\top\label{eq: special case mu}
			\end{align}
			and
		\[
				\B{W}^{(h, \hbar)}_{\eta,\eta'} = \begin{bmatrix}\Phi(h_{t,\eta})&\Phi((h')_{t,\eta})&\Phi(\hbar_{t,\eta'})&\Phi((\hbar')_{t,\eta'})\end{bmatrix}^\top\in \mathbb{C}^4
			\] 
			and denote the associated augmented covariance matrix as $\underline{\Sigma}_{\eta,\eta'}^{(h, \hbar)}$. 

		When we study the noise structure and carry out the perturbation arguments, we need the following notations. Take two even, bounded and smooth functions $\varphi_1,\varphi_2$. For $k=0,1,\ldots$, 
		define the following real-valued functions on $\mathbb{R}\times\mathbb{R}$:
		\begin{align}
		\gamma^{[\varphi_1,\varphi_2]}_k(\eta,\eta') &\vcentcolon= \int \left(\xi+\frac{\eta+\eta'}{2}\right)^k e^{-4\pi^2(\xi+\frac{\eta+\eta'}{2})^2}\varphi_1(\eta+\xi) \varphi_2(\eta'+\xi) \,d\vartheta(\xi)\nonumber\\
		\nu^{[\varphi_1,\varphi_2]}_k(\eta,\eta') &\vcentcolon=\int \left(\xi+\frac{\eta-\eta'}{2}\right)^k e^{-4\pi^2(\xi+\frac{\eta-\eta'}{2})^2}\varphi_1(\eta+\xi)\varphi_2(\eta'-\xi) \,d\vartheta(\xi)\nonumber
		\end{align}
		for $k=0,1,2,\ldots$. 
		In the special case when $\eta=\eta'$, we use the following simplified notations:
		\begin{align}\label{Definition:Gamma function l}
		\gamma^{[\varphi_1,\varphi_2]}_k(\eta)&:=\int (\xi+\eta)^ke^{-4\pi^2(\xi+\eta)^2}\varphi_1(\eta+\xi)\varphi_2(\eta+\xi)d\vartheta(\xi)\\
		\nu^{[\varphi_1,\varphi_2]}_k(\eta)&:=\int \xi^k e^{-4\pi^2\xi^2}\varphi_1(\eta+\xi)\varphi_2(\eta-\xi) \,d\vartheta(\xi)\nonumber
		\end{align}
		Note the following facts summarized as a lemma, which follow immediately from the definition and we omit the proof.
		
		\begin{lemma}\label{Basic relationship between gamma and nu over windows}
		Following the above notation, we have
		\begin{enumerate}
		\item When $\varphi_1=\varphi_2$, $\varphi_1(\eta+\xi)\varphi_2(\eta-\xi)$ is an even function of $\xi$, so $\nu^{[\varphi_1,\varphi_1]}_k(\eta)=0$ when $k$ is odd for any $\eta\in \mathbb{R}$. Also, $\nu^{[1,1]}_k(\eta)=\gamma^{[1,1]}_k(0)$ for any $\eta$. Furthermore, when $d\vartheta(\xi)=d\xi$, we have $\gamma^{[1,1]}_k(\eta)=\gamma^{[1,1]}_k(0)$ for any $\eta$.
		
		\item $\gamma_k^{[\varphi_1,\varphi_2]}(\eta)=\gamma_k^{[\varphi_2,\varphi_1]}(\eta)$ for any $k>0$ and $\eta>0$.
		
		\item $\nu_k^{[\varphi_1,\varphi_2]}(\eta)=(-1)^k\nu_k^{[\varphi_2,\varphi_1]}(\eta)$  for any $k>0$ and $\eta>0$.

		\item $\nu^{[\varphi_1,\varphi_1]}_k(0)=\gamma^{[\varphi_1,\varphi_1]}_k(0)$ for all $k$.
		
		\item When $\varphi_1\varphi_2=0$, $\gamma_l(0)=0$ for all $l$. 		\end{enumerate}
		\end{lemma}
		
		Below, $\varphi_1$ may be chosen to be $1$ and $\varphi_2$ is chosen to be a ``good'' function; for example, in the $M$-approximation, we choose $\varphi_2$ to be a smooth, bounded and symmetric function that is $1$ when $|t|\leq M$ and $0$ when $|t|\geq 2M$.

	%\clearpage
	\section{Technical lemmas for the SST analysis}\label{section:Technical lemmas for the SST analysis}
	We need the following lemma when we control the covariance between different $\eta$'s. Lemma~\ref{lem: V_Phi and d_t V_Phi second-order stats} is a direct consequence of Lemma~\ref{Appendex:C and P of W for Cov}. Indeed, when $\varphi_1=\varphi_2=1$, we obtain Lemma~\ref{lem: V_Phi and d_t V_Phi second-order stats}.

	\begin{lemma}\label{Appendex:C and P of W for Cov}
		Suppose $h$ satisfies Assumption \ref{assump:gaussian window}, $\varphi_1,\varphi_2$ are two even, bounded and smooth functions and $h_1,h_2$ are Schwartz functions satisfying $\hat{h}_i=\hat{h}\varphi_i$, $i=1,2$. For $\eta,\eta'>0$, the covariance and the pseudocovariance matrices of $\B{W}^{(h_1,h_2)}_{\eta,\eta'}$ are
		 \[
		 \begin{bmatrix} \Gamma^{[\varphi_1,\varphi_1]}_{\eta,\eta} & \Gamma^{[\varphi_1,\varphi_2]}_{\eta,\eta'} \\
		 \Gamma^{[\varphi_2,\varphi_1]}_{\eta',\eta} &\Gamma^{[\varphi_2,\varphi_2]}_{\eta',\eta'}\end{bmatrix}\in \mathbb{C}^{4\times 4}
		 \quad
		 \mbox{and}
		  \quad
		 \begin{bmatrix} C^{[\varphi_1,\varphi_1]}_{\eta,\eta} & C^{[\varphi_1,\varphi_2]}_{\eta,\eta'} \\
		 C^{[\varphi_2,\varphi_1]}_{\eta',\eta} & C^{[\varphi_2,\varphi_2]}_{\eta',\eta'}\end{bmatrix}\in \mathbb{C}^{4\times 4}
		 \]
		 respectively, where for $k,l=1,2$
		 \begin{align*}
		 &\Gamma^{[\varphi_k,\varphi_l]}_{\eta,\eta'}=e^{-\pi^2(\eta'-\eta)^2}\\
		 &\quad\times\begin{bmatrix} \gamma^{[\varphi_k,\varphi_l]}_0(\eta,\eta')  & -2\pi i[\gamma^{[\varphi_k,\varphi_l]}_1(\eta,\eta')+\frac{\eta-\eta'}{2}\gamma^{[\varphi_k,\varphi_l]}_0(\eta,\eta')] \\  2\pi i[\gamma^{[\varphi_k,\varphi_l]}_1(\eta,\eta')+\frac{\eta-\eta'}{2}\gamma^{[\varphi_k,\varphi_l]}_0(\eta,\eta')] & 4\pi^2 [\gamma^{[\varphi_k,\varphi_l]}_2(\eta,\eta')-\frac{(\eta-\eta')^2}{4}\gamma^{[\varphi_k,\varphi_l]}_0(\eta,\eta')] \end{bmatrix}\,,
		 \end{align*}
		 and
		 \begin{align*}
		 &C^{[\varphi_k,\varphi_l]}_{\eta,\eta'}=e^{-\pi^2(\eta'+\eta)^2}\\
		 &\quad\times\begin{bmatrix} 
		 \nu^{[\varphi_k,\varphi_l]}_0(\eta,\eta')  
		 & 2\pi i[-\nu^{[\varphi_k,\varphi_l]}_1(\eta,\eta')+\frac{\eta+\eta'}{2}\nu^{[\varphi_k,\varphi_l]}_0(\eta,\eta')] \\  
		 2\pi i[\nu^{[\varphi_k,\varphi_l]}_1(\eta,\eta')+\frac{\eta+\eta'}{2}\nu^{[\varphi_k,\varphi_l]}_0(\eta,\eta')] 
		 & 4\pi^2 [\nu^{[\varphi_k,\varphi_l]}_2(\eta,\eta')-\frac{(\eta+\eta')^2}{4}\nu^{[\varphi_k,\varphi_l]}_0(\eta,\eta')] \end{bmatrix}\,.
		 \end{align*}
	\end{lemma}
	
	\begin{proof}
		It is a straightforward calculation  
		by plugging $h(x)=\frac{1}{\sqrt{2\pi}}e^{-x^2/2}$, whose Fourier transform is $\hat{h}(\xi)=e^{-2\pi^2\xi^2}$, into \eqref{covariance functional integral form} and \eqref{pseudocovariance functional definition and integral form}, where $\phi$ and $\psi$ are replaced by either $(h_1)_{t,\eta}$, $(h_1')_{t,\eta}$, $(h_2)_{t,\eta'}$ or $(h_2')_{t,\eta'}$. By noting that for $i=1,2$, 
		\begin{align*}
			\mathcal{F}((h_i)_{t,\eta})(\xi) 
			&= 
			\hat{h}(\eta+\xi) \varphi_i(\eta+\xi) e^{-2\pi i \xi t}\\
			\mathcal{F}((h_i')_{t,\eta})(\xi)
			&= 
			2\pi i (\eta+\xi) \hat{h}(\eta+\xi)\varphi_i(\eta+\xi) e^{-2\pi i \xi t},
		\end{align*} 
		and using the decompositions like $(\xi+\eta)^2+(\xi-\eta')^2=2(\xi+(\eta-\eta')/2)^2+(\eta+\eta')^2/2$,
		we finish the proof.
		
	\end{proof}

		Note that $C^{[\varphi_1,\varphi_2]}_{\eta,\eta'}$ is the transpose of $C^{[\varphi_2,\varphi_1]}_{\eta,\eta'}$ since $\nu^{[\varphi_1,\varphi_2]}_1(\eta,\eta')=-\nu^{[\varphi_2,\varphi_1]}_1(\eta,\eta')$ by Lemma \ref{Basic relationship between gamma and nu over windows}.

	The following lemma is used when we handle the degeneracy of the covariance matrix when $\eta\to0$.

	\begin{lemma}\label{lem: V_Phi and d_t V_Phi second-order stats each entries}
		Follow the same notations used in Lemma \ref{Appendex:C and P of W for Cov}.
		When $\eta\to 0$, $\gamma^{[\varphi_1,\varphi_2]}_l(\eta)$ defined in \eqref{Definition:Gamma function l} satisfies
		\begin{align*}
		\gamma^{[\varphi_1,\varphi_2]}_0(\eta)&\,=\gamma^{[\varphi_1,\varphi_2]}_0(0)+Q^{[\varphi_1,\varphi_2]}_0\eta^2+O(\eta^4)\\
		\gamma^{[\varphi_1,\varphi_2]}_1(\eta)&\,=Q^{[\varphi_1,\varphi_2]}_1\eta+O(\eta^3)\\
		\gamma^{[\varphi_1,\varphi_2]}_2(\eta)&\,=\gamma^{[\varphi_1,\varphi_2]}_2(0)+Q^{[\varphi_1,\varphi_2]}_2\eta^2+O(\eta^4)\,,
		\end{align*}
		where
		\begin{align*}
		Q^{[\varphi_1,\varphi_2]}_0&\,=4\pi^2\left(8\pi^2\gamma^{[\varphi_1,\varphi_2]}_2(0)-\gamma^{[\varphi_1,\varphi_2]}_0(0)\right.\\
		&\quad\left.+\int e^{-4\pi^2\xi^2}\left[-8\pi^2\xi(\varphi_1\varphi_2)'(\xi)+\frac{1}{2}(\varphi_1\varphi_2)''(\xi)\right]d\vartheta(\xi)\right)\\
		Q^{[\varphi_1,\varphi_2]}_1&\,=-8\pi^2\gamma^{[\varphi_1,\varphi_2]}_2(0)+\gamma^{[\varphi_1,\varphi_2]}_0(0)+\int e^{-4\pi^2\xi^2}\xi(\varphi_1\varphi_2)'(\xi)d\vartheta(\xi)\\
		Q^{[\varphi_1,\varphi_2]}_2&\,=32\pi^4\gamma^{[\varphi_1,\varphi_2]}_4(0)-20\pi^2\gamma^{[\varphi_1,\varphi_2]}_2(0)+\gamma^{[\varphi_1,\varphi_2]}_0(0)\\
		&\quad+ \int e^{-4\pi^2\xi^2}\left[-8\pi^2\xi^3(\varphi_1\varphi_2)'(\xi)+\frac{1}{2}\xi^2(\varphi_1\varphi_2)''(\xi)+2\xi(\varphi_1\varphi_2)'(\xi)\right] d\vartheta(\xi)\,.
		\end{align*}
		On the other hand, when $\eta\to 0$, $\nu^{[\varphi_1,\varphi_2]}_l(\eta)$ defined in \eqref{Definition:Gamma function l} satisfies
		\begin{align*}
		\nu^{[\varphi_1,\varphi_2]}_0(\eta)&\,=\gamma^{[\varphi_1,\varphi_2]}_0(0)+R^{[\varphi_1,\varphi_2]}_0\eta^2+O(\eta^4)\\
		\nu^{[\varphi_1,\varphi_2]}_1(\eta)&\,=R^{[\varphi_1,\varphi_2]}_1\eta+O(\eta^3)\\
		\nu^{[\varphi_1,\varphi_2]}_2(\eta)&\,=\gamma^{[\varphi_1,\varphi_2]}_2(0)+R^{[\varphi_1,\varphi_2]}_2\eta^2+O(\eta^4)\,,
		\end{align*}
		where
		\begin{align*}
		R^{[\varphi_1,\varphi_2]}_0&\,=\int e^{-4\pi^2\xi^2}\left[\phi_1'\phi_2-\phi_1\phi_2'+\frac{1}{2}(\phi_1''\phi_2+\phi_1\phi_2'')\right](\xi)d\vartheta(\xi)\\
		R^{[\varphi_1,\varphi_2]}_1&\,=\int e^{-4\pi^2\xi^2}\xi(\varphi'_1\varphi_2+\varphi_1\varphi_2')(\xi)d\vartheta(\xi)\\
		R^{[\varphi_1,\varphi_2]}_2&\,=\int e^{-4\pi^2\xi^2}\left[\phi_1'\phi_2-\phi_1\phi_2'+\frac{1}{2}(\phi_1''\phi_2+\phi_1\phi_2'')\right](\xi)d\vartheta(\xi)\,.
		\end{align*}
	\end{lemma}

	\begin{proof}
		The lemma follows from a straightforward calculation by Taylor's expansion and the boundedness and smoothness assumptions of $\varphi_1$ and $\varphi_2$. 
		By plugging Taylor's expansion of 
		$e^{-4\pi^2(\xi+\eta)^2}$ and $\varphi_1(\xi+\eta)\varphi_2(\xi+\eta)$ at $\eta=0$ into	 
		$\gamma^{[\varphi_1,\varphi_2]}_l(\eta)=\int e^{-4\pi^2(\xi+\eta)^2}(\xi+\eta)^l\varphi_1(\xi+\eta)\varphi_2(\xi+\eta)d\vartheta(\xi)$ and using the symmetry of $\varphi_1$, $\varphi_2$, and $d\vartheta(\xi)=p(\xi)d\xi$ to cancel the odd order terms when $\eta=0$, we obtain the first claim. 
		For example, since 
		\[
		e^{-4\pi^2(\xi+\eta)^2}=e^{-4\pi^2\xi^2}-8\pi^2\eta \xi e^{-4\pi^2\xi^2}-4\pi^2\eta^2e^{-4\pi^2\xi^2}+32\pi^4\eta^2\xi^2e^{-4\pi^2\xi^2}+O(\eta^3)
		\]
		when $\varphi_1=\varphi_2=1$ and $\eta$ is close to $0$,
		we have
		\begin{align*}
		\gamma^{[1,1]}_0(\eta)&\,=\int e^{-4\pi^2(\xi+\eta)^2}d\vartheta(\xi)\\
		&\,=\int [e^{-4\pi^2\xi^2}-8\pi^2\eta \xi e^{-4\pi^2\xi^2}-4\pi^2\eta^2e^{-4\pi^2\xi^2}+32\pi^4\eta^2\xi^2e^{-4\pi^2\xi^2}+O(\eta^3)]d\vartheta(\xi)\\
		&\,=\gamma_0(0)+4\pi^2[-\gamma_0(0)+2\gamma_2(0)]\eta^2+O(\eta^4)
		\end{align*} 
		since $\int \xi e^{-4\pi^2\xi^2} d\vartheta(\xi)=0$. The proof for $\nu_k^{[\varphi_1,\varphi_2]}(\eta)$ is the same, while we use the fact that $\nu^{[\varphi_1,\varphi_1]}_k(0)=\gamma^{[\varphi_1,\varphi_1]}_k(0)$ for all $k$.
		\end{proof}
		
		\begin{lemma}\label{lem: V_Phi and d_t V_Phi second-order stats each entries part2}
		Follow the same notations used in Lemma \ref{Appendex:C and P of W for Cov} and assume $\varphi_1\varphi_2\neq 0$.
		For $|\eta|\geq 1$, we have
		\begin{align*}
		\frac{|\gamma^{[\varphi_1,\varphi_2]}_1(\eta)|}{\int e^{-4\pi^2(\xi+\eta)^2}|\xi+\eta|\varphi_1(\xi+\eta)\varphi_2(\xi+\eta)d\vartheta(\xi)}\leq  1-\mathsf d_m\,,
		\end{align*}
		where 
		\[
		\mathsf{d}_m:=\min_{\eta\geq 1}\frac{2\min\{\int_0^\infty e^{-4\pi^2\xi^2}\xi \varphi_1(\xi)\varphi_2(\xi)p(\xi+\eta)d\xi,\,\int_0^\infty e^{-4\pi^2\xi^2}\xi \varphi_1(\xi)\varphi_2(\xi)p(\xi-\eta)d\xi\}}{\int e^{-4\pi^2\xi^2}|\xi| \varphi_1(\xi)\varphi_2(\xi) p(\xi-\eta)d\xi}
		>0\,,
		\] 
		where $\mathsf{d}_m$ is bounded above by $1$ and depends on $\varrho$, $\varphi_1$ and $\varphi_2$.
	\end{lemma}
	\begin{remark}
	Note that $\mathsf{d}_m=1$ when $\varrho=0$; that is, when the noise is white. When the noise is white, all notations are simplified.
	\end{remark}
	\begin{proof}
	Note that $\gamma^{[\varphi_1,\varphi_2]}_1(\eta)=\int e^{-4\pi^2\xi^2}\xi \varphi_1(\xi)\varphi_2(\xi)p(\xi-\eta)d\xi$ by a change of variable, so by symmetry we have
		\begin{align*}
		\gamma^{[\varphi_1,\varphi_2]}_1(\eta)
		&\,=\int_0^\infty e^{-4\pi^2\xi^2}\xi\varphi_1(\xi)\varphi_2(\xi)(p(\xi-\eta)-p(\xi+\eta))d\xi\\
		&\,=\int_0^\infty e^{-4\pi^2\xi^2}\xi \varphi_1(\xi)\varphi_2(\xi)p(\xi-\eta)d\xi-\int_0^\infty e^{-4\pi^2\xi^2}\xi \varphi_1(\xi)\varphi_2(\xi)p(\xi+\eta)d\xi\,.
		\end{align*}
		On the other hand, 
		\begin{align*}
		&\int e^{-4\pi^2\xi^2}|\xi| \varphi_1(\xi)\varphi_2(\xi)p(\xi-\eta)d\xi\\
		=&\,\int_0^\infty e^{-4\pi^2\xi^2}|\xi| \varphi_1(\xi)\varphi_2(\xi) (p(\xi-\eta)+p(\xi+\eta))d\xi\\
		=&\,\int_0^\infty e^{-4\pi^2\xi^2} \xi \varphi_1(\xi)\varphi_2(\xi) p(\xi-\eta)d\xi+\int_0^\infty e^{-4\pi^2\xi^2}\xi \varphi_1(\xi)\varphi_2(\xi)p(\xi+\eta)d\xi\,.
		\end{align*}
		We thus have 
		\[
		0\leq \frac{|\gamma^{[\varphi_1,\varphi_2]}_1(\eta)|}{\int e^{-4\pi^2\xi^2}|\xi| \varphi_1(\xi)\varphi_2(\xi)p(\xi-\eta)d\xi}=1-\mathsf d(\eta)\,,
		\] 
		where 
		\[
		\mathsf{d}(\eta)=\frac{2\min\{\int_0^\infty e^{-4\pi^2\xi^2}\xi \varphi_1(\xi)\varphi_2(\xi)p(\xi+\eta)d\xi,\,\int_0^\infty e^{-4\pi^2\xi^2}\xi \varphi_1(\xi)\varphi_2(\xi)p(\xi-\eta)d\xi\}}{\int e^{-4\pi^2\xi^2}|\xi| \varphi_1(\xi)\varphi_2(\xi) p(\xi-\eta)d\xi}\,.
		\]
		By setting $\mathsf{d}_m:=\min_{\eta\geq1} \mathsf{d}(\eta)>0$, we obtain the claim. 
		
	\end{proof}

	We need the following key Lemma summarizing the spectral behavior of the augmented covariance matrix in different regimes of $\eta$. This behavior is critical when we handle those integrations involving confluent hypergeometric functions. 
	
	\begin{lemma}[Key lemma]\label{Lemma: c of mu_xi0eta when eta small}
			Suppose Assumptions~\ref{assump:noise part}, \ref{assump:nonnull signal} and \ref{assump:gaussian window} hold. Take an even, bounded and smooth function $\varphi$ and set a new window $\hbar$ so that $\hat{\hbar}=\hat{h}\varphi$. Then the following statements hold. 
			\begin{enumerate}
			\item When $\eta\geq 1$, $\lambda^{(\hbar)}_{\eta,1},\ldots,\lambda^{(\hbar)}_{\eta,4}\asymp \eta^\varrho$, where the implied constants depend on $\varphi$ and $\varrho$.

			\item When $0<\eta<1$, particularly when $\eta$ is close to $0$, we have $\lambda^{(\hbar)}_{\eta,1}\asymp 8\pi^2\gamma^{[\hbar,\hbar]}_2(0)$, $\lambda^{(\hbar)}_{\eta,2}\asymp 2\gamma^{[\hbar,\hbar]}_0(0)$ and 
			\begin{equation}
			\lambda^{(\hbar)}_{\eta,3}\asymp\eta^2,\,\, \lambda^{(\hbar)}_{\eta,4}\asymp \eta^6,\,\mbox{ and hence }\,\det\underline{\Sigma}^{(\hbar)}_\eta\asymp \eta^{8}\,,
			\end{equation} 
			where the implied constants depend on $\varphi$ and $\varrho$.

			\item For $u=e^{i\theta}\begin{bmatrix} 1& 2\pi i  w 
			\end{bmatrix}^\top\in \mathbb{C}^2$, where $\theta\in [0,2\pi)$ and $w=re^{i\phi}\in \mathbb{C}$, we have 
			\begin{align}
			\underline{u}^*\underline{ \Sigma}_\eta^{(\hbar)-1}\underline{u}\asymp&
			\frac{2\sin^2(\theta)}{\mathsf c_4}\eta^{-6}+\frac{8\pi^2r^2\cos^2(\theta+\phi)}{\mathsf c_3}\eta^{-2}+\frac{4\pi^2r^2\sin^2(\theta+\phi)}{\gamma^{[\varphi,\varphi]}_0(0)}+\frac{\cos^2(\theta)}{4\pi^2\gamma^{[\varphi,\varphi]}_2(0)}\,\nonumber
			\end{align}
			when $\eta$ is close to $0$ for constants $\mathsf{c}_3>0$ and $\mathsf c_4>0$ independent of $\eta$ but dependent on $\varrho$ and $\varphi$, where the implied constants depend on $\varphi$ and $\varrho$. 
			In particular, for $\mu^{(h)}_{\xi_0,\eta}$ defined in \eqref{Definition of muhxi0eta}, where $A>0$ and $\xi_0>0$ are fixed, we have
			\begin{align}
			\underline{\mu^{(\hbar)*}_{\xi_0,\eta}}\underline{\Sigma}_\eta^{(\hbar)-1} \underline{\mu^{(\hbar)}_{\xi_0,\eta}}\asymp \,A^2\hat{\hbar}(\xi_0-\eta)^2\Big[&\frac{2\sin^2(2\pi\xi_0t)}{\mathsf c_4 \eta^{6}}+\frac{8\pi^2\xi_0^2\cos^2(2\pi\xi_0t)}{\mathsf c_3 \eta^{2}}\Big] \label{Asymptotic muSigmainvmu or C} 
			\end{align}
			and
			\begin{align}
			\underline{\mu^{(\hbar)*}_{\xi_0,\eta}}\underline{ \Sigma}_\eta^{(\hbar)-1}\underline{u}\asymp&\,
			A\hat{\hbar}(\xi_0-\eta)\Big[\frac{2\sin(\theta)\sin(2\pi\xi_0t)}{\mathsf c_4 \eta^{6}}+\frac{8\pi^2r\xi_0\cos(\theta+\phi)\cos(2\pi\xi_0t)}{\mathsf c_3 \eta^{2}}\label{Asymptotic muSigmainvmu or B}\\
			&+\frac{4\pi^2r\xi_0\sin(\theta+\phi)\sin(2\pi\xi_0t)}{\gamma^{[\varphi,\varphi]}_0(0)}+\frac{\cos(\theta)\cos(2\pi\xi_0t)}{4\pi^2\gamma^{[\varphi,\varphi]}_2(0)}\Big]\nonumber
			\end{align}
			when $\eta$ is close to $0$, where the implied constants depend on $\varphi$ and $\varrho$. 
			\end{enumerate}
			\end{lemma}
			
			Note that the third point of this Lemma is the key tool we use to handle the degenerate $\underline{\Sigma}_\eta^{(\hbar)}$ when $\eta$ is close to $0$, particularly the expansion of $A_\eta(\theta,\omega)$, $B^2_{\xi_0,\eta}(\theta,\omega)$ and $C_{\xi_0,\eta}$.

			\begin{proof}
			To simplify the notation, since there is no danger of confusion, we omit the superscript ${}^{[\varphi,\varphi]}$ or ${}^{(\hbar)}$ describing the dependence on the window below.
			By Lemma \ref{Appendex:C and P of W for Cov}, we have the expansion of $\underline{\Sigma}_\eta
			= \begin{bmatrix}
			\Gamma_\eta & C_\eta\\
			\overline{C_\eta} & \overline{\Gamma_\eta}
			\end{bmatrix}
		$, where $\Gamma_\eta:=\Gamma^{[\varphi,\varphi]}_{\eta,\eta}$ and $C_\eta:=C^{[\varphi,\varphi]}_{\eta,\eta}$.

				For the first claim, by Assumption \ref{assump:nonnull signal} and a direct expansion, we have $\gamma_0(\eta)\asymp \eta^\varrho$ and $\gamma_2(\eta)\asymp \eta^\varrho$ when $|\eta|\geq 1$.  Since $\varrho$ is finite, $C_\eta$ decays exponentially fast when $\eta$ increases. Thus, $C_\eta$ is negligible compared with $\Gamma_\eta$, and the eigenvalues of $\underline{\Sigma}_\eta$ are mainly determined by $\Gamma_\eta$ by a perturbation argument. By a direct calculation, the eigenvalues of $\Gamma_\eta$ are $\frac{1}{2}[\gamma_0(\eta)+4\pi^2\gamma_2(\eta)\pm \sqrt{(\gamma_0(\eta)+4\pi^2\gamma_2(\eta))^2-16\pi^2(\gamma_0(\eta)\gamma_2(\eta)-\gamma_1(\eta)^2)}]$.
		By Lemma \ref{lem: V_Phi and d_t V_Phi second-order stats each entries part2}, 
		\begin{align*}
		\frac{|\gamma_1(\eta)|^2}{\gamma_0(\eta)\gamma_2(\eta)}&\,=\frac{(\int e^{-4\pi^2(\xi+\eta)^2}|\xi+\eta|d\vartheta(\xi))^2}{\gamma_0(\eta)\gamma_2(\eta)}\left(\frac{|\gamma_1(\eta)|}{\int e^{-4\pi^2(\xi+\eta)^2}|\xi+\eta|d\vartheta(\xi)}\right)^2 \\
		&\,\leq \left(\frac{|\gamma_1(\eta)|}{\int e^{-4\pi^2(\xi+\eta)^2}|\xi+\eta|d\vartheta(\xi)}\right)^2 \leq (1-\mathsf d_m)^2\,, 
		\end{align*}
		where the first inequality comes from the Cauchy-Schwartz, and the second inequality comes from Lemma \ref{lem: V_Phi and d_t V_Phi second-order stats each entries}. Since $\mathsf{d}_m>0$ and is independent of $|\eta|\geq1$, hence $\gamma_0(\eta)\gamma_2(\eta)-\gamma_1(\eta)^2\asymp \gamma_0(\eta)\gamma_2(\eta)$ when $|\eta|\geq 1$. As a result, 
		\[
		\gamma_0(\eta)\gamma_2(\eta)-\gamma_1(\eta)^2\geq \frac{4\mathsf d_m}{(1+\mathsf d_m)^2}\gamma_0(\eta)\gamma_2(\eta)\,, 
		\]
		and hence 
		\begin{align*}
		&(\gamma_0(\eta)+4\pi^2\gamma_2(\eta))^2-16\pi^2(\gamma_0(\eta)\gamma_2(\eta)-\gamma_1(\eta)^2)\\
		\leq&\, \gamma_0(\eta)^2+8\pi^2\Big(1-\frac{8\mathsf d_m}{(1+\mathsf d_m)^2}\Big)\gamma_0(\eta)\gamma_2(\eta)+16\pi^4\gamma_2(\eta)^2\,.
		\end{align*}
		By combining this fact and the fact that $\gamma_0(\eta)\asymp \gamma_2(\eta)$ when $\eta\geq \eta_0$, the eigenvalues of $\Gamma_\eta$, $\lambda_{\eta,1},\ldots,\lambda_{\eta,4}$, are of the order $\eta^\varrho$ when $\eta\geq \eta_0$.
			
			For the second claim, we first show the result when $\eta$ is close to $0$. Denote the eigendecomposition of $\underline{ \Sigma}_\eta$ as $UDU^*$, where $U\in U(4)$ and $D$ is a diagonal matrix. To quantify $\lambda_{\eta,3}$ and $\lambda_{\eta,4}$ when $\eta$ is small, by Lemma \ref{lem: V_Phi and d_t V_Phi second-order stats each entries},
			we have
			\begin{align*}
			\Gamma_\eta 
			=&\, 
			\begin{bmatrix}
				\gamma_0(0) & 0\\
				0 & 4\pi^2\gamma_2(0)
			\end{bmatrix}
			+\begin{bmatrix}
				Q_0\eta^2 & -2\pi i Q_1\eta\\
				2\pi iQ_1\eta & 4\pi^2 Q_2\eta^2 
			\end{bmatrix}+\begin{bmatrix}
				O(\eta^4) & O(\eta^3)\\
				O(\eta^3) & O(\eta^4) 
			\end{bmatrix}
			\end{align*}
			and
			\begin{align*}
			C_\eta
			=&\,\begin{bmatrix}
				\gamma_0(0)  & 0\\
				 0 & 4\pi^2\gamma_2(0)
			\end{bmatrix}+\begin{bmatrix}
				(R_0-4\pi^2\gamma_0(0))\eta^2  & 2\pi i \gamma_0(0)\eta\\
				 2\pi i \gamma_0(0) \eta& 4\pi^2(R_2-4\pi^2\gamma_2(0))\eta^2
			\end{bmatrix}+\begin{bmatrix}
				O(\eta^4) & O(\eta^3)\\
				O(\eta^3) & O(\eta^4) 
			\end{bmatrix}
		\end{align*} 
			Therefore, $U$ and $D$ are well approximated by
			\[
			U_0=\frac{1}{\sqrt{2}}\begin{bmatrix}1 & 0 & 0 & 1\\ 0 & 1 & 1 & 0\\ -1 & 0 & 0 & 1 \\ 0 & -1 & 1 & 0\end{bmatrix}\mbox{ and }
			D_0=\text{diag}\Big(\mathsf c_4\eta^6,\mathsf c_3\eta^2,2\gamma_0(0),8\pi^2\gamma_2(0)\Big)
			\]
			up to a negligible higher order error, where $\mathsf c_3>0$ and $\mathsf c_4>0$ are constants depending on $\varphi$ and $\varrho$ via $\gamma_i$ and $\nu_i$. As a result, $\lambda_{\eta,3}$ and $\lambda_{\eta,4}$ approach zero at the rate of $\eta^2$ and $\eta^6$ when $\eta\to 0$ respectively, and hence $\det\underline{\Sigma}_\eta\to 0$ at the rate of $\eta^8$ as well. Since eigenvalues continuously depend on $\eta$ and we have had a control when $\eta=1$, we finish the claim by the compactness argument.

			For the third claim, we use the fact that $\underline{ \Sigma}_\eta$ can be well approximated by $U_0D_0U_0^*$ via approximating the eigenstructure. 
			By a direct expansion, we have
			\begin{equation}
			U_0^*\underline{u}=\begin{bmatrix}\sqrt{2}i\sin(\theta) & 2\sqrt{2}\pi ir\cos(\theta+\phi)& -2\sqrt{2}\pi r \sin(\theta+\phi) & \sqrt{2}\cos(\theta)\end{bmatrix}^\top\nonumber
			\end{equation}
			and
			\begin{equation*}
			U_0^*\underline{\mu_{\xi_0,\eta}}=\begin{bmatrix}\sqrt{2}i\sin(2\pi\xi_0t) & 2\sqrt{2}\pi i\xi_0\cos(2\pi\xi_0t)& -2\sqrt{2}\pi \xi_0 \sin(2\pi\xi_0t) & \sqrt{2}\cos(2\pi\xi_0t)\end{bmatrix}^\top\,.\nonumber
			\end{equation*}
			As a result, 
			$\underline{u}^*\underline{ \Sigma}_\eta^{-1}\underline{u}$ is approximated by 
			\begin{align*}
			\underline{u}^*U_0D_0^{-1}U_0^*\underline{u}=\frac{2\sin^2(\theta)}{\mathsf c_4}\eta^{-6}+\frac{8\pi^2r^2\cos^2(\theta+\phi)}{\mathsf c_3}\eta^{-2}+\frac{4\pi^2r^2\sin^2(\theta+\phi)}{\gamma_0(0)}+\frac{\cos^2(\theta)}{4\pi^2\gamma_2(0)}
						\end{align*}
			up to a negligible error. 
		The argument for  $\underline{\mu_{\xi_0,\eta}^*}\underline{ \Sigma}_\eta^{-1} \underline{\mu_{\xi_0,\eta}}$ and $\underline{\mu_{\xi_0,\eta}^*}\underline{ \Sigma}_\eta^{-1} \underline{u}$ follow the same line by setting $\theta=(2\pi \xi_0t \textup{ mod }2\pi)$ and $\omega=\eta-\xi_0$. We thus conclude the proof.
		\end{proof}
	
	\begin{remark}
	Note that for $\underline{\mu_{\xi_0,\eta}^{*}}\underline{ \Sigma}_\eta^{-1} \underline{\mu_{\xi_0,\eta}}$, since $\xi_0$ is positive, the phase $\phi=0$. Thus, when the $\eta^{-6}$ term is zero, the $\eta^{-2}$ term is not zero.   
	If we rewrite $r^2\sin^2(\theta+\phi)=|\omega|^2(1-\cos(2\theta+2\phi))=|\omega|^2-\Re(e^{i2\theta}\omega^2)$, we can further observe the interaction between $\theta$ and $\omega$.
	\end{remark}
	
	The next key lemma is about a simplification of quantities $A^{(\hbar)}_{\eta}(\theta,\omega)$, $B^{(\hbar)}_{\xi_0,\eta}(\theta,\omega)$ and $C^{(\hbar)}_{\xi_0,\eta}$ defined in \eqref{Expansion:ABC}. Note that it only holds when $\eta$ is large. When $\eta$ is small, particularly
	
	\begin{lemma}\label{Control ABC bound over different regions}
	Consider quantities $A^{(\hbar)}_{\eta}(\theta,\omega)$, $B^{(\hbar)}_{\xi_0,\eta}(\theta,\omega)$ and $C^{(\hbar)}_{\xi_0,\eta}$ defined in \eqref{Expansion:ABC}, where we take window to be $\hat{\hbar}=\hat{h}\varphi$ with an even, bounded and smooth function $\varphi$. For $\theta\in [0,\pi]$, $\omega\in \mathbb{C}$, and $\eta\geq1$, we have
	\begin{align*}
			{\mathsf c}^{-1}\boldsymbol{\omega}^* \Gamma_\eta^{-1} \boldsymbol{\omega} &\,\leq A^{(\hbar)}_{\eta}(\theta,\omega)\leq \mathsf c\boldsymbol{\omega}^* \Gamma_\eta^{-1} \boldsymbol{\omega}\\
			{\mathsf c}^{-1}\Re\big[ e^{i\theta}\mu_{\xi_0,\eta}^{(\hbar)*}\Gamma_\eta^{-1}\boldsymbol{\omega} \big] &\, \leq B^{(\hbar)}_{\xi_0,\eta}(\theta,\omega)\leq
			\mathsf c\Re\big[ e^{i\theta}\mu_{\xi_0,\eta}^{(\hbar)*}\Gamma_\eta^{-1}\boldsymbol{\omega} \big]\\
			{\mathsf c}^{-1} \mu^{(\hbar)*}_{\xi_0,\eta}\Gamma_\eta^{-1} \mu^{(h)*}_{\xi_0,\eta} &\,\leq  C^{(\hbar)}_{\xi_0,\eta}\leq \mathsf c \mu^{(\hbar)*}_{\xi_0,\eta}\Gamma_\eta^{-1} \mu^{(\hbar)}_{\xi_0,\eta}
			\end{align*}
			for some $\mathsf c>1$ depending on $\varphi$ and $\rho$. 
	\end{lemma}
	\begin{proof}
	To simplify the notation, when there is no danger of confusion, we omit the superscript ${}^{(\hbar)}$ describing the dependence on the window below.
	 Consider \eqref{Expansion Atheta omega in a different form}; that is, $A_{\eta}(\theta,\omega)=\boldsymbol{\omega}^* \conj{{P}^{-1}} \boldsymbol{\omega} - \Re \big(e^{2i\theta}\boldsymbol{\omega}^\top {R}^\top\conj{ P^{-1}} \boldsymbol{\omega}\big)$. Also note that $C^{(h)}_{\xi_0,\eta}$ is a special $A_{\eta}(\theta,\omega)$ when $\theta=0$ and $\omega=\eta-\xi_0$. First, by Lemma \ref{Appendex:C and P of W for Cov}, $e^{4\pi^2 \eta^2}\conj{C}_\eta$ and $\Gamma_\eta$ are of the same order when $\eta$ is bounded, like when $\eta\leq 10$. On the other hand, $\|R\|$ decays to $0$ exponentially when $\eta$ increases. Thus, $R = \conj{C}_\eta \Gamma_\eta^{-1}$ is negligible in that we have $\|R\|=O(e^{-4\pi^2\eta^2})$ when $\eta\geq 1$, where the implied constant depends on $\phi$ and $\varrho$. Since $\boldsymbol{\omega}^\top {R}^\top\conj{ P^{-1}} \boldsymbol{\omega}=(\overline{R\boldsymbol{\omega}})^*\conj{ P^{-1}} \boldsymbol{\omega}$, we use the Cauchy-Schwartz inequality to get 
	 \[
	 |\Re \big(e^{2i\theta}\boldsymbol{\omega}^\top {R}^\top\conj{ P^{-1}} \boldsymbol{\omega}\big)|\leq |(R\boldsymbol{\omega})^* \conj{ P^{-1}} (R\boldsymbol{\omega})|^{1/2}|\boldsymbol{\omega}^* \conj{ P^{-1}} \boldsymbol{\omega}|^{1/2}\leq \|R\|\boldsymbol{\omega}^* \conj{{P}^{-1}} \boldsymbol{\omega}\,.
	 \] 
	 The control of $\boldsymbol{\omega}^* \conj{{P}^{-1}} \boldsymbol{\omega}$ by $\boldsymbol{\omega}^* \Gamma_\eta^{-1} \boldsymbol{\omega}$ is similar. Indeed, by a direct block matrix inversion, we have
	 \[
	 \conj{{P}^{-1}}=\Gamma_\eta^{-1}+\Gamma_\eta^{-1}C_\eta(\bar{\Gamma}_\eta-C_\eta^*\Gamma_\eta^{-1}{C}_\eta)^{-1}C_\eta^*\Gamma_\eta^{-1}\,.
	 \]
	 By the same argument as above, $\Gamma_\eta^{-1}{C}_\eta$  exponentially when $\eta$ increases, so $C_\eta^*\Gamma_\eta^{-1}\overline{C}_\eta$ is small compared with $\Gamma_\eta^{-1}$. By the inversion approximation 
	 \begin{equation}\label{Proof the inversion approximation expansion}
	 (\bar{\Gamma}_\eta-C_\eta^*\Gamma_\eta^{-1}{C}_\eta)^{-1}=\bar{\Gamma}^{-1}_\eta+\bar{\Gamma}^{-1}_\eta C_\eta^*\Gamma_\eta^{-1}{C}_\eta  \bar{\Gamma}^{-1}_\eta +O(\|\bar{\Gamma}^{-1}_\eta C_\eta^*\Gamma_\eta^{-1}{C}_\eta\|^2 )
	 \end{equation}
	 and the fact that $\bar{\Gamma}^{-1}_\eta C_\eta^*\Gamma_\eta^{-1}{C}_\eta  \bar{\Gamma}^{-1}_\eta={R}^\top\Gamma_\eta^{-1}\overline{R}$, we have 
	 \[
	 |\boldsymbol{\omega}^*(\bar{\Gamma}_\eta -C_\eta^*\Gamma_\eta^{-1}{C}_\eta)^{-1}\boldsymbol{\omega} -\boldsymbol{\omega}^* \bar{\Gamma}^{-1}_\eta \boldsymbol{\omega}|\leq 2\|R\|^2 \boldsymbol{\omega}^* {\Gamma}^{-1}_\eta \boldsymbol{\omega}\,.
	 \]
	 Thus,
	 \begin{align*}
	 &|\boldsymbol{\omega}^* \conj{{P}^{-1}} \boldsymbol{\omega}-\boldsymbol{\omega}^* \Gamma_\eta^{-1} \boldsymbol{\omega}|
	 = |\boldsymbol{\omega}^*\Gamma_\eta^{-1}C_\eta(\bar{\Gamma}_\eta-C_\eta^*\Gamma_\eta^{-1}{C}_\eta)^{-1}C_\eta^*\Gamma_\eta^{-1}\boldsymbol{\omega}|\\
	 \leq & \,|\boldsymbol{\omega}^*\Gamma_\eta^{-1}C_\eta \overline{\Gamma_\eta^{-1}}C_\eta^*\Gamma_\eta^{-1}\boldsymbol{\omega}|+2\|R\|^2|\boldsymbol{\omega}^*\Gamma_\eta^{-1}C_\eta {\Gamma_\eta^{-1}}C_\eta^*\Gamma_\eta^{-1}\boldsymbol{\omega}| \\
	 \leq &\, \|R\|^2\boldsymbol{\omega}^* \conj{{\Gamma}_\eta^{-1}} \boldsymbol{\omega}+2\|R\|^4\boldsymbol{\omega}^* {{\Gamma}_\eta^{-1}} \boldsymbol{\omega}
	 \end{align*}
	 since $C_\eta^*=\overline{C}_\eta$. By putting all together, we have
	 \begin{align*}
	 |A_{\eta}(\theta,\omega)-\boldsymbol{\omega}^* {{\Gamma}_\eta^{-1}} \boldsymbol{\omega}|&\,\leq  (\|R\|^2+\|R\|^3)\boldsymbol{\omega}^* \conj{{\Gamma}_\eta^{-1}} \boldsymbol{\omega}+(\|R\|+2\|R\|^4+2\|R\|^5)\boldsymbol{\omega}^* {{\Gamma}_\eta^{-1}} \boldsymbol{\omega}\\
	 &\,\leq 2\|R\| \boldsymbol{\omega}^* {{\Gamma}_\eta^{-1}} \boldsymbol{\omega}+2\|R\|^2\boldsymbol{\omega}^* \conj{{\Gamma}_\eta^{-1}} \boldsymbol{\omega}\,.
	 \end{align*}
	 Since $\|R\|^2\boldsymbol{\omega}^* \conj{{\Gamma}_\eta^{-1}} \boldsymbol{\omega}$ is negligible compared with $\|R\| \boldsymbol{\omega}^* {{\Gamma}_\eta^{-1}} \boldsymbol{\omega}$ and $2\|R\|<1$, we obtain the proof.
	 The proof of $B_{\xi_0,\eta}(\theta,\omega)$ is similar.
	 \end{proof}
	 
	 The next two lemmas are about the eigenstructure perturbation when the window is perturbed. 

\begin{lemma}[$A^{(\hbar)}_{\eta}(\theta,\omega)$, $B^{(\hbar)}_{\xi_0,\eta}(\theta,\omega)$ and $C^{(\hbar)}_{\xi_0,\eta}$ deformation caused by window perturbation]\label{Lemma ABC perturbation after window perturbation}
			Follow the same notations used in Lemma \ref{Appendex:C and P of W for Cov}. Fix $\eta>0$.
			Assume two symmetric, bounded and smooth functions, $\varphi_1\neq \varphi_2$, and satisfy 
			$\varphi_1\varphi_2\neq 0$, 
			$\max_{l,k=1,2}|\gamma^{[\varphi_1,\varphi_1]}_i(\eta)-\gamma^{[\varphi_l,\varphi_k]}_i(\eta)|\leq\epsilon$ and 			$\max_{l,k=1,2}|\nu^{[\varphi_1,\varphi_1]}_i(\eta)-\nu^{[\varphi_l,\varphi_k]}_i(\eta)|\leq\epsilon$ for some small $\epsilon>0$ for $i=0,1,2$. 

			\end{lemma}
	 
	 \begin{proof}
	 For $i=1,2$, denote
	 \begin{align*}
			A^{(h_i)}_{\eta}(\theta,\omega)&=\boldsymbol{\omega}^* \conj{{P}_i^{-1}} \boldsymbol{\omega} - \Re \Big(e^{2i\theta}\boldsymbol{\omega}^\top {R}_i^\top\conj{ P_i^{-1}} \boldsymbol{\omega}\Big)\\
			B^{(h_i)}_{\xi_0,\eta}(\theta,\omega)&=
			\Re\Big[ e^{i\theta}\Big(\mu_{\xi_0,\eta}^{(h_i)*}-\mu_{\xi_0,\eta}^{(h_i)\top} R_i^\top\Big)\conj{ P_i^{-1}}\boldsymbol{\omega} \Big]\nonumber\,,
			\end{align*}
			where $P_i = \conj{\Gamma}_\eta^{[\varphi_i,\varphi_i]}- \conj{C_\eta^{[\varphi_i,\varphi_i]}} \Gamma_\eta^{[\varphi_i,\varphi_i]-1} C_\eta^{[\varphi_i,\varphi_i]}$ and $R_i= \conj{C}^{[\varphi_i,\varphi_i]}_\eta \Gamma_\eta^{[\varphi_i,\varphi_i]-1}$. To simplify the notation, we denote $\Gamma_i:={\Gamma}_\eta^{[\varphi_i,\varphi_i]}$ and $C_i:=C_\eta^{[\varphi_i,\varphi_i]}$ below.
	 Again, since $C^{(h_i)}_{\xi_0,\eta}$ is a special case of $A^{(h_i)}_{\eta}(\theta,\omega)$, its results follow directly from those for $A^{(h_i)}_{\eta}(\theta,\omega)$.
	 When $\eta\geq 1$, the proof is based on the same approximation technique used in Lemma \ref{Control ABC bound over different regions}, particularly the inversion formula \eqref{Proof the inversion approximation expansion}. 
	 \end{proof}
	 
	The following lemma will be used to control the precision matrix perturbation.
	
	\begin{lemma}[Covariance deformation caused by window perturbation]\label{Lemma covariance eigenstructure after perturbation}
			Follow the same notations used in Lemma \ref{Appendex:C and P of W for Cov}. Fix $\eta>0$.
			Assume two symmetric, bounded and smooth functions, $\varphi_1\neq \varphi_2$, and satisfy 
			$\varphi_1\varphi_2\neq 0$, 
			$\max_{l,k=1,2}|\gamma^{[\varphi_1,\varphi_1]}_i(\eta)-\gamma^{[\varphi_l,\varphi_k]}_i(\eta)|\leq\epsilon$ and 
						$\max_{l,k=1,2}|\nu^{[\varphi_1,\varphi_1]}_i(\eta)-\nu^{[\varphi_l,\varphi_k]}_i(\eta)|\leq\epsilon$  
			for some small $\epsilon>0$ for $i=0,1,2$. 
			Denote the eigendecomposition of $\underline{\Sigma}_\eta^{(h_1)}=U_2D_2U_2^*$, where $U_2\in U(4)$ and $D_2\in \mathbb{R}^{4\times 4}$ is diagonal. 
			Then, when $\epsilon$ is sufficiently small, the eigendecomposition of $\underline{\Sigma}_{\eta,\eta}^{(h_1, h_2)}={U}_4 D_4{U}_4^*$, where $U_4\in U(4)$ and $D_4$ is diagonal, satisfies
			\begin{equation}
				D_4 =\begin{bmatrix}
				0 & 0 \\ 0 & 2D_2 
				\end{bmatrix}+\begin{bmatrix}
				\Lambda_1'& 0 \\ 0 & \Lambda_2'	
				\end{bmatrix},\nonumber
			\end{equation}
			where $\Lambda_1'$ and $\Lambda_2'$ are of order $O(\epsilon)$ with the implied constants depending on $\eta$, $\varphi_i$ and $\varrho$, and 
			\begin{equation}\label{expansion:U4}
				U_4=\frac{1}{\sqrt{2}}\begin{bmatrix}
				U_2(\Gamma_1+\Gamma_1 B-\Gamma_2 C^*) & U_2(\Gamma_2+\Gamma_1 C+\Gamma_2 D)\\
				-U_2(\Gamma_1+\Gamma_1 B+\Gamma_2C^*) & U_2(\Gamma_2-\Gamma_1C+\Gamma_2 D)
				\end{bmatrix}\,.\nonumber
						\end{equation}
			where $\Gamma_1$ and $\Gamma_2$ can be found in \eqref{Expansion Gamma2 and Lambda2'} and $B,C$ and $D$ are of order $O(\epsilon)$ with the implied constants depending on $\eta$, $\varphi_i$ and $\varrho$.
		\end{lemma}

		\begin{proof}
		To ease the intense notation, we use the superscript $[i,j]$ to replace $[\varphi_i,\varphi_j]$ in this proof. We study the relationship between $\underline{\Sigma}_{\eta,\eta}^{(h_1,h_2)}\in \mathbb{C}^{8\times 8}$ and $\underline{\Sigma}_{\eta}^{(h_1)}\in \mathbb{C}^{4\times 4}$ by exploring the perturbed eigenstructure of $\underline{\Sigma}_{\eta,\eta}^{(h_1,h_2)}$. Denote
			\begin{equation}
				{\Sigma}:=\begin{bmatrix}I\\I\end{bmatrix} \underline{\Sigma}_\eta^{(h_1)}\begin{bmatrix}I&I\end{bmatrix}\,,
			\end{equation}
			where $I$ is a $4\times 4$ identity matrix.
			Then ${\Sigma}$ and $\underline{\Sigma}_{\eta,\eta}^{(h_1, h_2)}$ have a simple relationship:
			\begin{equation}\label{definition of P permutation first time}
				\mathsf P\underline{\Sigma}_{\eta,\eta}^{(h_1,h_2)} \mathsf P^\top={\Sigma}+E_4\,,
			\end{equation}
			where $\mathsf P$ is a permutation matrix mapping $[x_1,x_2,x_3,x_4,x_5,x_6,x_7,x_8]^\top\in \mathbb{C}^8$ to $[x_1,x_2,x_5,x_6,x_3,x_4,x_7,x_8]^\top\in \mathbb{C}^8$, and 
			\begin{equation}\label{Definition E4 error matrix for window perturbation}
				E_4:=\begin{bmatrix}
				0&  E_2\\
				{E}^*_2 & \breve E_2
				\end{bmatrix}\,,
			\end{equation}
			where 
			\[
			 E_2=\begin{bmatrix} \Gamma^{[1,1]}_{\eta,\eta}-\Gamma^{[1,2]}_{\eta,\eta}  & C^{[1,1]}_{\eta,\eta}-C^{[1,2]}_{\eta,\eta}  \\
		 \overline{C^{[1,1]}_{\eta,\eta}}-\overline{C^{[1,2]}_{\eta,\eta}} & \overline{\Gamma^{[1,1]}_{\eta,\eta}}-\overline{\Gamma^{[1,2]}_{\eta,\eta}} 
		 \end{bmatrix}
			\]
			and
			\[
			 \breve E_2=\begin{bmatrix} \Gamma^{[1,1]}_{\eta,\eta}-\Gamma^{[2,2]}_{\eta,\eta}  & C^{[1,1]}_{\eta,\eta}-C^{[2,2]}_{\eta,\eta}  \\
		 \overline{C^{[1,1]}_{\eta,\eta}}-\overline{C^{[2,2]}_{\eta,\eta}} & \overline{\Gamma^{[1,1]}_{\eta,\eta}}-\overline{\Gamma^{[2,2]}_{\eta,\eta}} 
		 \end{bmatrix}\,.
			\]
			Here, $E_2$ contains entries of $\gamma^{[1,1]}_i(\eta)-\gamma^{[1,2]}_i(\eta)$ and $\nu^{[1,1]}_i(\eta)-\nu^{[1,2]}_i(\eta)$ and $\breve E_2$ contains entries of $\gamma^{[1,1]}_i(\eta)-\gamma^{[2,2]}_i(\eta)$ and $\nu^{[1,1]}_i(\eta)-\nu^{[2,2]}_i(\eta)$, so that they are of order $O(\epsilon)$ by assumption. 

			Now we evaluate the perturbation bound. Clearly, due to the non-zero pseudocovariance, $\underline{\Sigma}_\eta^{(h_1)}$ has four distinct eigenvalues, and ${\Sigma}$ is of rank $4$ and has 4 distinct non-zero eigenvalues. A (non-unique) eigendecomposition of ${\Sigma}$ naturally becomes $ U  D U^*$,
			where 
			\begin{equation}
				 U:=\frac{1}{\sqrt{2}}\begin{bmatrix}
				U_2 & U_2 \\ -U_2 & U_2
				\end{bmatrix}\in U(8),\quad D:=\begin{bmatrix}
				0 & 0 \\ 0 & 2D_2 
				\end{bmatrix}\in \mathbb{R}^{8\times 8}.
			\end{equation} 
			Based on the eigendecomposition of ${\Sigma}$, we apply the perturbation calculation of the eigensystem problem \cite{VanDerAn_TerMorsche:2007} to approximate the eigenvalues and eigenvectors of the eigendecomposition 
			\begin{align}
			\underline{\Sigma}_{\eta,\eta}^{(h_1, h_2)}={U}_4 D_4{U}_4^*. 
			\end{align}
			
			To bound the eigenvalue derivative of $\underline{\Sigma}_{\eta,\eta}^{(h_1, h_2)}$ from that of ${\Sigma}$ when $\epsilon$ is sufficiently small, we apply \cite[(3.7d)]{VanDerAn_TerMorsche:2007}. When $\epsilon$ is sufficiently small, after a direct expansion, we get the first order approximation of eigenvalues of $\underline{\Sigma}_{\eta,\eta}^{(h_1,h_2)}$; that is, 
			\begin{equation}
				D_4 = D+\begin{bmatrix}
				\Lambda_1'& 0 \\ 0 & \Lambda_2'	
				\end{bmatrix},
			\end{equation}
			where $\Lambda_1'$ and $\Lambda_2'$ are diagonal with entries of order $\epsilon$. Indeed, $\Lambda_1'$ and $\Lambda_2'$ can be approximated by solving the following eigenvalue problems \cite[(3.7a) and (3.7d)]{VanDerAn_TerMorsche:2007}
			\begin{align}
			\frac{1}{\sqrt{2}}\begin{bmatrix}
			U_2^* & -U_2^*
			\end{bmatrix}E_4\frac{1}{\sqrt{2}}\begin{bmatrix}
			U_2\\ -U_2
			\end{bmatrix}\Gamma_1=\frac{1}{2}U_2^*(\breve E_2-2\Re E_2) U_2\Gamma_1=\Gamma_1\Lambda_1'\label{Expansion Gamma2 and Lambda2'}\\
			\frac{1}{\sqrt{2}}\begin{bmatrix}
			U_2^* & U_2^*
			\end{bmatrix}E_4\frac{1}{\sqrt{2}}\begin{bmatrix}
			U_2\\ U_2
			\end{bmatrix}\Gamma_2=\frac{1}{2}U_2^*(\breve E_2+2\Re E_2) U_2\Gamma_2=\Gamma_2\Lambda_2'\nonumber
			\end{align}
			for some $\Gamma_1\in U(4)$ and $\Gamma_2\in U(4)$.
			Therefore, $\Lambda_1'$ and $\Lambda_2'$ are eigenvalues  of $\breve E_2-2\Re E_2$ and $\breve E_2+2\Re E_2$ respectively. 
			When $\eta$ is sufficiently large, the peudocovariance part becomes exponentially small. Thus, $E_2$ and $\breve E_2$, and hence $\breve E_2-2E_2$ and $\breve E_2+2E_2$, get closer to a diagonal block matrix. As a result, $\Lambda_1'$ and $\Lambda_2'$ are of order $\epsilon$. 
			
			When $\eta\to 0$, we need to
			take a closer look at $\breve E_2-2\Re E_2$ as a $2\times 2$ block matrix. By a direct expansion, the $(1,1)$-th block is $-\overline{\Gamma^{[1,1]}_{\eta,\eta}}+2\Re \Gamma^{[1,2]}_{\eta,\eta} - \Gamma^{[2,2]}_{\eta,\eta}$, which reads like, where we omit the dependence on $\eta$ for the $\gamma$ and $\nu$ terms,
			\[
			-\begin{bmatrix}
			\gamma^{[1,1]}_0+\gamma^{[2,2]}_0 -2\gamma^{[1,2]}_0 & -2\pi i(\gamma^{[1,1]}_0-\gamma^{[2,2]}_0) \\
			2\pi i(\gamma^{[1,1]}_0-\gamma^{[2,2]}_0) & 4\pi^2(\gamma^{[1,1]}_2+\gamma^{[2,2]}_2-2\gamma^{[1,2]}_2)
			\end{bmatrix}\,,
			\]
			 the $(1,2)$-th block is $-\overline{C^{[1,1]}_{\eta,\eta}}+2\Re C^{[1,2]}_{\eta,\eta} - C^{[2,2]}_{\eta,\eta}$, which reads like
			 \[
			-e^{-4\pi^2\eta^2}\begin{bmatrix}
			\nu^{[1,1]}_0+\nu^{[2,2]}_0-2\nu^{[1,2]}_0 
			& 2\pi i(\nu^{[1,1]}_1-\nu^{[2,2]}_1-\eta(\nu^{[1,1]}_0-\nu^{[2,2]}_0)) \\
			 2\pi i(-\nu^{[1,1]}_1+\nu^{[2,2]}_1-\eta(\nu^{[1,1]}_0-\nu^{[2,2]}_0)) & 4\pi^2(\nu^{[1,1]}_2+\nu^{[2,2]}_2-2\nu^{[1,2]}_2)
			\end{bmatrix}\,,
			\]
			and the $(2,1)$-th and $(2,2)$-th entries are the complex conjugation of the $(1,2)$-th and $(1,1)$-th entries respectively.
			Clearly, the pseudocovariance part gets closer to the covariance part when $\eta\to 0$. By an argument similar to that for the second part of Lemma \ref{Lemma: c of mu_xi0eta when eta small}, we obtain the claim.

			To bound the derivation of eigenvectors of $\underline{\Sigma}_{\eta,\eta}^{(h_1, h_2)}$ from those of ${\Sigma}$ when $\epsilon$ is sufficiently small, we apply \cite[(2.3),(2.5),(3.9a),(3.9b),(3.11),(3.5)]{VanDerAn_TerMorsche:2007}.  
			To this end, we need to handle the non-unique eigendecomposition since the zero eigenvalue of ${\Sigma}$ has a multiplicity exceeding $1$. Suppose $U_4$ is perturbed from an eigenvector matrix $U$ of ${\Sigma}$, and $\tilde U$ and $ U$ are related by $\Gamma_4$; that is, $\tilde U=U\Gamma_4$ \cite[(2.5)]{VanDerAn_TerMorsche:2007}, where  
			\begin{equation}
				\Gamma_4=\begin{bmatrix}
				\Gamma_1 &0 \\ 0& \Gamma_2
				\end{bmatrix}\in U(8)\,\label{calculation of Gamma 2 and Gamma4}
			\end{equation}
			is calculated directly by \cite[(3.7)]{VanDerAn_TerMorsche:2007} or \eqref{Expansion Gamma2 and Lambda2'}.
			Hence, the form of $\tilde U$ is confirmed to be
			\begin{equation}
				\tilde U:=\frac{1}{\sqrt{2}}\begin{bmatrix}
				U_2\Gamma_1 & U_2\Gamma_2 \\ -U_2\Gamma_1 & U_2\Gamma_2
				\end{bmatrix}\in U(4)\,.
			\end{equation}
			The eigenvector derivative, denoted as $U'$, comes from evaluating $\tilde U^{-1}U'$ \cite[(2.3)]{VanDerAn_TerMorsche:2007}, which can be evaluated by carrying out \cite[(3.9a),(3.9b),(3.11),(3.5)]{VanDerAn_TerMorsche:2007} sequentially. As a result, we have
			\begin{equation}
				U_4
				=
				\tilde U+\tilde U
				\begin{bmatrix}
					B & C \\ 
					\tilde C & D 
				\end{bmatrix},
			\end{equation}
			where $B,C,\tilde C$ and $D$ are of order $\epsilon$. 
			For example, by \cite[(3.9b)]{VanDerAn_TerMorsche:2007},
			\begin{equation}\label{Expansion tilde C}
			\tilde C=-(2D_2)^{-1}\Gamma_1^* U_2^*\breve E_2 U_2\Gamma_2=-C^*\,,
			\end{equation}
			which is of order $\epsilon$.
			Note that while by \cite{VanDerAn_TerMorsche:2007} we can write done a more precise error term, the bound found here is sufficient since we only care about the case $\epsilon\to 0$.
			Thus, we have
			\begin{equation}
				U_4=\frac{1}{\sqrt{2}}\begin{bmatrix}
				U_2(\Gamma_1+\Gamma_1 B-\Gamma_2 C^*) & U_2(\Gamma_2+\Gamma_1 C+\Gamma_2 D)\\
				-U_2(\Gamma_1+\Gamma_1 B+\Gamma_2C^*) & U_2(\Gamma_2-\Gamma_1C+\Gamma_2 D)
				\end{bmatrix}\,.\nonumber
			\end{equation}
		\end{proof}
	
	\begin{remark}
	Note that for $\Lambda_1'$, the error $\breve E_2-2E_2$ depends on $(\varphi_1-\varphi_2)^2$ and for $\Lambda_1'$, the error $\breve E_2+2E_2$ depends on $4[\varphi_1^2-(\frac{\varphi_1+\varphi_2}{2})^2]$ since, for example, %is smaller since
			\[
			 \gamma^{[\varphi_2,\varphi_2]}_l(\eta)+\gamma^{[\varphi_1,\varphi_1]}_l(\eta)-2\gamma^{[\varphi_1,\varphi_2]}_l(\eta)=\int e^{-4\pi^2(\xi+\eta)^2}(\xi+\eta)^l(\varphi_1(\xi+\eta)-\varphi_2(\xi+\eta))^2p(\xi)d\xi\,. 
			\]
			This means that the error depends on how two windows differ.
		Next, recall that when $\eta$ is small, by Lemma \ref{Lemma: c of mu_xi0eta when eta small}, the smallest eigenvalues in $D_2$ is of order $\eta^6$, which comes from the degeneracy of the augmented covariance matrix. Thus, the error matrix $E_4$ does have further structures that could be further explored. However, for our application, the provided bound is sufficient. 
		\end{remark}

	For a given Schwartz window function $\hbar$, we need the following Lemma evaluating the density of $\B{Z}^{(\hbar)}_{\alpha,\xi_0,\eta} \vcentcolon= \begin{bmatrix}Y^{(\hbar)}_{\alpha,\eta}& {\Omega}^{(\hbar)}_{\eta}\end{bmatrix}^\top$ when we study the moments of $Y_{f+\Phi}^{(\hbar,\alpha,\xi)}(t,\eta)$.

		\begin{lemma}\label{lemma:Z density function}
			Suppose Assumptions~\ref{assump:noise part} and \ref{assump:nonnull signal} hold and $\hbar$ is the given Schwartz window function. For $\eta>0$, $\alpha>0$, and $\xi>0$, the density function of $\B{Z}^{(\hbar)}_{\alpha,\xi_0,\eta}\in\mathbb{C}^2$ is 
			\begin{align}
			f_{\B{Z}^{(\hbar)}_{\alpha,\xi_0,\eta}}(y,\omega)=\frac{4 |y|^2E^4_{\alpha,\xi}(\omega)}{\sqrt{\det{\underline{\Sigma}_\eta^{(\hbar)}}}}
			\exp\Big[-\frac{1}{2}\big(\underline{g_Y^{-1}(y,\omega)}-\underline{\mu^{(\hbar)}_{\xi_0,\eta}}\big)^* \underline{\Sigma}_\eta^{(\hbar)-1} \big(\underline{g_Y^{-1}(y,\omega)}-\underline{\mu^{(\hbar)}_{\xi_0,\eta}}\big)\Big]\,,
			\label{expansion Z distribution}
			\end{align}
			where $g_Y$ is defined in Lemma \ref{Lemma:ChangeVariableJacobianC2}
			and
			$E_{\alpha,\xi}(\omega)$ is defined in \eqref{Definition:Ealphaxi}.
		\end{lemma}

		\begin{proof}
			To reduce notation load, we omit the dependence on $\hbar$ in the superscript in the proof.
			Consider the complex change of variables $g_Y:\CC^2\setminus\{(0,z_2):z_2\in\CC\}\to \CC^2\setminus\{(0,z_2):z_2\in\CC\}$ discussed in Lemma \ref{Lemma:ChangeVariableJacobianC2} to build $\B{Z}_{\alpha,\xi_0,\eta}$ from the Gaussian random vector $\B{U}_{\xi_0,\eta}\vcentcolon=\mu_{\xi_0,\eta}+\B{W}_\eta$, where $\B{W}_\eta = \begin{bmatrix}\Phi(\hbar_{t,\eta})&\Phi((\hbar')_{t,\eta})\end{bmatrix}^\top$.  
			By recalling the density function of $\B{W}_\eta$, we know that the density function of $\B{U}_{\xi_0,\eta}$ satisfies 
			\begin{align}\label{distribution of V}
			f_{\B{U}_{\xi_0,\eta}}(z) &\,=
			\frac{1}{\pi^{2} \sqrt{\det{\underline{\Sigma}_\eta}}}
			\exp\Big[-\frac{1}{2}\big(\underline{z}-\underline{\mu_{\xi_0,\eta}}\big)^* \underline{\Sigma}_\eta^{-1} \big(\underline{z}-\underline{\mu_{\xi_0,\eta}}\big)\Big]\,,
			\end{align}
			where $\underline{\Sigma}_\eta=\begin{bmatrix}\Gamma_\eta & C_\eta \\ \conj{C_\eta} & \conj{\Gamma_\eta}\end{bmatrix}$ is the augmented covariance matrix and $z\in \mathbb{C}^2$. 
			Therefore, with $|\det{J(y,\omega)}|$ evaluated in Lemma \ref{Lemma:ChangeVariableJacobianC2}, we have the claim that
			\begin{equation}
			f_{\B{Z}_{\alpha,\xi_0,\eta}}(y,\omega)=|\det{J(y,\omega)}|f_{\B{U}_{\xi_0,\eta}}(g_Y^{-1}(y,\omega))\,.
			\end{equation}

		\end{proof}

	The following Lemma is another expression of the moments of $Y_{f+\Phi}^{(h,\alpha,\xi)}(t,\eta)$ that is convenient for the upcoming perturbation argument. Note that it is different from the expression used in \eqref{var eq intermediate}.

		\begin{lemma}\label{Lemma:ChangeOfVariableVariance}
			Suppose Assumptions~\ref{assump:noise part} and \ref{assump:nonnull signal} hold. Suppose $\hbar$ is a given Schwartz window function. For $k\in\mathbb{N}$, $\eta>0$, $\xi>0$ and $\alpha>0$, we have
			\begin{align}
						\mathbb E \left[Y_{f+\Phi}^{(\hbar,\alpha,\xi)}(t,\eta)\right]^k=\frac{1}{\pi^2\sqrt{\text{det}\underline\Sigma^{(\hbar)}_\eta}}\iint e^{-\frac{1}{2}\underline{\boldsymbol{z}}^* \underline{\Sigma}_\eta^{(\hbar)-1}\underline{\boldsymbol{z}}} \left(\frac{p+\mu_1}{E_{\alpha,\xi}\Big(\frac{1}{2\pi i}
			\frac{q+\mu_2}{p+\mu_1}\Big)}\right)^k\textup dp \textup dq\,\nonumber,\\
			\mathbb E \left|Y_{f+\Phi}^{(\hbar,\alpha,\xi)}(t,\eta)\right|^k=\frac{1}{\pi^2\sqrt{\text{det}\underline\Sigma^{(\hbar)}_\eta}}\iint e^{-\frac{1}{2}\underline{\boldsymbol{z}}^* \underline{\Sigma}_\eta^{(\hbar)-1}\underline{\boldsymbol{z}}} \left|\frac{p+\mu_1}{E_{\alpha,\xi}\Big(\frac{1}{2\pi i}
			\frac{q+\mu_2}{p+\mu_1}\Big)}\right|^k\textup dp \textup dq\,\nonumber,
			\end{align}
			where $\mu_1=A\hat{\hbar}(\xi_0-\eta)e^{i2\pi \xi_0 t}$, $\mu_2=i2\pi \mu_1$, $\boldsymbol{z}=\begin{bmatrix}p& q\end{bmatrix}^\top\in \mathbb{C}^2$ and the integrand domain is $p\in \mathbb{C}\backslash\{-\mu_1\}$ and $q\in \mathbb{C}$. 
		\end{lemma}
		\begin{proof}
			To reduce notation load, we omit the dependence on $\hbar$ in the superscript in the proof.
			The proof is by a straightforward change of variable. Define $\B{U}_{\xi_0,\eta}:=\mu_{\xi_0,\eta}+\B{W}_\eta$. 			Then, by Lemma \ref{Lemma:ChangeVariableJacobianC2}, we have
			\begin{align}
			\mathbb{E}\left[Y_{f+\Phi}^{(h,\alpha,\xi)}(t,\eta)\right]^k=4\pi^2\int_{\mathbb{C}\backslash\{0\}}x^k  \int_{\mathbb{C}} |x|^2 E^4_{\alpha,\xi}(\omega) f_{\B{U}_{\xi_0,\eta}}(g_Y^{-1}(x,\omega))\textup d\omega \textup dx \,,\nonumber
			\end{align}
			where $g_Y$ is defined in \eqref{Definition:g1invg2inv}. Set
			\begin{align}
			p:=xE_{\alpha,\xi}(\omega)-\mu_1,\,\,\,q:=2\pi i \omega x E_{\alpha,\xi}(\omega)-\mu_2\,,
			\end{align}
			which is equivalent to $\omega=\frac{q+\mu_2}{2\pi i(p+\mu_1)}$ and $x=\frac{p+\mu_1}{E_{\alpha,\xi}\big(\frac{q+\mu_2}{2\pi i(p+\mu_1)}\big)}$. Here, note that since $x$ is not defined at $0$ and $E_{\alpha,\xi}(\omega)$ is nonzero, $p\in \mathbb{C}\backslash\{-\mu_1\}$ and $q\in \mathbb{C}$.
			By denoting $\boldsymbol{z}=\begin{bmatrix}p& q\end{bmatrix}^\top$, we have the claim, since the Jacobian of changing $(x,\omega)$ to $(p,q)$ is $(2\pi)^{-2} |x|^{-2} E^{-4}_{\alpha,\xi}(\omega)$.  The absolute moments follows by the same argument immediately.
		\end{proof}

	%\clearpage
	\section{Argument for the reassignment rule}\label{OS section Proof of Proposition prop:Q convergence to proper case}

We will omit the subscript ${}^{(h)}$ to simplify the notation. When $\eta=\xi_0$, we have $\mu=\begin{bmatrix}f(t)& 0\end{bmatrix}$. Denote $Q_{\xi_0}:=Q_{f+\Phi}(t,\xi_0)$.  When $\xi_0$ is sufficiently large so that the pseudo-covariance of $\mathbf{W}_{\xi_0}$ is negligible, to evaluate the mean of $Q_{f+\Phi}(t,\xi_0)$, we could first replace the augmented covariance matrix $\underline{\Sigma}_{\xi_0}$ by 
\[
\underline{\Upsilon}_{\xi_0}=\begin{bmatrix} \Gamma_{\xi_0} & 0 \\ 0 & \overline{\Gamma}_{\xi_0} \end{bmatrix}
\]
and denote the resulting random variable as $Z_{\xi_0}$. By a direct calculation,  the density function of $Z_{\xi_0}$ is 
				\begin{align}
				f_{Z_{\xi_0}}(q) 
				&= 
					\frac{e^{-\frac{1}{2}\underline{\mu}^*\underline{\Upsilon}_{\xi_0}^{-1} \underline{\mu} }}
					{\pi^2 \sqrt{\det{\underline{\Upsilon}_{\xi_0}}}}
					\int_0^{\pi}
					\Hypergeometric{1}{1}{2}{\frac{1}{2}}{\frac{
					\mathfrak B(\theta,q)^2
					}{
					\mathfrak A(q)
					}}
					 \,d\theta\frac{1}{
					\mathfrak A(q)^2
					}\,,
			\end{align}
			where
			\begin{equation*}
				\mathfrak A(q)=\textbf{q}^*\Gamma_{\xi_0}^{-1}\textbf{q} \quad\mbox{and}\quad\mathfrak B(\theta,q)=\Re[e^{i\theta}\mu^*\Gamma_{\xi_0}^{-1}\textbf{q}]\,.
				\end{equation*}
				Note that while in general $\mathfrak A$ does not have the symmetry $\mathfrak A(-q)=\mathfrak A(q)$, since the off-diagonal entries of $\Gamma_{\xi_0}$ is imaginary, it has the symmetry $\mathfrak A(-\overline{q})=\mathfrak A(q)$. Indeed, by a direct expansion, $\mathfrak A(q)=\det(\Gamma_{\xi_0})^{-1}[\gamma_2(\xi_0)+\gamma_0(\xi_0)|q|^2-4\pi \gamma_1(\xi_0)\Im q]$ and $\Im q=\Im(-\overline{q})$.

				Furthermore, when the noise is white, we know $\gamma_1(\xi_0)=0$, so $\Gamma_{\xi_0}$ is diagonal.
In this case,
			$\mathfrak A(q)$ further has the symmetry that $\mathfrak A(q)=\mathfrak A(-q)$ (that is, Theorem \ref{thm: quotient mean bound proposition} (iii)), and $B_\mu(\theta,q)$ does not depend on $q$. Thus, by the same derivation for \eqref{Equation of f_Q when C=0}, we have
\begin{align*}
			\mathbb{E}Z_{\xi_0}=\frac{e^{-\frac{1}{2}\underline{\mu}^*\underline{\Upsilon}_{\xi_0}^{-1} \underline{\mu} }}
					{\pi^2 \sqrt{\det{\underline{\Upsilon}_{\xi_0}}}}
					\int_0^{\pi} \Big[\int_{\mathbb{C}}
					\Hypergeometric{1}{1}{2}{\frac{1}{2}}{\frac{
					\mathfrak B_\mu(\theta,q)^2
					}{
					\mathfrak A(q)
					}}
					\frac{q}{
					\mathfrak A(q)^2
					}
					 \,dq \Big] d\theta=0\,.
			\end{align*}
			On the other hand, the expectation of $Q_{\xi_0}$ is
			\begin{align*}
				\mathbb{E}Q_{\xi_0}=\frac{e^{-\frac{1}{2}\underline{\mu}^*\underline{\Sigma}_{\xi_0}^{-1} \underline{\mu} }}
					{\pi^2 \sqrt{\det{\underline{\Sigma}_{\xi_0}}}}
					\int_0^{\pi}\int_{\mathbb{C}}
					\Hypergeometric{1}{1}{2}{\frac{1}{2}}{\frac{
					B_{\xi_0,\xi_0}(\theta,q)^2
					}{
					A_{\xi_0}(\theta,q)
					}}
					\frac{q}{
					A_{\xi_0}(\theta, q)^2
					}dq
					 \,d\theta\,,
			\end{align*}
Since the pseudo-covariance of $\mathbf{W}_{\xi_0}$ is negligible, we could apply the perturbation argument to obtain the claim. Indeed, by definition, since $\nu_i(\eta)\asymp \eta^{\rho}$ for $i=1,2,3$ when $\eta$ is large, the pseudo-covariance $C_{\xi_0}$ satisfies $\|C_{\xi_0}\|=O(e^{-4\pi^2\xi_0^2}\xi_0^{\varrho})$, which is exponentially small when $\xi_0$ is large. Thus, by a direct expansion, we have 
\[
\overline{C}_{\xi_0}\Gamma_{\xi_0}^{-1}C_{\xi_0}=O(e^{-8\pi^2\xi_0^2}\xi_0^{-\varrho})
\]
since $\|\Gamma_{\xi_0}\|\asymp \xi_0^{\varrho}$, which leads to the control of inverse of $P_{\xi_0}:=\overline{\Gamma}_{\xi_0}-\overline{C}_{\xi_0}\Gamma_{\xi_0}^{-1}C_{\xi_0}$. 
That is, 
\[
\mathbf{q}^*\overline{P}^{-1}_{\xi_0}\mathbf{q}=\mathbf{q}^*{\Gamma}^{-1}_{\xi_0}\mathbf{q}+O(e^{-8\pi^2\xi_0^2}\xi_0^{-\varrho}\|\mathbf{q}\|^2)\,, 
\]
where the fact $\|\Gamma_{\xi_0}\|\asymp \xi_0^{\varrho}$ is again used (see the proof of Lemma \ref{Control ABC bound over different regions} for details). 
The other term in $A_{\xi_0}(\theta, q)$ that involves 
\[
R_{\xi_0}:=\overline{C}_{\xi_0}\Gamma_{\xi_0}^{-1}=O(e^{-4\pi^2\xi_0^2}\xi_0^{-\varrho})
\] 
is controlled in the same way so that 
\[
\|\Re(e^{2\pi \theta}\mathbf{q}R_{\xi_0}^\top\overline{P}_{\xi_0}^{-1}\mathbf{q})\|= O(e^{-4\pi^2\xi_0^2}\xi_0^{-2\varrho}\|\mathbf{q}\|^2)\,.
\] 
By combining the above bounds together, we have
\[
|A_{\xi_0}(\theta, q)-\mathfrak A(q)|=O(e^{-4\pi^2\xi_0^2}\xi_0^{-2\varrho}\|\mathbf{q}\|^2)\,.
\] 
By a similar argument that we omit, we have 
\[
|B_{\xi_0,\xi_0}(\theta, q)-\mathfrak B(q)|=O(e^{-4\pi^2\xi_0^2}\xi_0^{-2\varrho}A\|\mathbf{q}\|)\,.
\] 
Since $\|\underline{\Sigma}_{\xi_0}^{-1}-\underline{\Upsilon}^{-1}_{\xi_0}\|=O(e^{-4\pi^2\xi_0^2}\xi_0^{-2\varrho})$ and all eigenvalues of $\underline{\Sigma}_{\xi_0}$ are of order $\xi_0^{\rho}$ by Lemma \ref{Lemma: c of mu_xi0eta when eta small}, by Weyl's inequality for the eigenvalue perturbation, we have 
\[
|(\det{\underline{\Sigma}_{\xi_0}})^{-1/2}-(\det{\underline{\Upsilon}_{\xi_0}})^{-1/2}|=O(e^{-4\pi^2\xi_0^2}\xi_0^{\varrho})\,.
\] 
By another direct bound, we have 
\[
|\underline{\mu}^*\underline{\Sigma}_{\xi_0}^{-1} \underline{\mu} - \underline{\mu}^*\underline{\Upsilon}_{\xi_0}^{-1} \underline{\mu}|=O(e^{-4\pi^2\xi_0^2}\xi_0^{-2\varrho}A^2)\,.
\] 
By plugging these bounds, we obtain the claim.

%\clearpage
	\section{Properties of 
	 the integrand in the synchrosqueezing transform}\label{Proof of Proposition joint density Y and omega}

		In this section, we prove three theorems, Theorem \ref{Proposition joint density Y and omega gen}, \ref{Proposition joint density Y and omega gen222}, \ref{Proposition joint density Y and omega gen333} and \ref{Proposition joint density Y and omega gen444}, which in combination proves Theorem \ref{Proposition joint density Y and omega}. In Theorem \ref{Proposition joint density Y and omega gen}, \ref{Proposition joint density Y and omega gen222}, \ref{Proposition joint density Y and omega gen333} and \ref{Proposition joint density Y and omega gen444}, we show a more general result in the sense that we use a window $\hbar$ that is a perturbation of $h$ satisfying Assumption \ref{assump:gaussian window}. 
		In this section, to reduce the heavy notational load, we omit the dependence on $\hbar$ in the superscript unless mentioned otherwise, and denote
		\begin{equation*}
		{Y}_{\alpha,\xi,\eta}:=Y_{f+\Phi}^{(\hbar,\alpha,\xi)}(t,\eta),\,\, \Omega_\eta:=\Omega_{f+\Phi}^{(\hbar)}(t,\eta)\,\,\mbox{ and }V_\eta:=V_{f+\Phi}^{(\hbar)}(t,\eta).
		\end{equation*}
		We first have the following lemma bounding the absolute moments of ${Y}_{\alpha,\xi,\eta}$.

\begin{lemma}\label{LEMMA joint density Y and omega gen}
			Suppose Assumptions~\ref{assump:noise part}, \ref{assump:nonnull signal} and \ref{assump:gaussian window} hold. Assume $\hbar$ is a smooth function satisfying $\hat{\hbar}=\hat{h}\varphi$ for a symmetric, smooth and bounded function $\varphi$. 
			Fix $\xi>0$ and $k > 0$. We have 
			\begin{align}
			\EE |Y_{\alpha,\xi,\eta}|^k=& c_\eta e^{-C_{\xi_0,\eta}}\frac{\sqrt{\pi}\Gamma(k+4)}{2^{k+3}\Gamma(\frac{k+5}{2})} \int_{\mathbb{C}} 
			\frac{1}{E^k_{\alpha,\xi}(\omega)} 
			\int_0^{\pi} \frac{1}{A^{2+k/2}_\eta(\theta,\omega)}
			\label{var eq intermediate}\\
			&\qquad\times \Hypergeometric{1}{1}{\frac{k}{2}+2}{\frac{1}{2}}{\frac{B^2_{\xi_0,\eta}(\theta,\omega)}{A_\eta(\theta,\omega)}}
			\,d\theta\,d\omega\,, \nonumber
			\end{align}
			Moreover,
			we have
			the following bound
\begin{align}
				&m Kc^{(\hbar)}_\eta \int_{\mathbb{C}} \int_0^{\pi}
				\frac{G^{(\hbar)}(\theta,\omega)}{E^k_{\alpha,\xi}(\omega)A^{(\hbar)2+k/2}_\eta(\theta,\omega)}\,d\theta\,d\omega \label{Proof:Bound k abs moments of Y part1} \\
				 \leq &\, \EE{|{Y}_{\alpha,\xi,\eta}|^k}
				 \leq  m^{-1}K c^{(\hbar)}_\eta  \int_{\mathbb{C}}\int_0^{\pi}
				\frac{G^{(\hbar)}(\theta,\omega)}{E^k_{\alpha,\xi}(\omega)A^{(\hbar)2+k/2}_\eta(\theta,\omega)}\,d\theta\,d\omega\,,\nonumber
			\end{align}
			where $m$ comes from Lemma \ref{Lemma: 1F1 bound}, 
			\begin{align*}
				K 
				&\vcentcolon=  
				\frac{\sqrt{\pi}\Gamma(k+4)}{2^{k+3}\Gamma(\frac{k+5}{2})}\\
				G^{(\hbar)}(\theta,\omega)&\vcentcolon=\max\left\{\left(\frac{B^{(\hbar)2}_{\xi_0,\eta}(\theta,\omega)}{A^{(\hbar)}_\eta(\theta,\omega)}\right)^{(k+3)/2}\exp\left\{\frac{B^{(\hbar)2}_{\xi_0,\eta}(\theta,\omega)}{A^{(\hbar)}_\eta(\theta,\omega)}-C^{(\hbar)}_{\xi_0,\eta}\right\},\,e^{-C^{(\hbar)}_{\xi_0,\eta}} \right\}\,.
			\end{align*}
				\end{lemma}
		\begin{proof}
			The density function of $Y_{\alpha,\xi,\eta}$ comes from the standard step of marginalizing ${\Omega}_{f+\Phi}(t,\eta)$ from $f_{\B{Z}_{\alpha,\xi_0,\eta}}(y,\omega)$ shown in Lemma \ref{lemma:Z density function}. 
			First, denote 
			\begin{equation}
			\boldsymbol{\omega}\vcentcolon=\begin{bmatrix}1& 2\pi i {(\eta-\omega)}\end{bmatrix}^\top
			\end{equation} 
			so that
			we have $g_Y^{-1}(y,\omega)=yE_{\alpha,\xi}(\omega)\boldsymbol{\omega}$. {Note that $\boldsymbol{\omega}$ depends on $\eta$, but we use it to simplify the notation.} Therefore, if we write $y=re^{i\theta}$  in the polar coordinate for $r>0$ and $\theta\in [0,2\pi)$, the density function $f_{\B{Z}_{\alpha,\xi_0,\eta}}(y,\omega)$ shown in \eqref{expansion Z distribution} becomes
			\begin{align}
			f_{\B{Z}_{\alpha,\xi_0,\eta}}(re^{i\theta},\omega)=c_\eta r^2E^4_{\alpha,\xi}(\omega)
			\exp[-r^2E^2_{\alpha,\xi}(\omega)A_{\eta}(\theta,\omega)+2rE_{\alpha,\xi}(\omega)B_{\xi_0,\eta}(\theta,\omega)-C_{\xi_0,\eta}]\,.\nonumber
			\end{align}
			For any $k\in \NN$, the $k$-th absolute moment satisfies
			\begin{equation}\label{expansion Y kth moment eq1}
			\EE |Y_{\alpha,\xi,\eta}|^k
			= \int_0^\infty r^k\int_0^{2\pi}\!\!\! \int_{\mathbb{C}}
			 f_{\B{Z}_{\alpha,\xi_0,\eta}}(re^{i\theta}, \omega) \, d\omega\,r\,d\theta\,dr\,.
			\end{equation}
			Since the integrand is nonnegative, Tonelli's theorem allows us to reorder the integration and have
			\begin{align}
			\EE |Y_{\alpha,\xi,\eta}|^k
			=&\, c_\eta e^{-C_{\xi_0,\eta}}\int_{\mathbb{C}}  E^4_{\alpha,\xi}(\omega)\int_0^{2\pi} \!\!\!\int_0^\infty
			r^{k+3} \nonumber\\
			&\quad\times e^{-r^2E^2_{\alpha,\xi}(\omega) A_\eta(\theta,\omega)  + 2rE_{\alpha,\xi}(\omega) B_{\xi_0,\eta}(\theta,\omega)}
			\,dr\,d\theta\,d\omega\,.\label{Expansion absolute moment of order k}
			\end{align}
			Here, the term involving $E_{\alpha,\xi}(\omega)$ defined in \eqref{Definition:Ealphaxi} comes from the Jacobian associated with the change of variable. We now change it back to a more trackable form.
			Note that $E_{\alpha,\xi}(\omega)>0$, and by \eqref{Upper bound of Aeta2}, $A_\eta(\theta,\omega)>0$. 
			By changing variables by letting 
			\begin{equation}
			r = \frac{t}{E_{\alpha,\xi}(\omega)\sqrt{A_\eta(\theta,\omega)}}, 
			\end{equation}
			we have
			 \begin{align}
			\EE |Y_{\alpha,\xi,\eta}|^k
			= &\,c_\eta e^{-C_{\xi_0,\eta}}\int_{\mathbb{C}} E^4_{\alpha,\xi}(\omega)
			\int_0^{2\pi} 
			\frac{1}{E^{k+4}_{\alpha,\xi}(\omega) A^{k/2+2}_\eta(\theta,\omega)}\nonumber\\
			&\qquad \times\Big[\int_0^\infty t^{k+3} e^{-t^2 + \frac{2B_{\xi_0,\eta}(\theta,\omega)}{\sqrt{A_\eta(\theta,\omega)}}t}
			\,dt\Big]\,d\theta\,d\omega\,,
			\end{align}
			which can be converted to the confluent hypergeometric function \eqref{eq:Hermite function of negative order integral representation} via
			\begin{align}
			\EE |Y_{\alpha,\xi,\eta}|^k=&\, c_\eta e^{-C_{\xi_0,\eta}} \int_{\mathbb{C}} 
			\frac{1}{E^k_{\alpha,\xi}(\omega)}
			\int_0^{2\pi} 
			\frac{\Gamma(k+4)}{A^{k/2+2}_\eta(\theta,\omega)}
			\nonumber\\
			&\qquad\times H_{-k-4}\left(\frac{-B_{\xi_0,\eta}(\theta,\omega)}{\sqrt{A_\eta(\theta,\omega)}}\right)
			\,d\theta\,d\omega\nonumber\\
			=&\, c_\eta e^{-C_{\xi_0,\eta}}\frac{\sqrt{\pi}\Gamma(k+4)}{2^{k+3}\Gamma(\frac{k+5}{2})} \int_{\mathbb{C}} 
			\frac{1}{E^k_{\alpha,\xi}(\omega)} 
			\int_0^{\pi} \frac{1}{A^{2+k/2}_\eta(\theta,\omega)}
			\nonumber\\
			&\qquad\times \Hypergeometric{1}{1}{\frac{k}{2}+2}{\frac{1}{2}}{\frac{B^2_{\xi_0,\eta}(\theta,\omega)}{A_\eta(\theta,\omega)}}
			\,d\theta\,d\omega\,, \nonumber
			\end{align}
			where the second equality holds since $A_\eta(\theta+\pi,\omega)=A_\eta(\theta,\omega)$, and $B_{\xi_0,\eta}(\theta+\pi,\omega)=-B_{\xi_0,\eta}(\theta,\omega)$. We thus get \eqref{var eq intermediate}.
			To bound $\EE |Y_{\alpha,\xi,\eta}|^k$, we apply Lemma \ref{Lemma: 1F1 bound}. 
			Note that $\frac{B^2_{\xi_0,\eta}(\theta,\omega)}{ A_\eta(\theta,\omega)}$ is smooth on $\theta$ and $\omega$, and $\frac{B^2_{\xi_0,\eta}(\theta,\omega)}{A_\eta(\theta,\omega)}\geq 0$ since $A_\eta(\theta,\omega)>0$ and $B_{\xi_0,\eta}(\theta,\omega)\in \mathbb{R}$. 
			By Lemma \ref{Lemma: 1F1 bound}, the above culminates in the inequality
			\begin{align}
			&m\max\left\{\Big(\frac{B^2_{\xi_0,\eta}(\theta,\omega)}{A_\eta(\theta,\omega)}\Big)^{(k+3)/2}e^{\frac{B^2_{\xi_0,\eta}(\theta,\omega)}{A_\eta(\theta,\omega)}},\,1\right\}\nonumber\\
			\le&\, \Hypergeometric{1}{1}{\frac{k}{2}+2}{\frac{1}{2}}{\frac{B^2_{\xi_0,\eta}(\theta,\omega)}{A_\eta(\theta,\omega)}}\label{Theorem Bound Y moments General bound}\\
			\le&\, m^{-1}\max\left\{\Big(\frac{B^2_{\xi_0,\eta}(\theta,\omega)}{A_\eta(\theta,\omega)}\Big)^{(k+3)/2}e^{\frac{B^2_{\xi_0,\eta}(\theta,\omega)}{A_\eta(\theta,\omega)}},\,1\right\}\,\nonumber.
			\end{align}
			Then, plug this bound to~\eqref{var eq intermediate} to obtain the claimed bound
			\begin{align}
				&m Kc_\eta \int_{\mathbb{C}} \int_0^{\pi}
				\frac{G(\theta,\omega)}{E^k_{\alpha,\xi}(\omega)A^{2+k/2}_\eta(\theta,\omega)}\,d\theta\,d\omega \nonumber \\
				 \leq &\, \EE{|Y_{\alpha,\xi,\eta}|^k}
				 \leq  m^{-1}K c_\eta  \int_{\mathbb{C}}\int_0^{\pi}
				\frac{G(\theta,\omega)}{E^k_{\alpha,\xi}(\omega)A^{2+k/2}_\eta(\theta,\omega)}\,d\theta\,d\omega\,.\nonumber
			\end{align}
			
			\end{proof}
			
			We start from describing the absolute moments of $Y_{\alpha,\xi,\eta}$ in the null case; that is, when $f=0$.
			\begin{thm}[Absolute moments of $Y_{\alpha,\xi,\eta}$, null case]\label{Proposition joint density Y and omega gen}
			Suppose Assumptions~\ref{assump:noise part}, \ref{assump:nonnull signal} and \ref{assump:gaussian window} hold and $A=0$. Assume $\hbar$ is a smooth function satisfying $\hat{\hbar}=\hat{h}\varphi$ for a symmetric, smooth and bounded function $\varphi$.
			Fix $k >0$. 
			For any $\eta>0$ and $\alpha>0$, the $k$-th absolute moment of ${Y}_{\alpha,\xi,\eta}$ is finite. Specifically,
			for $\eta\geq1$ and $\xi>0$, when $\alpha$ is sufficiently small, we have
			\begin{align}
				\EE{|{Y}_{\alpha,\xi,\eta}|^k}
				\asymp \alpha^{-k/2+1} \,,
				\end{align}
				where 
				 the implied constant depends on $\varphi$, $\varrho$, $k$ and $\frac{\eta^{k\varrho/2}}{(1+4\pi^2|\eta-\xi|^2)^{(k+4)/2}}$; 
				 for $\eta<1$ (particularly when $\eta$ is close to $0$), when $\alpha$ is sufficiently small, we have
			\begin{align}
				c_1\alpha^{-k/2+1} \leq \EE{|{Y}_{\alpha,\xi,\eta}|^k}
				\leq  c_2 \max\Big\{\frac{1}{\sqrt{\alpha\eta}}e^{-\frac{k\xi^2}{2\alpha}},\, 1\Big\}\alpha^{-k/2+1}\,,
				\end{align}
				where 
				 $c_1$ depends on $\varphi$, $\varrho$, $k$ and $\frac{\eta^{3k+8}}{(1+4\pi^2|\eta-\xi|^2)^{(k+4)/2}}$, and $c_2$ depends on $\varphi$, $\varrho$ and $k$.
				\end{thm}

\begin{remark}
This bound shows that the variance of $Y_{\alpha,\xi,\eta}$ is finite and bounded from below since $\EE{|Y_{\alpha,\xi,\eta}|}\asymp \alpha^{1/2}$ and the variance bound is independent of $\xi$.
When $\eta\to 0$ and $\alpha$ is sufficiently small, the upper bound of $\EE{|Y_{\alpha,\xi,\eta}|^k}$ might not be sharp since it comes from several rough bounds for the sake of controlling the degeneracy of the augmented covariance. However, this bound is sufficient for us to derive the desired asymptotic analysis.  Also, note that $(\alpha\eta)^{-1/2}e^{-k(\xi/\sqrt{2\alpha})^2}\leq 1$ when $\alpha$ is sufficiently small for any $\xi>\sqrt{\eta}$; for example, when $\alpha<\eta^2$. Moreover, the lower bound when $\eta$ is close to zero is not sharp since we used a rough bound, but it is sufficient for our purpose.
\end{remark}

			\begin{proof}
			We follow the notation and convention used in the proof for Lemma \ref{LEMMA joint density Y and omega gen}. To get the proof, we control the integration in the upper and lower bounds in \eqref{Proof:Bound k abs moments of Y part1}. 
			In this case $C_{\xi_0,\eta}=0$ and $B_{\xi_0,\eta}(\theta,\omega)=0$, so we have
			\[
				G(\theta,\omega)=e^{-C_{\xi_0,\eta}}=1\,.
			\]
			Note that in the null case, $m=1$ in \eqref{Theorem Bound Y moments General bound}. By the definition of $E_{\alpha,\xi}(\omega)$, we have
			\begin{align}
			&\int_{\mathbb{C}}\int_{0}^\pi
			\frac{1}{E^k_{\alpha,\xi}(\omega)} \frac{1}{A^{(k+4)/2}_\eta(\theta,\omega)}\,d\theta\,d\omega\label{Proof:Bound k abs moments of Y part2}\\
			=&\,\frac{1}{\pi^{k/2}\alpha^{k/2-1}}\int_{\mathbb{C}}\Big[ \int_0^\pi
			\frac{1}{A^{(k+4)/2}_\eta(\theta,\omega)} \,d\theta\Big]\,\frac{1}{\alpha}e^{-k|\omega-\xi|^2/ \alpha}d\omega\,,
			\nonumber
			\end{align}
			where to simplify the notation we further denote
			\[
			c_\eta'':=\int_{\mathbb{C}}\Big[ \int_0^\pi
			\frac{1}{A^{(k+4)/2}_\eta(\theta,\omega)} \,d\theta\Big]\,\frac{1}{\alpha}e^{-k|\omega-\xi|^2/ \alpha}d\omega\,.
			\]
			\underline{{\bf When $\eta\geq 1$:}}
			By \eqref{Upper bound of Aeta2}, when $\eta\geq 1$, we have a trivial bound
			\begin{equation}
			 \frac{\pi \lambda_{\eta,4}^{(k+4)/2}}{C(1+4\pi^2|\eta-\omega|^2)^{(k+4)/2}}\leq \int_0^\pi
			\frac{1}{A^{(k+4)/2}_\eta(\theta,\omega)} \,d\theta\leq \frac{2\pi \lambda_{\eta,1}^{(k+4)/2}}{(1+4\pi^2|\eta-\omega|^2)^{(k+4)/2}}\,,\label{upper and lower bounds of A-k+4/2 integral}
			\end{equation}
			where we use Lemma \ref{Control ABC bound over different regions} 
			On the other hand, by Lemma \ref{Lemma: c of mu_xi0eta when eta small}, $\lambda_{\eta,1}\asymp \lambda_{\eta,4}$ when $\eta\geq 1$.
			Thus, when $\eta\geq 1$, the problem is simply reduced to control  
			\begin{align}
			\int_{\mathbb{C}} \frac{1}{(1+4\pi^2|\eta-\omega|^2)^{(k+4)/2}} \,\frac{1}{\alpha}e^{-k|\omega-\xi|^2/ \alpha}d\omega\,,\label{Proof Y moments Approximation of Identity of A}
			\end{align}
			which is finite due to the exponential decay of the Gaussian function. When $\alpha\to 0$, 
			its dependence on $\alpha$ could be directly evaluated by an approximation of identity of $\frac{1}{(1+4\pi^2|\eta-\omega|^2)^{(k+4)/2}}$ at $\xi$ when $\alpha$ is small with an error of order $\alpha$ since the Hessian of $\frac{1}{(1+4\pi^2|\eta-\omega|^2)^{(k+4)/2}}$ is uniformly bounded. As a result, when $\alpha$ is sufficiently small and $\eta>\eta_0$, 
			\begin{equation}
			c_\eta''\asymp \frac{\eta^{(k+4)\varrho/2}}{(1+4\pi^2|\eta-\xi|^2)^{(k+4)/2}}\,.
			\end{equation} 
			 since $\lambda_{\eta,1}\asymp \lambda_{\eta,4}\asymp \eta^\varrho$. Finally, note that $c_\eta\asymp \eta^{-2\varrho}$. So we have the implied constant depending on $\frac{\eta^{k\varrho/2}}{(1+4\pi^2|\eta-\xi|^2)^{(k+4)/2}}$ since $(\eta^\varrho)^{(k+4)/2}\eta^{-2\varrho}=\eta^{k\varrho/2}$.
			\medskip			
			
			\underline{{\bf When $\eta<1$, particularly when $\eta$ is small:}}
			When $\eta$ approaches $0$, the control is different. For the lower bound, we can simply use the lower bound shown in \eqref{upper and lower bounds of A-k+4/2 integral}, so by Lemma \ref{Lemma: c of mu_xi0eta when eta small}, the lower bound is of the order $\lambda_{\eta,4}^{(k+4)/2}c_\eta=(\eta^6)^{(k+4)/2}\eta^{-4}=\eta^{3k+8}$. 
			
			However,  
			the upper bound in \eqref{upper and lower bounds of A-k+4/2 integral} is not sharp enough for our purpose, since $c_\eta$ in \eqref{Proof:Bound k abs moments of Y part1} is of order $\eta^{-4}$ and $\lambda_{\eta,4}^{(k+4)/2}$ is of order $1$ by Lemma \ref{Lemma: c of mu_xi0eta when eta small}. We thus need to carefully take the structure of $A_{\eta}(\theta,\omega)$ into account to control $\eta^{-4}$ caused by the degeneracy of $\underline{\Sigma}_\eta$.
			We will split the integral domain of $\theta$ in \eqref{Proof:Bound k abs moments of Y part2} into three pieces:
			\begin{align}
			[0,\pi)=I_1\cup I_2\cup I_3\,,
			\end{align}
			where 
			\begin{align}
			I_1=&\,(\eta^{2},\pi-\eta^{2})\,,\quad
			I_2=(\eta^{5/2},\eta^{2}]\cup [\pi-\eta^{2},\pi-\eta^{5/2})\,,\nonumber\\
			I_3=&\,[0,\eta^{5/2}]\cup [\pi-\eta^{5/2},\pi)\,.\nonumber
			\end{align}
			By Lemma \ref{Lemma: c of mu_xi0eta when eta small}, when $\theta\in I_1$,  $A_\eta(\theta,\omega)$ is lower bounded by $\frac{2\sin(\theta)^2}{\mathsf c_4}\eta^{-6}\geq \frac{\eta^{-2}}{\mathsf c_4}$.
			 {Therefore, over $I_1$, we have
			\begin{align}
			&\int_{\mathbb{C}} \int_{I_1}
			\frac{1}{A^{(k+4)/2}_\eta(\theta,\omega)} \,d\theta\,\frac{1}{\alpha}e^{-k|\omega-\xi|^2/ \alpha}d\omega\nonumber\\
			\leq\,&\int_{\mathbb{C}} \int_{I_1}
			(\mathsf{c}_4\eta^2)^{(k+4)/2} \,d\theta\,\frac{1}{\alpha}e^{-k|\omega-\xi|^2/ \alpha}d\omega\nonumber\\
			\leq\,&\mathsf{c}^{(k+4)/2}_4\eta^{k+4}\int_{\mathbb{C}} 
			\frac{1}{\alpha}e^{-k|\omega-\xi|^2/ \alpha}d\omega= \frac{\pi\mathsf{c}^{(k+4)/2}_4}{k}\eta^{k+4}\,,
			\end{align}
			where we use $\frac{1}{\alpha}\int_{\mathbb{C}} 
			e^{-k|\omega-\xi|^2/ \alpha}d\omega=\frac{\pi}{k}$. The integral over $I_2$ is an easy one. Note that when $\theta\in I_2$, we have a loose lower bound
			\[
			A_\eta(\theta,\omega)\geq\sin^2(\theta)\eta^{-6}\,, 
			\]
			which ranges from values of order $\eta^{-1}$ to values of order $\eta^{-2}$.  As a result,			
			Thus, we have 
			\begin{align}
			&\int_{\mathbb{C}} \int_{I_2}
			\frac{1}{A^{(k+4)/2}_\eta(\theta,\omega)} \,d\theta\,\frac{1}{\alpha}e^{-k|\omega-\xi|^2/ \alpha}d\omega\nonumber\\
			\leq\,&\int_{\mathbb{C}} \int_{I_2}
			\frac{\eta^{3(k+4)}}{\sin(\theta)^{k+4}} \,d\theta\,\frac{1}{\alpha}e^{-k|\omega-\xi|^2/ \alpha}d\omega\nonumber
			\leq\eta^{3(k+4)}\int_{\mathbb{C}}\int_{I_2}
			\frac{2^{k+4}}{\theta^{k+4}} d\theta\frac{1}{\alpha}e^{-k|\omega-\xi|^2/ \alpha}d\omega\nonumber\\
			\leq\,&\eta^{3(k+4)}\times \frac{2^{k+4}}{k+3}\eta^{-5(k+3)/2}\times \frac{\pi}{k}=\frac{\pi 2^{k+4}}{k(k+3)}\eta^{(k+9)/2}\,,\nonumber
			\end{align}
			where in the second inequality we use the fact that $\sin(\theta)\geq \theta/2$ when $\theta$ is sufficiently small.}
			Finally, we handle $I_3$. In this regime, we need to further divide $\mathbb{C}$. First, note that over this regime, we loss the lower bound control of $\sin(\theta)^2\eta^{-6}$. Instead, we note that by rewriting $\eta-\omega=re^{i\phi}$, we have 
			\begin{align*}
			A_\eta(\theta,\omega)
			\leq C_1[\theta^2\eta^{-6}+ r^2\cos^2(\theta+\phi)\eta^{-2}+r^2\sin^2(\theta+\phi)+1]
			\end{align*}
			for some constant $C_1$. However, to control the integral over $I_3$, we need to control the lower bound of $A_\eta(\theta,\omega)$. 
			Split the integral domain of $\phi$ into
			\begin{align*}
			J_1&\,=[\pi/2-\eta^{1/2},\pi/2+\eta^{1/2}]\cup [3\pi/2-\eta^{1/2},3\pi/2+\eta^{1/2}],\\ 
			J_2&\,=[0,2\pi]\backslash J_1, 
			\end{align*}
			and rewrite the integral in the polar coordinate form:
			\begin{align}
			&\int_{\mathbb{C}} \int_{I_3}
			\frac{1}{A^{(k+4)/2}_\eta(\theta,\omega)} \,d\theta\,\frac{1}{\alpha}e^{-k|\omega-\xi|^2/ \alpha}d\omega\nonumber\\
			=\,&\int_{0}^\infty\int_{J_1\cup J_2} \int_{I_3}
			\frac{1}{A^{(k+4)/2}_\eta(\theta,\eta-re^{i\phi})} \,d\theta\,\frac{1}{\alpha}e^{-k|\xi-\eta+re^{i\phi}|^2/ \alpha}d\phi rdr\,.\nonumber
			\end{align}
			Since $\theta\in I_3$, over $J_1$, $\theta+\phi$ differs from $\pi/2$ or $3\pi/2$ by maximally $2\eta^{1/2}$, $\sin(\theta+\phi)\geq 1/2$, and hence 
			\begin{align*}
			A_\eta(\theta,\omega)&\,\geq C_2[r^2(\cos(\theta+\phi)^2\eta^{-2}+1)+1]
			\end{align*}
			for some constant $C_2>0$.
						On the other hand, since $\phi\in J_1$, $\theta\in I_3$ and $\xi$ is much larger than $\eta$, $|\xi-\eta+re^{i\phi}|^2 \geq \xi^2/2+r^2/2$ and hence 
			\[
			e^{-k|\xi-\eta+re^{i\phi}|^2/ \alpha}\leq e^{-k\xi^2/2\alpha}e^{-kr^2/ 2\alpha}\,. 
			\]
			Thus, we have
			\begin{align}
			&\int_{0}^\infty\int_{J_1} \int_{I_3}
			\frac{1}{A^{(k+4)/2}_\eta(\theta,\eta-re^{i\phi})} \,d\theta\,\frac{1}{\alpha}e^{-k|\xi-\eta+re^{i\phi}|^2/ \alpha}d\phi rdr\nonumber\\
			\leq &\,e^{-k\xi^2/2\alpha}\int_{0}^\infty\int_{J_1} \int_{I_3}
			\frac{C_2^{-(k+4)/2}}{[r^2(\cos(\theta+\phi)^2\eta^{-2}+1)+1]^{(k+4)/2}} \,d\theta\,\frac{1}{\alpha}e^{-kr^2/ 2\alpha}d\phi rdr \nonumber
			\end{align}
			When $\phi\in J_{1,1}:=\phi\in [\pi/2-\eta^{3/2},\pi/2+\eta^{3/2}]\cup [3\pi/2-\eta^{3/2},3\pi/2+\eta^{3/2}]\subset J_1$, we use the following simple bound 
			\[
			r^2(\cos(\theta+\phi)^2\eta^{-2}+1)\geq r^2+1\,.
			\]
			We then have
			\begin{align*}
			&\int_{0}^\infty\int_{J_{1,1}} \int_{I_3}
			\frac{1}{[r^2(\cos(\theta+\phi)^2\eta^{-2}+1)+1]^{(k+4)/2}} \,d\theta\,\frac{1}{\alpha}e^{-kr^2/ 2\alpha}d\phi rdr\\
			\leq &\, \int_{0}^\infty \frac{r}{(r^2+1)^{(k+4)/2}}  \int_{J_1} \int_{I_3}
			\,d\theta d\phi \,\frac{1}{\alpha}e^{-kr^2/ 2\alpha} dr\\
			\leq &\, \eta^4\int_{0}^\infty \frac{r}{(r^2+1)^{(k+4)/2}}  \frac{1}{\alpha}e^{-kr^2/ 2\alpha} dr \leq C_3\eta^4
			\end{align*}
			for a constant $C_3>0$.

			When $\phi\in J_{1,2}:=J_1\backslash J_{1,1}$, we have $|\phi-\pi/2+\theta|\leq 2|\phi-\pi/2|$ when $\phi\in [\pi/2-\eta^{1/2},\pi/2+\eta^{1/2}]$ and $|\phi-3\pi/2+\theta|\leq 2|\phi-3\pi/2|$ when $\phi\in [3\pi/2-\eta^{1/2},3\pi/2+\eta^{1/2}]$, thus, we use the rough bound
			\[
			\cos(\theta+\phi)^2\eta^{-2}\geq \phi^2\eta^{-2}/2\,.
			\]
			Thus, we have
			\begin{align}
			&\int_{0}^\infty\int_{J_{1,2}} \int_{I_3}
			\frac{1}{[r^2(\cos(\theta+\phi)^2\eta^{-2}+1)+1]^{(k+4)/2}} \,d\theta\,\frac{1}{\alpha}e^{-kr^2/ 2\alpha}d\phi rdr\nonumber\\
			\leq \,&\int_{0}^\infty\int_{J_{1,2}} \int_{I_3}
			\frac{2^{(k+4)/2}}{[r^2(\phi^2\eta^{-2}+1)]^{(k+4)/2}} \,d\theta\,\frac{1}{\alpha}e^{-kr^2/ 2\alpha}d\phi rdr\leq C_4 \frac{\eta^{7/2}}{\alpha^{1/2}}\nonumber
			\end{align}
			for some constant $C_4>0$, where the last bound comes from $\int_{0}^\infty \frac{1}{\alpha}e^{-kr^2/ \alpha} dr=\frac{\sqrt{\pi}}{2\sqrt{\alpha k}}$ and the following rough control. Indeed, by a change of variable $r\phi/\eta=y$, we obtain
			\begin{align*}
			&\int_{\eta^{3/2}}^{\eta^{1/2}} 
			\frac{1}{(r^2\phi^2\eta^{-2}/2+1)^{(k+4)/2}}\,d\phi\\
			=\,&\frac{\eta}{r}\int_{r\eta^{1/2}}^{r\eta^{-1/2}} 
			\frac{1}{(y^2/2+1)^{(k+4)/2}}\,dy\leq \frac{\eta}{r}\int_{0}^{\infty} 
			\frac{1}{(y^2/2+1)^{(k+4)/2}}\,dy=\frac{C'\eta}{r}
			\end{align*}
			for some constant $C'>0$.
			As a result, we have
			\begin{align*}
			&\int_{0}^\infty\int_{J_1} \int_{I_3}
			\frac{1}{A^{(k+4)/2}_\eta(\theta,\eta-re^{i\phi})} \,d\theta\,\frac{1}{\alpha}e^{-k|\xi-\eta+re^{i\phi}|^2/ \alpha}d\phi rdr\\
			\leq &\,C_5\alpha^{-1/2}e^{-k\xi^2/2\alpha}\eta^{7/2}\nonumber
			\end{align*}
			for some constant $C_5>0$. Note that if $\xi>\sqrt{\eta}$, when $\alpha$ is sufficiently small, $\alpha^{-1/2}e^{-k\xi^2/2\alpha}$ decays to $0$. 
			
			The integration over $J_2$ is different since over $J_2$, we lose all the above controls. We use the rough bound $A_\eta(\theta,\omega)\geq  C_6(r^2\cos(\phi)^2\eta^{-2}+1)$ for some constant $C_6>0$. This is because $\sin(\theta+\phi)$ is bounded by $1$ from above, $\cos(\theta+\phi)^2$ is bounded by $\eta^{-1}$ from below, and $\cos(\theta+\phi)\geq \cos(\phi)/2$ when $\phi\in J_2$ and $\theta\in I_3$.  Also, note that since $\eta$ is close to $0$ and $\xi\geq \sqrt{\eta}$ by assumption, we have $|\xi-\eta+re^{i\phi}|\geq |\xi+re^{i\phi}|/2$. 
			This leads to
			\begin{align}
			&\int_{0}^\infty\int_{J_2} \int_{I_3}
			\frac{1}{A^{(k+4)/2}_\eta(\theta,\eta-re^{i\phi})} \,d\theta\,\frac{1}{\alpha}e^{-k|\xi-\eta+re^{i\phi}|^2/ \alpha}d\phi rdr\nonumber\\
			\leq \,&\frac{2\eta^{5/2}}{C_6^{(k+4)/2}}\int_{0}^\infty\int_{J_2} 
			\frac{1}{(r^2\cos(\phi)^2\eta^{-2}+1)^{(k+4)/2}}\,\frac{1}{\alpha}e^{-k|\xi+re^{i\phi}|^2/ 2\alpha}d\phi rdr\nonumber\\
			\leq \,&\frac{2\eta^{5/2}}{C_6^{(k+4)/2}}\int_{\mathbb{C}} 
			\frac{1}{(\eta^{-2}(z+\bar{z})^2+1)^{(k+4)/2}}\,\frac{1}{\alpha}e^{-k|\xi+z|^2/ \alpha} dz\nonumber\\
			\leq \,&\frac{4\eta^{5/2}}{C_6^{(k+4)/2}}\frac{1}{(4\eta^{-2}\xi^2+1)^{(k+4)/2}}
			\leq C_7\eta^{(k+9)/2} \nonumber
			\end{align}
			for some constant $C_7>0$, where the second inequality comes from the positivity of the integrand, the third inequality comes from the approximation of identify with the assumption that $\xi>\sqrt{\eta}$ when $\alpha$ is sufficiently small, and the last inequality comes from a direct bound.

			We conclude that when $\eta\to 0$, we have $c''_\eta\leq C_8(\alpha^{-1/2}e^{-k\xi^2/2\alpha}\eta^{7/2}\vee\eta^4)$ for all $k\geq 1$ for some $C_8>0$ independent of $\eta$. 
			As a result, in the null case we have the desired claim
			when $\alpha$ is sufficiently small.  

			\end{proof}

			Next, we discuss the absolute moments of $Y_{\alpha,\xi,\eta}$ in the non-null case; that is, when $f\neq 0$. 
			The key step in the proof is controlling $\frac{B^2_{\xi_0,\eta}(\theta,\omega)}{A_\eta(\theta,\omega)}-C_{\xi_0,\eta}$, particularly when $\eta$ is close to $0$. In this case, this term cannot be simply bound using the Cauchy-Schwartz inequality $B^2_{\xi_0,\eta}(\theta,\omega)\leq A_\eta(\theta,\omega)C_{\xi_0,\eta}$ and we need a sharper one. 
			The key observation in our setup is that we do not work with generic vectors when we evaluate $\frac{B^2_{\xi_0,\eta}(\theta,\omega)}{A_\eta(\theta,\omega)}-C_{\xi_0,\eta}$. Instead, vectors in $A_\eta(\theta,\omega)$, $B^2_{\xi_0,\eta}(\theta,\omega)$, and $C_{\xi_0,\eta}$ do have a specific low dimensional structure that is specified in the following Lemma.
			
			\begin{lemma} \label{Lemma:Theta eta control}
			Consider the map $\Theta_\eta:\mathbb{C}\times \mathbb{R}\to \mathbb{C}^2$ defined by
		\[
		\Theta_\eta:(\omega,\theta)\to e^{i\theta}\boldsymbol{\omega}=e^{i\theta}\begin{bmatrix}1& 2\pi i {(\eta-\omega)}\end{bmatrix}^\top\,.
		\] 
		Note that we have $\mu_{\xi_0,\eta}:=\Theta_\eta(\eta-\xi_0, 2\pi\xi_0t)$. Then, when $\Theta_\eta(\omega,\theta)\neq \mu_{\xi_0,\eta}$, $\Theta_\eta(\omega,\theta)$ is generically not parallel to $\mu_{\xi_0,\eta}$ locally around $(\eta-\xi_0, 2\pi\xi_0t)$. Similarly, the map $\underline{\Theta_\eta}:\mathbb{C}\times \mathbb{R}\to \mathbb{C}^4$ defined by
		\[
		\underline{\Theta_\eta}:(\omega,\theta)\to \underline{e^{i\theta}\boldsymbol{\omega}}
		\] 
		also satisfies the same property.
		\end{lemma}
		
		\begin{proof}
		Clearly, $\Theta_\eta$ is smooth and a periodic map on the $\theta$ variable with the periodicity $2\pi$. Also, $\Theta_\eta$ has a low dimensional range. Note that $\mu_{\xi_0,\eta}$ is a special case of $e^{i\theta}\boldsymbol{\omega}$ scaled by $A\hat{h}(\xi_0-\eta)$. Note that $\mu_{\xi_0,\eta}$ is not in the range of $\Theta_\eta$ unless $A\hat{h}(\xi_0-\eta)=1$ since the first coordinate of the range is of unit norm. 
		 By a direct calculation, we have
		\[
		\nabla\Theta_\eta|_{(\eta-\xi_0, 2\pi\xi_0t)}\begin{bmatrix}\omega\\\theta\end{bmatrix}=e^{i2\pi\xi_0t}\begin{bmatrix}i\theta\\ -2\pi\xi_0\theta-2\pi i\omega \end{bmatrix}=i\theta u_{\xi_0,\eta}+e^{i2\pi\xi_0t}\begin{bmatrix}0\\ -2\pi i\omega \end{bmatrix}\,.
		\]
		Note that $\begin{bmatrix}\omega\\\theta\end{bmatrix}\in T_{(\eta-\xi_0, 2\pi\xi_0t)}(\mathbb{C}\times \mathbb{R})$, which plays a different role compared with $(\eta-\xi_0, 2\pi\xi_0t)\in \mathbb{C}\times \mathbb{R}$.
		Thus, $\Theta_\eta(\omega,\theta)$ is parallel to $\mu_{\xi_0,\eta}$ when $\omega=0$, which finishes the claim. 
		Note that the inner product structure on $\mathbb{C}^4$ of the range of $\Theta_\eta$ does not play a role.
		\end{proof}
		
		Next, we need the following statement regarding the Cauchy-Schwartz inequality.
		\begin{lemma}\label{Lemma: improved Cauchy-Schwartz}
		Take a positive definite matrix $\Sigma\in \mathbb{C}^{p\times p}$ for $p\in \mathbb{N}$ and $u,v\in \mathbb{C}^p$. We have
		\begin{equation*}
		\frac {[\Re(u^* {\Sigma} v)]^2}{v^* {\Sigma} v}-u^*{\Sigma} u\leq -z^*{\Sigma} z\,,
		\end{equation*} 
		where $z=u-P_vu$ and  $P_vu:=\frac { u^* {\Sigma} v}{v^* {\Sigma} v}v$. 
		\end{lemma}

		\begin{proof}
		We have 
		\[
		\frac { |u^* {\Sigma} v|^2}{v^* {\Sigma} v}-u^*{\Sigma} u=-z^*{\Sigma} u\,,
		\]
		which leads to 
		\begin{equation*}
		\frac {[\Re(u^* {\Sigma} v)]^2}{v^* {\Sigma} v}-u^*{\Sigma} u=-z^*{\Sigma} u-\frac {[\Im(u^* {\Sigma} v)]^2}{v^* {\Sigma} v}\leq -z^*{\Sigma} u=-z^*{\Sigma} z
		\end{equation*}   
		where the last equality holds since $u=P_vu+z$ and $P_vu$ and $z$ are perpendicular with related to ${\Sigma}$. 
		Note that in general, $z^*{\Sigma} u=0$ and $\Im(u^* {\Sigma} v)=0$ can happen. 
 We thus finish the proof.
		\end{proof}
			
			Note that in general we have
		\begin{align*}
		(u-v)^*{\Sigma}(u-v)=(z+(P_vu-v))^*{\Sigma}(z+(P_vu-v))\geq z^*{\Sigma} z
		\end{align*}
		since $z$ and $P_vu$ are perpendicular with related to ${\Sigma}$.

			With the above preparation, we have the following theorem about the absolute moments of $Y_{\alpha,\xi,\eta}$ in the non-null case. The proof depends on reducing the non-null case to the null case shown in Theorem \ref{Proposition joint density Y and omega gen}.
			
			\begin{thm}[Absolute moments of $Y_{\alpha,\xi,\eta}$, non-null case]\label{Proposition joint density Y and omega gen222}
			Suppose Assumptions~\ref{assump:noise part}, \ref{assump:nonnull signal} and \ref{assump:gaussian window} hold and $A>0$. Assume $\hbar$ is a smooth function satisfying $\hat{\hbar}=\hat{h}\varphi$ for a symmetric, smooth and bounded function $\varphi$.
			Fix $k \in \NN$. 
			For any $\eta>0$, $\xi>0$ and $\alpha>0$, the $k$-th absolute moment are finite. 
			Moreover,
			\begin{enumerate}
			\item for $\eta\geq 1$, $|\eta-\xi_0|\geq 1/2$ and $\xi>0$, when $\alpha$ is sufficiently small, we have
			\begin{align}
				\EE{|Y_{\alpha,\xi,\eta}|^k}
				\asymp \alpha^{-k/2+1} \,,
				\end{align}
				where 
				 the implied constant depends on $\varphi$, $\varrho$, $k$ and $\frac{\eta^{k\varrho/2}}{(1+4\pi^2|\eta-\xi|^2)^{(k+4)/2}}$; 
				 when $|\eta-\xi_0|< 1/2$ and $\xi>0$, when $\alpha$ is sufficiently small, we have
			\begin{align}
				c_1 \alpha^{-k/2+1}  \leq \EE{|Y_{\alpha,\xi,\eta}|^k}
				\leq  c_2\alpha^{-k/2+1} \,,
				\end{align}
				where $c_1$ depends on $\varphi$, $\varrho$, $k$ and $e^{-A^2(1+4\pi^2\xi_0)^2\xi_0^{-\rho}}$ and
				 $c_2$ depends on $\varphi$, $\varrho$, $k$ and $A^{k+3}\xi_0^{(2-\rho)(k+3)/2}$;
				 \item for $\eta<1$ (particularly when $\eta$ is close to $0$), 
				 when $\alpha$ is sufficiently small so that $\alpha<\eta$, we have
			\begin{align}
				c_1&\,e^{-CA^2\hat{\hbar}(\xi_0)^2\max\{\eta^{-6},\,\xi_0^2\eta^{-2}\}}\alpha^{-k/2+1} \leq  \EE{|Y_{\alpha,\xi,\eta}|^k}\nonumber\\
				&\qquad\leq  c_2  \max\Big\{\frac{1}{\sqrt{\alpha\eta}}e^{-\frac{k\xi^2}{2\alpha}},\, 1,\,\frac{1}{A\hat{\hbar}(\xi_0)\xi_0}\Big\} \alpha^{-k/2+1} \nonumber\,
				\end{align}
				where 
				 $c_1$ depends on $\varphi$, $\varrho$, $k$ and $\frac{\eta^{3k+8}}{(1+4\pi^2|\eta-\xi|^2)^{(k+4)/2}}$, $C>0$ depends on $\varrho$ and $\varphi$, and $c_2$ depends on $\varphi$, $\varrho$ and $k$.
				 \end{enumerate}
				\end{thm}
				
			\begin{proof}
			We follow the notation and convention used in the proof for Lemma \ref{LEMMA joint density Y and omega gen}. We need to control the impact of the signal, which involves the ``signal strength'' $A$, the frequency $\xi_0$, and the window effect $\hat{\hbar}(\xi_0-\eta)$ in $c_\eta$, $B_{\xi_0,\eta}(\theta,\omega)$ and $C_{\xi_0,\eta}$. Without loss of generality, below we assume that $\hat{\hbar}(\xi_0-\eta)\neq 0$, otherwise the proof is trivially the same as that for the null case. 
			\medskip
			
			\underline{{\bf When $\eta\geq 1$:}} 		
		Since all eigenvalues of $\underline{\Sigma}_\eta$ are of the same order and grow at the polynomial rate maximally by Lemma \ref{Lemma: c of mu_xi0eta when eta small} and $A\hat{h}(\xi_0-\eta)$ decays exponentially with the rate depending on $A$, $\xi_0$, $\varphi$ and $\varrho$, when $|\xi_0-\eta|$ grow, $C_{\xi_0,\eta}$  
		decays exponentially when $|\xi_0-\eta|$ increases.  		We then apply \eqref{var eq intermediate} in Lemma \ref{LEMMA joint density Y and omega gen} to control the absolute moment. By the Cauchy-Schwartz inequality, $\frac{B^2_{\xi_0,\eta}(\theta,\omega)}{A_\eta(\theta,\omega)}$ decays exponentially when $|\xi_0-\eta|$ increases. By combining this fact and the approximation that \cite[p.323 (13.2.13)]{NIST} 
			$\Hypergeometric{1}{1}{\frac{k}{2}+2}{\frac{1}{2}}{x^2}=1+O(x^2)$ when $x\to 0$ for all $k\in\mathbb{N}$, when $\eta\geq 1$ and $|\xi_0-\eta|\geq 1/2$, $e^{-C_{\xi_0,\eta}}\Hypergeometric{1}{1}{\frac{k}{2}+2}{\frac{1}{2}}{\frac{B^2_{\xi_0,\eta}(\theta,\omega)}{A_\eta(\theta,\omega)}}\asymp 1$, where the implied constant depends on $k$, $A$, $\xi_0$, $\varrho$ and $\varphi$. These approximations together imply that we can control $\EE |Y_{\alpha,\xi,\eta}|^k$ by its null parallel result.

		On the other hand, when $\eta$ approaches $\xi_0$, $A\hat{h}(\xi_0-\eta)$ approaches $A$ and eigenvalues of $\underline{\Sigma}_\eta$ are of order $\eta^\varrho$ since $\xi_0\geq 1$ by assumption. Precisely, when $|\xi_0-\eta|< 1/2$, $C_{\xi_0,\eta}\asymp A^2\xi_0^{2-\rho}$. In this case, the term $G(\theta,\omega)$ in \eqref{Proof:Bound k abs moments of Y part1} could be roughly bounded by $\max\{C_{\xi_0,\eta}^{(k+3)/2}, e^{-C_{\xi_0,\eta}}\}$ from above and $e^{-C_{\xi_0,\eta}}$ from below by the bound
			$\frac{B^2_{\xi_0,\eta}(\theta,\omega)}
					{A_\eta(\theta,\omega)}-C_{\xi_0,\eta}
				\leq 
				0
			$
		by the Cauchy-Schwartz.

		As a result, when $\eta\geq 1$, the finite absolute $k$-th moment in the non-null case follows from the same argument as that in the null case, with an extra constant depending on $A$ and $\xi_0$.
		
		\medskip
			
			\underline{{\bf When $\eta< 1$, particularly when $\eta$ is close to $0$:}} When $\eta$ is small, 
		 again we need to handle the degeneracy of $\underline{\Sigma}_\eta$. The lower bound is simple. 
		 Note that when $\eta$ is close to $0$, 
		 \[
		 C_{\xi_0,\eta}\leq CA^2\hat{\hbar}(\xi_0)^2\max\{\eta^{-6},\xi_0^2\eta^{-2}\}
		 \] 
		 for some $C>0$ according to \eqref{Asymptotic muSigmainvmu or C} in Lemma \ref{Lemma: c of mu_xi0eta when eta small} since $\xi_0$ is assumed to be finite. Thus, we can bound $e^{-C_{\xi_0,\eta}}$ from below.
		 With this bound, we could control the lower bound of $\EE{|Y_{\alpha,\xi,\eta}|^k}$ by controlling $G(\theta,\omega)$ from below by $e^{-C_{\xi_0,\eta}}$. As a result, by the same argument as that in the null case, we get a rough lower bound control of $\EE{|Y_{\alpha,\xi,\eta}|^k}$ with an extra constant $e^{-C_{\xi_0,\eta}}$.

		 Next, we control the upper bound of $\EE{|Y_{\alpha,\xi,\eta}|^k}$. 
		 Apply Lemma \ref{Lemma: improved Cauchy-Schwartz} with $\Sigma=\underline{\Sigma}^{-1}_\eta$, $v\in \underline{\Theta_\eta}(\mathbb{C}\times \mathbb{R})\in \mathbb{C}^4$ and $u=\underline{\mu_{\xi_0,\eta}}\in \mathbb{C}^4$. We have
		\begin{align*}
		\frac{B^2_{\xi_0,\eta}(\theta,\omega)}
					{A_\eta(\theta,\omega)}-C_{\xi_0,\eta}=\frac {[\Re(u^* \underline{\Sigma}^{-1}_\eta v)]^2}{v^* \underline{\Sigma}^{-1}_\eta v}-u^*\underline{\Sigma}^{-1}_\eta u\leq -z^*\underline{\Sigma}^{-1}_\eta z\,,
		\end{align*} 
		where $z=\underline{\mu_{\xi_0,\eta}}-P_v\underline{\mu_{\xi_0,\eta}}$ and $P_v\underline{\mu_{\xi_0,\eta}}:=\frac { \underline{\mu_{\xi_0,\eta}}^* \underline{\Sigma}^{-1}_\eta v}{v^* \underline{\Sigma}^{-1}_\eta v}v$ is the projection of $\underline{\mu_{\xi_0,\eta}}$ onto $v$ via the inner product structure $\underline{\Sigma}^{-1}_\eta$.
		By Lemma \ref{Lemma:Theta eta control}, generically we have $z^*\underline{\Sigma}^{-1}_\eta z\neq 0$. 
		To finish the proof, we need to further quantify when $z^*\underline{\Sigma}^{-1}_\eta z$ is away from $0$, and how. 
		By the third fact in Lemma \ref{Lemma: c of mu_xi0eta when eta small} and setting the second coordinate of $\Theta_\eta(\omega,\theta)$ as $re^{i\phi}$, we have
		\[
		z=\underline{\mu_{\xi_0,\eta}}-P_v\underline{\mu_{\xi_0,\eta}}\asymp A\hat{\hbar}(\xi_0-\eta)\Big[\underline{\Theta}_\eta(\eta-\xi_0, 2\pi\xi_0t)-\mathsf{D}_p(\xi_0, \omega, \theta, \eta)v\Big]\,,
		\]
		where $\mathsf{D}_p(\xi_0, \omega, \theta, \eta):=\frac{\mathsf{D}_d(\xi_0, \omega, \theta, \eta)}{\mathsf{D}_n(\omega,\theta,  \eta)}$,
		\begin{align*}
		\mathsf{D}_d(\xi_0, \omega, \theta, \eta):=&\,\frac{2\sin(\theta)\sin(2\pi\xi_0t)}{\mathsf c_4 \eta^{6}}+\frac{8\pi^2r\xi_0\cos(\theta+\phi)\cos(2\pi\xi_0t)}{\mathsf c_3 \eta^{2}}\\
		&+\frac{4\pi^2r\xi_0\sin(\theta+\phi)\sin(2\pi\xi_0t)}{\gamma^{[\varphi,\varphi]}_0(0)}+\frac{\cos(\theta)\cos(2\pi\xi_0t)}{4\pi^2\gamma^{[\varphi,\varphi]}_2(0)}
		\end{align*}
		and
		\[
		\mathsf{D}_n(\omega,\theta,  \eta):=\frac{2\sin^2(\theta)}{\mathsf c_4\eta^{6}}+\frac{8\pi^2r^2\cos^2(\theta+\phi)}{\mathsf c_3\eta^{2}}+\frac{4\pi^2r^2\sin^2(\theta+\phi)}{\gamma^{[\varphi,\varphi]}_0(0)}+\frac{\cos^2(\theta)}{4\pi^2\gamma^{[\varphi,\varphi]}_2(0)}\,.
		\]
		This seemingly complicated formula actually has a simple rule. First, since $A\hat{\hbar}(\xi_0-\eta)\asymp A\hat{\hbar}(\xi_0)$, which is a fixed constant, we only focus on the quantity inside the bracket. Thanks to the Cauchy-Schwartz inequality or by a direct checkup, we know that $|\mathsf{D}_p(\xi_0, \omega, \theta, \eta)|\leq 1$. Thus, $z$ will be small only if $\theta$ is close to $2\pi\xi_0t$ (mod $2\pi$), and $|\omega|$ is close to $\xi_0$. When $2\pi\xi_0t$ (mod $2\pi$) is small, the $\eta^{-2}$ term will dominate simultaneously in $\mathsf{D}_d(\xi_0, \omega, \theta, \eta)$ and $\mathsf{D}_n(\omega,\theta,  \eta)$ if $\phi$ is away from $0$; otherwise the constant term will dominate simultaneously. In other words, only over a small region will $z^*\underline{\Sigma}^{-1}_\eta z$ be ``small''.
		To be more specific, consider the following example. When $2\pi_0t$ is close to $0$ or $-\pi$, $C_{\xi_0,\eta}$ is of order $\eta^{-2}$. Further, when $\omega$ is close to $\xi_0$, we have $\mathsf{D}_d(\xi_0, \omega, \theta, \eta)$ is close to
		$\frac{8\pi^2\xi^2_0}{\mathsf c_3 \eta^{2}}+\frac{1}{4\pi^2\gamma^{[\varphi,\varphi]}_2(0)}$, and 
		$\mathsf{D}_n(\omega,\theta,  \eta)$ is close to $\frac{8\pi^2\xi_0^2}{\mathsf c_3\eta^{2}}+\frac{1}{4\pi^2\gamma^{[\varphi,\varphi]}_2(0)}$; that is, $\mathsf{D}_p(\xi_0, \omega, \theta, \eta)$ is close to $1$, and $u$ is close to $\underline{\mu_{\xi_0,\eta}}$, and hence $\underline{\Theta}_\eta(\eta-\xi_0, 2\pi\xi_0t)-\mathsf{D}_p(\xi_0, \omega, \theta, \eta)v$ is close to $0$. 
		
		To further quantify the above observation, by a further expansion, we have 
		\begin{align*}
		z^*\underline{\Sigma}^{-1}_\eta z\asymp&\,
			A^2\hat{\hbar}(\xi_0-\eta)^2\Big[
			\frac{2(\sin(2\pi\xi_0t)-\mathsf{D}_p(\xi_0, \omega, \theta, \eta)\sin(\theta))^2}{\mathsf c_4 \eta^{6}}\\
			&\qquad+\frac{8\pi^2(\xi_0\cos(2\pi\xi_0t)-r\mathsf{D}_p(\xi_0, \omega, \theta, \eta)\cos(\theta))^2}{\mathsf c_3 \eta^{2}}\\
			&\qquad+\frac{4\pi^2(\xi_0\sin(2\pi\xi_0t)-r\mathsf{D}_p(\xi_0, \omega, \theta, \eta)\sin(\theta))^2}{\gamma^{[\varphi,\varphi]}_0(0)}\\
			&\qquad+\frac{(\cos(2\pi\xi_0t)-\mathsf{D}_p(\xi_0, \omega, \theta, \eta)\cos(\theta))^2}{4\pi^2\gamma^{[\varphi,\varphi]}_2(0)}\Big]\,.\nonumber
		\end{align*}
		A direct control shows that 
		\[
		z^*\underline{\Sigma}^{-1}_\eta z\geq C_1\eta^2C_{\xi_0,\eta}
		\] 
		for some $C_1>0$  when $|\theta-2\pi\xi_0t\mbox{ (mod }2\pi)|> C_2\eta^2$ and $|\omega-\xi_0|> C_2\eta^2$ for some $C_2>0$, where $C_2>0$ will be chosen later.
		 Note that by the third fact in Lemma \ref{Lemma: c of mu_xi0eta when eta small}, we have 
		\begin{equation}\label{Bound: C_xi lower bound when eta->0}
		C_{\xi_0,\eta}\geq C_3 A^2\hat{\hbar}(\xi_0)^2\xi_0^2\eta^{-2}
		\end{equation}
		for some constant $C_3>0$. 
		If we bound $\frac{B^2_{\xi_0,\eta}(\theta,\omega)}
					{A_\eta(\theta,\omega)}$ in $G(\theta,\omega)$ trivially by $C_{\xi_0,\eta}$, when $C_2$ is chosen so that $C_3C_1 A^2\hat{\hbar}(\xi_0)^2\xi_0^2\geq 3k+9$, we trivially have 
					\begin{align}
					G(\theta,\omega)\leq C_4
					\end{align}
					for some constant $C_4>0$.
					Denote 
\begin{equation}
		\mathcal{N}:=\Big\{(\omega,\theta)\in \mathbb{C}\times [0,2\pi)\big|\,|\theta-2\pi\xi_0t\mbox{ (mod }2\pi)|\vee|\omega-\xi_0|\leq C_2\eta^2\Big\}\,,\nonumber
		\end{equation}
		which is a subset of $\mathbb{C}\times [0,2\pi)$.

		By the third fact in Lemma \ref{Lemma: c of mu_xi0eta when eta small}, we immediately have that over $\mathcal{N}$, despite the factor $A\hat{\hbar}(\xi_0-\eta)$, $A_{\eta}(\theta,\omega)$, $B_{\xi_0,\eta}(\theta,\omega)$ and $C_{\xi_0,\eta}$ are close.
		Thus, we have 
		\begin{equation}
		A_\eta(\theta,\omega)/2\leq \frac{B^2_{\xi_0,\eta}(\theta,\omega)}{A_\eta(\theta,\omega)}\leq  2A_\eta(\theta,\omega)\,,
		\end{equation}
		As a result, 
		\begin{align}
		G(\theta,\omega)\leq C_5 \max\{A_\eta(\theta,\omega)^{(k+3)/2}, \,e^{-C_{\xi_0,\eta}}\}=C_5A_\eta(\theta,\omega)^{(k+3)/2}\,,
		\end{align}
		for some constant $C_5>0$, where the last equality holds since $A_\eta(\theta,\omega)\geq \lambda_{\eta,1}^{-1}$, which is of order $\eta^{-\rho}$, and $e^{-C_{\xi_0,\eta}}\leq e^{-C_3 A^2\hat{\hbar}(\xi_0)^2\xi_0^2\eta^{-2}}$ by \eqref{Bound: C_xi lower bound when eta->0}, which is exponentially small when $\eta$ is close to $0$.

		As a result, we have
				\begin{align}
		&\int_{\mathbb{C}}\int_0^{\pi}
				\frac{G(\theta,\omega)}{E^k_{\alpha,\xi}(\omega)A^{2+k/2}_\eta(\theta,\omega)}\,d\theta\,d\omega\nonumber\\
				\leq &\,\int_{\mathcal{N}^c}
				\frac{C_4}{E^k_{\alpha,\xi}(\omega)A^{2+k/2}_\eta(\theta,\omega)}\,d\theta\,d\omega+\int_{\mathcal{N}}
				\frac{C_5}{E^k_{\alpha,\xi}(\omega)A^{1/2}_\eta(\theta,\omega)}\,d\theta\,d\omega\nonumber\\
				\leq &\,C_4\int_{\mathbb{C}}\int_0^{\pi}
				\frac{1}{E^k_{\alpha,\xi}(\omega)A^{2+k/2}_\eta(\theta,\omega)}\,d\theta\,d\omega+\int_{\mathcal{N}}
				\frac{C_5}{E^k_{\alpha,\xi}(\omega)A^{1/2}_\eta(\theta,\omega)}\,d\theta\,d\omega\nonumber\,,
		\end{align}
		where the first term can be bounded by the same way for the null case.
		We now control the second term involving $\mathcal{N}$. Note that over $\mathcal{N}$, $A_\eta(\theta,\omega)$ is close to $C_{\xi_0,\eta}$, which is bounded from below by $C_3 A^2\hat{\hbar}(\xi_0)^2\xi_0^2\eta^{-2}$, so we have
		\begin{align*}
	 \int_{\mathcal{N}}&
				\frac{C_5}{E^k_{\alpha,\xi}(\omega)A^{1/2}_\eta(\theta,\omega)}d\theta\,d\omega\leq 
				\frac{C_5\eta\alpha^{-k/2}}{\sqrt{C_3} A\hat{\hbar}(\xi_0)\xi_0} \int_{\mathcal{N}} e^{-k|\omega-\xi|^2/ \alpha} d\theta d\omega\,
		\end{align*}
		We immediately have
		\begin{align*}
				\int_{\mathcal{N}}  e^{-k|\omega-\xi|^2/ \alpha} d\theta d\omega = &2C_2\eta^2 \int_{|\omega-\xi_0|\leq C_2\eta^2} e^{-k|\omega-\xi|^2/ \alpha} d\omega
				\end{align*}
				 since $|\theta-2\pi\xi_0t\mbox{ (mod }2\pi)|\leq C_2\eta^2$. Finally, we use the trivial bound when $\xi=\xi_0$,
				\begin{align*}
				 \int_{|\omega-\xi_0|\leq C_2\eta^2} e^{-k|\omega-\xi|^2/ \alpha} d\omega\leq \frac{\alpha}{k}(1-e^{-kC_2^2\eta^4/\alpha})\,,
				\end{align*}
				which is bounded by $C_6\alpha \eta$ for a constant $C_6>0$ by the assumption that $\eta>\alpha$, and hence we have
		\begin{align*}
		\int_{\mathcal{N}}
				\frac{C_5}{E^k_{\alpha,\xi}(\omega)A^{1/2}_\eta(\theta,\omega)}d\theta\,d\omega\leq \frac{C_7}{A\hat{\hbar}(\xi_0)\xi_0}\eta^4\alpha^{-k/2+1}
				\end{align*} 
				for a constant $C_7>0$.
		We thus finish the proof.
		
\end{proof}

Finally we describe the moments of $Y_{\alpha,\xi,\eta}$ in  both the null and non-null cases.
			
			\begin{thm}[Moments of $Y_{\alpha,\xi,\eta}$, null case]\label{Proposition joint density Y and omega gen333}
			Suppose Assumptions~\ref{assump:noise part}, \ref{assump:nonnull signal} and \ref{assump:gaussian window} hold. Assume $\hbar$ is a smooth function satisfying $\hat{\hbar}=\hat{h}\varphi$ for a symmetric, smooth and bounded function $\varphi$.
			Fix $\xi>0$ and $k \in \NN$. 
			When $k$ is odd, for any $\eta>0$, we have
			\[
			\mathbb{E}Y_{\alpha,\xi,\eta}^k=0\,.
			\]
			When $k$ is even, for $\eta\geq1$, when $\alpha$ is sufficiently small, we have
			\begin{align}
				|\EE Y_{\alpha,\xi,\eta}^k|
				=O( \alpha^{-k/2+1}) \,,
				\end{align}
				where 
				 the implied constant depends on $\varphi$, $\varrho$, $k$ and $\frac{\eta^{k\varrho/2}e^{-4\pi^2\eta^2}}{(1+4\pi^2|\eta-\xi|^2)^{(k+4)/2}}$; 
				 for $\eta<1$, particularly when $\eta$ is close to $0$, we have a simple bound
			\begin{align}
				|\EE Y^k_{\alpha,\xi,\eta}|\leq  \EE |Y_{\alpha,\xi,\eta}|^k\,.
				\end{align}
				\end{thm}
				
				\begin{proof}
We follow the notation and convention used in the proof for Lemma \ref{LEMMA joint density Y and omega gen}. 
By a direct expansion, we have
			\begin{align}
			\EE Y^k_{\alpha,\xi,\eta}
			=&\,Kc_\eta e^{-C_{\xi_0,\eta}}\int_{\mathbb{C}}  E^4_{\alpha,\xi}(\omega) \int_0^{2\pi}e^{ik\theta}\Big[\int_0^\infty
			r^{k+3}\nonumber\\
			&\qquad\times e^{-E^2_{\alpha,\xi}(\omega) A_\eta(\theta,\omega) r^2}e^{2E_{\alpha,\xi}(\omega) B_{\xi_0,\eta}(\theta,\omega) r}
			\,dr\Big]d\theta\,d\omega\nonumber\,.
			\end{align}
			Note that the only difference between $\mathbb{E}Y^k_{\alpha,\xi,\eta}$ and $\mathbb{E}|Y_{\alpha,\xi,\eta}|^k$ in \eqref{Expansion absolute moment of order k} is the phase $e^{ik\theta}$. Therefore, by the same derivation of \eqref{var eq intermediate}, by \eqref{eq:Hermite function of negative order integral representation}, when $k$ is even, $\EE Y^k_{\alpha,\xi,\eta}$ becomes
			\begin{align}
			 &\,Kc_\eta e^{-C_{\xi_0,\eta}}\int_{\mathbb{C}} 
			\frac{1}{E^k_{\alpha,\xi}(\omega)} 
			\int_0^{\pi} \frac{e^{ik\theta}}{A^{2+k/2}_\eta(\theta,\omega)}
			\nonumber\\
			&\qquad\times \Hypergeometric{1}{1}{\frac{k}{2}+2}{\frac{1}{2}}{\frac{B^2_{\xi_0,\eta}(\theta,\omega)}{A_\eta(\theta,\omega)}}
			\,d\theta\,d\omega\label{var eq intermediate2even}\,,
			\end{align}
			where $K:=\frac{\sqrt{\pi}\Gamma(k+4)}{2^{k+3}\Gamma(\frac{k+5}{2})}$;
			when $k$ is odd, the same derivation of \eqref{var eq intermediate} is slightly changed and we have
			\begin{align}
			 &\,K'c_\eta e^{-C_{\xi_0,\eta}}\int_{\mathbb{C}} 
			\frac{1}{E^k_{\alpha,\xi}(\omega)} 
			\int_0^{\pi} \frac{e^{ik\theta}B_{\xi_0,\eta}(\theta,\omega) }{A^{5/2+k/2}_\eta(\theta,\omega)}
			\nonumber\\
			&\qquad\times \Hypergeometric{1}{1}{\frac{k+5}{2}}{\frac{3}{2}}{\frac{B^2_{\xi_0,\eta}(\theta,\omega)}{A_\eta(\theta,\omega)}}
			\,d\theta\,d\omega\label{var eq intermediate2odd}
			\end{align}
			where $K':=\frac{\sqrt{\pi}\Gamma(k+4)}{2^{k+3}\Gamma(\frac{k+4}{2})} $. Indeed, when we evaluate moments, we have an extra $e^{ik\theta}$ inside the integrant of $d\theta$ in \eqref{var eq intermediate}. Since $e^{ik(\theta+\pi)}=(-1)^ke^{ik\theta}$, when $k$ is odd, $e^{ik(\theta+\pi)}=-e^{ik\theta}$. As a result, the format of odd moments different from that of even moments. 

			Since $B_{\xi_0,\eta}(\theta,\omega)=C_{\xi_0,\eta}=0$ in the null case, clearly we have $\EE Y^k_{\alpha,\eta}=0$ for all $\eta>0$ when $k$ is odd. When $k$ is even, 
			\begin{align}
			 \EE Y^k_{\alpha,\xi,\eta}=&\,Kc_\eta \int_{\mathbb{C}} 
			\frac{1}{E^k_{\alpha,\xi}(\omega)} 
			\int_0^{\pi} \frac{e^{ik\theta}}{A^{2+k/2}_\eta(\theta,\omega)}d\theta\,d\omega\,.
			\end{align}
			In this case, {since $e^{ik(\theta+\pi/2)}=(-1)^{k/2}e^{ik\theta}$ when $\theta\in [0,\pi/2]$, we have }
			\begin{equation*}
			\int_0^{\pi} \frac{e^{ik\theta}}{A^{2+k/2}_\eta(\theta,\omega)}d\theta=\int_0^{\pi/2} e^{ik\theta}\Big[\frac{1}{A^{2+k/2}_\eta(\theta,\omega)}+\frac{(-1)^{k/2}}{A^{2+k/2}_\eta(\theta+\pi/2,\omega)}\Big]d\theta\,.
			\end{equation*}
			\medskip
			
			\underline{{\bf When $\eta\geq 1$ and $k$ is even:}} 		
			When $k/2$ is odd, by using \eqref{Expansion Atheta omega in a different form}, we have 
			\begin{equation}
			|A_\eta(\theta,\omega)-\boldsymbol{\omega}^*\overline{{P}^{-1}}\boldsymbol{\omega}|\leq |\boldsymbol{\omega}^\top\overline{\Gamma^{-1}}C\overline{{P}^{-1}}\boldsymbol{\omega}|= O(e^{-4\pi^2\eta^2}\|\boldsymbol{\omega}\|)\,,
			\end{equation}
			which leads to 
			\begin{equation}\label{Proof moments difference of A}
			\Big|\frac{1}{A^{2+k/2}_\eta(\theta,\omega)}-\frac{1}{A^{2+k/2}_\eta(\theta+\pi/2,\omega)}\Big|=O\Big(\frac{\eta^{(k+4)\rho/2} e^{-4\pi^2\eta^2}}{{(1+4\pi^2|\eta-\omega|^2)^{(k+4)/2}}}\Big)
			\end{equation}
			by {the same approximation like that of \eqref{upper and lower bounds of A-k+4/2 integral} and} a direct binomial expansion, and hence
			\begin{align}
			 |\EE Y^k_{\alpha,\xi,\eta}|=O( \alpha^{-k/2+1})\,,
			\end{align}
			where the implied constant depends on $\varphi$, $\varrho$, $k$ and $\frac{\eta^{k\rho/2}e^{-4\pi^2\eta^2}}{(1+4\pi^2|\eta-\xi|^2)^{(k+4)/2}}$.
			When $k/2$ is even, we simply apply $ |\EE Y^k_{\alpha,\xi,\eta}|\leq  \EE |Y_{\alpha,\xi,\eta}|^k$.
			
			\medskip
			\underline{{\bf When $\eta< 1$, particularly when $\eta$ is close to $0$,  and $k$ is even:}} 
		  In this case, we simply applied the rough bound; that is $ |\EE Y^k_{\alpha,\xi,\eta}|\leq  \EE |Y_{\alpha,\xi,\eta}|^k$, and hence the proof.
			\end{proof}
			
			\begin{remark}
			When $\eta\to 0$, again we need to handle the degeneracy of $\underline{\Sigma}_\eta$.
		 Ideally, we would expect that the trick \eqref{Proof moments difference of A} would work and the moments would be ``smaller'' than the absolute moments due to the oscillation. However, if we examine the proof for the absolute moment carefully, the main place we could possibly obtain an improvement with this trick is when $\theta\in I_3\cap [0,\eta^{5/2}]$ and $\phi\in J_1$, since this is the dominant term in the absolute moments and the other bounds are much smaller. In this case, by a direct expansion with the third fact in Lemma \ref{Lemma: c of mu_xi0eta when eta small}, we see that $\frac{1}{A^{2+k/2}_\eta(\theta,\omega)}$ is of order $\frac{\eta^2}{\theta^2/\eta^4+\eta^2+r^2}$ since $\eta^4r^2\cos^2(\theta+\phi)$ dominates $\eta^6r^2\sin^2(\theta+\phi)$, while $\frac{1}{A^{2+k/2}_\eta(\theta+\pi/2,\omega)}$ is of order $\frac{\eta^6}{1+\eta^4r^2\min\{(\theta+\phi-\pi/2)^2,(\theta+\phi-3\pi/2)^2\}+\eta^6 r^2}$, which is much smaller than $\frac{1}{A^{2+k/2}_\eta(\theta,\omega)}$. Thus, we do not obtain a benefit by the cancellation. Since this bound is not essential for our proof, we do not pursue a better bound but simply apply the rough bound and hence the proof.
			\end{remark}

			The final part of this section is estimating the moments of $Y_{\alpha,\xi,\eta}$ in the non-null case. In this case, the signal plays a role in the analysis. While we can always apply the trivial bound $ |\EE Y^k_{\alpha,\xi,\eta}|\leq  \EE |Y_{\alpha,\xi,\eta}|^k$, it is desirable that the moments are in general ``smaller'' than the absolute moments. However, note that in general we have 
			\[
			A_\eta(\theta,\omega)\neq A_\eta(\theta+\pi/2,\omega)
			\mbox{ 
			and 
			}
			B_{\xi_0,\eta}(\theta,\omega)\neq B_{\xi_0,\eta}(\theta+\pi/2,\omega)
			\] 
			for $\theta\in [0,\pi/2]$. Indeed, by \eqref{Expansion Atheta omega in a different form}, we have
			\begin{align*}
			A^{(h)}_{\eta}(\theta,\omega)&=\boldsymbol{\omega}^* \conj{{P}^{-1}} \boldsymbol{\omega} - \Re \Big(e^{2i\theta}\boldsymbol{\omega}^\top {R}^\top\conj{ P^{-1}} \boldsymbol{\omega}\Big)\,\\
			A^{(h)}_{\eta}(\theta+\pi/2,\omega)&=\boldsymbol{\omega}^* \conj{{P}^{-1}} \boldsymbol{\omega} + \Re \Big(e^{2i\theta}\boldsymbol{\omega}^\top {R}^\top\conj{ P^{-1}} \boldsymbol{\omega}\Big)\,\\
			B^{(h)}_{\xi_0,\eta}(\theta,\omega)&=
			\Re\Big[ e^{i\theta}\Big(\mu_{\xi_0,\eta}^{(h)*}-\mu_{\xi_0,\eta}^{(h)\top} R^\top\Big)\conj{ P^{-1}}\boldsymbol{\omega} \Big]\,\nonumber\\
			B^{(h)}_{\xi_0,\eta}(\theta+\pi/2,\omega)&=
			-\Im\Big[ e^{i\theta}\Big(\mu_{\xi_0,\eta}^{(h)*}-\mu_{\xi_0,\eta}^{(h)\top} R^\top\Big)\conj{ P^{-1}}\boldsymbol{\omega} \Big]\,.
			\end{align*}
			Thus, the same trick like \eqref{Proof moments difference of A} may not work in general. 
			Specifically, while it is intuitive that $\mathbb{E} Y_{\alpha,\xi,\eta}=0$, it is not transparent to see it. Below, we show some results that are sufficient for our purpose, while they are not the most general results.
			
			\begin{thm}[Moments of $Y_{\alpha,\xi,\eta}$, non-null case]\label{Proposition joint density Y and omega gen444}
			Suppose Assumptions~\ref{assump:noise part}, \ref{assump:nonnull signal} and \ref{assump:gaussian window} hold. Assume $\hbar$ is a smooth function satisfying $\hat{\hbar}=\hat{h}\varphi$ for a symmetric, smooth and bounded function $\varphi$.
			Fix $\xi>0$ and $k \in \NN$. 
			When $\eta\geq 1$ and $|\xi_0-\eta|\geq 1/2$, for any $k\in \mathbb{N}$, when $\alpha$ is sufficiently small, we have
			\[
			|\mathbb{E} Y_{\alpha,\xi,\eta}^k|=O(e^{-4\pi^2(\xi_0-\eta)^2})\,,
			\]
			where the implied constant depends on $\EE|Y_{\alpha,\xi,\eta}|^k$.
			When $\eta< 1$ or when $\eta\geq 1$ and $|\xi_0-\eta|< 1/2$, when $\alpha$ is sufficiently small, we have the trivial bound
			\begin{align}
				|\EE Y_{\alpha,\xi,\eta}^k|
				\leq \EE|Y_{\alpha,\xi,\eta}|^k\,.
				\end{align}
				\end{thm}

			\begin{proof}
			We follow the notation and convention used in the proof for Lemma \ref{LEMMA joint density Y and omega gen}. 
			By the same expansion, $\mathbb{E} Y_{\alpha,\xi,\eta}^k$ is shown in \eqref{var eq intermediate2even} and \eqref{var eq intermediate2odd} for even and odd $k$ respectively. The approximation technique is the same as the above, so we only sketch the proof by indicating each key steps.
			
		\medskip
			\underline{{\bf When $\eta\geq 1$ and $|\eta-\xi_0|\geq 1$:}} 
			In this case, we have a bound similar to that of the null case. Indeed, by \eqref{Expansion Atheta omega in a different form}, since $\mu_{\xi_0,\eta} =A\hat{h}(\xi_0-\eta)e^{i2\pi \xi_0 t}\begin{bmatrix}1&i2\pi\xi_0\end{bmatrix}^\top$, when $|\eta-\xi_0|$ is sufficiently large, we have $|B_{\xi_0,\eta}(\theta,\omega)|=O(e^{-2\pi^2(\xi_0-\eta)^2}\|\boldsymbol{\omega}\|)$. Therefore, we have
			\begin{equation}
			\frac{B^2_{\xi_0,\eta}(\theta,\omega)}{A_\eta(\theta,\omega)}=O(e^{-4\pi^2(\xi_0-\eta)^2})\,,
			\end{equation}
			which by \cite[p.323 (13.2.13)]{NIST} leads to
			\begin{align}
			\Hypergeometric{1}{1}{\frac{k}{2}+2}{\frac{1}{2}}{\frac{B^2_{\xi_0,\eta}(\theta,\omega)}{A_\eta(\theta,\omega)}}&\,=1+O(e^{-4\pi^2(\xi_0-\eta)^2})\\
			\Hypergeometric{1}{1}{\frac{k+5}{2}}{\frac{3}{2}}{\frac{B^2_{\xi_0,\eta}(\theta,\omega)}{A_\eta(\theta,\omega)}}&\,=1+O(e^{-4\pi^2(\xi_0-\eta)^2})\,.\nonumber
			\end{align}
			By the same argument as that in the null case, we get the claim. Indeed, 
			the moment is controlled by $O(e^{-4\pi^2(\xi_0-\eta)^2})$, and hence the claim.

			\medskip
			\underline{{\bf When $\eta< 1$ or when $\eta\geq 1$ and $|\eta-\xi_0|<1$:}} 
		  In this case, we simply applied the rough bound; that is $ |\EE Y^k_{\alpha,\xi,\eta}|\leq  \EE |Y_{\alpha,\xi,\eta}|^k$, and hence the proof.
		  
		\end{proof}

With the above theorems, we have the following corollary. 

			\begin{corro}[Variance of $Y_{\alpha,\xi,\eta}$]\label{Proposition joint density Y and omega gen COR1}
			Suppose Assumptions~\ref{assump:noise part}, \ref{assump:nonnull signal} and \ref{assump:gaussian window} hold. Assume $\hbar$ is a smooth function satisfying $\hat{\hbar}=\hat{h}\varphi$ for a symmetric, smooth and bounded function $\varphi$.
			When $A=0$, we have
			\begin{enumerate}
			\item for $\eta\geq1$ and $\xi>0$, when $\alpha$ is sufficiently small, we have
			\begin{align*}
				\textup{Var}({Y}_{\alpha,\xi,\eta})
				\asymp c_1 \,,
				\end{align*}
				where 
				 $c_1$ depends on $\varphi$, $\varrho$ and $\frac{\eta^{\varrho}}{(1+4\pi^2|\eta-\xi|^2)^{3}}$; 
				\item for $\eta<1$ and $\xi>0$, when $\alpha$ is sufficiently small, we have
				 \begin{align}
				c_{2,1} \leq \textup{Var}({Y}_{\alpha,\xi,\eta})
				\leq  c_{2,2} \max\Big\{\frac{1}{\sqrt{\alpha\eta}}e^{-\frac{\xi^2}{\alpha}},\, 1\Big\}\,,
				\end{align}
				where 
				 $c_{2,1}$ depends on $\varphi$ and $\varrho$ and $\frac{\eta^{14}}{(1+4\pi^2|\eta-\xi|^2)^{(k+4)/2}}$, and $c_{2,2}$ depends on $\varphi$ and $\varrho$.
\end{enumerate}
When $A>0$, we have
\begin{enumerate} 
				\item For $\eta\geq 1$ so that $|\eta-\xi_0|\geq 1/2$ and $\xi>0$, when $\alpha$ is sufficiently small, we have
			\begin{align*}
				\textup{Var}({Y}_{\alpha,\xi,\eta})\asymp c_3\,,
				\end{align*}
				where 
				$c_3$ depends on $\xi_0$, $A$, $\varphi$, $\varrho$, and $\frac{\eta^{\varrho}}{(1+4\pi^2|\eta-\xi|^2)^{3}}$; 
			\item	
				for $\eta\geq 1$ so that $|\eta-\xi_0|< 1/2$, when $\alpha$ is sufficiently small, we have
			\begin{align*}
				\textup{Var}({Y}_{\alpha,\xi,\eta}) \leq  c_4\,, \nonumber
				\end{align*}
				where 
				  $c_4$ depends on $\varphi$, $\varrho$, $k$ and $A^{k+3}\xi_0^{(2-\rho)(k+3)/2}$;

				\item when $\eta<1$,
				 when $\alpha$ is sufficiently small so that $\alpha<\eta$, we have
			\begin{align*}
				\textup{Var}({Y}_{\alpha,\xi,\eta}) \leq  c_5  \max\Big\{\frac{1}{\sqrt{\alpha\eta}}e^{-\frac{\xi^2}{\alpha}},\, 1,\,\frac{1}{A\hat{\hbar}(\xi_0)\xi_0}\Big\} \,,\nonumber
				\end{align*}
				where 
				  $c_5$ depends on $\varphi$ and $\varrho$.
				 \end{enumerate}
				\end{corro}
%

%\clearpage
%
			\section{Proof of covariance-related theorems}
\label{Section: proof of M dependent related arguments}

This section is divided into two parts. The first part is quantifying how the window perturbation impacts the variance of ${Y}^{(\hbar)}_{\alpha,\xi,\eta}$ when the window is $\hbar$. 
The second part is evaluating the covariance of ${Y}^{(\hbar)}_{\alpha,\xi,\eta}$ and ${Y}^{(\hbar)}_{\alpha,\xi,\eta'}$ when $\eta$ and $\eta'$ are far apart and how window perturbation impacts this covariance. 
	Intuitively, suppose we have a window $h_2$ that is ``very close'' to $h_1$. It is intuitive to expect that all quantities regarding SST are also ``very close'' if we replace $h_1$ by $h_2$; that is,
		 we expect similar expectations and 
		\[
			\text{Var}({Y}^{(h_1)}_{\alpha,\xi,\eta})
			\approx
			\text{Var}( {Y}^{(h_2)}_{\alpha,\xi,\eta})
			\approx 
			\text{Cov}({Y}^{(h_1)}_{\alpha,\xi,\eta},{Y}^{(h_2)}_{\alpha,\xi,\eta})
		\]
		 for any $(t,\eta)$, where the covariance of ${Y}^{(h_1)}_{\alpha,\xi,\eta}$ and ${Y}^{(h_2)}_{\alpha,\xi,\eta}$ captures the ``similarity'' of two random variables. The following two lemmas quantify this intuition. 

		\begin{lemma}\label{Lemma: EY and EvY difference}
		Suppose Assumptions~\ref{assump:noise part}, \ref{assump:nonnull signal} and \ref{assump:gaussian window} hold. Take the notations and assumptions in Lemma \ref{Lemma covariance eigenstructure after perturbation}. When $A=0$, $\eta,\xi,\alpha>0$, we have
		\[
				\mathbb{E}Y^{(h_1)}_{\alpha,\xi,\eta}=\mathbb{E}{Y}^{(h_2)}_{\alpha,\xi,\eta}=0\,.
			\]
		When $A>0$, for $\eta,\xi,\alpha>0$, we have
			\[
				|\mathbb{E}Y^{(h_1)}_{\alpha,\xi,\eta}-\mathbb{E}{Y}^{(h_2)}_{\alpha,\xi,\eta}|=O(\epsilon)\,,
			\]
			where the implied constant depends on $\varphi_1$, $\varphi_2$ and $\mathbb{E}|Y^{(h_1)}_{\alpha,\xi,\eta}|$.
		\end{lemma}

		\begin{proof}
		The null case follows the same proof of Theorem \ref{Proposition joint density Y and omega gen333}. We next show the non-null case. To ease the notation, in this proof we denote 
		$Y_{\alpha,\xi,\eta}:=Y^{(h_1)}_{\alpha,\xi,\eta}\mbox{ and }\mathsf Y_{\alpha,\xi,\eta}:=Y^{(h_2)}_{\alpha,\xi,\eta}$ and omit the superscript $h_1$ and $h_2$ describing the dependence on the windows.
		The proof is a direct calculation based on the derivations in the proof of Lemma \ref{LEMMA joint density Y and omega gen}. We follow the same notations there. Since the proof is similar, we provide the main ingredients and skip details. By \eqref{var eq intermediate2odd}, we have $\mathbb E Y_{\alpha,\xi,\eta}$.
			By the same derivation like that of \eqref{var eq intermediate2odd}, we also have $\mathbb E  \mathsf Y_{\alpha,\xi,\eta}$ expanded like
			\begin{align}
			 K'\mathsf c_\eta e^{-\mathsf C_{\xi_0,\eta}}\int_{\mathbb{C}} 
			\frac{s}{E_{\alpha,\xi}(\omega)} 
			\int_0^{\pi} \frac{e^{i\theta}\mathsf B_{\xi_0,\eta}(\theta,\omega) }{\mathsf A^{3}_\eta(\theta,\omega)}
			\Hypergeometric{1}{1}{3}{\frac{3}{2}}{\frac{\mathsf B^2_{\xi_0,\eta}(\theta,\omega)}{4 \mathsf A_\eta(\theta,\omega)}}
			\,d\theta\,d\omega\,,\nonumber
			\end{align}
			where $\mathsf c_\eta$, $\mathsf A_\eta(\theta,\omega)$, $\mathsf B_{\xi_0,\eta}(\theta,\omega)$ and $\mathsf C_{\xi_0,\eta}$ are defined in the same way as \eqref{Expansion:ABC} with the window $h_2$. 
			Thus, $|\mathbb E  Y_{\alpha,\xi,\eta}-\mathbb E \mathsf Y_{\alpha,\xi,\eta}|$ is controlled by bounding $\left|\mathsf c_\eta e^{-\mathsf C_{\xi_0,\eta}}- c_\eta e^{- C_{\xi_0,\eta}}\right|$, $\left|\frac{\mathsf B_{\xi_0,\eta}(\theta,\omega) }{\mathsf A^{3}_\eta(\theta,\omega)}-\frac{ B_{\xi_0,\eta}(\theta,\omega) }{ A^{3}_\eta(\theta,\omega)}\right|$ 
			and $\left|\frac{\mathsf B^2_{\xi_0,\eta}(\theta,\omega) }{\mathsf A_\eta(\theta,\omega)}-\frac{ B^{2}_{\xi_0,\eta}(\theta,\omega) }{ A_\eta(\theta,\omega)}\right|$ and taking the smoothness of $\Hypergeometric{1}{1}{3}{\frac{3}{2}}{x}$ when $x>0$ into account. Note that we have $\frac{d}{dx}\Hypergeometric{1}{1}{3}{\frac{3}{2}}{x}=2\Hypergeometric{1}{1}{4}{\frac{5}{2}}{x}$, $0\leq \frac{ B^{2}_{\xi_0,\eta}(\theta,\omega) }{ A_\eta(\theta,\omega)}\leq 1$ and $\Hypergeometric{1}{1}{4}{\frac{5}{2}}{x}$ is bounded for $x\in [0,1]$, the error term involving perturbing the confluent hypergeometric function is thus well controlled. % M=1F1. https://dlmf.nist.gov/13.3
			As a result, an inequality like controlling the absolute moments \eqref{Proof:Bound k abs moments of Y part1} is achieved, and with the same trick like \eqref{Proof:Bound k abs moments of Y part2}, we are done with the proof.
			\end{proof}

		\begin{lemma}\label{Main Lemma 2 for Kernel SST}
			Suppose Assumptions~\ref{assump:noise part}, \ref{assump:nonnull signal} and \ref{assump:gaussian window} hold and follow the notations and assumptions in Lemma \ref{Lemma covariance eigenstructure after perturbation}. Fix $\alpha, \xi, \eta>0$. 
			When $\alpha$ is sufficiently small, we have
			\begin{equation}
				\begin{aligned}
					&|\text{Var}({Y}^{(h_1)}_{\alpha,\xi,\eta})
					-
					\text{Cov}({Y}^{(h_1)}_{\alpha,\xi,\eta}, {Y}^{(h_2)}_{\alpha,\eta})|
					=
					O(\alpha^{-5}\epsilon^3)\\
					&|\text{Cov}({Y}^{(h_1)}_{\alpha,\xi,\eta}, \overline{{Y}^{(h_1)}_{\alpha,\xi,\eta}})
					-
					\text{Cov}({Y}^{(h_1)}_{\alpha,\xi,\eta}, \overline{{Y}^{(h_2)}_{\alpha,\xi,\eta})}|
					=
					O(\alpha^{-5}\epsilon^3)\,,
					\end{aligned}
					\end{equation}
					where the implied constant depends on $\varphi_1$, $\varphi_2$ and $\varrho$, and the dependence on $\eta$ is $O(\eta^{-1/2})$ when $\eta$ is sufficiently small.
		\end{lemma}
		
		Note that when we control $|\text{Var}({Y}^{(h_1)}_{\alpha,\xi,\eta})
					-
					\text{Cov}({Y}^{(h_1)}_{\alpha,\xi,\eta}, {Y}^{(h_2)}_{\alpha,\xi,\eta})|$ and other terms, the error depends on $\alpha^{-5}$ instead of $\alpha^{-1}$. This comes from the fact that we count on the approximation of identity to finish the proof. While this bound might not be optimal, it is sufficient for our purpose.
Lemma \ref{Main Lemma 2 for Kernel SST} is the key for the perturbation argument, particularly the $M$-dependence argument for the asymptotical behavior of SST. At the first glance, the statement is intuitively clear. However, due to the nonlinear structure of the transform, it is not clear how the error term is controlled. Since the error term is critical for the asymptotical analysis of SST, with the above preparations, below we provide a careful analysis.

\begin{proof} 
		To ease the notation, in this proof we denote 
		\[
		Y_{\alpha,\xi,\eta}:=Y^{(h_1)}_{\alpha,\xi,\eta}\mbox{ and }\mathsf Y_{\alpha,\xi,\eta}:=Y^{(h_2)}_{\alpha,\xi,\eta}\,.
		\]
		Since all bounds follow the same arguments, below we only quantify how similar $\text{Var}(Y_{\alpha,\xi,\eta})$ and $\text{Cov}(Y_{\alpha,\xi,\eta}, \mathsf Y_{\alpha,\xi,\eta})$ are when $\epsilon$ is sufficiently small. We start from the joint structure of $Y_{\alpha,\xi,\eta}$ and $\mathsf Y_{\alpha,\xi,\eta}$. 
		Suppose the joint density function of random variables $Y_{\alpha,\xi,\eta}$ and $\mathsf Y_{\alpha,\xi,\eta}$ is $f_{Y_{\alpha,\xi,\eta},\mathsf Y_{\alpha,\xi,\eta}}$. We have 
			\begin{align}
				\text{Cov}(Y_{\alpha,\xi,\eta},\mathsf Y_{\alpha,\xi,\eta})=\iint &f_{Y_{\alpha,\xi,\eta},\mathsf Y_{\alpha,\xi,\eta}}(y_1,y_2)\nonumber\\
				&\times(y_1-\mathbb{E}Y_{\alpha,\xi,\eta})(y_2-\mathbb{E}\mathsf Y_{\alpha,\xi,\eta})^*\textup dy_1\textup dy_2\label{Definition Expansion Cov Y Yv}.
			\end{align}
			\underline{\bf{Step 1}:} We start from evaluating the joint density of $(Y_{\alpha,\xi,\eta},\mathsf Y_{\alpha,\xi,\eta})$ by applying Lemma \ref{Lemma:ChangeVariableJacobianC4};
			that is, take $g_{Y_1Y_2}:\CC^4\setminus\{z_1z_3=0\} \to \CC^4\setminus\{z_1z_3=0\}$ defined in Lemma \ref{Lemma:ChangeVariableJacobianC4} to build the density function of 
			\[
				\B{Z}_{\alpha,\xi,\eta}=\begin{bmatrix}Y_{\alpha,\xi,\eta}&Q_\eta&\mathsf Y_{\alpha,\xi,\eta}&\mathsf Q_{\eta}\end{bmatrix}^\top\in \mathbb{C}^4
			\]
			from the gaussian vector $\B{U}:=\mu^{(h_1, h_2)}_{\xi_0,\eta,\eta}+\B{W}^{(h_1, h_2)}_{\eta,\eta}\in \mathbb{C}^4$. 
			By the change of variable and Lemma \ref{Lemma:ChangeVariableJacobianC4}, we have
			\begin{align}
				f_{\B{Z}_{\alpha,\eta}}(y_1,\omega_1,y_2,\omega_2)=&16\pi^4 |y_1|^2|y_2|^2E^4_{\alpha,\xi}(\omega_1)E^4_{\alpha,\xi}(\omega_2)\label{Equation fZ for covariance Y vY}\\
				&\times f_{\B{U}}(g^{-1}(y_1,\omega_1,y_2,\omega_2))\,\nonumber
			\end{align}
			and hence the density function $f_{Y_{\alpha,\xi,\eta},\mathsf Y_{\alpha,\xi,\eta}}(y_1,y_2)$ by marginalizing out $\omega_1$ and $\omega_2$.
			By Lemma \ref{Lemma covariance eigenstructure after perturbation}, we know how the eigenstructure of $\underline{\Sigma}_{\eta,\eta}^{(h_1, h_2)}$ is related to that of $\underline{\Sigma}_{\eta}^{(h_1)}$. 
			
			\underline{\bf{Step 2}:} We now carry out some change of variables in order to compare the covariance structures associated with $\text{Cov}(Y_{\alpha,\xi,\eta},\mathsf Y_{\alpha,\xi,\eta})$ and $\text{Var}(Y_{\alpha,\xi,\eta})$. First, run a change of variable by mapping $g_{Y_1Y_2}^{-1}(y_1,\omega_1,y_2,\omega_2)-\mu^{(h_1, h_2)}_{\xi_0,\eta,\eta}\in \mathbb{C}^4$ to $(a,b,c,d)\in \mathbb{C}^4$; that is, 
			\begin{align}
			y_1&\,=(a+\mu_1)E_{\alpha,\xi}\Big(\frac{b+\mu_2}{2\pi i(a+\mu_1)}\Big)^{-1}\\
			y_2&\,=(c+\mu_3)E_{\alpha,\xi}\Big(\frac{d+\mu_4}{2\pi i(c+\mu_3)}\Big)^{-1}.\nonumber
			\end{align}
			A direction $\mathbb{CR}$-calculation shows that this transform has the Jacobian $(2\pi)^{-4} |y_1|^{-2}|y_2|^{-2}E^{-4}_{\alpha,\xi}(\omega_1)E^{-4}_{\alpha,\xi}(\omega_2)$. Note that $y_1$ and $y_2$ are not defined at $0$, and $E_{\alpha,\xi}(\omega_1)$ and $E_{\alpha,\xi}(\omega_2)$ are both nonzero. The expansion of $\text{Cov}(Y_{\alpha,\xi,\eta},\mathsf Y_{\alpha,\xi,\eta})$ in \eqref{Definition Expansion Cov Y Yv} therefore becomes
			\begin{align}
				&\iint \Big\{\iint 16\pi^4 |y_1|^2|y_2|^2E^4_{\alpha,\xi}(\omega_1)E^4_{\alpha,\xi}(\omega_2) \nonumber\\
				&\qquad\qquad\times f_{\B{U}_{\xi_0,\eta}}(g_{Y_1Y_2}^{-1}(y_1,\omega_1,y_2,\omega_2))\textup d\omega_1\textup d\omega_2\Big\} \label{Expansion Cov Y vY first}\\
				&\qquad\qquad\times(y_1-\mathbb{E}Y_{\alpha,\xi,\eta})(y_2-\mathbb{E}\mathsf Y_{\alpha,\xi,\eta})^* \textup dy_1\textup dy_2  \,\nonumber\\
				=&\,\frac{1}{\pi^4\sqrt{\det(\underline\Sigma_{\eta,\eta}^{(h_1, h_2)})}}\iiiint e^{-F_4(a,b,c,d)}G(a,b)\tilde{G}(c,d)^*\textup da\textup db \textup dc \textup dd\nonumber\,,
			\end{align}
			where $F_4(a,b,c,d) := \frac{1}{2}\underline {\boldsymbol u}^*  \underline{\Sigma}_{\eta,\eta}^{(h_1, h_2)-1}\underline{\boldsymbol u}$, $\boldsymbol u:=\begin{bmatrix} a&b&c&d \end{bmatrix}$, 
			\begin{equation}
			G(a,b):=(a+\mu_1)E_{\alpha,\xi}^{-1}\Big(\frac{1}{2\pi i}
				\frac{b+\mu_2}{a+\mu_1}\Big)-\mathbb{E} Y_{\alpha,\xi,\eta}
				\end{equation} 
				and 
				\begin{equation}
				\tilde{G}(c,d):=(c+\mu_3)E_{\alpha,\xi}^{-1}\Big(\frac{1}{2\pi i}
				\frac{d+\mu_4}{c+\mu_3}\Big)-\mathbb{E}\mathsf Y_{\alpha,\xi,\eta}\,. 
				\end{equation}
				Since $E_{\alpha,\xi}^{-1}(z)$ behaves like a Gaussian function centered at $\xi$, $G$ is a smooth function over $(\mathbb{C}\backslash\{-\mu_1\})\times \mathbb{C}$. When $a\to -\mu_1$ and $b+\mu_2\neq 0$, $\frac{b+\mu_2}{a+\mu_1}\to \infty$ and $G(a,b)\to 0-\mathbb{E} Y_{\alpha,\xi,\eta}=-\mathbb{E} Y_{\alpha,\xi,\eta}$. When $a\to \infty$, $\frac{b+\mu_2}{a+\mu_1}\to0$ and $G(a,b)\to \infty$ linearly.
				Note that 
			\begin{align}
			&\underline{\boldsymbol{u}}^* \underline{\Sigma}_{\eta,\eta}^{(h_1, h_2)-1}\underline{\boldsymbol{u}}=\underline{\boldsymbol{u}}^*\mathsf P^\top (\mathsf P \underline{\Sigma}_{\eta,\eta}^{(h_1, h_2)-1}\mathsf P^\top)  \mathsf P\underline{\boldsymbol{u}}\\
			=&\,(\mathsf P\underline{\boldsymbol{u}})^* J J^\top (\mathsf P \underline \Sigma_{\eta,\eta}^{(h_1, h_2)-1} \mathsf P^\top) JJ^\top \mathsf P \underline{\boldsymbol{u}} \,,\nonumber
			\end{align}
			where $\mathsf P \underline{\Sigma}_{\eta,\eta}^{(h_1, h_2)-1}\mathsf P^\top$ is used to enhance the eigenstructure relationship between $\underline{\Sigma}_{\eta,\eta}^{(h_1,h_2)}$ and $\underline\Sigma_{\eta}^{(h_1)}$ discussed above and 
			\begin{equation}
			J =\frac{1}{\sqrt{2}} \begin{bmatrix}I_4 & I_4 \\ I_4 & -I_4\end{bmatrix}\in O(8)
			\end{equation}
			is chosen to pair parameters representing ``highly correlated'' random variable pairs, $(Q_{\eta},\,\mathsf Q_{\eta})$ and $(Y_{\alpha,\xi,\eta},\,\mathsf Y_{\alpha,\xi,\eta})$. By denoting $p=\frac{a+c}{\sqrt{2}}$, $q=\frac{b+d}{\sqrt{2}}$, $\zeta=\frac{a-c}{\sqrt{2}}$ and $\upsilon=\frac{b-d}{\sqrt{2}}$, we see that 
			\begin{equation}
			J^\top \mathsf P\underline{\boldsymbol{u}}=\begin{bmatrix}p & q& \bar p & \bar q& \zeta& \upsilon& \bar\zeta& \bar \upsilon \end{bmatrix}^\top.
			\end{equation}
			Clearly, this is a change of variable by a rotation, and after this pairing step, $\text{Cov}(Y_{\alpha,\xi,\eta},\mathsf Y_{\alpha,\xi,\eta})$ becomes
			\begin{align}
				\frac{1}{\pi^4\sqrt{\det(\underline\Sigma_{\eta,\eta}^{(h,\hbar)})}}\iiiint& e^{-\tilde F_4(p,q,\zeta,\upsilon)}G\Big(\frac{p+\zeta}{\sqrt{2}},\frac{q+\nu}{\sqrt{2}}\Big)\tilde G\Big(\frac{p-\zeta}{\sqrt{2}},\frac{q-\nu}{\sqrt{2}}\Big)^*\textup dp\textup dq \textup d\zeta \textup d\upsilon\label{Expansion Cov of Y Yv 2nd}\,,
			\end{align}
			where 
			\begin{equation}
			\tilde F_4(p,q,\zeta,\upsilon) := \frac{1}{2}\begin{bmatrix}\underline{\boldsymbol{Q}}  & \underline{\boldsymbol{\Upsilon}} \end{bmatrix}^* J^\top (\mathsf P \underline{\Sigma}_4^{-1}\mathsf P^\top) J\begin{bmatrix}\underline{\boldsymbol{Q}}  & \underline{\boldsymbol{\Upsilon}} \end{bmatrix}^\top, 
			\end{equation}
			$\boldsymbol Q:=\begin{bmatrix} p&q\end{bmatrix}^\top$ and $\boldsymbol \Upsilon:=\begin{bmatrix} \zeta&\upsilon\end{bmatrix}^\top$.
			To continue, note that by \eqref{expansion:U4}, we have
			\begin{equation}
			U_4^* J=
				\begin{bmatrix}
				\tilde C^* U_2^* & (I+B^*)\Gamma_2^*U_2^*\\
				(I+D^*)U_2^* & C^*\Gamma_2^*U_2^*
				\end{bmatrix}\,.
			\end{equation}
			As a result, we have
						\begin{align}
				\tilde F_4(p,q,\zeta,\upsilon)=&\frac{1}{2}\Big[\|{\Lambda_1'}^{-1/2}(\tilde C^*U_2^*\underline{\boldsymbol Q}+(I+B^*)\Gamma_2^*U_2^*\underline{\boldsymbol{\Upsilon}})\|^2\nonumber\\
				&\quad+\|(2D_2+\Lambda_2')^{-1/2}((I+D^*)U_2^*\underline{\boldsymbol Q}+C^*\Gamma_2^*U_2^*\underline{\boldsymbol{\Upsilon}})\|^2\Big]\label{Expansion F4 in Q Upsilon}\\
				=&\,\frac{1}{2}\Big[\|{\Lambda_1'}^{-1/2}\tilde C^*U_2^*\underline{\boldsymbol Q}\|^2+\|{\Lambda_1'}^{-1/2}(I+B^*)\Gamma_2^*U_2^*\underline{\boldsymbol{\Upsilon}}\|^2\nonumber\\
				&\quad+\|(2D_2+\Lambda_2')^{-1/2}(I+D^*)U_2^*\underline{\boldsymbol Q}\|^2\nonumber\\
				&\quad+\|(2D_2+\Lambda_2')^{-1/2}C^*\Gamma_2^*U_2^*\underline{\boldsymbol{\Upsilon}}\|^2\nonumber\\
				&\quad+\Re(\boldsymbol{Q}^*U_2\tilde C {\Lambda_1'}^{-1}(I+B^*)\Gamma_2^*U_2^*\underline{\boldsymbol{\Upsilon}})\nonumber\\
				&\quad+\Re(\boldsymbol{Q}^*U_2(I+D)(2D_2+\Lambda_2')^{-1}C^*\Gamma_2^*U_2^*\underline{\boldsymbol{\Upsilon}})\Big]\,.\nonumber
			\end{align}

			\underline{\bf{Step 3}:} Finally, we apply the asymptotic expansion. 
			Note that when $\epsilon$ is sufficiently small, $\Lambda'_1$ is sufficiently small and we would expect that an integration against $\zeta$ and $\upsilon$ behaves like an approximation of identity at $\boldsymbol \Upsilon=0$ with a controllable error term. 
			To precisely carry out this step, we view the augmented vector $\underline{\boldsymbol\Upsilon}\in \mathbb{C}^4$ as a real 4-dim variable, and set 
			\begin{equation}
			(I+B^*)\Gamma_2^*U_2^*\underline{\boldsymbol{\Upsilon}}\mapsto \boldsymbol{x}:=[x_1,x_2,x_3,x_4]^\top\in \mathbb{R}^4\label{Proof m-dependence Change of Variable complex to real}
			\end{equation} 
			as a change of variable, which is nothing but a rotation of $\mathbb{R}^4\subset \mathbb{C}^4$ in $\mathbb{C}^4$ with a ``small''
			 dilation perturbation via $(I+B^*)$
			 Note that 
			 \begin{equation}
			 T:=\frac{1}{2}\begin{bmatrix}
			 1 & 0 & 1 & 0\\
			 0 & 1 & 0 & 1 \\
			 -i & 0 & i & 0\\
			 0 & -i & 0 & i
			 \end{bmatrix}
			 \end{equation}
			 maps $\underline{\boldsymbol{\Upsilon}}$ to $(\Re \zeta, \Re\upsilon, \Im \zeta, \Im\upsilon)$, so $(I+B^*)\Gamma_2^*U_2^*\underline{\boldsymbol{\Upsilon}}=(I+B^*)\Gamma_2^*U_2^*T^{-1}(T\underline{\boldsymbol{\Upsilon}})$. By noting that the $|\textup{det}T|$ is $1/4$, we get that the Jacobian of \eqref{Proof m-dependence Change of Variable complex to real} is $\frac{1}{4}\det (I+B)$. 
			 Therefore, the integration against $\textup d\zeta\textup d \upsilon$ over $\mathbb{C}^2$ becomes an integration against $\frac{1}{4}\det(I+B)\textup{d} \boldsymbol x$ over $\mathbb{R}^4$. 
			 Also, $\|{\Lambda_1'}^{-1/2}(I+B^*)\Gamma_2^*U_2^*\underline{\boldsymbol{\Upsilon}}\|^2$ in \eqref{Expansion F4 in Q Upsilon} becomes $\|{\Lambda_1'}^{-1/2}\boldsymbol{x}\|^2$, and $G\Big(\frac{p+\zeta}{\sqrt{2}},\frac{q+\nu}{\sqrt{2}}\Big)$ in \eqref{Expansion Cov of Y Yv 2nd} becomes 
			 \begin{equation}
			 G\Big(\frac{p+e_1^\top U_2\Gamma_2(I+B^*)^{-1}\boldsymbol{x}}{\sqrt{2}},\frac{q+e_2^\top U_2\Gamma_2(I+B^*)^{-1}\boldsymbol{x}}{\sqrt{2}}\Big)\,,\nonumber
			 \end{equation}
			where $e_i\in \mathbb{R}^4$ is a unit vector with $1$ in the $i$-th entry, $i=1,\ldots,4$. The other terms are expanded in the same way and \eqref{Expansion Cov of Y Yv 2nd} becomes
			\begin{align}
			\frac{\det(I+B)}{4\pi^4\sqrt{\det(\underline\Sigma_{\eta,\eta}^{(h_1,h_2)})}}\iint &\, e^{-\frac{1}{2}\big[\|{\Lambda_1'}^{-1/2}\tilde C^*U_2^*\underline{\boldsymbol Q}\|^2+\|(2D_2+\Lambda_2')^{-1/2}(I+D^*)U_2^*\underline{\boldsymbol Q}\|^2\big]}\nonumber\\
			\times&\Big\{\int_{\mathbb{R}^4} e^{-\frac{1}{2}\|{\Lambda_1'}^{-1/2}\boldsymbol{x}\|^2}H_{p,q}(\boldsymbol{x}) d\boldsymbol{x}\Big\}\textup dp\textup dq \,,\nonumber
			\end{align}
			where 
			\begin{align}
			H_{p,q}(\boldsymbol{x}):=&\,e^{-\frac{1}{2}\|(2D_2+\Lambda_2')^{-1/2}C^* (I+B^*)^{-1}\boldsymbol{x}\|^2}e^{-\frac{1}{2}\Re(\boldsymbol{Q}^*U_2\tilde C {\Lambda_1'}^{-1}{\boldsymbol{x}})}\nonumber\\
			&\times e^{-\frac{1}{2}\Re(\boldsymbol{Q}^*U_2(I+D)(2D_2+\Lambda_2')^{-1}C^*(I+B^*)^{-1}\boldsymbol{x})}\\
			&\times G\Big(\frac{p+e_1^\top U_2\Gamma_2(I+B^*)^{-1}\boldsymbol{x}}{\sqrt{2}},\frac{q+e_2^\top U_2\Gamma_2(I+B^*)^{-1}\boldsymbol{x}}{\sqrt{2}}\Big) \nonumber\\
			&\times \tilde G\Big(\frac{p-e_1^\top U_2\Gamma_2(I+B^*)^{-1}\boldsymbol{x}}{\sqrt{2}},\frac{q-e_2^\top U_2\Gamma_2(I+B^*)^{-1}\boldsymbol{x}}{\sqrt{2}}\Big)^*\,\nonumber
			\end{align}
			is a smooth function. 
			Note that $\|{\Lambda_1'}^{-1/2}\boldsymbol{x}\|^2=\boldsymbol{x}^\top {\Lambda_1'}^{-1}\boldsymbol{x}$ is of order $e^{-M^2}\|\boldsymbol{x}\|^2$ by the previous calculation,
			so we can now apply the approximation of identity at $\boldsymbol x=0$. To do so, we check the property of $H_{p,q}(\boldsymbol{x})$.
			Note that $(2D_2+\Lambda_2')^{-1/2}C^* (I+B^*)^{-1}$ is of order $\epsilon^2$ and $U_2(I+D)(2D_2+\Lambda_2')^{-1}C^*(I+B^*)^{-1}$ is of order $\epsilon^2$. Also note that $\tilde C {\Lambda_1'}^{-1}$ is of order $1$ since both $\tilde C$ and $\Lambda_1'$ are of order $\epsilon$ and by \eqref{Expansion Gamma2 and Lambda2'} and \eqref{Expansion tilde C} we have
			\begin{equation}
			\tilde C {\Lambda_1'}^{-1}=\frac{1}{4}D_2^{-1}U_2^*\breve E_2(\breve E_2-2\tilde{E})^{-1}U_2\Gamma_2\,
			\end{equation}
			up to error of order $O(\epsilon^2)$. Also, the $G$ and $\tilde{G}$ functions are smooth and both grow up to $\infty$ linearly when $\|\boldsymbol{x}\|\to \infty$. Therefore, the Hessian of $H$ at $0$ is of order $1$. As a consequence, we have the following approximation of identity:
			\begin{align}
			&\int_{\mathbb{R}^4} e^{-\frac{1}{2}\|{\Lambda_1'}^{-1/2}\boldsymbol{x}\|^2}H_{p,q}(\boldsymbol{x}) d\boldsymbol{x}\\
			=&\,4\pi^2 \sqrt{\det(\Lambda_1')}\Big[H_{p,q}(0)+\frac{1}{2}\text{tr}({\Lambda'_1}\nabla^2H_{p,q}(0))+O(\|\Lambda'_1\|^2)\Big]\,\nonumber\\
			=&\,4\pi^2 \sqrt{\det(\Lambda_1')}H_{p,q}(0)(1+O(\alpha^{-4}e^{-3M^2}))\,,\nonumber
			\end{align}
			where the last equality holds when $\alpha$ is small. Also note that $\nabla^2H_{p,q}(0)$ is bounded for any pairs of $p,q$. 
			As a result, \eqref{Expansion Cov of Y Yv 2nd} becomes
			\begin{align}
				\frac{\det(I+B)(1+O(\alpha^{-4}e^{-3M^2}))}{\pi^2\sqrt{\det(2D_2+\Lambda_2')}}&\iint e^{-\frac{1}{2}\big[\|{\Lambda_1'}^{-1/2}\tilde C^*U_2^*\underline{\boldsymbol Q}\|^2+\|(2D_2+\Lambda_2')^{-1/2}(I+D^*)U_2^*\underline{\boldsymbol Q}\|^2\big] }\nonumber\\
				&\times G\Big(\frac{p}{\sqrt{2}},\frac{q}{\sqrt{2}}\Big)\tilde{G}\Big(\frac{p}{\sqrt{2}},\frac{q}{\sqrt{2}}\Big)^*\textup dp\textup dq\,,\label{Expansion Cov of Y Yv 3rd} 
			\end{align}
			where we use the fact that $\frac{1\sqrt{\det (\Lambda_1')}}{\pi^2\sqrt{\det(\underline\Sigma_{\eta,\eta}^{(h,\hbar)})}}=\frac{1}{\pi^2\sqrt{\det(2D_2+\Lambda_2')}}$.
			Note that $\|{\Lambda_1'}^{-1/2}\tilde C^*U_2^*\underline{\boldsymbol Q}\|^2$ is controlled by $O(\epsilon\|\boldsymbol Q\|^2)$ since $\Lambda_1'$ is of order $\epsilon$ and $\tilde C$ is of the same order. Also note that $\sqrt{\det(2D_2+\Lambda_2')}=4\sqrt{\det(D_2)}+O(\epsilon^2)$.
			On the other hand, by Lemma \ref{Lemma: EY and EvY difference}, we have 
			\[
				\mathbb{E}Y_{\alpha,\xi,\eta}=\mathbb{E}\mathsf Y_{\alpha,\xi,\eta}+O(\epsilon)\,.
			\]
			Also, $\mu_3=\mu_1+O(\epsilon)$ and $\mu_4=\mu_2+O(\epsilon)$. By plugging these approximations into \eqref{Expansion Cov of Y Yv 3rd} and by another asymptotical expansion, we have
			\begin{align}
				\frac{1+O(\alpha^{-4}\epsilon)}{4\pi^2\sqrt{\det(D_2)}}&\iint e^{-\frac{1}{2}\|(2D_2)^{-1/2}U_2^*\underline{\boldsymbol Q}\|^2}\left|G\Big(\frac{p}{\sqrt{2}},\frac{q}{\sqrt{2}}\Big)\right|^2
				\textup dp\textup dq \,\nonumber,
				\end{align}
				which by another change of variable becomes
				\begin{align}
				&\frac{1+O(\alpha^{-4}\epsilon)}{\pi^2\sqrt{\det(D_2)}}\iint e^{-\frac{1}{2}\underline{\boldsymbol Q}^*\underline{\Sigma}^{-1}_2\underline{\boldsymbol Q}}\left|G(p,q)\right|^2
				\textup dp\textup dq \,\nonumber\\
				=&\,\frac{1}{\pi^2\sqrt{\det(D_2)}}\iint e^{-\frac{1}{2}\underline{\boldsymbol Q}^*\underline{\Sigma}^{-1}_2\underline{\boldsymbol Q}}\left|G(p,q)\right|^2
				\textup dp\textup dq+O(\alpha^{-5}\epsilon)
			\end{align}
			since $\|D_2^{-1/2}U_2^*\underline{\boldsymbol Q}\|^2=\underline{\boldsymbol Q}^*\underline{\Sigma}_\eta^{(h)-1}\underline{\boldsymbol Q}$ and $\text{Var}(Y_{\alpha,\xi,\eta})\asymp \alpha^{-1}$ when $\alpha$ is small by Proposition \ref{Proposition joint density Y and omega}. Note that the implied constant depends on $\eta$ and is of order $\eta^{4}$ when $\eta\to 0$ by Proposition \ref{Proposition joint density Y and omega}.
			By comparing this formula with that of $\text{Var}(Y_{\alpha,\xi,\eta})$ shown in Lemma \ref{Lemma:ChangeOfVariableVariance}, we have
			the first claim that 
			\[
				|\text{Cov}(Y_{\alpha,\xi,\eta},\mathsf Y_{\alpha,\xi,\eta})-\text{Var}(Y_{\alpha,\xi,\eta})|=O(\alpha^{-5}\epsilon),\,
			\] 
			where the implied constant depends on $\eta$ and is of order $\eta^{-1/2}$ when $\eta\to 0$.
			By exactly the same argument, we have the second claim that 
			\[
				|\text{Cov}(Y_{\alpha,\xi,\eta},\mathsf Y_{\alpha,\xi,\eta})-\text{Var}(\mathsf Y_{\alpha,\xi,\eta})|=O(\alpha^{-5}\epsilon)\,,
			\]  
			where the implied constant depends on $\eta$ and is of order $\eta^{-1/2}$ when $\eta\to 0$
			\end{proof}

			\begin{lemma}\label{Lemma: Y covariance between two different eta eta'}
			Suppose Assumptions~\ref{assump:noise part}, \ref{assump:nonnull signal} and \ref{assump:gaussian window} hold. Take an even, bounded and smooth function $\varphi_1$ and set a new window $h_1$ so that $\hat{h}_1=\hat{h}\varphi_1$. 
			When $|\eta-\eta'|\geq 1$, we have
					\begin{align}
					&|\text{Cov}({Y}^{(h_1)}_{\alpha,\xi,\eta}, {{Y}^{(h_1)}_{\alpha,\xi,\eta'}})|
					=
					O((\eta-\eta')^2e^{-\pi^2(\eta-\eta')^2})\nonumber\\
					&|\text{Cov}({Y}^{(h_1)}_{\alpha,\xi,\eta}, \overline{{Y}^{(h_1)}_{\alpha,\xi,\eta'}})|
					=
					O((\eta-\eta')^2e^{-\pi^2(\eta-\eta')^2})\,,\nonumber
				\end{align}
			where the implied constants depend on $\varphi$ and $\varrho$.
			Second, there exists $\eta_0>0$ that depends on $\varphi_1$ and $\rho$ so that when
			$|\eta-\eta'|\leq \eta_0$, we have
			\begin{align}
					|\text{Cov}({Y}^{(h_1)}_{\alpha,\xi,\eta}, Y^{(h_1)}_{\alpha,\xi,\eta'})-\textup{Var}({Y}^{(h_1)}_{\alpha,\xi,\eta})| &\leq \frac{1}{10}\textup{Var}({Y}^{(h_1)}_{\alpha,\xi,\eta})\nonumber\\
				|\text{Cov}({Y}^{(h_1)}_{\alpha,\xi,\eta}, \overline{Y^{(h_1)}_{\alpha,\xi,\eta'}})-\text{Cov}({Y}^{(h_1)}_{\alpha,\xi,\eta}, \overline{Y^{(h_1)}_{\alpha,\xi,\eta}})| &\leq \frac{1}{10}|\text{Cov}({Y}^{(h_1)}_{\alpha,\xi,\eta}, \overline{Y^{(h_1)}_{\alpha,\xi,\eta}})| \,.\nonumber
			\end{align}

			Moreover, take another even, smooth and bounded function $\varphi_2$ so that conditions in Lemma \ref{Lemma ABC perturbation after window perturbation} is satisfied. Then we have the window perturbation bounds:
					\begin{equation}
					\begin{aligned}
					&|\text{Cov}({Y}^{(h_1)}_{\alpha,\xi,\eta}, Y^{(h_1)}_{\alpha,\xi,\eta'})
					-
					\text{Cov}({Y}^{(h_1)}_{\alpha,\xi,\eta}, {Y}^{(h_2)}_{\alpha,\xi,\eta'})|
					=
					O(\alpha^{-5}\epsilon^3)\\
					&|\text{Cov}({Y}^{(h_1)}_{\alpha,\xi,\eta}, \overline{Y^{(h_1)}_{\alpha,\xi,\eta'}})
					-
					\text{Cov}({Y}^{(h_1)}_{\alpha,\xi,\eta}, \overline{{Y}^{(h_2)}_{\alpha,\xi,\eta'}})|
					=
					O(\alpha^{-5}\epsilon^3)\,,
				\end{aligned}
			\end{equation}
			where the implied constants depend on $\varphi_1$, $\varphi_2$ and $O(e^{-(\eta-\eta')^2})$ if $\eta$ and $\eta'$ are both away from $0$, $O(\eta^{-1/2}e^{-(\eta-\eta')^2})$ when $\eta$ is sufficiently small, and $O({\eta'}^{-1/2}e^{-(\eta-\eta')^2})$ when $\eta'$ is sufficiently small.
			\end{lemma}
			\begin{proof}
			The proofs follow essentially the same perturbation argument as before, so we skip details but simply provide key steps. 
			
			For the first part, follow the same notations used in Lemma \ref{Appendex:C and P of W for Cov} with the symmetric, bounded and smooth function $\varphi$. 
			To simplify the notation, we omit the superscript depending on $h_1$. 
			Take $\eta,\eta'>0$. When $|\eta-\eta'|\geq 1$, denote
				\begin{align}
					\underline{\Sigma}_{\eta,\eta'}
					=&\,
					\begin{bmatrix}
						{\Gamma}_{\eta} 					& {\Gamma}_{\eta,\eta'} & C_\eta 											& C_{\eta,\eta'}\\
						{\Gamma}_{\eta,\eta'}^* 	& {\Gamma}_{\eta'} 			& C_{\eta,\eta'}^\top 				& C_{\eta'}\\
						\conj{C}_\eta 						& \conj{C}_{\eta,\eta'}	& \conj{\Gamma}_{\eta}				& \conj{\Gamma}_{\eta,\eta'} \\
						C_{\eta,\eta'}^* 					& \conj{C}_{\eta'} 			& \Gamma_{\eta,\eta'}^\top		& \conj{\Gamma}_{\eta'}
					\end{bmatrix}\nonumber,\quad
					\underline{\Sigma}_{\eta,\eta'}^\circ
					=&\,
					\begin{bmatrix}
						{\Gamma}_{\eta} & 0 & C_\eta & 0\\
						0 & {\Gamma}_{\eta'} & 0 & C_{\eta'}\\
						\overline{C}_\eta & 0 & \conj{\Gamma}_{\eta} &  0\\
						0 & \overline{C}_{\eta'} & 0 & \conj{\Gamma}_{\eta'}
					\end{bmatrix}.
				\end{align}
				By a direct expansion, every entry in the matrix
				\begin{align*}
					E
					\vcentcolon=
					\underline{\Sigma}_{\eta,\eta'} -\underline{\Sigma}_{\eta,\eta'}^\circ
					=
					\begin{bmatrix}
						0 											& {\Gamma}_{\eta\eta'} 	& 0 												& C_{\eta\eta'}								\\
						{\Gamma}^*_{\eta\eta'}	& 0 										& C_{\eta\eta'} 						& 0														\\
						0												& \conj{C}_{\eta\eta'} 	& 0 												& \conj{\Gamma}_{\eta\eta'} 	\\
						\conj{C}_{\eta\eta'} 		& 0 										& {\Gamma}^\top_{\eta\eta'} & 0														\\
					\end{bmatrix}
				\end{align*}
				is of order $(\eta-\eta')^2e^{-\pi^2(\eta-\eta')^2}$. Thus $\underline{\Sigma}_{\eta,\eta'}$ is well-approximated by $\underline{\Sigma}_{\eta,\eta'}^\circ$ by the eigenstructure perturbation bound \cite{VanDerAn_TerMorsche:2007} with the error of order $(\eta-\eta')^2e^{-\pi^2(\eta-\eta')^2}$. Note that if we replace the augmented matrix $\underline{\Sigma}_{\eta,\eta'}$ in $\text{Cov}({Y}_{\alpha,\xi,\eta}, Y_{\alpha,\xi,\eta'})$ by $\underline{\Sigma}_{\eta,\eta'}^\circ$ and call the resulting quantity $\text{Cov}_0({Y}_{\alpha,\xi,\eta}, Y_{\alpha,\xi,\eta'})$, by a direct expansion we have $\text{Cov}_0({Y}_{\alpha,\xi,\eta}, Y_{\alpha,\xi,\eta'})=0$. Similar result holds for $\text{Cov}_0({Y}_{\alpha,\xi,\eta}, Y_{\alpha,\xi,\eta'})$.

				Second, when $|\eta-\eta'|$ is sufficiently small, denote
				\begin{align}
					\underline{\Sigma}^\triangle_{\eta,\eta'}
					=&\,
					\begin{bmatrix}
						{\Gamma}_{\eta} 					& {\Gamma}_{\eta} & C_\eta 											& C_{\eta}\\
						{\Gamma}_{\eta}^* 	& {\Gamma}_{\eta} 			& C_{\eta}^\top 				& C_{}\\
						\conj{C}_\eta 						& \conj{C}_{\eta}	& \conj{\Gamma}_{\eta}				& \conj{\Gamma}_{\eta} \\
						C_{\eta}^* 					& \conj{C}_{\eta} 			& \Gamma_{\eta}^\top		& \conj{\Gamma}_{\eta}
					\end{bmatrix}\nonumber\mbox{ and }E'
					=
					\begin{bmatrix}
						0					& {\Gamma}_{\eta,\eta'}-{\Gamma}_\eta & 0 											& C_{\eta,\eta'}-C_\eta\\
						{\Gamma}_{\eta,\eta'}^*- {\Gamma}_{\eta}^*	& {\Gamma}_{\eta'} -{\Gamma}_{\eta} 			& C_{\eta,\eta'}^\top-C_{\eta}^\top 				& C_{\eta'}-C_{\eta}^\top\\
						0	& \conj{C}_{\eta,\eta'}-\conj{C}_{\eta}	& 0				& \conj{\Gamma}_{\eta,\eta'}-\conj{\Gamma}_{\eta} \\
						C_{\eta,\eta'}^* 	-C_\eta^*				& \conj{C}_{\eta'} 	-C_\eta^*	& \Gamma_{\eta,\eta'}^\top-\Gamma_{\eta}^\top		& \conj{\Gamma}_{\eta'}-\conj{\Gamma}_{\eta}
					\end{bmatrix}
				\end{align}
				By a direct expansion, when $|\eta-\eta'|$ is small, all non-zero entries in $E'$ are bounded, and hence by a similar perturbation argument, $|\text{Cov}({Y}^{(h_1)}_{\alpha,\xi,\eta}, Y^{(h_1)}_{\alpha,\xi,\eta'})-\textup{Var}({Y}^{(h_1)}_{\alpha,\xi,\eta})|\leq \frac{1}{10}\textup{Var}({Y}^{(h_1)}_{\alpha,\xi,\eta})$ holds. The other statement holds by the same claim.
				
				For the window perturbation bound, we follow the same proof as that of Lemma \ref{Main Lemma 2 for Kernel SST}. Let $\mathsf P$ to be a permutation matrix defined in \eqref{definition of P permutation first time} so that $\mathsf P \underline{\Sigma}_{\eta,\eta'}^\circ \mathsf P^\top$ is a block-diagonal matrix with each nonzero block depending on only one of $\eta,\eta'$ and $\mathsf P E \mathsf P^\top$ is nonzero in the off-diagonal blocks like that shown in \eqref{Definition E4 error matrix for window perturbation}. The proof proceeds in the same way, and
				we omit details.
			\end{proof}

	\begin{proof}[Proof of Theorem~\ref{thm: Q covariance}]
	The proof follows the same line as the above lemmas, so we omit details.
	\end{proof}
	
	%\clearpage
\section{Proof of SST-related results}\label{Proof SST related Section4}

We consider an $M$-dependent approximation as follows.

		\begin{defn}[$M$-dependent truncation] \label{def: Main Lemma for Kernel SST}
			Fix $M>0$ and let $\psi \in C^\infty_c$ be an even function, decreasing for $|t|\in[M,2M]$ and satisfying $\psi(t)=1$ when $|t|\leq M$, and $\psi(t)=0$ when $|t|>2M$.
			The $M$-dependent truncation of a non-zero Schwartz function $\hbar$, denoted as $\mathsf{h}$, is defined by $\hat{\mathsf{h}}:=\hat{\hbar}\psi$.
		\end{defn}
		In this section, since $t$, $\xi$ and $f+\Phi$ are fixed, to ease the notation, when there is no danger of confusion, we suppress $t$, $\xi$, and $f+\Phi$ and denote 
		\begin{align*}
		&{Y}_{\alpha,\xi,\eta}:=Y_{f+\Phi}^{(\hbar,\alpha,\xi)}(t,\eta),\,  \Omega_\eta:=\Omega_{f+\Phi}^{(\hbar)}(t,\eta),\,  V_\eta:=V_{f+\Phi}^{(\hbar)}(t,\eta)\\
		&\mathsf{Y}_{\alpha,\xi,\eta}:=Y_{f+\Phi}^{(\mathsf h,\alpha,\xi)}(t,\eta),\, \mathsf \Omega_\eta:=\Omega_{f+\Phi}^{(\mathsf h)}(t,\eta),\, \mathsf V_\eta:=V_{f+\Phi}^{(\mathsf h)}(t,\eta)\,.
				\end{align*}
		The first Lemma shows that $\mathsf{Y}_{\alpha,\xi,\eta}$ is an $M$-dependent random process indexed by $\eta$. 
				\begin{lemma}\label{Main Lemma for Kernel SST}
			{Suppose Assumptions~\ref{assump:noise part}, \ref{assump:nonnull signal} and \ref{assump:gaussian window} hold.} When $|\eta-\eta'|>4M$, the random variables 
			$\mathsf V_{\eta}$ and $\mathsf V_{\eta'}$ (respectively: $\partial_tV_{f+\Phi}^{(\mathsf h)}(t,\eta)$ and $\partial_tV_{f+\Phi}^{(\mathsf h)}(t,\eta')$, {$\mathsf \Omega_{\eta}$ and $\mathsf \Omega_{\eta'}$}, and $\mathsf{Y}_{\alpha,\xi,\eta}$ and $\mathsf{Y}_{\alpha,\xi,\eta'}$) are independent. 
		\end{lemma}
		
			\begin{proof}
				By a direct calculation, the covariance of $\mathsf V_{\eta}$ and $\mathsf V_{\eta'}$ is diagonal and the associated pseudocovariance is zero when $|\eta-\eta'|>4M$.
				By Gaussianity, we conclude that $\mathsf V_{\eta}$ and $\mathsf V_{\eta'}$ are independent. The same argument holds for $\partial_tV_{f+\Phi}^{(\mathsf h)}(t,\eta)$ and and $\partial_tV_{f+\Phi}^{(\mathsf h)}(t,\eta')$. Since {$\mathsf \Omega_{\eta}$ and $\mathsf \Omega_{\eta'}$} are transforms of independent random vectors, they are independent. A similar argument holds for $\mathsf{Y}_{\alpha,\xi,\eta}$ and $\mathsf{Y}_{\alpha,\xi,\eta'}$.
			\end{proof}

			\begin{lemma}\label{Lemma Integration of Falpha for the CLT}
			Suppose Assumptions~\ref{assump:noise part}, \ref{assump:nonnull signal} and \ref{assump:gaussian window} hold and assume $\varrho<5$. 
			Take $\varphi$ to be a symmetric, bounded and smooth function. Take the window $\hbar$ satisfying $\hat{\hbar}=\hat{h}\varphi$. 
			For $\xi>0$, denote a matrix-valued function over $(\eta,\eta')\in \mathbb{R}^+\times\mathbb{R}^+$ by
			\begin{align}
			&F_{\alpha,\xi}(\eta,\eta')\nonumber\\
			:=\frac{1}{2}& \begin{bmatrix}\Re(\text{Cov}({Y}_{\alpha, \xi,\eta},{Y}_{\alpha, \xi,\eta'})+\text{Cov}({Y}_{\alpha,\xi, \eta},\overline{{Y}_{\alpha,\xi, \eta'}})) & \Im\text{Cov}({Y}_{\alpha, \xi,\eta},\overline{{Y}_{\alpha,\xi, \eta'}}) \\
			\Im\text{Cov}({Y}_{\alpha,\xi, \eta},\overline{{Y}_{\alpha, \xi,\eta'}})& \Re(\text{Cov}({Y}_{\alpha,\xi, \eta},{Y}_{\alpha, \xi,\eta'})-\text{Cov}({Y}_{\alpha, \xi,\eta},\overline{{Y}_{\alpha,\xi, \eta'}}))\end{bmatrix}\,.\nonumber
			\end{align}
			Then $F_{\alpha,\xi}$ is continuous and integrable on $\mathbb{R}^+\times \mathbb{R}^+$. 
			Moreover, when $A=0$ (null case) and $\alpha>0$ is sufficiently small, we have
			\begin{align*}
			\int_{\alpha}^\infty\int_{\alpha}^\infty F_{\alpha,\xi}(\eta,\eta')d\eta d\eta' \asymp
			\left\{
			\begin{array}{ll}
				 \xi^{\rho} & \mbox{when }\xi>1\\
				 1 & \mbox{when }\xi\in({\alpha}^{1/4},1)\,,
				 \end{array}\right.
			\end{align*} 
			where the implied constant depends on $\varphi$ and $\varrho$. 
			When $A>0$ (non-null case) and $\alpha>0$ is sufficiently small, 
			\begin{align*}
			\int_{\alpha}^\infty\int_{\alpha}^\infty  F_{\alpha,\xi}(\eta,\eta')d\eta d\eta' \asymp
			\left\{
			\begin{array}{ll}
				\xi^{\rho} & \mbox{when }\xi>1\\
				 1& \mbox{when }\xi\in({\alpha}^{1/4},1)\,,
			 \end{array}\right.
			\end{align*} 
			where the implied constant depends on $A$, $\xi_0$, $\varphi$ and $\varrho$.
			\end{lemma}
\begin{proof}

			By Theorem \ref{Theorem covariance of Yeta and Yetap}, they are both continuous functions over $(\eta,\eta')$. 
			Next we check its integrability. By a simple control $|\text{Cov}(\mathsf{Y}_{\alpha,\xi, \eta},\mathsf{Y}_{\alpha, \xi,\eta'})|\,\vee\,|\text{Cov}(\mathsf{Y}_{\alpha, \xi,\eta},\overline{\mathsf{Y}_{\alpha,\xi, \eta'}})| \leq \sqrt{\text{Var}(\mathsf{Y}_{\alpha,\xi, \eta})\text{Var}(\mathsf{Y}_{\alpha, \xi,\eta'})}$, and the integrability of $\sqrt{\text{Var}(\mathsf{Y}_{\alpha,\xi, \eta})}$ as a function of $\eta$ by Corollary \ref{Proposition joint density Y and omega gen COR1} for any $\alpha,\xi>0$, we get the integrability of $F_{\alpha,\xi}$ over $\mathbb{R}^+\times \mathbb{R}^+$ for any $\alpha,\xi>0$. Note that this integrability is too rough and not sufficient for our application.

			Consider 
			\[
			\mathcal B =\{(\eta,\eta')\in [\alpha,\infty)\times [\alpha,\infty)|\, |\eta-\eta'|\leq 1/2\}\,.
			\]
			We first control $\int_{\mathcal B} F_{\alpha,\xi}(\eta,\eta')d\eta d\eta' $. 
			Consider the case when $A=0$. In this case, for a fixed small $\alpha>0$, when $\eta>\alpha$ and $\xi>{\alpha}^{1/4}$, by Corollary \ref{Proposition joint density Y and omega gen COR1}, we have
			\[
			\text{Var}(\mathsf{Y}_{\alpha,\xi, \eta})\asymp 1\,,
			\] 
			where the implied constant depends on $\varphi$, $\varrho$ and $\frac{\eta^{\varrho}}{(1+4\pi^2|\eta-\xi|^2)^{3}}$. On the other hand, by Lemma \ref{Lemma: Y covariance between two different eta eta'}, $\text{Cov}({Y}_{\alpha, \xi,\eta},{Y}_{\alpha, \xi,\eta'})$ (and $\text{Cov}({Y}_{\alpha,\xi, \eta},\overline{{Y}_{\alpha,\xi, \eta'}})$) behaves like $\text{Var}({Y}_{\alpha, \xi,\eta})$ (and $\text{Cov}({Y}_{\alpha,\xi, \eta},\overline{{Y}_{\alpha,\xi, \eta}})$) when $|\eta'-\eta|\leq \eta_0$.
			Thus, for $i=1,2$,
			\[
			\int_{\mathcal B} e_i^\top F_{\alpha,\xi}(\eta,\eta')e_i d\eta d\eta' \asymp \eta_0\int_\alpha^\infty \text{Var}({Y}_{\alpha, \xi,\eta})d\eta \,,
			\]
			where the implied constant depends on $\varphi$ and $\varrho$. 
			By a direct calculation, we have
			\[
			\int_\alpha^\infty \text{Var}({Y}_{\alpha, \xi,\eta})d\eta\asymp 
						\left\{
			\begin{array}{ll}
				 \xi^{\varrho/2} & \mbox{when }\xi>1\\
				 1 & \mbox{when }\xi\in({\alpha}^{1/4},1)
				 \end{array}\right.,
			\]
			where we use the fact that $\varrho<5$, and the implied constant depends on $\varphi$ and $\varrho$. A similar control holds for $\int_{\mathcal B} e_i^\top F_{\alpha,\xi}(\eta,\eta')e_j d\eta d\eta'$ when $j\neq i$. The control of $\int_{\mathcal B^c} F_{\alpha,\xi}(\eta,\eta')d\eta d\eta'$ depends on Lemma \ref{Lemma: Y covariance between two different eta eta'}, where we see that $F_{\alpha,\xi}(\eta,\eta')$ is controlled by $(\eta-\eta')^2e^{-4\pi^2(\eta-\eta')^2}$ when $|\eta-\eta'|>\eta_0$. Therefore, each entry of $\int_{\mathcal B^c} F_{\alpha,\xi}(\eta,\eta')d\eta d\eta'$ is dominated by the associated entry of $\int_{\mathcal B} F_{\alpha,\xi}(\eta,\eta')d\eta d\eta'$. Thus we get the claim for the null case. When $A>0$, the argument is similar and we omit details. 
						
			\end{proof}

		Note that results in Section \ref{Section: proof of M dependent related arguments} hold with $\hbar$ and $\mathsf h$ with the associated error bounds. Below, we provide two more general theorems that will be combined to prove Theorem \ref{thm:SSQ-CLT} and used for the local bootstrapping proof as well.

		\begin{thm}[CLT for $M$-dependent kernel] \label{thm:SSQ-CLT general}
			{Suppose Assumptions~\ref{assump:noise part}, \ref{assump:nonnull signal} and \ref{assump:gaussian window} hold and assume $\varrho<5$.} Take $\varphi$ to be a symmetric, bounded and smooth function and a window $\mathsf{h}$ satisfying $\hat{\mathsf{h}}=\hat{h}\varphi\psi$, where $\psi$ is from Definition \ref{def: Main Lemma for Kernel SST} with $M=\sqrt{2\log(n)}$. Denote $\mathsf S_{\xi,n}\vcentcolon= \Delta \eta\sum_{l=1}^n  \mathsf Y_{\alpha,\xi,l}$.
			Fix a small $\delta>0$. 
			Take $\beta=\frac{\delta}{4(2+\delta)}$ and
			assume $\Delta \eta=n^{-1/2-\beta}$. 
			Assume $\alpha=\alpha(n)$ so that $\alpha\to 0$ and $n\alpha\to 1$ when $n\to \infty$. For any fixed $\xi>0$, 
			we have 
			\[
			\mathsf S_{\xi,n}-\mathbb{E}\mathsf S_{\xi,n}\to \CC N_1(0,\nu,\wp) 
			\]
			weakly when $n\to \infty$, where $\nu>0$ and $\wp\in \mathbb{C}$ are of order $(1+\xi)^\rho$, where the implied constant depends on $\varrho$ and $\varphi$ when $A=0$ and on $A$, $\xi_0$, $\varrho$ and $\varphi$ when $A>0$.

		\end{thm}

\begin{proof} 
			We follow the notation used in the proof of Lemma \ref{Lemma Integration of Falpha for the CLT}. 			 We combine the Cramer-Wold theorem and the CLT for $M$-dependent random variables \cite{berk1973} to study the asymptotic behavior of 
			 \[
			 \Delta \eta \sum_{l=1}^n[\mathsf{Y}_{\alpha,\xi, l}-\mathbb E\mathsf{Y}_{\alpha,\xi, l}] \,. 
			 \] 
			 Rewrite the complex random variable as $\mathsf{Y}_{\alpha,\xi, l}=\mathsf{R}_{\alpha, \xi,l}+i\mathsf{I}_{\alpha, \xi,l}$, and consider it as a two dimensional real random vector $\begin{bmatrix}\mathsf{R}_{\alpha,\xi, l}&\mathsf{I}_{\alpha,\xi, l} \end{bmatrix}^\top$. To apply the Cramer-Wold theorem, below we check the case $\mathsf{C}^{(\theta)}_{\alpha, \xi,l} := \cos(\theta) \mathsf{R}_{\alpha,\xi,l}+\sin(\theta)\mathsf{I}_{\alpha,\xi,l}$ for a fixed $\theta$, and the other $\theta$ follows the same argument.
			 Clearly, we have 
			 \begin{align}
			&\text{Var}(\cos(\theta) \mathsf{R}_{\alpha,l}+\sin(\theta)\mathsf{I}_{\alpha,l})=\frac{\cos(\theta)^2}{2}\big[\text{Var}\mathsf{Y}_{\alpha, l}+\Re \mathbb{E}(\mathsf{Y}_{\alpha, l}-\mathbb{E}\mathsf{Y}_{\alpha, l})^2\big]\nonumber\\
			&+    \frac{\sin(\theta)^2}{2}\big[\text{Var}\mathsf{Y}_{\alpha, l}-\Re \mathbb{E}(\mathsf{Y}_{\alpha, l}-\mathbb{E}\mathsf{Y}_{\alpha, l})^2\big]       +\cos(\theta)\sin(\theta)\Im \mathbb{E}(\mathsf{Y}_{\alpha, l}-\mathbb{E}\mathsf{Y}_{\alpha, l})^2 \nonumber
			 \end{align}
			 for any $\theta\in [0,2\pi)$, since $\text{Var}\mathsf{R}_{\alpha, l} = \frac{1}{2}\big[\text{Var}\mathsf{Y}_{\alpha, l}+\Re \mathbb{E}(\mathsf{Y}_{\alpha, l}-\mathbb{E}\mathsf{Y}_{\alpha, l})^2\big]$, $\text{Var} \mathsf{I}_{\alpha, l} = \frac{1}{2}\big[\text{Var}\mathsf{Y}_{\alpha, l}-\Re \mathbb{E}(\mathsf{Y}_{\alpha, l}-\mathbb{E}\mathsf{Y}_{\alpha, l})^2\big]$, and $\text{Cov}(\mathsf{R}_{\alpha, l},\mathsf{I}_{\alpha, l}) = \frac{-1}{2}\Im \mathbb{E}(\mathsf{Y}_{\alpha, l}-\mathbb{E}\mathsf{Y}_{\alpha, l})^2$. 

			 For either the null and non-null cases, we need to check the four conditions in the main theorem of \cite{berk1973}. 
			 It is clear that
			\begin{align}
			\lim_{n\to \infty}M^{2+2/\delta}/n=\lim_{n\to \infty} [\sqrt{4\beta\log(n)}]^{2+2/\delta}/n=0\,,
			\end{align} 
			so \cite[Theorem (iv)]{berk1973} holds. The other conditions depend on the case.

			Since we assume $\xi>0$ is fixed, we have $\xi>{\alpha}^{1/4}$ when $n$ is sufficiently large.
			Thus, by Theorems \ref{Proposition joint density Y and omega gen} and Theorem \ref{Proposition joint density Y and omega gen222}, the $k$-th absolute moment of $\mathsf{Y}_{\alpha, \xi,l}$ is of order $\alpha^{-k/2+1}$ in both null and non-null cases, so are the $k$-th absolute moments of $\mathsf{R}_{\alpha,\xi, l}$ and $\mathsf{I}_{\alpha,\xi, l}$ and hence the $k$-th absolute moments of $\mathsf{C}^{(\theta)}_{\alpha,\xi, l}$. 
			By the assumption that $n\alpha\to 1$ when $n\to \infty$, the $2+\delta$ moment of $\alpha^{\frac{\delta}{2(2+\delta)}}\mathsf{Y}_{\alpha, \xi,l}$ is bounded, and hence \cite[Theorem (i)]{berk1973} holds.

			Next, for a given $\theta$, we check the positive finiteness of $\frac{1}{n}\text{Var}\big[\sum_{l=1}^n\alpha^{\frac{\delta}{2(2+\delta)}}\mathsf{C}^{(\theta)}_{\alpha,\xi,l}\big]$. By a direct expansion, we have
			\begin{align}
			\frac{1}{n}\text{Var}\Big[\sum_{l=1}^n\mathsf{C}^{(\theta)}_{\alpha,\xi,l}\Big]=\frac{1}{n}\sum_{l,k=1}^n\text{Cov}(\mathsf{C}^{(\theta)}_{\alpha,\xi,l},\mathsf{C}^{(\theta)}_{\alpha,\xi,k})\,,
			\end{align}
			where 
			\begin{align}
			&\,\text{Cov}(\mathsf{C}^{(\theta)}_{\alpha,\xi,l},\mathsf{C}^{(\theta)}_{\alpha,\xi,k})\nonumber\\
			=&\,\frac{\cos(\theta)^2}{2}\Re(\text{Cov}(\mathsf{Y}_{\alpha, \xi,l},\mathsf{Y}_{\alpha, \xi,k})+\text{Cov}(\mathsf{Y}_{\alpha,\xi, l},\overline{\mathsf{Y}_{\alpha, \xi,k}}))\label{Expansion:cov Yl Yk into theta}\\
			&+\frac{\sin(\theta)^2}{2}\Re(\text{Cov}(\mathsf{Y}_{\alpha,\xi, l},\mathsf{Y}_{\alpha,\xi, k})-\text{Cov}(\mathsf{Y}_{\alpha,\xi, l},\overline{\mathsf{Y}_{\alpha, \xi,k}}))\nonumber\\
			&+\cos(\theta)\sin(\theta)\Im\text{Cov}(\mathsf{Y}_{\alpha,\xi, l},\overline{\mathsf{Y}_{\alpha,\xi, k}})=v_\theta^\top \mathsf{M}_{\alpha,\xi,l,k}v_\theta,\nonumber
			\end{align}
			where 
			\begin{align}
			&\mathsf{M}_{\alpha,\xi,l,k}\nonumber\\
			:=&\,\frac{1}{2}\begin{bmatrix}\Re(\text{Cov}(\mathsf{Y}_{\alpha,\xi, l},\mathsf{Y}_{\alpha, \xi,k})+\text{Cov}(\mathsf{Y}_{\alpha, \xi,l},\overline{\mathsf{Y}_{\alpha,\xi, k}})) & \Im\text{Cov}(\mathsf{Y}_{\alpha,\xi, l},\overline{\mathsf{Y}_{\alpha, \xi,k}}) \\
			\Im\text{Cov}(\mathsf{Y}_{\alpha,\xi, l},\overline{\mathsf{Y}_{\alpha,\xi, k}})& \Re(\text{Cov}(\mathsf{Y}_{\alpha,\xi, l},\mathsf{Y}_{\alpha,\xi, k})-\text{Cov}(\mathsf{Y}_{\alpha, \xi,l},\overline{\mathsf{Y}_{\alpha, \xi,k}}))\nonumber
			\end{bmatrix}
			\end{align}
			and $v_\theta=\begin{bmatrix}\cos(\theta)&\sin(\theta)\end{bmatrix}^\top$. 
			Since $\mathsf{M}_{\alpha,\xi,l,k}=\mathsf{F}_{\alpha,\xi}(\eta_l,\eta_k)$ defined in Lemma \ref{Lemma Integration of Falpha for the CLT} and $\mathsf{F}_{\alpha,\xi}$ is continuous and integrable by Lemma \ref{Lemma Integration of Falpha for the CLT}, we can approximate  
			\begin{align}
			\sum_{l,k=1}^n\text{Cov}(\mathsf{C}^{(\theta)}_{\alpha,\xi,l},\mathsf{C}^{(\theta)}_{\alpha,\xi,k})&=v_\theta^\top\Big[\sum_{l,k=1}^n\mathsf{M}_{\alpha,\xi,l,k}\Big]v_\theta\nonumber
			\end{align}
			by the Riemannian sum
			\begin{align}
			\frac{1}{(\Delta\eta)^2}v_\theta^\top \left[\int_0^{\infty}\int_{\max\{0,\eta-H(n)/2\}}^{\eta+H(n)/2} \mathsf{F}_{\alpha,\xi}(\eta,\eta')\textup d\eta'\textup d\eta\right] v_\theta\label{Equation Expansion Cov summation}\,,
			\end{align}
			where $H(n)=4\sqrt{2\log(n)}<n\Delta \eta=n^{1/2-\beta}$ when $n$ is sufficiently large. 
			Note that $H(n)=4\sqrt{2\log(n)}$ since $\mathsf{M}_{\alpha,\xi,l,k}=0$ when $|l-k|\Delta \eta>4M=4\sqrt{2\log(n)}$. 
			With the above facts, by Lemma \ref{Lemma Integration of Falpha for the CLT}, all entries of the matrix 
			\[
			\mathsf M_n:=\int_0^{\infty}\int_{\max\{0,\eta-H(n)/2\}}^{\eta+H(n)/2} \mathsf{F}_{\alpha,\xi}(\eta,\eta')\textup d\eta'\textup d\eta
			\]
			are finite, independent of $\theta$, and of order $(1+ \xi)^\rho$ when $\alpha$ is sufficiently small. Here, note that we again use the fact that $\xi>\alpha^{1/4}$ when $n$ is sufficiently large. Since $\frac{1}{n(\Delta\eta)^{2}}=n^{2\beta}$ by assumption, we have 
			\begin{align*}
			&\frac{1}{n}\text{Var}\Big[\sum_{l=1}^n\alpha^{\frac{\delta}{2(2+\delta)}}\mathsf{C}^{(\theta)}_{\alpha,\xi,l}\Big]=\alpha^{\frac{\delta}{2(2+\delta)}}\frac{1}{n}\sum_{l, k=1}^n\text{Cov}(\mathsf{C}^{(\theta)}_{\alpha,\xi,l},\mathsf{C}^{(\theta)}_{\alpha,\xi,k})\\
			=&\,\alpha^{\frac{\delta}{2(2+\delta)}} v_\theta^\top\Big[ \frac{1}{n}\sum_{l, k=1}^n\mathsf{M}_{\alpha,\xi,l,k}\Big]v_\theta\asymp \alpha^{\frac{\delta}{2(2+\delta)}} n^{2\beta}
			\end{align*}
			when $n$ is sufficiently large.
			By the assumption that $\beta=\frac{\delta}{4(2+\delta)}$ and $\alpha=\alpha(n)$ so that $n\alpha\to 1$ as $n\to \infty$, 
			we conclude that 
			\begin{align}\label{Definition nu kernel regression}
			\lim_{n\to \infty}\frac{1}{n}\text{Var}\sum_{l=1}^nn^{-\frac{\delta}{2(2+\delta)}}\mathsf{C}^{(\theta)}_{\alpha,l}=v_\theta^\top \mathsf Mv_\theta>0
			\end{align}
			for some positive definite matrix $\mathsf M$ for any $\theta\in[0,2\pi)$. We thus have \cite[Theorem (iii)]{berk1973} for each $\theta$. By a similar calculation, we know that there exists $K^{(\theta)}>0$ so that
			\begin{align}
			\text{Var}\sum_{l=i+1}^jn^{-\frac{\delta}{2(2+\delta)}}\mathsf{C}^{(\theta)}_{\alpha,l}\leq (j-i)K^{(\theta)}
			\end{align}
			for all $1\leq i\leq j\leq n$ and hence \cite[Theorem (ii)]{berk1973} also holds. Finally, recall the following relationship -- if the covariance matrix of the real random vector $\begin{bmatrix}X& Y\end{bmatrix}^\top$ is $\begin{bmatrix}c & d \\ d& e\end{bmatrix}$ for $c,d,e\in \mathbb{R}$, by a direct calculation, the augmented covariance matrix of the complex random variable $X+iY$ is $\begin{bmatrix}c+e & c-e+2id\\ c-e-2id& c+e\end{bmatrix}$. With this fact, the main theorem in \cite{berk1973} and the Cramer-Wold theorem, we deduce the desired result that 
			\begin{equation*}
			\mathsf S_{\xi,n}-\mathbb E\mathsf S_{\xi,n}=\frac{1}{\sqrt{n}}\sum_{l=1}^nn^{-\frac{\delta}{2(2+\delta)}}[\mathsf{Y}_{\alpha,l}-\mathbb E\mathsf{Y}_{\alpha, l}]\to \mathbb{C}{N}_1(0,\nu,\wp)\,,
			\end{equation*} 
			in distribution when $n\to \infty$, where $\nu>0$ and $\wp\in \mathbb{C}$ are both from $\mathsf{M}$, of order $(1+\xi)^\rho$, and the implied constant depends on $\varrho$ and $\psi$ when $A=0$, and on $A$, $\xi_0$, $\varrho$ and $\psi$ when $A>0$.

\end{proof}

\begin{thm}[$M$-dependent as a perturbation] \label{thm:SSQ-CLT general 2}
			Follow the same notation and assumptions in Theorem \ref{thm:SSQ-CLT general}.
			We then have 
			\[
			\mathsf S_{\xi,n}-\mathbb{E}\mathsf S_{\xi,n}\to  S_{\xi,n}-\mathbb{E} S_{\xi,n}
			\]
			in probability when $n\to \infty$.
		\end{thm}
		
		\begin{proof}
		Note that 
					\begin{align*}
			&\mathsf S_{\xi,n}-\mathbb{E}\mathsf S_{\xi,n}=\frac{1}{\sqrt{n}}\sum_{l=1}^n n^{-\frac{\delta}{2(2+\delta)}}[\mathsf{Y}_{\alpha,l}-\mathbb E\mathsf{Y}_{\alpha, l}]\\
			&S_{\xi,n}-\mathbb{E} S_{\xi,n}=\frac{1}{\sqrt{n}}\sum_{l=1}^n n^{-\frac{\delta}{2(2+\delta)}}[{Y}_{\alpha,l}-\mathbb{E}Y_{\alpha, l}]
			\end{align*}
			To control the difference between $ \mathsf S_{\xi,n}$ and $S_{\xi,n}$, 
			by Chebychev's inequality, for $\varepsilon>0$, we have 
			\begin{align}
			&\text{Pr}\Big\{\Big|\frac{1}{\sqrt{n}}\sum_{l=1}^n(\mathsf{Y}_{\alpha,l}-Y_{\alpha,l})\Big|\geq \varepsilon\Big\}\leq\frac{1}{n\varepsilon^2}\text{Var}\sum_{l=1}^n(Y_{\alpha,l}-\mathsf{Y}_{\alpha,l}).
			\end{align}
			By a direct expansion, we have 
			\begin{align}
			&\text{Var}\sum_{l=1}^n(Y_{\alpha,l}-\mathsf{Y}_{\alpha,l})
			= \sum_{l=1}^n[\text{Var}(Y_{\alpha,l})-\text{Cov}(Y_{\alpha,l}, \mathsf{Y}_{\alpha,l})]\nonumber\\
			&\quad+\sum_{l=1}^n[\text{Var}(\mathsf Y_{\alpha,l})-\text{Cov}(\mathsf Y_{\alpha,l}, {Y}_{\alpha,l})]+\sum_{l\neq k}[\text{Cov}(Y_{\alpha,l}, Y_{\alpha,k})-\text{Cov}(\mathsf Y_{\alpha,l}, {Y}_{\alpha,k})]\nonumber\\
			&\quad+\sum_{l\neq k}[\text{Cov}(\mathsf Y_{\alpha,l}, \mathsf Y_{\alpha,k})-\text{Cov}(Y_{\alpha,l}, \mathsf {Y}_{\alpha,k})]\nonumber\,.
			\end{align}
			By Lemma \ref{Main Lemma 2 for Kernel SST} with the error $\epsilon$ set to $e^{-M^2}$, we have
			\[
			\frac{1}{n}\sum_{l=1}^n[\text{Var}(Y_{\alpha,l})-\text{Cov}(Y_{\alpha,l}, \mathsf{Y}_{\alpha,l})]+\frac{1}{n}\sum_{l=1}^n[\text{Var}(\mathsf Y_{\alpha,l})-\text{Cov}(\mathsf Y_{\alpha,l}, {Y}_{\alpha,l})]=O(
			\alpha^{-5}e^{-3M^2}),
			\]
			and by Lemma \ref{Lemma: Y covariance between two different eta eta'}, we have
			\begin{align}
			\frac{1}{n}&\sum_{l\neq k}[\text{Cov}(Y_{\alpha,l}, Y_{\alpha,k})-\text{Cov}(\mathsf Y_{\alpha,l}, {Y}_{\alpha,k})]\nonumber\\
			&+\,\frac{1}{n}\sum_{l\neq k}[\text{Cov}(\mathsf Y_{\alpha,l}, \mathsf Y_{\alpha,k})-\text{Cov}(Y_{\alpha,l}, \mathsf {Y}_{\alpha,k})]=O(\alpha^{-5}e^{-3M^2}).\nonumber
			\end{align}
			We thus have
			\begin{equation}
			\frac{1}{n}\text{Var}\sum_{l=1}^n(Y_{\alpha,l}-\mathsf{Y}_{\alpha,l})=O(\alpha^{-5}e^{-3M^2})\,
			\end{equation}
			when $n\to \infty$. Hence,
			\begin{align}\label{Bound of Y with different windows}
			&\text{Pr}\Big\{\Big|\frac{1}{\sqrt{n}}\sum_{l=1}^n n^{-\frac{\delta}{2(2+\delta)}}[\mathsf{Y}_{\alpha,l}-Y_{\alpha,l}]\Big|\geq \varepsilon\Big\}=O\left(\frac{\alpha^{-5}e^{-3M^2}}{\varepsilon^2n^{\frac{\delta}{2+\delta}}}\right)\,
			\end{align}
			which goes to $0$ when $n\to \infty$ for any $\varepsilon>0$ since $\alpha^{-5}e^{-3M^2}=n^{-1}$ by assumption. On the other hand, we have $|\mathbb E\mathsf{Y}_{\alpha, l}- \mathbb E{Y}_{\alpha, l}|=O(e^{-M^2})$ by Lemma \ref{Lemma: EY and EvY difference} when $n\to \infty$ for all $l$, so 
			\[
			\frac{1}{\sqrt{n}}\sum_{l=1}^nn^{-\frac{\delta}{2(2+\delta)}} [\mathbb E {Y}_{\alpha, l}-\mathbb E\mathsf{Y}_{\alpha, l}]=O(n^{-\frac{3}{2}-\frac{\delta}{2(2+\delta)}})\to 0
			\] 
			when $n\to \infty$. We thus conclude that 
			\[
			\frac{1}{\sqrt{n}}\sum_{l=1}^n n^{-\frac{\delta}{2(2+\delta)}}[\mathsf{Y}_{\alpha,l}-\mathbb E\mathsf{Y}_{\alpha, l}]\to\frac{1}{\sqrt{n}}\sum_{l=1}^n n^{-\frac{\delta}{2(2+\delta)}}[{Y}_{\alpha,l}-\mathbb{E}Y_{\alpha, l}]
			\]
			 in probability when $n\to \infty$.
		\end{proof}
		
		We can now finish the proof of Theorem \ref{thm:SSQ-CLT}.
		\begin{proof}[Proof of Theorem \ref{thm:SSQ-CLT}]
		Take $\psi=1$. By Theorem \ref{thm:SSQ-CLT general}, we obtain the CLT when the kernel is $M$-dependent. By  
		the discrepancy control in Theorem \ref{thm:SSQ-CLT general 2}, we obtain the theorem.
		\end{proof}

%\clearpage
\section{Proofs for bootstrapping}\label{Section proof for section 5}

In this section, to ease the notation, when there is no danger of confusion, we suppress $t$ and $f+\Phi$ and denote 
		\begin{align*}
		&{Y}_{\alpha,\xi,l}:=Y_{f+\Phi}^{(\hbar,\alpha,\xi)}(t,\eta_l),\,\mathsf{Y}_{\alpha,\xi,l}:=Y_{f+\Phi}^{(\mathsf h,\alpha,\xi)}(t,\eta_l)\,.
				\end{align*}

The proof of Corollary \ref{Discretization corollary} and Theorem \ref{Bootstrapping corollary} is almost the same as that for Theorem \ref{thm:SSQ-CLT}, with a perturbation argument about the window.

\begin{proof}[Proof of Corollary \ref{Discretization corollary}]

Recall the discretization of STFT at time $0$ and frequency $\eta$ shown in \eqref{Discretization Vf}:
\begin{equation}\label{Discretization tildeVf}
	{\mathbf V}_{\eta,N}=\Phi({\mathbf h}_{\eta,N})\,.
	\end{equation}
where
\[
\widehat{{\mathbf h}}_{\eta,N}(\xi)=\left[\frac{1}{\sqrt{N}}\sum_{j=\lfloor -N/2\rfloor}^{\lfloor N/2\rfloor}h\left(\frac{j}{\sqrt{N}}\right)e^{-i2\pi\frac{j}{\sqrt{N}}(\xi+\eta)}\right]\hat{\psi}(\xi/\sqrt{N})\,.
\]
Note that $\frac{1}{\sqrt{N}}\sum_{j=\lfloor -N/2\rfloor}^{\lfloor N/2\rfloor}h\left(\frac{j}{\sqrt{N}}\right)e^{-i2\pi\frac{j}{\sqrt{N}}(\xi+\eta)}$ is periodic with the periodicity $\sqrt{N}$.
We can view ${\mathbf h}_{\eta,N}$ as a perturbation of the window $h$. By a direct truncation and the Poisson summation formula, when $N$ is sufficiently large so that $\sqrt{N}/4>n^{1/2+\beta}$, where $n$ is the discretization in $\eta$, for $\eta\leq \sqrt{N}/4$ and $|\xi| \leq \sqrt{N}/2$, we have
\begin{align*}
&\frac{1}{\sqrt{N}}\sum_{j=\lfloor -N/2\rfloor}^{\lfloor N/2\rfloor}h\left(\frac{j}{\sqrt{N}}\right)e^{-i2\pi\frac{j}{\sqrt{N}}(\xi+\eta)}\\
=\,&\frac{1}{\sqrt{N}}\sum_{j\in \mathbb{Z}}h\left(\frac{j}{\sqrt{N}}\right)e^{-i2\pi\frac{j}{\sqrt{N}}(\xi+\eta)}+O\left(\frac{8\sqrt{2}}{\pi \sqrt{N}}e^{-N/16}\right)\\
=\,& \hat{h}(\xi+\eta)+\sum_{j\neq 0}\hat{h}\left(\xi+\eta+j\sqrt{N}\right)e^{-i2\pi\frac{j}{\sqrt{N}}(\xi+\eta)}+O\left(\frac{8\sqrt{2}}{\pi \sqrt{N}}e^{-N/16}\right)\\
=\,& \hat{h}(\xi+\eta)+O\left(\frac{8}{\pi^{5/2}N}e^{-\pi^2N/8}\right)+O\left(\frac{8\sqrt{2}}{\pi \sqrt{N}}e^{-N/16}\right)\,.
\end{align*}
On the other hand, note that $\hat{\psi}(\xi)=0$ when $|\xi|>\sqrt{N}/2$ and $\hat{\psi}(\xi)=1$ when $|\xi|\leq \sqrt{N}/4$. Thus, we have $\widehat{{\mathbf h}}_{\eta,N}(\xi)=\hat{h}_{0,\eta}(\xi)\phi_{1,N}(\xi)$ for some bounded, smooth and symmetric function $\phi_{1,N}$.
For a sufficiently large $N$, by denoting $\varphi_1=1$ and $\varphi_2=\phi_{1,N}$ and a direct bound, we obtain
\[
\sup_{0<\eta\leq \sqrt{N}/4}\max_{l,k=1,2}\left\{|\gamma^{[\varphi_1,\varphi_1]}_i(\eta)-\gamma^{[\varphi_l,\varphi_k]}_i(\eta)|\,,|\nu^{[\varphi_1,\varphi_1]}_i(\eta)-\nu^{[\varphi_l,\varphi_k]}_i(\eta)|\right\}=O(1/N)\,.
\]
Thus, we can apply the perturbation argument to compare the SST of $\mathsf{X}$ in the discretized setup and $\Phi$ in the continuous setup, denoted as ${\mathbf Y}_{\alpha,\xi,l,N}$ and ${Y}_{\alpha,\xi,l}$ respectively.
By 
the same argument for \eqref{Bound of Y with different windows}, for any $\varepsilon>0$ and $l$, when $n$ is sufficiently large, we have 
		\begin{align}
			&\text{Pr}\Big\{\Big|\frac{1}{\sqrt{n}}\sum_{l=1}^n n^{-\frac{\delta}{2(2+\delta)}}({\mathbf Y}_{\alpha,\xi,l,N}-{Y}_{\alpha,\xi,l})\Big|\geq \varepsilon\Big\}\leq\frac{\alpha^{-5}N^{-3}}{\varepsilon^2n^{\frac{\delta}{2+\delta}} }\,.
			\end{align}
			Since $\frac{1}{\sqrt{n}}\sum_{l=1}^nn^{-\frac{\delta}{2(2+\delta)}}\mathbf{Y}_{\alpha,\xi,l,N}=\mathbf{S}_{\alpha,\xi,n,N}$ and $\frac{1}{\sqrt{n}}\sum_{l=1}^nn^{-\frac{\delta}{2(2+\delta)}}Y_{\alpha,\xi,l}=S_{\alpha,\xi,n}$, when $N\to\infty$, we have $\mathbf{S}_{\alpha,\xi,n,N}\to S_{\alpha,\xi,n}$ in probability.
					
		\end{proof}

	\begin{proof}[Proof of Theorem \ref{Bootstrapping corollary}]
	
	We follow the notations used in the proof of Corollary \ref{Discretization corollary}.
Note that $\mathsf{X}$ has a short range dependence.
When $N$ is sufficiently large, according to \cite[Theorem 4]{xiao2012covariance},  the covariance structure of $\mathsf{X}$, denoted as ${\Sigma}_N$, can be estimated by the banding covariance approximation approach consistently. Specifically, if we consider the band to be of size $(N/\log(N))^{1/3}$, and denote the estimated banded covariance matrix as $\tilde{\Sigma}_N$ and $\epsilon_{N}:=\tilde{\Sigma}_{N}-\Sigma_{N}$, when $N$ is sufficiently large, we have the operator norm bound
\[
\|\epsilon_{N}\|=O_P\left((\log(N)/N)^{1/3}\right)\,.
\] 
Denote $\tilde{\mathsf X}:=[\tilde {\mathsf X}_{-\lfloor\frac{N}{2}\rfloor},\ldots,\tilde {\mathsf X}_{\lfloor\frac{N}{2}\rfloor}]$ to be a Gaussian process with mean $0$ and $\tilde{\Sigma}_N$ as the covariance structure.
By \cite[Equations (27) and (28)]{xiao2012covariance}, the spectral functions associated with $\mathsf X$ and $\tilde {\mathsf X}$ are uniformly bounded by $O_P\left((\log(N)/N)^{1/3}\right)$. 
When we run STFT on $\tilde{\mathsf X}$  at time $0$ and frequency $\eta$, we get a parallel formula like \eqref{Discretization Vf}:
\begin{equation}\label{Discretization tildeVf}
	\tilde{\mathbf V}_{\eta,N}=\Phi(\tilde{\mathbf h}_{\eta,N})\,.
	\end{equation}
where the difference between $\tilde{\mathbf h}_{\eta,N}$ and ${\mathbf h}_{\eta,N}$ leads to
\[
\sup_{0<\eta\leq \sqrt{N}/4}\max_{l,k=1,2}\left\{|\gamma^{[\varphi_1,\varphi_1]}_i(\eta)-\gamma^{[\varphi_l,\varphi_k]}_i(\eta)|\,,|\nu^{[\varphi_1,\varphi_1]}_i(\eta)-\nu^{[\varphi_l,\varphi_k]}_i(\eta)|\right\}=O\left(\Big(\frac{\log N}{N}\Big)^{1/6}\right)\,.
\]
Thus, we apply the perturbation argument to compare SST's of $\mathsf{X}$ and $\tilde{\mathsf{X}}$. Denote the integrands of SST's for $\mathsf{X}$ and $\tilde{\mathsf{X}}$ as ${\mathbf Y}_{\alpha,\xi,l,N}$ and $\tilde{\mathbf Y}_{\alpha,\xi,l,N}$ respectively. When $N$ is sufficiently large so that the number of discretization $n$ in $\eta$ satisfies $n^{1/2-\beta}<\sqrt{N}/2$,
by a similar argument for Theorems \ref{thm:SSQ-CLT general} and \ref{thm:SSQ-CLT general 2}, for each $\xi_j\in G$, 
			we have
			\begin{align}\label{bootstrapping Y approximation}
			\Big\{\Big|\frac{1}{\sqrt{n}}\sum_{l=1}^nn^{-\frac{\delta}{2(2+\delta)}}({\mathbf Y}_{\alpha, \xi_j,l,N}-\tilde{\mathbf Y}_{\alpha,\xi_j,l,N})\Big|\geq \varepsilon\Big\}\leq\frac{\epsilon_N^{3}}{\varepsilon^2 n^{\frac{\delta}{2+\delta}} \alpha^{5}}\,,
			\end{align}
			where $\epsilon_N=O((\log(N)/N)^{1/6})$.

To finish the proof, note that for any $\varepsilon>0$, by the definition of ${\mathfrak{m}}_{n,N}$
			and 
			$\mathfrak{m}^*_{n,N}$ and the triangular inequality, we have
\begin{align}
&\text{Pr}\left\{|\mathfrak{m}^*_{n,N}-\mathfrak{m}_{n,N}|>\varepsilon\right\}\nonumber\\
=\,&\text{Pr}\left\{\left|\max_{\xi_j\in G} \Big|\frac{1}{\sqrt{n}}\sum_{l=1}^n n^{-\frac{\delta}{2(2+\delta)}} \tilde{\mathbf{Y}}_{\alpha,\xi_j, l,N}\Big|-\max_{\xi_j\in G}\Big|\frac{1}{\sqrt{n}}\sum_{l=1}^n n^{-\frac{\delta}{2(2+\delta)}} \mathbf Y_{\alpha,\xi_j,l,N}\Big|\right|>\varepsilon\right\}\nonumber\\
\leq\,&\text{Pr}\left\{\max_{\xi_j\in G} \Big|\frac{1}{\sqrt{n}}\sum_{l=1}^n n^{-\frac{\delta}{2(2+\delta)}} \tilde{\mathbf{Y}}_{\alpha,\xi_j, l,N}-\frac{1}{\sqrt{n}}\sum_{l=1}^n  n^{-\frac{\delta}{2(2+\delta)}}\mathbf Y_{\alpha,\xi_j,l,N}\Big|>\varepsilon\right\}\nonumber\\
\leq\,			&\sum_{j=1}^{|G|}\text{Pr}\Big\{\Big|\frac{1}{\sqrt{n}}\sum_{l=1}^nn^{-\frac{\delta}{2(2+\delta)}}({\mathbf Y}_{\alpha, \xi_j,l,N}-\tilde{\mathbf Y}_{\alpha,\xi_j,l,N})\Big|> \varepsilon\Big\}\,.\nonumber
			\end{align}
			Since $|G|=\sqrt{N}/\log(N)$, by \eqref{bootstrapping Y approximation}, we have
			\begin{align}
\text{Pr}\left\{|\mathfrak{m}^*_{n,N}-\mathfrak{m}_{n,N}|>\varepsilon\right\}=O\left(\frac{\log(N)^{-1/2}}{\varepsilon^2n^{\frac{\delta}{2+\delta}} \alpha^{5}}\right)\,,
\end{align}
and hence 
\[
\mathfrak{m}^*_{n,N}- \mathfrak{m}_{n,N}\to 0
\]
			in probability when $N\to \infty$ as claimed. 
					
	\end{proof}

	\section{More numerical simulation}\label{Appendix more numerical evidence}
	
	In this section, we provide numerical evidence supporting the developed theorems. Consider $f(t)=A\exp(2\pi i \times 10t)+\Phi$, where $A> 0$, $\xi_0=10$ is the frequency, and $\Phi$ is the standard Gaussian white random process. Set $n=8,192$ and $N=n$. By taking $\beta=0.05$, we realize $f$ with the sampling rate $n^{-1/2+\beta}/2=115.36$Hz and sample $n$ points from $f$ in the time domain. Set $\Delta \eta=n^{-1/2-\beta}$ for the frequency axis discretization. According to the theorem, when $\alpha$ is of order $n^{-1}$, asymptotically SST $\mathbf S_{\alpha,\xi,n,N}$ converges to a normal distribution. 
	We choose $h$ to be the Gaussian window, and choose $\alpha=0.1\times 5^{j-1}/n$, where $j=1,\ldots,5$ and $\xi\in\{2,6,10,14\}$. In the null case, the QQ plots of $1,000$ realizations of $\Re S_\Phi^{(h,\alpha)}(0,\xi)$ against the standard normal distribution with different $\xi$ and $\alpha$ are shown in Figure \ref{SST_QQplot2}. It is clear that when $\alpha$ is sufficiently large, the distribution of $\Re S_\Phi^{(h,\alpha)}(0,\xi)$ gets closer to Gaussian for different $\xi$. The results of $\Im S_\Phi^{(h,\alpha)}(0,\xi)$ and other combinations of $\Re S_\Phi^{(h,\alpha)}(0,\xi)$ and $\Im S_\Phi^{(h,\alpha)}(0,\xi)$ have the same behavior, but not shown here.
	In the non-null case when $A=1$, the QQ plots of $1,000$ realizations of $S_{f+\Phi}^{(h,\alpha)}(0,\xi)$ against the standard normal distribution with different $\xi$ and $\alpha$ are shown in Figure \ref{SST_QQplot2}. Like the results in the null case, it is clear that when $\alpha$ is sufficiently large, the distribution of $\Re S_{f+\Phi}^{(h,\alpha)}(0,\xi)$ is close to Gaussian for different $\xi$, while when $\xi=\xi_0$, the mean is not zero, {as is predicted by Theorem \ref{Proposition joint density Y and omega}}. 
	\begin{figure}[hbt!]
		\centering
		\includegraphics[width=.99\textwidth]{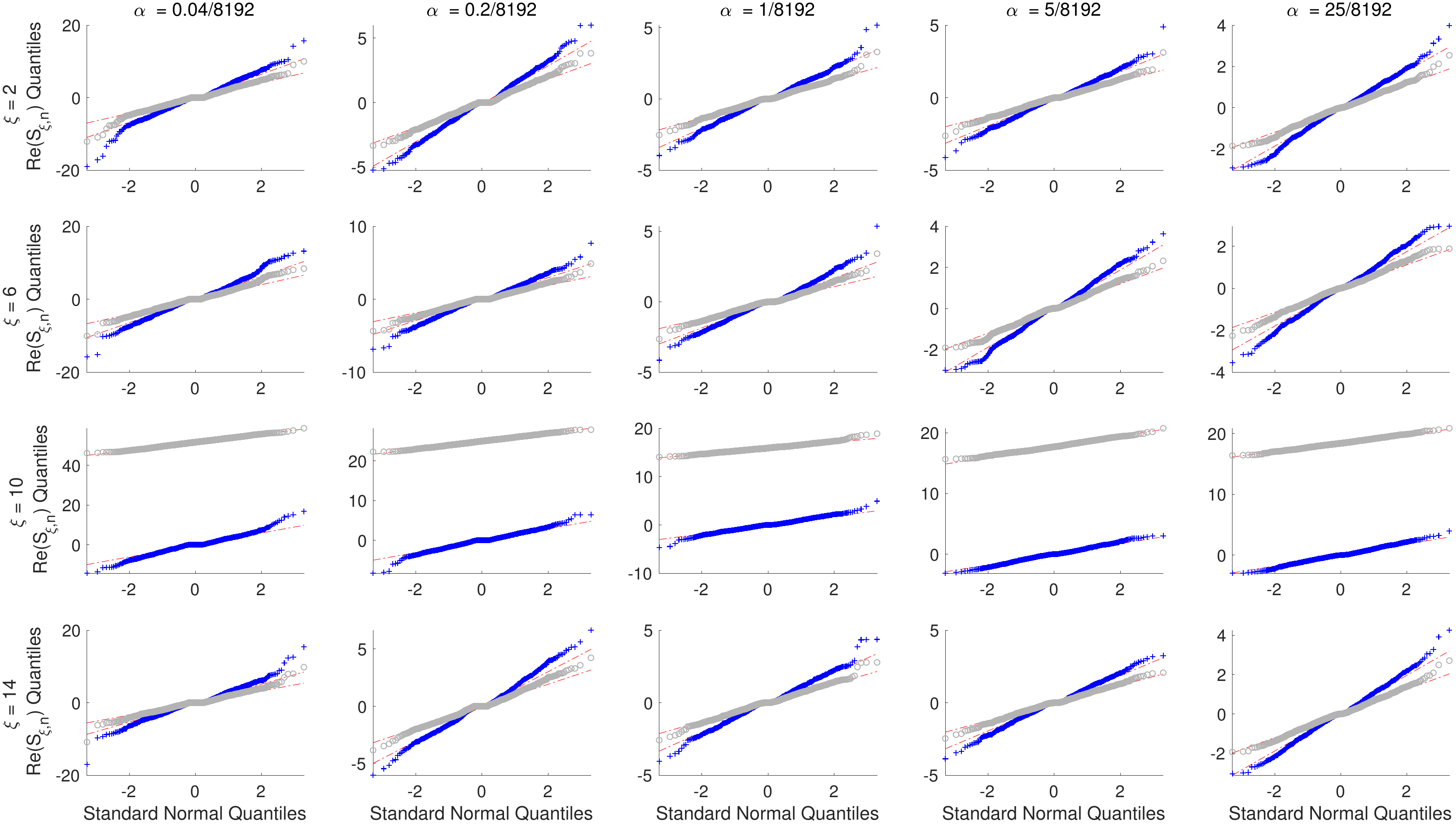}
		\caption{The QQ plots of $S_{A\exp(2\pi i \times 10t)+\Phi}^{(h,\alpha)}(0,\xi)$, where $A=0$ (the null case, shown in blue crosses) and $A=1$ (the non-null case, shown in gray circles), against the standard normal distribution with different $\xi$ and $\alpha$, where $h$ is the Gaussian window. The $x$-axis is the quantiles of the standard normal distribution, and the $y$-axis is the quantiles of $S_{A\exp(2\pi i \times 10t)+\Phi}^{(h,\alpha)}(0,\xi)$ with $1,000$ realizations of $\Phi$.}\label{SST_QQplot2}
	\end{figure}

\end{document}